\renewcommand{\eprint}[1]{#1}
\numberwithin{equation}{section}
\newenvironment{claim}[1]{\par\noindent\itshape\underline{Claim:}\space#1}{}
\theoremstyle{plain}
\newtheorem{theorem}{Theorem}[section]
\newtheorem{proposition}[theorem]{Proposition}
\newtheorem{lemma}[theorem]{Lemma}
\newtheorem{corollary}[theorem]{Corollary}
\newenvironment{replemma}[1]
	{\rlemma}
	{\endrlemma}
\theoremstyle{definition}
\newtheorem{definition}[theorem]{Definition}
\theoremstyle{remark}
\newtheorem{remark}[theorem]{Remark}
\newtheorem{condition}[theorem]{Condition}
\newcommand\bp{\begin{proof}}
\newcommand\ep{\end{proof}}
\newcommand{\un}{\mathds{1}}
\newcommand\Z{\mathbb{Z}}
\newcommand{\G}{\mathcal{G}}
\newcommand{\F}{\mathcal{F}}
\newcommand{\J}{\mathcal{J}}
\newcommand{\K}{\mathcal{K}}
\newcommand{\RR}{\mathcal{R}}
\newcommand{\Gu}{{\mathcal{G}^{(0)}}}
\newcommand{\Gxx}{\mathcal{G}^x_x}
\newcommand\con{\operatorname{con}}
\newcommand\dom{\operatorname{dom}}
\newcommand\ran{\operatorname{ran}}
\begin{document}

\title{Boundary quotients of C$^*$-algebras of left cancellative monoids and their groupoid models}

\date{July 2025}

\author{Gaute Schwartz}
\email{gautesc@uio.no}

\begin{abstract}
For a left cancellative monoid $S$ we consider a quotient of the reduced semigroup C$^*$-algebra $C_r^*(S)$ known as the boundary quotient. We present two potential groupoid models for this boundary quotient, obtained as reductions of Paterson and Spielberg's groupoids associated to $S$, and formulate conditions on $S$ which guarantees that either is a groupoid model. We outline how these conditions are related to the notions (strong) C$^*$-regularity introduced in a previous paper  \cite{NS}, and construct an example of a left cancellative monoid which is not C$^*$-regular, but satisfies both of the new conditions. 
\end{abstract}

\maketitle

\section*{Introduction}

In recent years, several researchers have studied boundary quotients of C*-algebras attached to semigroups, see for example \cite{Star}, \cite{LS} or \cite{KKLL}. Typically, the focus has been on semigroups that are either right LCM (least common multiple) monoids or group embeddable, where much can be said about the particular nature of the boundary quotients, e.g.  whether they are nuclear, simple or purely infinite. In this paper we take a broader point of view and define a boundary quotient for general left cancellative monoids. Every such monoid $S$ has a regular representation on~$\ell^2(S)$ and hence a well-defined reduced $C^*$-algebra $C_r^*(S)$. Inside this C*-algebra is a semilattice of projections corresponding to a certain family of subsets of $S$, $\J(S)$, called the constructible right ideals \cite{MR2900468}. To obtain the boundary quotient we then quotient $C_r^*(S)$ out by an ideal, $I$, which is defined in terms of foundation sets in $\J(S)$. The notion of foundation sets was introduced in \cite{SY} in the context of quasi-lattice orders and has been used in ~\cite{Star} and \cite{LS} to treat the right LCM and group embeddable case, respectively.\\

The goal of this paper is to find a suitable groupoid model for this boundary quotient $C_r^*(S)/I$. In a previous paper ~\cite{NS} we introduced conditions on $S$, which guaranteed, respectively, that the \'etale groupoids $\G_P(S)$ and $\G(S)$, were groupoid models for $C_r^*(S)$. The groupoid $\G_P(S)$ is a reduction of Paterson's universal groupoid (see \cite{MR1724106}) attached to the inverse hull, $I_\ell(S)$, of $S$ (see \cite{MR3200323}). The groupoid $\G(S)$ was introduced by Spielberg in \cite{MR4151331}, actually in the more general context of left cancellative small categories, and is a quotient of $\G_P(S)$. These groupoids have the same unit space which is identified with a subspace, denoted $\Omega(S)$, of the space filters on $\J(S)$. More precisely $\Omega(S)$ is the closure of the set of principal filters $\chi_s$ ($s\in S$) in the product topology. We always have left regular representations $\rho_{\chi_e}:C_r^*(\G_P(S))\rightarrow C_r^*(S)$ and $\rho_{\chi_e}:C_r^*(\G(S))\rightarrow C_r^*(S)$, where $e$ denotes the identity element in $S$. A sufficient condition for these maps to be isomorphisms, respectively, is that for the dense subset $Y=\{\chi_s \mid s\in S\}$ of $\Omega(S)$, every point $x\in \Omega(S)\setminus Y$ satisfies the following topological property for $\G= \G_P(S),\ \G(S)$, respectively,
\begin{equation}\label{eq:topproperty1}
\text{There is a net in } Y \text{ converging to } x\\
\text{ and having no other accumulation points in } \G_x^x,
\end{equation}

This property is equivalent to one introduced by Khoshkam and Skandalis~\cite{KhSk} where they give a condition which guarantees that all left regular representations are weakly contained in the set of regular representations determined by some given dense subset of a groupoid's unit space. There they show that for an inverse semigroup $I$, the dense subset $\{\rho_{\chi_g} \mid g\in I\}$ of $\G(I)$ (Paterson's universal groupoid associated to $I$), always satisfies this condition, but curiously the same is not true for the subset $\{\rho_{\chi_s} \mid s\in S\}$ in the groupoids $\G_P(S)$ or~$\G(S)$. Property \eqref{eq:topproperty1} for $\G=\G_P(S)$ and $\G(S)$ both have equivalent formulations in terms of $S$, namely strong C$^*$-regularity, see \cite[Definition~2.2]{NS} and C$^*$-regularity, see \cite[Definition~2.11]{NS}.\\

It turns out that the reduction of either $\G_P(S)$ or $\G(S)$ by the closure of the so called ultrafilters, denoted $\partial\Omega(S)$, are good candidates for a groupoid model of~$C_r^*(S)/I$. They appear in \cite{MR4151331} (in the more general context of  left cancellative small categories) where they are called boundary groupoids. For $\G_P(S)$ this reduction coincides with the tight groupoid, introduced by Exel in \cite{MR2419901}, of $I_\ell(S)$. There he showed that, for any inverse semigroup $\mathcal{S}$ the universal C*-algebra of the tight groupoid, $C^*(\G_{\text{tight}}(\mathcal{S}))$, is isomorphic to $C_{\text{tight}}^*(\mathcal{S})$, a C*-algebra generated by~$\mathcal{S}$ which is universal for so called tight representations of~$\mathcal{S}$. Since one can associate an inverse semigroup $I_\ell(S)$ to an arbitrary left cancellative monoid $S$ this indicated that the boundary quotient of $S$ is related to the $C^*$-algebra of an  \'etale groupoid. In \cite{Star}, Starling showed that for right LCM monoids $S$ then, at the level of full C*-algebras, the nicest relation one can hope for holds, namely, $Q(S)\cong C_{\text{tight}}^*(I_\ell(S))\cong C^*(\G_{\text{tight}}(I_\ell(S)))$. Here $Q(S)$ denotes a universal boundary quotient, see ~\cite[Definition~3.3]{Star}.\\

In investigating when $\partial\G_P(S):=(\G_P(S))_{\partial\Omega(S)}$ is a groupoid model for $C_r^*(S)/I$, we derive the following commutative diagram

\begin{equation}\label{eq:precommutativediagram}
\xymatrix{
C^*_r(\G_P(S))\ar[d]^{\phi} \ar[r]^{\rho_{\chi_e}} & C^*_r(S) \ar[d]^q\\
C^*_e(\partial\G_P(S)) \ar[r] & C_r^*(S)/I.
}
\end{equation}
Here $e$ is an exotic norm completion of $C_c(\partial\G_P(S))$ coming from the short exact sequence $$0 \rightarrow C_r^*(\G_P(S)\setminus \partial\G_P(S)) \rightarrow C_r^*(\G_P(S)) \xrightarrow[]{\phi} C_e^*(\partial\G_P(S)) \rightarrow 0.$$
It turns out that when the $e$ norm coincides with the reduced norm and~\eqref{eq:topproperty1} is satisfied for $\G=\G_P(S)$ and all $x\in \partial\Omega(S)$, then the bottom map in~\eqref{eq:precommutativediagram} is an isomorphism. For the groupoid $(\G(S))_{\partial\Omega(S)}$ we have an analogous exotic norm~$e_2$ and commutative diagram where the bottom map is an isomorphism if the $e_2$ norm coincides with the reduced norm and~\eqref{eq:topproperty1} is satisfied for $\G=\G(S)$ and all $x\in \partial\Omega(S)$. We deduce two conditions on $S$, strong C$^*$-regularity on the boundary which is equivalent to~\eqref{eq:topproperty1} being satisfied for $\G=\G_P(S)$ and all $x\in \partial\Omega(S)$, and C$^*$-regularity on the boundary which is equivalent to~\eqref{eq:topproperty1} being satisfied for $\G=\G(S)$  and all $x\in \partial\Omega(S)$. These conditions are reminiscent of their strong C$^*$-regularity and C$^*$-regularity counterparts, in fact they only differ in that a collection of sets that before were supposed to (set theoretically) cover something, are now only required to be a foundation set for that something.\\

This begs the question of when the $e$ norm, or $e_2$ norm, coincide with the reduced norm. It seems difficult to find general conditions on $S$ that imply this, much less are equivalent to it. However, in Section ~\ref{sec:example} we exhibit a monoid,~$R$, such that, (i) $R$ is strongly C$^*$-regular on the boundary, (ii) $\G_P(R)=\G(R)$, (iii) the boundary groupoid $\partial\G_P(R)=\partial\G(R)$ is Borel amenable (\cite[Definition~2.1]{R-2}) and (iv) the map $\rho_{\chi_e}: C_r^*(\G(R)) \rightarrow C_r^*(R)$ is not an isomorphism. Since the groupoid $\G:=\partial\G_P(R)=\partial\G(R)$ is $\sigma$-compact, it follows from a recent paper (\cite[Corollary~6.10]{BGHL}) that the canonical map $C^*(\G) \rightarrow C_r^*(\G)$ is an isomorphism. In particular the $e$ norm on $C_c(\partial\G(R))$ must be the reduced norm, hence the bottom map in~\eqref{eq:precommutativediagram} is an isomorphism in this case.\\

We begin the paper by introducing C$^*$-algebras of 
general étale groupoids, as well as the boundary quotient, 
in Section~\ref{sec:preliminaries}. In Section \ref{sec:boundary models} we define two 
potential groupoid models for the boundary quotient and 
give conditions on $S$ which guarantee that these are bona fide groupoid models. As explained, Section~\ref{sec:example} contains an example of a non-C$^*$-regular (see Definition \ref{def:regularity2}) left cancellative monoid which satisfies these conditions. In Section~\ref{sec:relations} conclude the main text by addressing the relations between the notions (strong) C$^*$-regularity, (strong) C$^*$-regularity on the boundary and the groupoid equalities $\G_P(\cdot)=\G(\cdot)$ and $\partial\G_P(\cdot)=\partial\G(\cdot)$. In particular we
show that the groupoids $\G_P(\cdot)$ and $\G(\cdot)$ need not coincide, thus answering a question posed by Xin Li in \cite{Li-1}, in the negative.\\

Let us finally mention that it might be interesting to look at these questions in the more general context of left cancellative small categories.  It would also be an interesting question for further study to find conditions on general left cancellative monoids which guarantee that the associated groupoids $\G_P(\cdot)$ or $\G(\cdot)$ are amenable. We feel that some of the techniques we used in proving the amenability of $\G_P(R)=\G(R)$ should hold more generally.\\

The author would like to thank Sergey Neshveyev for many helpful discussions as well as for introducing me to the problem.

\section{Preliminaries on Groupoid and Semigroup C$^*$-Algebras}\label{sec:preliminaries}

\subsection{C$^*$-algebras of non-Hausdorff \'Etale Groupoids} Let $\G$ be a locally compact, not necessarily Hausdorff, étale groupoid. By this we mean that $\G$ is a groupoid equipped with a locally compact topology such that

\begin{enumerate}[(I)]

\item the groupoid operations are continuous.
\item\label{ax:axiom2} the unit space $\Gu$ is Hausdorff in the relative topology.
\item\label{ax:axiom3} the source map $s: \G \rightarrow \Gu$ and the range map $r: \G \rightarrow \Gu$ are local homeomorphisms. 
\end{enumerate}
A subset $U\subseteq \G$ is called a bisection if the source and range maps are injective on $U$. Axioms~\eqref{ax:axiom2} and~\eqref{ax:axiom3} imply that bisections are Hausdorff in the relative topology. Axiom~\eqref{ax:axiom3} guarantees that~$\G$ has a basis of topology consisting of open bisections.\\

Let us associate a function space to $\G$. For each open Hausdorff set $V\subseteq \G$, consider the usual space $C_c(V)$ of continuous compactly supported complex-valued functions on $V$. Extending such functions by zero to $\G$, we may view $C_c(V)$ as a subspace of the space of functions on $\G$. Since in general $\G$ can be non-Hausdorff, these extensions need not be continuous on $\G$. We define the space

$$
C_c(\G):=\text{span}\{C_c(V)\mid V\subseteq \G,\ V\text{ open Hausdorff}\}.
$$
\\
Denoting $\G^x=r^{-1}(x)$ and $\G_x=s^{-1}(x)$ for a unit $x\in \Gu$, $C_c(\G)$ is a $*$-algebra with the convolution product
\begin{equation*} \label{eprod}
(f_{1}*f_{2})(g) := \sum_{h \in \G^{r(g)}} f_{1}(h) f_{2}(h^{-1}g)\quad \text{for}\quad g\in \G,
\end{equation*}
and involution $f^*(g)=\overline{f(g^{-1})}$. The full groupoid C$^*$-algebra $C^*(\G)$ is defined as the C$^*$-enveloping algebra of $C_c(\G)$.\\
For each $x\in\Gu$, define a \textit{left regular} representation $\rho_{x}\colon C_c(\G) \to B(\ell^{2}(\G_{x}))$ by
\begin{equation*}\label{eq:rhox}
(\rho_{x}(f)\xi)(g) :=\sum_{h \in \G^{r(g)}} f(h) \xi(h^{-1}g).
\end{equation*}
The reduced C$^*$-algebra $C^*_r(\G)$ is then defined as the completion of $C_c(\G)$ with respect to the norm
\begin{equation}\label{eq: reduced-formula}
\lVert f \rVert_{r} : = \sup_{x\in \Gu } \lVert \rho_{x}(f) \rVert , \qquad f\in C_c(\G).
\end{equation}

A central question in the previous paper \cite{NS} was when a subset $Y\subseteq \Gu$ is big enough to compute the reduced norm. That is, when can we replace $\Gu$ by $Y$ in Equation~\eqref{eq: reduced-formula}. This is equivalent to saying that for all $x\in\Gu$, the representation $\rho_x$ is weakly contained in $\bigoplus_{y\in Y}\rho_y$. We recall the following sufficient condition.

\begin{proposition}[{\cite[Proposition~1.4]{NS}}]\label{prop:weak-containmenttwo}
Let $\G$ be a locally compact étale groupoid, $Y\subseteq \Gu$ and $x\in \Gu\setminus Y$. Assume there is a net $(y_i)_i$ in $Y$ such that $x$ is the only accumulation point of $(y_i)_i$ in $\G_x^x=\G_x \cap \G^x$. Then the representation~$\rho_x$ of $C_r^*(\G)$ is weakly contained in $\bigoplus_{y\in Y}\rho_y$.
\end{proposition}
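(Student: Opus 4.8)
The plan is to verify Fell's criterion for weak containment by approximating a single cyclic state. First I would observe that $\delta_x$, the point mass at the unit $x\in\G_x$, is a cyclic vector for $\rho_x$: a direct computation with the convolution formula gives $\rho_x(f)\delta_x=f|_{\G_x}$, and since any open bisection meets $\G_x$ in at most one point, for each $g\in\G_x$ one can choose $f\in C_c(V)$ on a bisection $V\ni g$ with $\rho_x(f)\delta_x=\delta_g$; hence $\rho_x(C_c(\G))\delta_x$ is dense in $\ell^2(\G_x)$. The associated cyclic state is $\phi(f)=\langle\rho_x(f)\delta_x,\delta_x\rangle=f(x)$ on $C_c(\G)$. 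Because $\rho_x$ is cyclic, it suffices to show that $\phi$ lies in the weak-$*$ closure of the states of $\bigoplus_{y\in Y}\rho_y$; by Fell's theorem this yields that $\rho_x$ is weakly contained in $\bigoplus_{y\in Y}\rho_y$, as desired.

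For the approximating states I would use the unit vectors $\delta_{y_i}\in\ell^2(\G_{y_i})$, sitting inside $\bigoplus_{y\in Y}\ell^2(\G_y)$, whose vector states are $\phi_i(f)=\langle\rho_{y_i}(f)\delta_{y_i},\delta_{y_i}\rangle=f(y_i)$ on $C_c(\G)$. Since $x\in\G_x^x$ is an accumulation point of $(y_i)_i$, after passing to a subnet I may assume $y_i\to x$ in $\Gu$; this subnet still has $x$ as its only accumulation point in $\G_x^x$. As the $\phi_i$ are states, hence uniformly bounded, the whole problem reduces to proving the pointwise convergence $f(y_i)\to f(x)$ for every $f\in C_c(\G)$: this gives $\phi_i\to\phi$ weak-$*$ on the dense $*$-subalgebra $C_c(\G)$, and therefore on all of $C_r^*(\G)$. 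By linearity it is enough to treat $f\in C_c(V)$ for a single open bisection $V$.

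The main obstacle is exactly this convergence, which in the non-Hausdorff setting is not automatic: the extension-by-zero of $f$ from a bisection need not be continuous on $\G$, so $y_i\to x$ does not by itself force $f(y_i)\to f(x)$. This is where the hypothesis is essential. I would argue by extracting, from an arbitrary subnet, a further subnet lying either entirely in $V$ or entirely outside $V$. In the outside case $f(y_i)=0$, and $x\notin V$ (else $y_i\in V$ eventually), so $f(x)=0$ as well. In the inside case $s|_V$ is a homeomorphism onto $s(V)$ and $s(y_i)=y_i\to x$; if $x\in s(V)$ then $y_i\to\gamma:=(s|_V)^{-1}(x)\in V$, and since $r(y_i)=y_i\to x$ too we get $s(\gamma)=r(\gamma)=x$, i.e.\ $\gamma\in\G_x^x$. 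The hypothesis now forces $\gamma=x$, so $x\in V$ and continuity of $f$ on $V$ gives $f(y_i)\to f(x)$; if instead $x\notin s(V)$, then $y_i$ eventually leaves the compact set $\supp f$ and $f(y_i)\to 0=f(x)$. Thus every subnet has a further subnet along which $f(y_i)\to f(x)$, proving the convergence. The delicate point to get right is precisely the inside case: without the ``only accumulation point'' assumption a second $\gamma\in\G_x^x$ with $\gamma\neq x$ could appear as a limit of the $y_i$, and then a bump function supported near $\gamma$ but vanishing at $x$ would break the convergence; in the Hausdorff case this cannot happen and the argument collapses to plain continuity of $f$.
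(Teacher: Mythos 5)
Your proof is correct and follows essentially the same route as the cited proof in \cite{NS}: reduce to showing that the vector state $f\mapsto f(y_i)$ converges weak-$*$ to $f\mapsto f(x)$ on $C_c(\G)$ (using cyclicity of $\delta_x$ and uniform boundedness), and establish $f(y_i)\to f(x)$ for $f$ supported on a bisection via the subnet dichotomy, where the ``only accumulation point in $\G_x^x$'' hypothesis rules out a spurious limit $\gamma\neq x$. The minor elisions (partition of unity to pass from open Hausdorff sets to bisections, and closedness of $s(\supp f)$ in the Hausdorff unit space in the case $x\notin s(V)$) are routine and do not affect the argument.
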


If $x\in\Gu\cap\bar Y$, the non-existence of a net as in Proposition~\ref{prop:weak-containmenttwo} is equivalent to the following property:

\begin{condition}[{\cite[Condition~1.6]{NS}}]\label{condition2}
For some $n\ge1$ there are elements $g_1,\dots,g_n\in\Gxx\setminus\{x\}$, open bisections $U_1,\dots,U_n$ such that $g_k\in U_k$ and a neighbourhood $U$ of $x$ in $\Gu$ satisfying\\ $Y\cap U\subseteq U_1\cup\dots\cup U_n$.
\end{condition}

Note that Condition \eqref{condition2} can only be satisfied for $x\in (\Gu\cap\bar Y)\setminus Y$. In summary, if $Y\subseteq \Gu$ is a dense subset such that Condition \ref{condition2} is not satisfied for any $x\in \Gu\setminus Y$, then for each $x\in \Gu$ the representation $\rho_x$ is weakly contained in $\bigoplus_{y\in Y}\rho_y$ and we have
$$
\|f\|_r=\sup\limits_{y\in Y}\|\rho_y(f)\|, \qquad f\in C_c(\G).
$$

For an invariant subset $X\subseteq \Gu$, let $\G_X=s^{-1}(X)=r^{-1}(X)$. $\G_X$ is a subgroupoid of $\G$ and with the relative topology it is again an étale groupoid. Similarly $\G\setminus \G_X=\G_{\Gu\setminus X}$ is an étale subgroupoid. When $X$ is closed we have a natural embedding $C_c(\G\setminus \G_X) \hookrightarrow C_c(\G)$ and a restriction map $C_c(\G) \rightarrow C_c(\G_X)$. These maps induce the following short exact sequence of full groupoid C$^*$-algebras.
\begin{equation}\label{eq: universal-short-exact}
0 \rightarrow C^*(\G\setminus \G_X) \rightarrow C^*(\G) \rightarrow C^*(\G_X) \rightarrow 0.
\end{equation}
At the level of reduced algebras there is a short exact sequence, \cite[Proposition~1.2]{CN-2}),
\begin{equation}\label{eq: reduced-short-exact}
0 \rightarrow C_r^*(\G\setminus \G_X) \rightarrow C_r^*(\G) \rightarrow C_e^*(\G_X) \rightarrow 0.
\end{equation}
where $e$ is a uniquely determined exotic C$^*$-norm on $C_c(\G_X)$. That $e$ is exotic means that $r\leq e\leq u$ where $u$ denotes the universal norm.

\subsection{The Boundary Quotient}
We now give an introduction into the C$^*$-algebras associated to left cancellative monoids. These are related to the inverse hull which we also introduce.\\

Let $S$ be a left cancellative monoid with identity element $e$. We have a left regular representation of $S$ given by
$$
\lambda\colon S\to B(\ell^2(S)),\quad \lambda_s\delta_t=\delta_{st}.
$$
The reduced C$^*$-algebra $C_r^*(S)$ of $S$ is defined as the C$^*$-algebra generated by the operators $\lambda_s$, $s\in S$. Many operators in $C_r^*(S)$ are best understood as coming from an underlying collection of partial bijections on $S$, namely the left inverse hull of $S$.\\

The left inverse hull of $S$, $I_\ell(S)$ is the inverse semigroup of partial bijections on $S$ generated by the left translations $S\rightarrow S$. Whenever convenient we will identify an element $s\in S$ with the left translation $t\mapsto st$. Similarly, we denote by $s^{-1}\in I_\ell(S)$ the bijection $sS\mapsto S$ inverse to $s$. Then any element of $I_\ell(S)$ can be written as a composition
$$s_{2n}^{-1}s_{2n-1}\cdots s_2^{-1}s_1$$
for some elements $s_1, \cdots , s_{2n}\in S$ and $n\in \mathbb{N}$. If the map with empty domain lies in $I_\ell(S)$ we denote it by $0$.  The left regular representation $\lambda$ can be extended to a representation of $I_\ell(S)$ by
$$
\lambda \colon I_\ell(S) \to C_r^*(S), \quad \lambda(s_{2n}^{-1}s_{2n-1}\cdots s_2^{-1}s_1)=\lambda_{s_{2n}}^*\lambda_{s_{2n-1}}\cdots \lambda_{s_{2}}^*\lambda_{s_{1}}.
$$
Now let $E(S)$ be the abelian semigroup of idempotents in $I_\ell(S)$. Every element of~$E(S)$ is the identity map on its domain of definition $X\subseteq S$, which is a right ideal of $S$ of the form
$$
X=s_{2n}^{-1}s_{2n-1}\cdots s_2^{-1}s_1S.
$$
for some $s_1, \ldots , s_{2n}\in S$. Right ideals that arise in this way are 
called constructible. We denote by~$\J(S)$ the collection of all 
constructible right ideals. It is a semigroup under intersection, and we have an 
isomorphism $E(S)\cong \J(S)$. Note that $\emptyset\in \J(S)$ if and only if $0\in I_\ell(S)$.\\

Denote by $p_X\in E(S)$ the idempotent 
corresponding to $X\in \mathcal{J}(S)$. For $X\in \mathcal{J}(S)$ we denote the 
operator $\lambda(p_X)\in C_r^*(S)$ by $\un_X$. Concretely we have
\begin{equation}\label{eq:indicator}
\un_X(\delta_s)=\begin{cases}
\delta_s & \text{ if }s\in X,\\
0 & \text{ if }s\notin X.
\end{cases}
\end{equation}
This formula makes sense more generally and for any subset $X\subseteq S$, $\un_X$ will denote the projection defined by~\eqref{eq:indicator}. In general $\un_X\notin C_r^*(S)$.\\

To define the boundary quotient of $C_r^*(S)$ we need a preliminary definition.
\begin{definition}[cf.~{\cite[Definition~6.5]{LS}, \cite[Definition~11.5]{MR2419901}}]\label{def:foundationset}
Let $X, X_1, \ldots , X_m\in \mathcal{J}(S)$ be constructible right ideals, $m\geq 1$. Then $\{X_1, \ldots , X_m\}$ is a \textbf{foundation set} for $X$ if
\begin{enumerate}[(I)]
  \item  $X_i\subseteq X$ for all $i$;
  \item for any nonempty constructible right ideal $Y\in \mathcal{J}(S)\setminus \{\emptyset\}$ satisfying $Y\subseteq X$, there is an index $i_0$ such that $Y\cap X_{i_0}\neq \emptyset$.
\end{enumerate}
\end{definition}

\begin{proposition}\label{prop:noncovering-of-foundation-differences}
Let $X\in \J(S)\setminus \{\emptyset\}$ be a nonempty constructible right ideal. Then $X$ is not 
contained in a finite union of differences of the form $X'\setminus \cup_{i=1}^m X_i$ where $X', X_1, \cdots , X_m\in\mathcal{J}(S)$ and $\{{X_1}, 
\cdots, {X_m}\}$ is a foundation set for $X'$.\\
\end{proposition}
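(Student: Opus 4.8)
The plan is to argue by contradiction via a greedy refinement carried out entirely inside $X$. Suppose, toward a contradiction, that
$$
X\subseteq \bigcup_{j=1}^N\Bigl(X_j'\setminus\bigcup_{i=1}^{m_j}X_{j,i}\Bigr),
$$
where for each $j$ the family $\{X_{j,1},\dots,X_{j,m_j}\}$ is a foundation set for $X_j'$. I would then construct a decreasing chain $X=Y_0\supseteq Y_1\supseteq\dots\supseteq Y_N$ of \emph{nonempty} constructible right ideals such that $Y_j$ is disjoint from the $j$-th difference $X_j'\setminus\bigcup_i X_{j,i}$. Since $\J(S)$ is closed under intersection, each $Y_j$ again lies in $\J(S)\setminus\{\emptyset\}$; and since $Y_j\subseteq Y_{j-1}$, an immediate induction shows $Y_j$ is disjoint from the first $j$ differences. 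The terminal ideal $Y_N$ is then a nonempty subset of $X$ disjoint from the entire union, so any $s\in Y_N$ lies in $X$ but in none of the differences, contradicting the assumed containment.

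The refinement step is where the foundation set hypothesis does all the work. Given $Y_{j-1}\in\J(S)\setminus\{\emptyset\}$, I would consider the intersection $Y_{j-1}\cap X_j'$. If it is empty, every point of $Y_{j-1}$ lies outside $X_j'$ and hence outside the $j$-th difference, and I set $Y_j:=Y_{j-1}$. If it is nonempty, then $Y_{j-1}\cap X_j'$ is a nonempty constructible right ideal contained in $X_j'$, so property (II) of Definition~\ref{def:foundationset} yields an index $i_0$ with $Y_{j-1}\cap X_j'\cap X_{j,i_0}\neq\emptyset$. Setting $Y_j:=Y_{j-1}\cap X_j'\cap X_{j,i_0}$ produces a nonempty constructible right ideal contained in $X_{j,i_0}$; since every point of the $j$-th difference must avoid all of the $X_{j,i}$, in particular $X_{j,i_0}$, the ideal $Y_j$ is disjoint from that difference.

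The main point, and the place the argument could fail without the correct hypothesis, is the preservation of nonemptiness: disjointness from the earlier differences is inherited automatically by any subideal, but the passage to the smaller ideal $Y_j$ in the nonempty case must not collapse to $\emptyset$, and this is exactly what the foundation property guarantees (a mere set-theoretic inclusion $Y_{j-1}\cap X_j'\subseteq\bigcup_i X_{j,i}$ would not suffice to locate a \emph{nonempty} $Y_{j-1}\cap X_j'\cap X_{j,i_0}$). I would also remark that the argument never requires $\emptyset\in\J(S)$, since the empty-intersection situation is absorbed into the first case where $Y_j$ is left unchanged. Beyond this, the verification is routine bookkeeping on the decreasing chain.
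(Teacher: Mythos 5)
Your proof is correct and is essentially the paper's own argument: both iteratively intersect $X$ with one member of each relevant foundation set, using closure of $\J(S)$ under intersection together with property (II) of Definition~\ref{def:foundationset} to preserve nonemptiness, and end with a nonempty constructible ideal inside $X$ that misses every difference, contradicting the assumed covering. The only difference is bookkeeping: you run a sequential greedy chain $Y_0\supseteq Y_1\supseteq\cdots\supseteq Y_N$, handling the case $Y_{j-1}\cap X_j'=\emptyset$ explicitly, whereas the paper fixes a maximal family $F$ of indices with $X\cap\bigcap_{j\in F}X^j_{i_j}\neq\emptyset$ and reaches the same contradictions from maximality.
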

\begin{proof}
Assume for contradiction that
\begin{equation}\label{eq:differencecovering}
X\subseteq \Big( X^1\setminus \bigcup\limits_{i=1}^{m_1} X_i^1 \Big) \cup \cdots \cup \Big( X^n\setminus \bigcup\limits_{i=1}^{m_n} X_i^n \Big)
\end{equation}
where all sets appearing on the RHS are constructible right ideals and $\{X_1^j, \cdots, X_{m_j}^j\}$ is a foundation set for $X^j$. 
Fix a maximal set $F\subseteq \{1, \dots, n\}$ for which there exists indices $1\leq i_j \leq m_j$, $j\in F$, such that $X\cap\bigcap_{j\in F}X_{i_j}^j \neq \emptyset$. If $F=\{1, \dots, n\}$, then this contradicts~\eqref{eq:differencecovering}. On the other hand, if $F\neq \{1, \dots, n\}$, then~\eqref{eq:differencecovering} implies that $A:=X\cap \bigcap_{j\in F}X_{i_j}^j$ has nonempty intersection with a difference $X^{j'}\setminus \cup_{i=1}^{m_{j'}} X_i^{j'}$ for some $j'\notin F$. Then $A\cap X^{j'}$ is a nonempty constructible ideal contained in $X^{j'}$ not intersecting $X_i^{j'}$ for any $i$. This contradicts $\{X_1^{j'}, \cdots, X_{m_{j'}}^{j'}\}$ being a foundation set for~$X^{j'}$.\\
\end{proof}

We wrap up this section by defining the boundary quotient.
\begin{definition}\label{def:BoundaryQuotient}
Let $S$ be a left cancellative monoid. The \textbf{boundary quotient of $C_r^*(S)$} is the quotient of $C_r^*(S)$ by the ideal
\begin{align}\label{eq:I}
I=\langle \un_{X\setminus \bigcup\limits_{i=1}^m X_i} \mid & \ X, X_1, \ldots ,X_m\in \mathcal{J}(S), \nonumber \\[-1.3ex]
& \{X_1, \ldots , X_m\} \text{ is a foundation set for } X,\ m\geq 1\rangle.
\end{align}
\end{definition}

\begin{remark}
The generators of $I$ lie in $C_r^*(S)$; if $X, X_1, \ldots ,X_m\in \mathcal{J}(S)$ we have\\ $\un_{X\setminus \bigcup\limits_{i=1}^m X_i}=\prod\limits_{i=1}^m (\un_X- \un_{X_i})$, showing that this projection lies in $C_r^*(S)$.
\end{remark}
Later we shall see how Proposition~\ref{prop:noncovering-of-foundation-differences} implies that $I$ is always a proper ideal, so that the boundary quotient $C_r^*(S)/I$ is always non-trivial.\\

\section{Boundary Quotients of Semigroup C$^*$-algebras}\label{sec:boundary models}
In this section we give two main candidates for groupoid models for the boundary quotient $C_r^*(S)/I$. The first candidate is a reduction of the groupoid $\G_P(S)$ introduced in Section $2$ of \cite{NS} which in turn is a reduction of Paterson's universal groupoid for $I_\ell(S)$ (\cite{MR1724106}). The second one is a reduction of Spielberg's groupoid $\G(S)$, see \cite{NS}.

\subsection{The Reduced Paterson Groupoid}
Consider the collection $\widehat{\J(S)}$ of non-zero characters on~$\J(S)$, that is, semigroup homomorphisms $\J(S) \rightarrow \{0, 1\}$ which are not identically zero, where~$\{0, 1\}$ is considered as a semigroup under multiplication. Then $\widehat{\J(S)}$ is compact Hausdorff in the topology of pointwise convergence and can be identified with the space of filters on $\J(S)$ under the correspondence $\J(S)\ni \chi \mapsto \chi^{-1}(1)$. For every $s\in S$, consider the character $\chi_s\in \widehat{\J(S)}$ corresponding to the principal filter on $s$:
$$
\chi_s(X)=\begin{cases}
1 \text{ if }s\in X,\\
0 \text{ if }s\notin X.
\end{cases}
$$

Consider the closure $\Omega(S):=\overline{\{\chi_s \mid s\in S\}}$. It is again compact Hausdorff in the topology of pointwise convergence. Alternatively $\Omega(S)$ can be described as the characters $\chi \in \widehat{\J(S)}$ satisfying the properties

\begin{enumerate}[(I)]
\item if $0\in I_\ell(S)$, or equivalently if $\emptyset\in \J(S)$, then $\chi(\emptyset)=0$;
\item if $\chi(X)=1$ and $X=X_1\cup \cdots \cup X_n$ for $X, X_1, \ldots ,X_n\in \J(S)$, then $\chi(X_i)=1$ for at least one index $i$.
\end{enumerate}
Let
$$
\G_P(S)=\Sigma/\sim, \quad \text{ where }\Sigma=\{(g, \chi)\in I_\ell(S)\times \Omega(S) \mid \chi(\dom g)=1\}
$$
and the equivalence relation $\sim$ is defined by declaring $(g_1, \chi_1)$ and $(g_2, \chi_2)$ in $\Sigma$ to be equivalent if and only if
$$
 \chi_1=\chi_2\ \ \text{and there exists}\ \ A\in \J(S)\ \ \text{such that}\ \ {g_1}_{|A}={g_2}_{|A}\ \ \text{and}\ \ \chi_1(A)=1.
$$
We denote by $[g,\chi]$ the class of $(g,\chi)\in\Sigma$ in $\G_P(S)$. The product is defined by
$$
[g,\chi]\,[h,\psi] = [gh,\psi]\quad\text{if}\quad \chi=\psi(h^{-1}(\cdot)).
$$
In particular, the unit space $\G_P(S)^{(0)}$ can be identified with $\Omega(S)$ via the bijection $\chi \mapsto [p_S,\chi]$. The source and range maps are given by
$$
s([g,\chi])=\chi,\qquad r([g,\chi])=\chi(g^{-1}(\cdot)),
$$
while the inverse is given by $[g,\chi]^{-1}=[g^{-1},\chi(g^{-1}(\cdot))]$.\\

The topology on $\G_P(S)$ is chosen to make the source and range maps local homeomorphisms. More precisely, for each $g\in I_\ell(S)$ and subset $U\subset \Omega(S)$, define
$$
D(g, U):=\{[g, \chi]\in \G_P(S)\mid \chi\in U\}.
$$
Then the topology on $\G_P(S)$ is defined by taking as a basis the sets $D(g, U)$, as $g$ ranges over $I_\ell(S)$ and $U$ ranges over open subsets of the clopen set $\{\chi~\in~\Omega(S)~\mid~\chi(\dom g)=1\}$. Then $\G_P(S)$ is a locally compact étale groupoid.\\

Recall that we have a bijection $S\cong (\G_P(S))_{\chi_e}$ via $s \mapsto [s, \chi_e]$ and that under this identification the left regular representation of $\chi_e$ becomes a surjective $*$-homomorphism \cite[Lemma 2.1]{NS},
$$
\rho_{\chi_e}: C_r^*(\G_P(S))\rightarrow C_r^*(S), \qquad \rho_{\chi_e}(\un_{D(s,\ \Omega(S))})=\lambda_s, \quad s\in S.
$$
The orbit of $\chi_e$ in $\G_P(S)$ is $\{\chi_s \mid s\in S\}$ so a sufficient (but not necessary) condition for $\rho_{\chi_e}$ to be an isomorphism is that Condition \ref{condition2} does not hold for $Y=\{\chi_s \mid s\in S\}$ and any $x\in \Omega(S)\setminus Y$. This has an equivalent formulation in terms of $S$.

\begin{definition}[{\cite[Definition~2.2]{NS}}]\label{def:strong-regularity2}
We say that $S$ is \textbf{strongly C$^*$-regular} if, given elements $h_1,\dots,h_n\in I_\ell(S)$ and constructible ideals $X,X_1,\dots,X_m\in\J(S)$ satisfying
\begin{equation}\label{eq:strong-regularity0}
\emptyset\ne X\setminus\bigcup^m_{i=1} X_i\subseteq \bigcup^n_{k=1}\{s\in S \mid h_ks=s\},
\end{equation}
there are constructible ideals $Y_1,\dots, Y_l\in \J(S)$ and indices $1\le k_j\le n$ ($j=1,\dots,l$) such that
\begin{equation}\label{eq:strong-regularity}
X\setminus\bigcup^m_{i=1} X_i\subseteq\bigcup^l_{j=1}Y_j\qquad\text{and}\qquad h_{k_j}p_{Y_j}=p_{Y_j}\quad\text{for all}\quad 1\le j\le l.
\end{equation}
\end{definition}

\subsection{Paterson's Boundary Groupoid}\label{subsec:3.2}
We consider a suitable reduction of $\G_P(S)$ which will give a candidate for a groupoid model of the boundary quotient $C_r^*(S)/ I$.\\

Recall that a filter on $\J(S)$ is a nonempty collection of nonempty sets $\F\subseteq \J(S)$ which is closed under finite intersection and upwards inclusion. In other words (i) $A, B\in \F \Rightarrow A\cap B\in~\F$, and (ii) $A\in \F,\ B\in \J(S),\ A\subseteq~B \Rightarrow B\in \F$.
We say that a character $\chi\in \widehat{\J(S)}$ is \textbf{maximal} if the underlying filter $\chi^{-1}(1)$ of sets is maximal among the filters on $\J(S)$. If $\emptyset \notin \J(S)$, then $\J(S)$ itself is the unique such filter, otherwise there are more. The collection of maximal characters in $\widehat{\J(S)}$ is denoted $\Omega_{\text{max}}(S)$. One verifies that $\Omega_{\text{max}}(S)\subseteq\Omega(S)$; the same proof used in \cite[Lemma~5.7.7]{CELY} works. In particular $\Omega_{\text{max}}(S)$ is a subset of $\G_P(S)$. We note the following facts.

\begin{lemma}[{\cite[Lemma~2.21]{Li-1}}]\label{lem:maximal-facts}
\begin{enumerate}[(i)]

\item $\Omega_{\text{max}}(S)$ is an invariant subset of $\G_P(S)$.
\item For all $X\in \J(S)$, $X\neq \emptyset$, there exists a $\chi\in \Omega_{\text{max}}(S)$ with $\chi(X)=1$.\label{lem:maximal-fact2}
\item If $I_\ell(S)$ contains $0$, then $\chi\in \Omega_{\text{max}}(S)$ if and only if for all $X\in\J(S)$ with $\chi(X)=0$ there exists $Y\in \J(S)$ such that $\chi(Y)=1$ and $X\cap Y=\emptyset$.\label{lem:maximal-fact3}
\item If $I_\ell(S)$ does not contain $0$, then $\Omega_{\text{max}}(S)$ consists of a single character $\chi$ such that $\chi(X)=1$ for all $X\in\J(S)$.
\end{enumerate}
\end{lemma}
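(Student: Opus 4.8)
The plan is to work entirely in the filter picture: via the bijection $\chi\leftrightarrow\chi^{-1}(1)$ recalled before the lemma, a character $\chi$ lies in $\Omega_{\text{max}}(S)$ precisely when the filter $\F:=\chi^{-1}(1)$ is maximal in the inclusion-ordered poset of filters of nonempty sets. Since $\Omega_{\text{max}}(S)\subseteq\Omega(S)$, it suffices throughout to consider $\chi\in\Omega(S)$, so that $\chi(\emptyset)=0$ whenever $\emptyset\in\J(S)$ and $\F$ is genuinely proper (this excludes the trivial character $\chi\equiv1$, which is maximal in no useful sense). I would dispatch (iv) and (ii) first. For (iv), if $0\notin I_\ell(S)$ then every member of $\J(S)$ is nonempty, so $\J(S)$ is itself a filter and is the top element of the poset; hence it is the unique maximal filter and corresponds to the character with $\chi\equiv1$, giving the claim. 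For (ii), given $\emptyset\ne X\in\J(S)$ the principal collection $\{Y\in\J(S)\mid X\subseteq Y\}$ is readily a proper filter containing $X$; since the union of a chain of filters is again a filter, Zorn's lemma yields a maximal filter above it, whose character lies in $\Omega_{\text{max}}(S)$ and sends $X$ to $1$.

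For (iii) we are in the case $\emptyset\in\J(S)$, and $\F=\chi^{-1}(1)$ is a proper filter. For the ``if'' direction I argue by contraposition: if $\F$ is not maximal, choose a proper filter $\mathcal G\supsetneq\F$ and some $X\in\mathcal G\setminus\F$; then $\chi(X)=0$, so any separating $Y$ with $\chi(Y)=1$ and $X\cap Y=\emptyset$ would force $\emptyset=X\cap Y\in\mathcal G$, contradicting properness, so no such $Y$ can exist. For ``only if'', suppose $\chi$ is maximal, $\chi(X)=0$, but $X\cap Y\ne\emptyset$ for every $Y\in\F$. Then $\F':=\{Z\in\J(S)\mid X\cap Y\subseteq Z\text{ for some }Y\in\F\}$ is a proper filter -- properness is exactly the standing assumption $X\cap Y\ne\emptyset$ -- which contains both $\F$ and $X$, hence strictly contains $\F$, contradicting maximality.

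Part (i) is the step I expect to be the main obstacle, as it is the only one requiring the groupoid action rather than pure order theory. When $0\notin I_\ell(S)$ invariance is immediate from (iv), since $\Omega_{\text{max}}(S)$ is then a single point fixed by the action. When $0\in I_\ell(S)$ I would use the characterization from (iii): given $\chi\in\Omega_{\text{max}}(S)$ and $g\in I_\ell(S)$ with $\chi(\dom g)=1$, I must show that $\psi:=\chi(g^{-1}(\cdot))=r([g,\chi])$ again has the separation property. Given $X$ with $\psi(X)=\chi(g^{-1}(X))=0$, I apply the property of $\chi$ to the constructible ideal $g^{-1}(X)=\dom(g^{-1}p_Xg)$, obtaining $W\in\J(S)$ with $\chi(W)=1$ and $g^{-1}(X)\cap W=\emptyset$; replacing $W$ by $W\cap\dom g$ I may assume $W\subseteq\dom g$, and then $Y:=g(W)=\dom(gp_Wg^{-1})\in\J(S)$ satisfies $\psi(Y)=\chi(W)=1$ while $X\cap Y=g(W\cap g^{-1}(X))=\emptyset$. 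The genuinely delicate points, and where I expect the real work to lie, are verifying that the preimage $g^{-1}(X)$ and image $g(W)$ are again constructible right ideals -- which one reads off from their descriptions as domains of the idempotents $g^{-1}p_Xg$ and $gp_Wg^{-1}$ -- and confirming the bookkeeping identity $g(W)\cap X=g(W\cap g^{-1}(X))$ that makes the disjointness transfer. Invariance then follows, and together with (ii)--(iv) this completes the lemma.
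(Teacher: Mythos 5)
Your proposal is correct, but note that there is nothing in the paper to compare it against: Lemma~\ref{lem:maximal-facts} is imported from Li \cite[Lemma~2.21]{Li-1} without proof, so the only meaningful check is correctness and compatibility with the paper's conventions, and your argument passes both. Parts (iv) and (ii) are exactly the expected order theory (when $\emptyset\notin\J(S)$ the whole semilattice $\J(S)$ is the top filter; otherwise the principal filter over $X$ is proper and Zorn applies, using that unions of chains of filters are filters). In (iii) your preliminary restriction to $\chi\in\Omega(S)$ is not a cosmetic move but genuinely necessary: with $\emptyset\in\J(S)$ the character $\chi\equiv 1$ satisfies the separation condition vacuously while $\chi^{-1}(1)$ is not a filter in the paper's sense, so the ``if'' direction fails without the assumption $\chi(\emptyset)=0$; you correctly flag and dispose of this, and both implications (enlarging $\F$ to $\F'=\{Z\in\J(S)\mid X\cap Y\subseteq Z\text{ for some }Y\in\F\}$, and producing $\emptyset\in\mathcal G$ from a strictly larger filter) are sound. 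For (i), the two points you single out as delicate do hold: $g^{-1}(X)=\dom(g^{-1}p_Xg)$ and, after replacing $W$ by $W\cap\dom g$, $g(W)=\dom(gp_Wg^{-1})$ are constructible, $\psi(g(W))=\chi\bigl(g^{-1}(g(W))\bigr)=\chi(W)=1$ because $g$ is injective on $\dom g\supseteq W$, and $X\cap g(W)=g\bigl(W\cap g^{-1}(X)\bigr)=\emptyset$; since every arrow of $\G_P(S)$ has its inverse in the groupoid, proving that the range map preserves $\Omega_{\text{max}}(S)$ suffices for invariance, and the case $0\notin I_\ell(S)$ is trivial by (iv) as you say. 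This is essentially the same transport-of-the-maximality-criterion argument used in Li's original proof, so there is no gap to report.
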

It follows from the lemma that the closure
$$\partial \Omega(S):=\overline{\Omega_{\text{max}}(S)}\subseteq \Omega(S)$$
is an invariant subset of $\G_P(S)$. In \cite{MR2419901} the elements of $\partial \Omega(S)$  are called tight characters. We define the \textbf{boundary groupoid} of $S$ as 
$$\partial\G_P (S):=(\G_P(S))_{\partial\Omega(S)}.$$
For readability, we will sometimes suppress the left cancellative monoid $S$ in our notation. So we write $\G_P$, $\partial\G_P, \Omega$ and $\partial\Omega$ for $\G_P(S)$, $\partial\G_P(S), \Omega(S)$ and $\partial\Omega(S)$ when $S$ is clear from context.\\

Now, applying~\eqref{eq: reduced-short-exact} we obtain an exotic norm $e$ on $C_c(\partial\G_P (S))$ and a short exact sequence
\begin{equation}\label{eq: short-exact-boundary}
0 \rightarrow C_r^*(\G_P\setminus \partial\G_P) \rightarrow C_r^*(\G_P) \xrightarrow[]{\phi} C_e^*(\partial\G_P) \rightarrow 0.
\end{equation}
We argue that $\phi$ always induces the following commutative diagram,
\begin{equation}\label{eq:commutativediagram}
\xymatrix{
C^*_r(\G_P)\ar[d]^{\phi} \ar[r]^{\rho_{\mathcal{X}_e}} & C^*_r(S) \ar[d]^q\\
C^*_e(\partial\G_P) \ar@{.>}[r] & C_r^*(S)/I.
}
\end{equation}
To see why the bottom map exists we begin with the following observation.

\begin{proposition}\label{prop:foundation-maximal}
Let $X, X_1, \ldots ,X_m\in \J(S)$, $m\geq 1$, be constructible right ideals such that $X_i\subseteq X$ for all $i$. Then clopen set
\begin{equation}\label{eq:U-neighbourhood2}
U=\{\eta\in\Omega(S)\mid \eta(X)=1,\ \eta(X_i)=0\ \text{for}\ i=1,\dots,m\}
\end{equation}
is disjoint from $\partial \Omega$ if and only if the $X_i$'s form a foundation set for $X$.
\end{proposition}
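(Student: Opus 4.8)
The plan is to prove the two implications separately, and in both cases to work with the maximal characters $\Omega_{\text{max}}(S)$ rather than directly with $\partial\Omega=\overline{\Omega_{\text{max}}(S)}$. The first reduction I would make is to observe that, since $U$ is clopen and $\partial\Omega$ is the closure of $\Omega_{\text{max}}(S)$, the condition $U\cap\partial\Omega=\emptyset$ is equivalent to $U\cap\Omega_{\text{max}}(S)=\emptyset$. The nontrivial direction here uses only that $U^c$ is closed (as $U$ is open), so that $\Omega_{\text{max}}(S)\subseteq U^c$ forces $\overline{\Omega_{\text{max}}(S)}\subseteq U^c$ as well. This lets me ignore the non-maximal boundary points entirely.

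For the implication that a foundation set forces $U\cap\Omega_{\text{max}}(S)=\emptyset$, I would argue by contradiction, assuming there is a maximal character $\chi$ with $\chi(X)=1$ and $\chi(X_i)=0$ for all $i$, and split on whether $0\in I_\ell(S)$. If $0\in I_\ell(S)$, I invoke the maximality criterion of Lemma~\ref{lem:maximal-facts}(iii): for each $i$, since $\chi(X_i)=0$, there is $Y_i\in\J(S)$ with $\chi(Y_i)=1$ and $X_i\cap Y_i=\emptyset$. Setting $Y:=X\cap Y_1\cap\cdots\cap Y_m\in\J(S)$, closure of the filter under intersection gives $\chi(Y)=1$, so $Y$ is a nonempty constructible ideal with $Y\subseteq X$ and $Y\cap X_i\subseteq Y_i\cap X_i=\emptyset$ for every $i$, contradicting condition (II) of Definition~\ref{def:foundationset}. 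If instead $0\notin I_\ell(S)$, then by Lemma~\ref{lem:maximal-facts}(iv) the unique maximal character sends every (necessarily nonempty) constructible ideal to $1$; in particular it sends each $X_i$ to $1$, so it cannot lie in $U$, and $U\cap\Omega_{\text{max}}(S)=\emptyset$ holds trivially.

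For the converse I would assume $\{X_1,\dots,X_m\}$ is \emph{not} a foundation set for $X$. Since the hypothesis $X_i\subseteq X$ already secures condition (I) of Definition~\ref{def:foundationset}, the failure must lie in condition (II): there is a nonempty $Y\in\J(S)$ with $Y\subseteq X$ and $Y\cap X_i=\emptyset$ for all $i$. By Lemma~\ref{lem:maximal-facts}(ii) there is $\chi\in\Omega_{\text{max}}(S)$ with $\chi(Y)=1$; upward closure of the underlying filter gives $\chi(X)=1$, and multiplicativity of $\chi$ together with $\chi(\emptyset)=0$ (property (I) of $\Omega(S)$) forces $\chi(X_i)=\chi(Y)\chi(X_i)=\chi(Y\cap X_i)=\chi(\emptyset)=0$ for each $i$. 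Hence $\chi\in U\cap\Omega_{\text{max}}(S)\subseteq U\cap\partial\Omega$, so $U$ meets $\partial\Omega$, as required.

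The only genuinely delicate point is the forward direction in the case $0\in I_\ell(S)$: one cannot simply read a witness $Y$ off of $\chi$, and the maximality characterization in Lemma~\ref{lem:maximal-facts}(iii) is exactly what lets me separate $\chi$ from each $X_i$ by a set $Y_i$ in its filter and then assemble the required nonempty constructible ideal $Y\subseteq X$ disjoint from all the $X_i$. Everything else is bookkeeping with filters, multiplicativity of characters, and the pointwise topology.
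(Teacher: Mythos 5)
Your proof is correct and follows essentially the same route as the paper's: both directions reduce to maximal characters (the paper also passes from $U\cap\partial\Omega\neq\emptyset$ to $U\cap\Omega_{\text{max}}(S)\neq\emptyset$ via openness of $U$, which you just make explicit up front), use Lemma~\ref{lem:maximal-facts}(ii) to produce a maximal character on a witness $Y$, and use Lemma~\ref{lem:maximal-facts}(iii) with $Y=X\cap\bigcap_i Y_i$ in the case $0\in I_\ell(S)$, after disposing of $0\notin I_\ell(S)$ via the unique all-ones maximal character. The only differences are presentational: you argue the foundation-set direction by contradiction where the paper states the contrapositive, and you spell out the filter bookkeeping ($\chi(X)=1$ by upward closure, $\chi(X_i)=0$ by multiplicativity) that the paper leaves implicit.
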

\begin{proof}
For the forward direction, suppose that the $X_i$'s do not form a foundation set for~$X$. Then there exists a nonempty constructible ideal $Y\in \mathcal{J}(S)$ contained in $X$ which is disjoint from each~$X_i$. By Lemma~\ref{lem:maximal-facts} (\ref{lem:maximal-fact2}) there is a maximal character $\chi$ with $\chi(Y)=1$. Then $\chi\in U$, in particular~$U$ intersects $\partial \Omega$.\\

For the other direction, suppose $U$ intersects $\partial\Omega$. In particular $U$ contains a maximal character, say $\chi$. This is impossible in the case $0\notin I_\ell(S)$ since then $\chi(Y)=1$ for all $Y\in \J(S)$. Therefore $0\in I_\ell(S)$. By Lemma~\ref{lem:maximal-facts}(\ref{lem:maximal-fact3}) there exists, for each $i$, a $Y_i\in \mathcal{J}(S)$ disjoint from $X_i$ with $\chi(Y_i)=1$. Put $Y=X\cap\bigcap_i Y_i$. Then $\chi(Y)=1$ so $Y$ is a nonempty constructible ideal contained in $X$ which is disjoint from each $X_i$. This means that the $X_i$'s do not form a foundation set for~$X$.
\end{proof}

\begin{proposition}\label{prop:comdiagramexists}
In Diagram~\eqref{eq:commutativediagram} we have $\ker(q\circ \rho_{\chi_e})=\ker\rho_{\chi_e}+C_r^*(\G_P\setminus \partial \G_P)$. In particular the bottom map in~\eqref{eq:commutativediagram} exists.
\end{proposition}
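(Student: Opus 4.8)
The plan is to prove the stated identity by showing that $\rho_{\chi_e}$ maps the ideal $K:=C_r^*(\G_P\setminus\partial\G_P)$ (which is $\ker\phi$ by the short exact sequence~\eqref{eq: short-exact-boundary}) exactly onto the ideal $I$. Write $J:=\ker\rho_{\chi_e}$. Since $\rho_{\chi_e}$ is a surjective $*$-homomorphism we have $\ker(q\circ\rho_{\chi_e})=\rho_{\chi_e}^{-1}(I)$, and $J+K$ is again a closed ideal (a sum of closed ideals in a C$^*$-algebra is closed). Using surjectivity one checks the elementary equivalence
\[
J+K=\rho_{\chi_e}^{-1}(I)\quad\Longleftrightarrow\quad \rho_{\chi_e}(K)=I.
\]
Indeed $J\subseteq\rho_{\chi_e}^{-1}(I)$ always holds, the inclusion $K\subseteq\rho_{\chi_e}^{-1}(I)$ is exactly $\rho_{\chi_e}(K)\subseteq I$, and conversely any $a$ with $\rho_{\chi_e}(a)\in I=\rho_{\chi_e}(K)$ differs from some $k\in K$ by an element of $J$. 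Once $\rho_{\chi_e}(K)=I$ is established, the ``in particular'' is immediate: $\ker\phi=K\subseteq\ker(q\circ\rho_{\chi_e})$, so $q\circ\rho_{\chi_e}$ factors through $C_e^*(\partial\G_P)=C_r^*(\G_P)/\ker\phi$, producing the dotted arrow in~\eqref{eq:commutativediagram}. Everything thus reduces to the two inclusions $I\subseteq\rho_{\chi_e}(K)$ and $\rho_{\chi_e}(K)\subseteq I$.

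For $I\subseteq\rho_{\chi_e}(K)$ I would apply Proposition~\ref{prop:foundation-maximal} to the generators of $I$. Given a foundation set $\{X_1,\dots,X_m\}$ for $X$, the clopen set $U=\{\eta\in\Omega\mid\eta(X)=1,\ \eta(X_i)=0\}$ is disjoint from $\partial\Omega$, hence $U\subseteq\Omega\setminus\partial\Omega$, so its indicator lies in $C_c(\G_P\setminus\partial\G_P)\subseteq K$. Writing $U=Z_X\setminus\bigcup_i Z_{X_i}$ with $Z_A=\{\eta\mid\eta(A)=1\}=D(p_A,\Omega)$, and using $\rho_{\chi_e}(\un_{Z_A})=\lambda(p_A)=\un_A$ together with the factorisation $\un_{X\setminus\cup X_i}=\prod_i(\un_X-\un_{X_i})$ from the Remark after Definition~\ref{def:BoundaryQuotient}, we obtain $\rho_{\chi_e}(\un_U)=\un_{X\setminus\cup X_i}$, a generator of $I$. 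As $\rho_{\chi_e}(K)$ is a closed ideal (the image of a closed ideal under a surjective $*$-homomorphism), it contains all these generators and therefore contains $I$.

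For the reverse inclusion $\rho_{\chi_e}(K)\subseteq I$ I would first record the standard groupoid fact that, for the open invariant set $V=\Omega\setminus\partial\Omega$, the ideal $K$ is generated as a closed ideal by $C_0(V)\subseteq C_0(\Omega)$ (given $f\in C_c(B)$ on an open bisection $B\subseteq\G_V$ and $h\in C_c(V)$ with $h\equiv1$ on the compact set $s(\supp f)\subseteq V$, one has $f*h=f$). Since $\rho_{\chi_e}$ is surjective and $I$ is closed, it then suffices to show $\rho_{\chi_e}(\un_W)\in I$ for every compact open $W\subseteq V$. As $\Omega$ is totally disconnected with a basis of clopen sets $U(X;Y_1,\dots,Y_l)=\{\eta\mid\eta(X)=1,\ \eta(Y_j)=0\}$ with $Y_j\subseteq X$ (one merges the ``$=1$'' conditions via intersection-closedness of $\J(S)$ and replaces each $Y_j$ by $Y_j\cap X$), such a $W$ is a finite union $W=\bigcup_p U_p$ of basic sets contained in $V$. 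Each $U_p$ is disjoint from $\partial\Omega$, so by Proposition~\ref{prop:foundation-maximal} its defining $Y$'s form a foundation set and $\rho_{\chi_e}(\un_{U_p})\in I$; moreover a nonempty finite intersection of such basic sets is again of the same form (intersect all the data with the common $X''=\bigcap_p X^{(p)}$) and remains disjoint from $\partial\Omega$, so its $Y$'s again form a foundation set and it too maps into $I$, while an empty intersection maps to $0$. Inclusion--exclusion, $\un_W=\sum_{\emptyset\ne F}(-1)^{|F|+1}\un_{\bigcap_{p\in F}U_p}$, then writes $\rho_{\chi_e}(\un_W)$ as a finite sum of elements of $I$.

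The main obstacle is this last step: passing from the generators of $I$ to the image of the entire ideal $K$. The difficulty is in controlling an arbitrary compact open $W\subseteq\Omega\setminus\partial\Omega$ rather than just the special sets $U$ arising from foundation sets, and the two ingredients that make it work are (i) the identification of $K$ with the ideal generated by $C_0(\Omega\setminus\partial\Omega)$, which reduces matters to unit-space projections, and (ii) the stability of the class of basic clopen sets under intersection combined with Proposition~\ref{prop:foundation-maximal}, which guarantees that every piece produced by inclusion--exclusion again corresponds to a foundation set and hence lands in $I$.
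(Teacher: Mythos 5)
Your proposal is correct and follows essentially the same route as the paper's proof: both reduce the kernel identity to the equality $\rho_{\chi_e}(C_r^*(\G_P\setminus\partial\G_P))=I$, obtain $I\subseteq\rho_{\chi_e}(C_r^*(\G_P\setminus\partial\G_P))$ by applying Proposition~\ref{prop:foundation-maximal} to the generators of $I$, and obtain the reverse inclusion by using that $C_0(\Omega\setminus\partial\Omega)$ generates $C_r^*(\G_P\setminus\partial\G_P)$ as a closed ideal together with a compactness argument covering everything by foundation-set neighbourhoods. The only (harmless) divergence is the final step, where you reduce to indicators of compact open sets and use inclusion--exclusion over basic clopen sets, whereas the paper handles a general $f\in C_c(\Omega\setminus\partial\Omega)$ directly via the absorption identity $\rho_{\chi_e}(f)=\rho_{\chi_e}(f)\,\un_{\bigcup_j(X^j\setminus\bigcup_i X_i^j)}$.
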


\begin{proof}
Let $I$ be as in Equation~\eqref{eq:I}. We will establish that
\begin{equation}\label{eq:image1}
\rho_{\chi_e}(C_r^*(\G_P\setminus \partial \G_P))=I.
\end{equation}
This will prove the proposition. Indeed, assuming Equality~\eqref{eq:image1}, if $x\in \ker(q\circ \rho_{\chi_e})$ then $\rho_{\chi_e}(x)\in I$, so there exists $y\in C_r^*(\G_P\setminus \partial \G_P)$ with $\rho_{\chi_e}(y)=\rho_{\chi_e}(x)$. Then $x=(x-y)+y\in \ker\rho_{\chi_e}+C_r^*(\G\setminus \partial \G)$, which demonstrates that $\ker(q\circ \rho_{\chi_e})\subseteq\ker\rho_{\chi_e}+C_r^*(\G_P\setminus \partial \G_P)$. The inclusion  $\ker(q\circ \rho_{\chi_e})\supseteq\ker\rho_{\chi_e}+C_r^*(\G_P\setminus \partial \G_P)$ follows similarly.\\

To verify~\eqref{eq:image1}, let us first show that $\rho_{\chi_e}(C_r^*(\G_P\setminus \partial \G_P))\subseteq I$. Since $C_c(\Omega\setminus \partial\Omega)$ generates $C_r^*(\G\setminus \partial \G)$ as an ideal it suffices to show that $\rho_{\chi_e}(C_c(\Omega\setminus \partial\Omega))\subseteq I$. Fix $f\in C_c(\Omega\setminus \partial\Omega)$ and let $K=\text{supp}(f)$. For each $\chi\in K$ there is an open neighbourhood $U_\chi$ of the form
$$U_\chi=\{\eta\in\Omega(S)\mid \eta(X)=1,\ \eta(X_i)=0\ \text{for}\ i=1,\dots,m\},$$
where $X, X_1, \ldots ,X_m\in \J(S)$, $m\geq 1$ which does not intersect $\partial \Omega$. By intersecting the $X_i$'s with~$X$, we may assume that $X_i\subseteq X$ for each $i$. Then Proposition~\ref{prop:foundation-maximal} implies that $\{X_1, \ldots ,X_m\}$ is a foundation set for $X$. The $U_\chi$, $\chi\in K$, form an open cover for $K$ and by compactness there are finitely many $\chi_1, \ldots ,\chi_n \in K$ such that
$$\text{supp}(f)=K\subseteq \bigcup\limits_{j=1}^n U_{\chi_j}.$$

Let $X^j, X_i^j\ (1\leq i \leq m_j)$, be the constructible ideals used to define $U_{\chi_j}$. Then
$$
f(\chi_s) \neq 0\Rightarrow s\in \bigcup\limits_{j=1}^n (X^j\setminus \cup_{i=1}^{m_j} X_i^j).
$$
Since $\rho_{\chi_e}(f)\delta_s=f(\chi_s)\delta_s$ this means that
$$
\rho_{\chi_e}(f)=\rho_{\chi_e}(f)\un_{\bigcup\limits_{j=1}^n (X^j\setminus \cup_{i=1}^{m_j} X_i^j)}\in I.
$$

We now show the other inclusion, that $I\subseteq\rho_{\chi_e}(C_r^*(\G_P\setminus \partial \G_P))$. Since the image $\rho_{\chi_e}(C_r^*(\G_P\setminus \partial \G_P))$ is a closed ideal of $C_r^*(S)$ we have only to see that it contains the generators of $I$. So suppose $X_1, \ldots, X_m \in \J(S)$ forms a foundation set for $X\in \J(S)$. Then the clopen set
$$U=\{\chi\in \Omega(S) \mid \chi(X)=1,\ \chi(X_i)=0,\ i=1,\ldots , m\}$$
is contained in $\Omega\setminus \partial\Omega$ by Proposition~\ref{prop:foundation-maximal}. Therefore
$$\un_{X\setminus \bigcup\limits_{i=1}^m X_i}=\rho_{\chi_e}(\un_U)\in \rho_{\chi_e}(C_r^*(\G_P\setminus \partial \G_P)),$$
which completes the proof.
\end{proof}

\begin{corollary}\label{cor:iso-sufficient-condition-1}
The bottom map in Diagram~\eqref{eq:commutativediagram} is an isomorphism if and only if $\ker{\rho_{\chi_e}}\subseteq C_r^*(\G_P\setminus \partial \G_P)$.
\end{corollary}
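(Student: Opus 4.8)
The plan is to recognize that this corollary is pure diagram-chasing once Proposition~\ref{prop:comdiagramexists} is in hand; the genuine work has already been carried out there. Write $\psi\colon C_e^*(\partial\G_P)\to C_r^*(S)/I$ for the (dotted) bottom map, so that $\psi\circ\phi=q\circ\rho_{\chi_e}$. I would keep in mind throughout that, by the short exact sequence~\eqref{eq: short-exact-boundary}, the map $\phi$ is surjective with $\ker\phi=C_r^*(\G_P\setminus\partial\G_P)$, and that both $\rho_{\chi_e}$ and $q$ are surjective. Note also that $\psi$ is genuinely well-defined only because $\ker\phi\subseteq\ker(q\circ\rho_{\chi_e})$, which is precisely a consequence of Proposition~\ref{prop:comdiagramexists}.

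First I would dispose of surjectivity: since $\psi\circ\phi=q\circ\rho_{\chi_e}$ is a composite of surjections it is surjective, and as $\phi$ is surjective this forces $\psi$ to be surjective as well. Hence $\psi$ is an isomorphism if and only if it is injective.

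Next I would compute $\ker\psi$. Because $\phi$ is surjective, every element of $C_e^*(\partial\G_P)$ has the form $\phi(x)$ for some $x\in C_r^*(\G_P)$, and $\psi(\phi(x))=q(\rho_{\chi_e}(x))$ vanishes exactly when $x\in\ker(q\circ\rho_{\chi_e})$. Therefore $\ker\psi=\phi\bigl(\ker(q\circ\rho_{\chi_e})\bigr)$, and $\psi$ is injective if and only if $\ker(q\circ\rho_{\chi_e})\subseteq\ker\phi=C_r^*(\G_P\setminus\partial\G_P)$.

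Finally I would invoke Proposition~\ref{prop:comdiagramexists}, which gives $\ker(q\circ\rho_{\chi_e})=\ker\rho_{\chi_e}+C_r^*(\G_P\setminus\partial\G_P)$. Since the inclusion $C_r^*(\G_P\setminus\partial\G_P)\subseteq\ker\rho_{\chi_e}+C_r^*(\G_P\setminus\partial\G_P)$ holds automatically, the condition $\ker(q\circ\rho_{\chi_e})\subseteq C_r^*(\G_P\setminus\partial\G_P)$ collapses to exactly $\ker\rho_{\chi_e}\subseteq C_r^*(\G_P\setminus\partial\G_P)$. Combining this with the injectivity criterion above yields the stated equivalence. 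There is no substantive obstacle here; the only point requiring care is confirming that $\psi$ is well-defined before chasing kernels, and this is guaranteed by Proposition~\ref{prop:comdiagramexists}.
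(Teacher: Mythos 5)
Your proof is correct and is exactly the argument the paper intends: the corollary is stated without proof as an immediate consequence of Proposition~\ref{prop:comdiagramexists}, and your diagram chase (surjectivity of $\psi$ from surjectivity of $q\circ\rho_{\chi_e}$ and $\phi$, then $\ker\psi=\phi\bigl(\ker(q\circ\rho_{\chi_e})\bigr)$ combined with $\ker(q\circ\rho_{\chi_e})=\ker\rho_{\chi_e}+C_r^*(\G_P\setminus\partial\G_P)$) fills in precisely the steps the paper leaves implicit.
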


\begin{remark}
In particular, if $\rho_{\chi_e}$ is an isomorphism, then the bottom map in Diagram~\eqref{eq:commutativediagram} is an isomorphism. So for example, when $S$ is strongly C$^*$-regular, then the bottom map is an isomorphism.
\end{remark}

\begin{corollary}\label{cor:iso-sufficient-condition-2}
If the norm $e$ from Short Exact Sequence~\eqref{eq: short-exact-boundary} coincides with the reduced norm and $\rho_z\prec \rho_{\chi_e}\sim \bigoplus_{s\in S}\rho_{\chi_s}$ for all $z\in \partial \Omega$, then the bottom map in Diagram~\eqref{eq:commutativediagram} defines an isomorphism $C_r^*(\partial\G_P(S)) \cong C_r^*(S)/I$.
\end{corollary}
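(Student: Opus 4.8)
The plan is to reduce the statement to Corollary~\ref{cor:iso-sufficient-condition-1}, which already asserts that the bottom map in~\eqref{eq:commutativediagram} is an isomorphism precisely when $\ker\rho_{\chi_e}\subseteq C_r^*(\G_P\setminus\partial\G_P)$. Note also that, since by hypothesis the exotic norm $e$ equals the reduced norm, the codomain $C_e^*(\partial\G_P)$ of the bottom map is literally $C_r^*(\partial\G_P)$, so the asserted isomorphism is indeed $C_r^*(\partial\G_P(S))\cong C_r^*(S)/I$. Thus everything comes down to deducing the inclusion $\ker\rho_{\chi_e}\subseteq C_r^*(\G_P\setminus\partial\G_P)$ from the two hypotheses.

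The first and main step is to describe the ideal $C_r^*(\G_P\setminus\partial\G_P)=\ker\phi$ fibrewise. I would first observe that, because $\partial\Omega$ is an invariant subset of $\G_P$ (Lemma~\ref{lem:maximal-facts} and the remark following it), for every $z\in\partial\Omega$ the whole fibre $(\G_P)_z$ lies inside $\partial\G_P$; consequently the reduced representation $\rho_z$ of $C_r^*(\G_P)$ depends only on the restriction $\phi(f)=f|_{\partial\G_P}$, that is, $\rho_z=\bar\rho_z\circ\phi$ for the corresponding representation $\bar\rho_z$ of $C_e^*(\partial\G_P)$. This already gives $\ker\phi\subseteq\bigcap_{z\in\partial\Omega}\ker\rho_z$. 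For the reverse inclusion I would use $e=r$ together with the reduced-norm formula~\eqref{eq: reduced-formula} applied to $\partial\G_P$: for $f\in C_r^*(\G_P)$,
$$\|\phi(f)\|_e=\|\phi(f)\|_r=\sup_{z\in\partial\Omega}\|\bar\rho_z(\phi(f))\|=\sup_{z\in\partial\Omega}\|\rho_z(f)\|,$$
so $\phi(f)=0$ as soon as $\rho_z(f)=0$ for all $z\in\partial\Omega$. Combining the two inclusions yields the identification $C_r^*(\G_P\setminus\partial\G_P)=\ker\phi=\bigcap_{z\in\partial\Omega}\ker\rho_z$.

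With this description in hand, the conclusion is short. The hypothesis $\rho_z\prec\rho_{\chi_e}$ is, by definition of weak containment, exactly the statement $\ker\rho_{\chi_e}\subseteq\ker\rho_z$ for every $z\in\partial\Omega$. Intersecting over all such $z$ gives $\ker\rho_{\chi_e}\subseteq\bigcap_{z\in\partial\Omega}\ker\rho_z=C_r^*(\G_P\setminus\partial\G_P)$, which is precisely the condition required by Corollary~\ref{cor:iso-sufficient-condition-1}. I note that the weak equivalence $\rho_{\chi_e}\sim\bigoplus_{s\in S}\rho_{\chi_s}$ is not needed for the logical implication itself; it merely records, via the orbit $\{\chi_s\mid s\in S\}$ of $\chi_e$, the standard way to verify the weak-containment hypothesis in practice.

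The step I expect to be the main obstacle is the reverse inclusion $\bigcap_{z\in\partial\Omega}\ker\rho_z\subseteq\ker\phi$, since this is exactly where the hypothesis $e=r$ is indispensable: in general one only has $r\le e\le u$, and the $e$-norm of $\phi(f)$ can strictly exceed its reduced norm $\sup_{z}\|\rho_z(f)\|$, so without $e=r$ the fibrewise vanishing of $f$ over $\partial\Omega$ would not force $\phi(f)=0$. I would also take care to justify the factorisation $\rho_z=\bar\rho_z\circ\phi$ cleanly, checking on the dense $*$-subalgebra $C_c(\G_P)$ that $\rho_z(f)$ only sees arrows in $(\G_P)_z$, all of which lie in $\partial\G_P$ by invariance of $\partial\Omega$, and to confirm that $\rho_z$ extends continuously to $C_e^*(\partial\G_P)$, which is immediate once $e=r$.
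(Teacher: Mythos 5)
Your proposal is correct and takes essentially the same route as the paper's proof: the paper packages your two steps (the factorisation $\rho_z=\bar\rho_z\circ\phi$ for $z\in\partial\Omega$, and the identity $\|\cdot\|_e=\|\cdot\|_r=\sup_{z\in\partial\Omega}\|\bar\rho_z(\cdot)\|$ on $C_e^*(\partial\G_P)=C_r^*(\partial\G_P)$) into a commutative triangle through the restriction map and the faithful direct sum of regular representations, deducing $\ker\rho_{\chi_e}\subseteq\ker\bigl(\bigoplus_{z\in\partial\Omega}\rho_z\bigr)=C_r^*(\G_P\setminus\partial\G_P)$ exactly as you do and then invoking Corollary~\ref{cor:iso-sufficient-condition-1}. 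Your side remarks are also accurate: $e=r$ is precisely where the hypothesis is indispensable, and the weak equivalence $\rho_{\chi_e}\sim\bigoplus_{s\in S}\rho_{\chi_s}$ plays no logical role in the implication, serving only to connect the hypothesis to the dense orbit $\{\chi_s\mid s\in S\}$.
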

\begin{proof}
We always have the commutative diagram
\begin{equation}\label{eq:reducedcommutativediagram}
\xymatrix{
C^*_r(\G_P)\ar[d]_{\pi} \ar[dr]^{\bigoplus\limits_{z\in \partial \Omega}\rho_z}\\
C^*_r(\partial\G_P) \ar[r] & \bigoplus\limits_{z\in \partial \Omega} B(\ell^2 ((\G_P)_z))
}
\end{equation}
where $\pi$ extends restriction of functions and the bottom map is the direct sum of all left regular representations of $C_r^*(\partial \G_P)$. Since the bottom map is faithful we have $\ker{\pi}=\ker(\bigoplus_{z\in \partial \Omega}\rho_z)$. Assuming $C_r^*(\partial \G_P)=C_e^*(\partial \G_P)$, then by Short Exact Sequence~\eqref{eq: short-exact-boundary} these kernels are just \mbox{$C_r^*(\G_P\setminus\partial\G_P)$}. Then, if $\bigoplus_{z\in \partial \Omega}\rho_z$ is weakly contained in $\rho_{\chi_e}\sim \bigoplus_{s\in S}\rho_{\chi_s}$, we have
$$\ker{\rho_{\chi_e}}\subseteq \ker(\bigoplus_{z\in \partial \Omega}\rho_z)=C_r^*(\G_P\setminus \partial  \G_P)$$
and the condition in Corollary~\ref{cor:iso-sufficient-condition-1} is met.
\end{proof}
It seems difficult to find conditions on $S$ which guarantee that the norm $e$ in~\eqref{eq: short-exact-boundary} is the reduced norm. However, negating Condition \ref{condition2} in the groupoid $\G_P(S)$ for $Y=\{\chi_s \mid s\in S\}$ and every $x\in \partial\Omega\setminus Y$, leads to the following condition on $S$ which guarantees that the second part of the hypothesis in Corollary~\ref{cor:iso-sufficient-condition-2} is true.

\begin{definition}\label{def:strong-regularity-on-boundary}
We say that $S$ is \textbf{strongly C$^*$-regular on the boundary} if, given elements $h_1, \ldots, h_n\in I_\ell(S)$ and constructible ideals $X, X_1, \ldots, X_m\in \J(S)$, $X_i\subseteq X$ for all $i$, satisfying
\begin{equation}\label{eq:hypstronggreg}
\emptyset \neq X\setminus \bigcup_{i=1}^m X_i \subseteq \bigcup_{k=1}^n \{s\in S \mid h_ks=s\},
\end{equation}
then there are constructible ideals $Y_1, \ldots ,Y_l\in \J(S)$ and indices $1\leq k_j\leq n$ ($j=1, \ldots, l$) such that $h_{k_j}p_{Y_j}=p_{Y_j}$, and $\{X_1, \ldots , X_m, Y_1, \ldots ,Y_l\}$ is a foundation set for $X$. That is, $Y_j\subseteq X$ for all $j$ and the set
$$X\setminus (\bigcup_{i=1}^m X_i\cup \bigcup_{j=1}^l Y_j)$$
does not contain a nonempty constructible ideal. Here we allow the set $\{Y_1, \ldots ,Y_l\}$ to be empty.
\end{definition}

\begin{remark}
Clearly, strong C$^*$-regularity on the boundary is a weaker property than strong C$^*$-regularity, see Definition~\ref{def:strong-regularity2}.
\end{remark}

\begin{remark}\label{rem:strong-regularity-on-boundary-remark-2}
In checking Definition~\ref{def:strong-regularity-on-boundary} we may assume that $X\subseteq\dom h_k$ for all $k$. Indeed, suppose that inclusion~\eqref{eq:hypstronggreg} is satisfied. For $F\subseteq \{1, \ldots ,n\}$, put
$$X_F:=X\cap \bigcap\limits_{k\in F}\dom h_k.$$
Then~\eqref{eq:hypstronggreg} is satisfied for $X_F$, $\{X_1\cap X_F, \ldots , X_m\cap X_F, \text{ }
\dom h_k \cap X_F \text{ }(k\notin F)\}$ and $\{h_k\text{ }(k\in F)\}$ in place of $X, 
\{X_1, \ldots, X_m\}$ and $\{h_k\text{ }(1\leq k\leq n)\}$. Suppose that for each $F$ we can find finitely many constructible ideals $\{A_i\}_{i\in I_F}$ such that for each $i$ we have $h_{k_i}p_{A_i}=p_{A_i}$ for some $k_i$, and $\{X_1\cap X_F, \ldots , X_m\cap X_F,\text{ } \dom h_k~\cap~X_F \text{ }(k\notin F), \text{ } A_i \text{ }(i\in I_F)\}$ is a foundation set for $X_F$. We claim that then $\{X_1, 
\ldots , X_m,\text{ } A_i\text{ }(i\in I_F,\text{ }F\subseteq \{1, \ldots, n\})\}$ is a foundation set for $X$. Indeed, if it was not, then by definition there is a nonempty constructible ideal $B\in \mathcal{J}(S)\setminus \{\emptyset\}$, satisfying
$$B\subseteq X\setminus (\bigcup\limits_{i=1}^m X_i \cup \bigcup\limits_{F,\text{ }i\in I_F} A_i).$$
Now pick a maximal subset $F$ such that $B\cap X_F\neq \emptyset$. Then
$$B\cap X_F\subseteq X_F\setminus (\bigcup\limits_{i=1}^m X_i \cup \bigcup\limits_{k\notin F}\dom h_k \cup \bigcup\limits_{i\in I_F} A_i),$$
which is a contradiction. Hence it is enough to establish the definition when $X\subseteq\dom h_k$ for all $k$.
\end{remark}

\begin{lemma}\label{lem:boundaryregular}
Condition \ref{condition2} is not satisfied for $\G=\G_P(S)$, $Y=\{\chi_s~\mid~s\in~S\}$ and every $\chi\in \partial\Omega\setminus Y$ if and only if $S$ is strongly C*-regular on the boundary.
\end{lemma}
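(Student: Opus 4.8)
The plan is to unwind Condition~\ref{condition2} for the specific groupoid $\G=\G_P(S)$, the dense set $Y=\{\chi_s\mid s\in S\}$, and a point $\chi\in\partial\Omega\setminus Y$, and show term by term that its negation for every such $\chi$ matches Definition~\ref{def:strong-regularity-on-boundary}. First I would recall the concrete descriptions available in $\G_P(S)$: an element of the isotropy group $\Gxx=(\G_P)_\chi^\chi$ is a class $[h,\chi]$ with $h\in I_\ell(S)$ satisfying $\chi(\dom h)=1$ and $\chi(h^{-1}(\cdot))=\chi$, and the condition $[h,\chi]=\chi$ (i.e.\ lying in the unit space) fails exactly when $h$ does not restrict to the identity on any $A\in\J(S)$ with $\chi(A)=1$. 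The basic open bisections are the sets $D(h,U)$. The crucial observation to establish early is how the hypothesis $Y\cap U\subseteq U_1\cup\dots\cup U_n$ of Condition~\ref{condition2} translates into a statement about which $s\in S$ have $\chi_s$ landing in a prescribed $D(h_k,U_k)$, and I would note that $\chi_s\in D(h_k,U_k)$ forces $h_k s=s$ (since $[h_k,\chi_s]=\chi_s=[e,\chi_s]$ means $h_k$ fixes $s$), which is where the fixed-point sets $\{s\mid h_ks=s\}$ in~\eqref{eq:hypstronggreg} enter.

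\textbf{From a single point to the regularity inclusion.}
Next I would fix one $\chi\in\partial\Omega\setminus Y$ for which Condition~\ref{condition2} holds and extract the data: isotropy elements $g_k=[h_k,\chi]\in\Gxx\setminus\{\chi\}$, bisections $U_k=D(h_k,V_k)$ containing them, and a basic neighbourhood $U$ of $\chi$ in $\Omega$. Shrinking $U$ to a basic clopen set of the form $\{\eta\mid\eta(X)=1,\ \eta(X_i)=0\}$ (possible since these sets form a basis at $\chi$, and $\chi\in\partial\Omega$), the containment $Y\cap U\subseteq\bigcup_k U_k$ becomes: every $s$ with $s\in X\setminus\bigcup_i X_i$ satisfies $\chi_s\in D(h_k,V_k)$ for some $k$, hence $h_k s=s$. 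This yields precisely the inclusion $\emptyset\neq X\setminus\bigcup_i X_i\subseteq\bigcup_k\{s\mid h_ks=s\}$ of~\eqref{eq:hypstronggreg} (nonemptiness because $\chi\in U\cap\partial\Omega$ and Proposition~\ref{prop:foundation-maximal} forbids $U$ being disjoint from $\partial\Omega$, so $\{X_i\}$ is not a foundation set for $X$, so the difference contains a nonempty constructible ideal). Conversely, given such an inclusion with $\{X_i\}$ not a foundation set, Proposition~\ref{prop:foundation-maximal} produces $\chi\in\partial\Omega$ in the corresponding $U$, and one checks $\chi\notin Y$ by a genericity/pigeonhole argument so the reverse reading is available.

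\textbf{Matching the two conclusions.}
The heart of the equivalence is matching the \emph{negations} of the conclusions. Negating Condition~\ref{condition2} says: no finite choice of isotropy elements, bisections and neighbourhood covers $Y\cap U$; equivalently, whenever the inclusion~\eqref{eq:hypstronggreg} holds, the set $X\setminus\bigcup_i X_i$ is \emph{not} coverable by the fixed-point sets in the groupoid-neighbourhood sense, which I would translate into the foundation-set statement of Definition~\ref{def:strong-regularity-on-boundary}. The key point is that a bisection $D(h_k,V_k)$ meeting $Y$ in $\chi_s$ contributes, after refining $V_k$ to a basic clopen set, a constructible ideal $Y_j\subseteq X$ on which $h_{k_j}$ restricts to the identity, i.e.\ $h_{k_j}p_{Y_j}=p_{Y_j}$; the failure of~\eqref{condition2} at every boundary point is exactly the assertion that no such finite family $\{Y_j\}$ together with $\{X_i\}$ covers $X\setminus\bigcup X_i$ up to a foundation set, which is the contrapositive of the existence clause in Definition~\ref{def:strong-regularity-on-boundary}. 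Here is where I would invoke Proposition~\ref{prop:foundation-maximal} once more to convert ``$\{X_1,\dots,X_m,Y_1,\dots,Y_l\}$ is a foundation set for $X$'' into ``the basic clopen set cut out by $X$, the $X_i$ and the $Y_j$ is disjoint from $\partial\Omega$,'' which is precisely the statement that the $U_k$ (built from the $D(h_k,\cdot)$) together with the complement of $U$ cover a neighbourhood of $\chi$ in $\partial\Omega$.

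\textbf{Main obstacle.}
The routine parts are the dictionary entries (isotropy $\leftrightarrow$ fixed points, bisections $\leftrightarrow$ ideals on which $h_k$ acts as identity, basic clopen sets $\leftrightarrow$ the $X,X_i$ data). The delicate step I expect to be the main obstacle is handling the \emph{domain} issue and the passage between ``$h_k$ fixes $s$'' (a pointwise statement used in covering $Y\cap U$) and ``$h_{k_j}p_{Y_j}=p_{Y_j}$'' (an idempotent statement on a whole constructible ideal): one must shrink each bisection $D(h_k,V_k)$ so that $h_k$ genuinely restricts to the identity on an ideal, not merely fixes individual generators, and ensure the shrinking is compatible with the neighbourhood $U$. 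This is exactly the reduction carried out in Remark~\ref{rem:strong-regularity-on-boundary-remark-2}, where one may assume $X\subseteq\dom h_k$ for all $k$; I would lean on that remark to guarantee each $h_k$ is defined on all of $X$, so that the fixed-point set $\{s\in X\mid h_ks=s\}$ is itself the intersection of $X$ with the domain of the idempotent $h_k^{-1}h_k$ and hence organises into constructible pieces, closing the gap between the pointwise and the idempotent formulations.
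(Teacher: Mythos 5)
Your forward direction (strong boundary regularity $\Rightarrow$ failure of Condition~\ref{condition2} at every $\chi\in\partial\Omega\setminus Y$) matches the paper: extract basic data $X,X_1,\dots,X_m,h_1,\dots,h_n$, get nonemptiness of $X\setminus\bigcup_iX_i$ from Proposition~\ref{prop:foundation-maximal}, apply Definition~\ref{def:strong-regularity-on-boundary}, and use Proposition~\ref{prop:foundation-maximal} again to force $\chi(Y_j)=1$ for some $j$, which makes $[h_{k_j},\chi]=\chi$ and contradicts nontriviality of the isotropy elements. The genuine gap is in the converse. There you only invoke Proposition~\ref{prop:foundation-maximal} once, to place a character $\chi$ in the single clopen set $U$ cut out by $X$ and the $X_i$; but to witness Condition~\ref{condition2} at $\chi$ you need the elements $g_k=[h_k,\chi]$ to be \emph{nontrivial} elements of $\Gxx$, i.e.\ $\chi(J)=0$ for \emph{every} $J\in\J(S)$ with $h_kp_J=p_J$, together with $\chi(\dom h_k)=1$ and $r([h_k,\chi])=\chi$. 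A character chosen merely in $U\cap\partial\Omega$ can perfectly well recognize some $h_k$-fixed ideal, making $g_k$ a unit and destroying the witness. The paper uses the failure of the definition in its full strength: for \emph{every} finite family $F$ of constructible ideals contained in $X$ and fixed by some $h_k$, the augmented family is not a foundation set, so every compact set $U_F$ of~\eqref{eq:UF} meets $\partial\Omega$; since these sets are downward directed, compactness yields a single $\chi\in\bigcap_{F\in\mathcal{F}}U_F\cap\partial\Omega$ avoiding all fixed ideals simultaneously. It then still has to discard the $h_k$ with $(h_k,\chi)\notin\Sigma$ (absorbing $\dom h_k$ into the $X_i$'s) and those with $\chi(h_k^{-1}(\cdot))\neq\chi$ (shrinking $U$ so that $h_ks\neq s$ for all $\chi_s\in U$), checking the covering survives. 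None of this appears in your outline, and ``genericity/pigeonhole'' does not substitute for it; note also that $\chi\notin Y$ is not the delicate point --- it is automatic once Condition~\ref{condition2} holds, since a unit $x\in Y\cap U\subseteq U_1\cup\dots\cup U_n$ would equal some $g_k$ by injectivity of the source map on the bisection $U_k$.

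Separately, the mechanism you propose for your self-identified ``main obstacle'' is incorrect. Since $\dom(h_k^{-1}h_k)=\dom h_k$, after the reduction of Remark~\ref{rem:strong-regularity-on-boundary-remark-2} the set $X\cap\dom(h_k^{-1}h_k)$ is all of $X$; your claimed identity with the fixed-point set would say $h_k$ fixes every point of $X$, which is false in general. The correct (and much easier) bridge between the pointwise and idempotent formulations is right-equivariance: every $h\in I_\ell(S)$ satisfies $h(st)=h(s)t$ and has a right ideal as domain, so $h_ks=s$ already gives $h_kp_{sS}=p_{sS}$, whence $\chi_s\in D(h_k,\Omega(S))$ if and only if $h_ks=s$. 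So the pointwise-versus-ideal gap evaporates; what does not evaporate --- and what your ``matching the negations'' paragraph asserts rather than proves --- is \emph{finiteness}: the fixed-point set is covered by infinitely many principal ideals $sS$, while Definition~\ref{def:strong-regularity-on-boundary} demands finitely many $Y_j$ completing $\{X_1,\dots,X_m\}$ to a foundation set. The compactness argument above is exactly the device that converts ``no finite family works'' into a single boundary character at which Condition~\ref{condition2} holds, and it is the missing heart of the proof.
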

\begin{proof}
Assume first that $S$ is strongly C$^*$-regular on the boundary. Suppose there is a $\chi\in \partial\Omega\setminus Y$ such that Condition \ref{condition2} is satisfied for $x=\chi$ and let $g_k=[h_k, \chi]$, $U_k$ $(1\leq k\leq n)$ and $U$ be as in that condition. We may assume that $U_k=D(h_k, \Omega)$ and
\begin{equation}\label{eq:Ufirst}
U=\{\eta\in\Omega(S)\mid \eta(X)=1,\ \eta(X_i)=0\ \text{for}\ i=1,\dots,m\}
\end{equation}
for some $X, X_1, \ldots ,X_m\in \mathcal{J}(S)$, $X_i\subseteq X$ for all $i$. Then Condition \ref{condition2} says that for every $s\in X\setminus \cup_{i=1}^m X_i$ there is a $k$ such that $\chi_s\in U_k$, that is, $h_ks=s$. Applying strong C$^*$-regularity on the boundary there exists $Y_1, \ldots, Y_l\in \mathcal{J}(S)$ such that $h_{k_j}p_{Y_j}=p_{Y_j}$, $j=1, \ldots ,l$, and\\ $X_1, \ldots , X_m, Y_1, \ldots ,Y_l$ form a foundation set for $X$. Then, by Proposition~\ref{prop:foundation-maximal}, the set
$$V=\{\eta\in\Omega(S)\mid \eta(X)=1,\ \eta(X_i)=\eta(Y_j)=0\ \text{for}\ i=1,\dots,m,\ j=1, \ldots ,l\}$$
cannot contain $\chi$, so $\chi\in U\setminus V$ and there must exist $j$ with $\chi(Y_j)=1$. This implies that $g_{k_j}=[h_{k_j},\chi]=\chi$ which contradicts that $g_1, \ldots ,g_n$ are nontrivial elements of the isotropy group $\G_{\chi}^\chi$.\\

Assume now that $S$ is not strongly C$^*$-regular on the boundary. Then there are elements $h_1, \ldots ,h_n\in I_\ell(S)$ and constructible ideals $X, X_1, \ldots ,X_m\in \mathcal{J}(S)$ such that~\eqref{eq:hypstronggreg} holds and for any finite set $F\subset \mathcal{J}(S)$ where each member of $F$ is contained in $X$ and fixed by some $h_k$, the set $\{X_1, \ldots ,X_m\}\cup F$ is not a foundation set for $X$. For such $F$ the compact set
\begin{equation}\label{eq:UF}
U_F:=\{\eta\in\Omega(S)\mid \eta(X)=1,\ \eta(X_i)=0\ \text{for}\ i=1,\dots,m,\ \eta(Y)=0\ \text{for}\ Y\in F\}
\end{equation}
intersects $\partial\Omega$ by Proposition~\ref{prop:foundation-maximal}. Letting $\F$ denote the collection of finite sets $F\subset \J(S)$ where each member is contained in $X$ and fixed by some $h_k$, we can pick
$$\chi\in \bigcap_{F\in \F}U_F\cap \partial\Omega.$$

We claim that it is possible to replace $U$ by a smaller neighbourhood of $\chi$ and discard some of the elements $h_k$ in such a way that Condition \ref{condition2} gets satisfied for $x=\chi$, $g_k=[h_k,\chi]$, $U=U_F$ and $U_k=D(h_k,\Omega(S))$ for the remaining $k$. Namely, if $(h_k,\chi)\not\in\Sigma$ for some $k$, then we add $\dom h_k$ to the collection $\{X_1,\dots,X_m\}$ and discard such $h_k$. If $(h_k,\chi)\in\Sigma$ but $\chi(h_k^{-1}(\cdot))\ne\chi$, then $\chi_s(h_k^{-1}(\cdot))\ne\chi_s$ for all $\chi_s$ close $\chi$, so by replacing $X$ by a smaller ideal and adding more constructible ideals to $\{X_1,\dots,X_m\}$ we may assume that $\chi_s(h_k^{-1}(\cdot))\ne\chi_s$ for all $\chi_s\in U$ and again discard such $h_k$. For the remaining elements $h_k$ and the new $U$ we have that for every $s\in S$ such that $\chi_s\in U$ there is an index $k$ satisfying $h_ks=s$. Then, in order to show that Condition~\ref{condition2} is satisfied, it remains to check that the elements $g_k=[h_k,\chi]$ of $\G^\chi_\chi$ are nontrivial. But this is clearly true, since $\chi(p_J)=0$ for every $J\in\J(S)$ such that $h_kp_J=p_J$.
\end{proof}

Applying Lemma~\ref{lem:boundaryregular} to Proposition~\ref{prop:weak-containmenttwo} and Corollary~\ref{cor:iso-sufficient-condition-2} gives:
\begin{theorem}\label{prop:iso-sufficient-condition-final}
Let $S$ be a left cancellative monoid. If the norm $e$ from the Short Exact Sequence \eqref{eq: short-exact-boundary} coincides with the reduced norm and $S$ is strongly C*-regular on the boundary, then the bottom map in Diagram~\eqref{eq:commutativediagram} defines an isomorphism $C_r^*(\partial\G_P(S)) \cong C_r^*(S)/I$
\end{theorem}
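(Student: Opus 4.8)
The plan is to reduce everything to Corollary~\ref{cor:iso-sufficient-condition-2}, whose conclusion is word-for-word the desired isomorphism. That corollary has two hypotheses: that the exotic norm $e$ coincides with the reduced norm, and that $\rho_z\prec\rho_{\chi_e}\sim\bigoplus_{s\in S}\rho_{\chi_s}$ for every $z\in\partial\Omega$. The first is assumed outright in the theorem, so the entire task is to verify the weak-containment hypothesis for all $z\in\partial\Omega$.

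First I would dispose of the trivial case. Since $Y=\{\chi_s\mid s\in S\}$, any $z\in\partial\Omega\cap Y$ is of the form $z=\chi_s$, so $\rho_z$ is literally one of the summands of $\bigoplus_{s\in S}\rho_{\chi_s}$ and the weak containment is immediate. The substantive case is $z\in\partial\Omega\setminus Y$, and here I would simply chain the equivalences already recorded earlier in the paper. By Lemma~\ref{lem:boundaryregular}, strong C$^*$-regularity on the boundary is exactly the assertion that Condition~\ref{condition2} fails for $\G=\G_P(S)$, $Y=\{\chi_s\mid s\in S\}$, and every $\chi\in\partial\Omega\setminus Y$. As observed immediately after the statement of Condition~\ref{condition2}, for a point $x\in\Gu\cap\bar Y$ the failure of Condition~\ref{condition2} is equivalent to the existence of a net $(y_i)_i$ in $Y$ whose only accumulation point in $\G_x^x$ is $x$ itself. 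Because $\partial\Omega\subseteq\Omega=\bar Y$, this equivalence applies to our $z$, producing such a net; Proposition~\ref{prop:weak-containmenttwo} then yields $\rho_z\prec\bigoplus_{y\in Y}\rho_y=\bigoplus_{s\in S}\rho_{\chi_s}$. Combining the two cases establishes the second hypothesis of Corollary~\ref{cor:iso-sufficient-condition-2} in full, and the corollary delivers $C_r^*(\partial\G_P(S))\cong C_r^*(S)/I$.

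I do not expect a serious obstacle here: all of the genuinely hard work has already been carried out in the preceding results — the identification $\rho_{\chi_e}(C_r^*(\G_P\setminus\partial\G_P))=I$ in Proposition~\ref{prop:comdiagramexists}, the translation of the foundation-set property into disjointness from $\partial\Omega$ in Proposition~\ref{prop:foundation-maximal}, and the net-existence reformulation of Condition~\ref{condition2} in Lemma~\ref{lem:boundaryregular}. The proof of the present theorem is therefore an assembly of these pieces. The only points that warrant a moment's care are confirming that $\partial\Omega\subseteq\bar Y$ so that the Condition~\ref{condition2} equivalence is genuinely applicable to boundary points, and recording that the orbit equivalence $\rho_{\chi_e}\sim\bigoplus_{s\in S}\rho_{\chi_s}$ invoked inside Corollary~\ref{cor:iso-sufficient-condition-2} holds, both of which are already built into the groupoid setup since the orbit of $\chi_e$ in $\G_P(S)$ is precisely $\{\chi_s\mid s\in S\}$.
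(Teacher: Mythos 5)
Your proof is correct and is essentially the paper's own argument: the paper proves this theorem precisely by applying Lemma~\ref{lem:boundaryregular} (translating strong C$^*$-regularity on the boundary into the failure of Condition~\ref{condition2} at every $z\in\partial\Omega\setminus Y$) together with Proposition~\ref{prop:weak-containmenttwo} to verify the weak-containment hypothesis of Corollary~\ref{cor:iso-sufficient-condition-2}. Your additional care about $\partial\Omega\subseteq\bar Y$ and the orbit equivalence $\rho_{\chi_e}\sim\bigoplus_{s\in S}\rho_{\chi_s}$ is exactly the right bookkeeping, and both points hold as you say.
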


In our previous paper \cite[Proposition~2.6]{NS}, we saw that the full groupoid C*-algebra $C^*(\G_P(S))$ had a description in terms of generators and relations. By modifying one of these relations we obtain a description for $C^*(\partial\G_P(S))$ as well.

\begin{proposition}[cf.~{\cite[Theorem~10.10]{MR4151331}, \cite[Theorem~6.13]{LS}}]\label{prop:universal-description}
Assume~$S$ is a left cancellative monoid. Consider the elements
$v_s:=\un_{\{[s,\ \chi]\ \mid\ \chi\in \partial \Omega\}}\in C^*(\partial\G_P(S))$, $s\in S$. Then $C^*(\partial\G_P(S))$ is the universal unital C$^*$-algebra generated by the elements $v_s$, $s\in S$, satisfying the following relations:
\begin{enumerate}
  \item[(R1)] $v_e=1$;
  \item[(R2)] for every $g=s_{2n}^{-1}s_{2n-1}\dots s_2^{-1}s_1\in I_\ell(S)$, $(s_i\in S)$, the element $v_g:=v_{s_{2n}}^*v_{s_{2n-1}}\dots v_{s_2}^*v_{s_1}$ is independent of the presentation of $g$;
  \item[(R3)] if $0\in I_\ell(S)$, then $v_0=0$;
  \item[(R4)] if $X_1, \ldots, X_m \in \J(S)$ forms a foundation set for $X\in \J(S)$, then
$$
\prod^m_{i=1}(v_{p_X}-v_{p_{X_i}})=0.
$$
\end{enumerate}
\end{proposition}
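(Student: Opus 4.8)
The plan is to realize $C^*(\partial\G_P(S))$ as an explicit quotient of $C^*(\G_P(S))$ and to identify the quotient relation with (R4). Write $u_s:=\un_{D(s,\Omega(S))}\in C^*(\G_P(S))$ for the generators of $C^*(\G_P(S))$, and recall from \cite[Proposition~2.6]{NS} that $C^*(\G_P(S))$ is universal for (R1)--(R3) together with the covering relation, which I denote (R4$'$): if $X=X_1\cup\dots\cup X_m$ with $X,X_i\in\J(S)$, then $\prod_{i=1}^m(u_{p_X}-u_{p_{X_i}})=0$. Applying the short exact sequence~\eqref{eq: universal-short-exact} to the closed invariant set $\partial\Omega\subseteq\Omega$ yields a surjection $q\colon C^*(\G_P)\to C^*(\partial\G_P)$ with kernel $J:=C^*(\G_P\setminus\partial\G_P)$, and restriction of functions sends $u_s\mapsto v_s$. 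Thus the whole proposition reduces to the single claim $J=J'$, where $J'$ is the closed two-sided ideal generated by the \emph{foundation relation elements} $R_{X,\{X_i\}}:=\prod_{i=1}^m(u_{p_X}-u_{p_{X_i}})$, ranging over all foundation sets $\{X_1,\dots,X_m\}$ for $X$. Indeed, granting $J=J'$, the quotient $C^*(\partial\G_P)=C^*(\G_P)/J'$ is by construction universal for (R1)--(R3), (R4$'$) together with the relations $R_{X,\{X_i\}}=0$; since every cover is a foundation set (so (R4) subsumes (R4$'$)), this is exactly the universal algebra for (R1)--(R4), with generators $v_s=q(u_s)$.

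For $J'\subseteq J$, note that $u_{p_X}$ is the indicator of the clopen set $\{\chi\in\Omega\mid\chi(X)=1\}$, so for a foundation set $\{X_i\}$ of $X$ (with each $X_i\subseteq X$, as required by Definition~\ref{def:foundationset}) the element $R_{X,\{X_i\}}$ is the indicator $\un_U$ of $U=\{\chi\in\Omega\mid\chi(X)=1,\ \chi(X_i)=0\ \forall i\}$. By Proposition~\ref{prop:foundation-maximal} this $U$ is disjoint from $\partial\Omega$, hence $\un_U\in C_c(\Omega\setminus\partial\Omega)\subseteq J$. The same computation restricted to $\partial\Omega$ shows directly that the $v_s$ satisfy (R4), since $q(R_{X,\{X_i\}})=\prod_{i=1}^m(v_{p_X}-v_{p_{X_i}})$ is the indicator of $U\cap\partial\Omega=\emptyset$.

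The reverse inclusion $J\subseteq J'$ is the technical core and proceeds exactly as in the proof of Proposition~\ref{prop:comdiagramexists}. First, $J$ is generated as an ideal by $C_c(\Omega\setminus\partial\Omega)$: any $f\in C_c(\G_P\setminus\partial\G_P)$ satisfies $f=f*h$ for a cutoff $h\in C_c(\Omega\setminus\partial\Omega)$ equal to $1$ on the compact set $s(\supp f)$, so it suffices to show $C_c(\Omega\setminus\partial\Omega)\subseteq J'$. Fix $f\in C_c(\Omega\setminus\partial\Omega)$ and cover its support $K$ by finitely many basic clopen sets $U_j=\{\chi\mid\chi(X^j)=1,\ \chi(X_i^j)=0\}$ missing $\partial\Omega$; by Proposition~\ref{prop:foundation-maximal} each $\{X_i^j\}$ is a foundation set for $X^j$, so $\un_{U_j}=R_{X^j,\{X_i^j\}}\in J'$. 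Since $\un_{\bigcup_j U_j}=1$ on $K$ we have $f=f\cdot\un_{\bigcup_j U_j}$, and inclusion--exclusion writes $\un_{\bigcup_j U_j}$ as an integer combination of indicators of the finite intersections $U_{j_1}\cap\dots\cap U_{j_k}$. Each such intersection equals $\{\chi\mid\chi(W)=1,\ \chi(X_i^{j_l}\cap W)=0\}$ with $W=X^{j_1}\cap\dots\cap X^{j_k}\in\J(S)$, and it is disjoint from $\partial\Omega$; so Proposition~\ref{prop:foundation-maximal} makes $\{X_i^{j_l}\cap W\}$ a foundation set for $W$, whence its indicator is $R_{W,\{X_i^{j_l}\cap W\}}\in J'$. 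Therefore $\un_{\bigcup_j U_j}\in J'$ and $f\in J'$, giving $J\subseteq J'$.

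The main obstacle is the inclusion $J\subseteq J'$, and within it two points require care: the standard but non-Hausdorff-sensitive fact that $C_c(\Omega\setminus\partial\Omega)$ generates the kernel ideal $J$ (where one uses that $s(\supp f)$ is a compact subset of the \emph{open} set $\Omega\setminus\partial\Omega$, so a genuine compactly supported cutoff exists), and the verification — via Proposition~\ref{prop:foundation-maximal} applied to each finite intersection — that the inclusion--exclusion expansion stays inside $J'$. Once $J=J'$ is established, the identification of $C^*(\G_P)/J'$ with the universal algebra for (R1)--(R4) is a formal consequence of the universal property from \cite[Proposition~2.6]{NS} together with the elementary observation that foundation sets generalize covers.
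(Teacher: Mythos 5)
Your proposal is correct and follows essentially the same route as the paper: reduce via \cite[Proposition~2.6]{NS} and the universal short exact sequence~\eqref{eq: universal-short-exact} to showing that the foundation-relation elements generate $C^*(\G_P\setminus\partial\G_P)$ as a closed ideal, with Proposition~\ref{prop:foundation-maximal} doing the work in both directions. The only difference is that where the paper cites \cite[Proposition~1.1 and Corollary~1.2]{NS} and asserts density of the span of foundation indicators in $C_0(\Omega\setminus\partial\Omega)$, you verify these steps by hand (the cutoff argument and the inclusion--exclusion expansion over intersections $U_{j_1}\cap\dots\cap U_{j_k}$, which are again basic clopen sets missing $\partial\Omega$), which is a sound and slightly more self-contained rendering of the same proof.
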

\begin{proof}
From \cite[Proposition~2.6]{NS} we know that $C^*(\G_P)$ is the universal C$^*$-algebra generated by elements $v_s$, $s\in S$, subject to relations $(R1)$-$(R3)$ and a weaker relation $(R4')$ in place of $(R4)$, namely \textit{
\begin{enumerate}
  \item[(R4')] if $X=\bigcup\limits_{i=1}^m X_i$ for $X,X_1,\dots,X_m\in\J(S)$, then $\prod^m_{i=1}(v_{p_X}-v_{p_{X_i}})=0$.
\end{enumerate}
}

Equation~\eqref{eq: universal-short-exact} tells us that $C^*(\partial \G_P)$ is canonically isomorphic to the quotient \mbox{$C^*(\G_P)/C^*(\G_P\setminus\partial\G_P)$} so to prove the proposition it suffices to check that the set
\begin{align}\label{eq:generating-set}
\{\prod^n_{i=1}(v_{p_X}-v_{p_{X_i}}) \mid \ & X, X_1, \ldots, X_m\in \J(S), \nonumber \\
& \{X_1, \ldots, X_m\} \text{ is a foundation set for } X,\ m\geq 1 \}
\end{align}
generates $C^*(\G_P\setminus\partial\G_P)$ as a closed ideal of $C^*(\G_P)$. For $Y\in \J(S)$ one has $v_{p_Y}=\un_{\{\eta\in \Omega(S)\ \mid \ \eta(Y)=1\}}$. Therefore
$$\prod^m_{i=1}(v_{p_X}-v_{p_{X_i}})=\un_{\{\eta\in\Omega(S)\ \mid\ \eta(X)=1,\ \eta(X_i)=0\ \text{for}\ i=1,\dots,m\}}.$$

Proposition~\ref{prop:foundation-maximal} implies that as $X, X_1, \ldots, X_m\in \J(S)$, $m\geq 1$, range over all possible foundation sets $\{X_1, \ldots, X_m\}$ of all constructible ideals~$X$, the span of such functions is dense in $C_0(\Omega\setminus \partial\Omega)$. But by \cite[Proposition~1.1~and~Corollary~1.2]{NS}, $C_0(\Omega\setminus \partial\Omega)$ generates  $C^*(\G_P\setminus\partial\G_P)$ as a closed ideal.
\end{proof}

\begin{remark}
Since $\partial \G_P(S)= \G_{\text{tight}}(I_\ell(S))$, it follows from \cite{MR2419901} that $C^*(\partial\G_P(S))\cong C_{\text{tight}}^*(I_\ell(S))$ where the latter is the C$^*$-algebra generated by $I_\ell(S)$ which is universal for tight representations (see \cite[Definition~11.6]{MR2419901}). Using the description of $C^*(\G_P(S))$ from \cite[Proposition~2.6]{NS} and the description of $C^*(\partial \G_P(S))$ from Proposition~\ref{prop:universal-description}, we then obtain the following diagram
\begin{equation}\label{eq:inversesemigroupdiagram}
\xymatrix{
C^*(I_\ell(S))\ar[d] \ar[r] & C^*(\G_P(S)) \ar[d]\\
C_{\text{tight}}^*(I_\ell(S)) \ar[r]^{\simeq} & C^*(\partial \G_P(S)).
}
\end{equation}
Since $C^*(I_\ell(S))\cong C^*(\G(I_\ell(S)))$ by \cite[Theorem 4.4.1]{MR1724106}, the top map is an isomorphism if $\G(I_\ell(S))=\G_P(S)$ which happens precisely when $S$ satisfies the independence condition,\cite[Definition~6.30]{Li-3}. The bottom map is always an isomorphism.
\end{remark}

\subsection{Spielberg's Groupoid}
We introduce a quotient groupoid of $\G_P(S)$, which was defined by Spielberg in~\cite{MR4151331}. Like $\G_P(S)$ it is a candidate for a groupoid model of $C_r^*(S)$.\\

Denote by $\bar\J(S)$ the collection obtained by appending to $\J(S)$ sets of the form $X\setminus \cup_{i=1}^m X_i$ where $X, X_1, \dots, X_m\in \J(S)$. Then $\bar\J(S)$ is a semigroup under intersection. We can extend Definition~\ref{def:foundationset} to sets in $\bar\J(S)$.

\begin{definition}\label{def:foundationset-generalized}
Let $X, X_1, \dots, X_m\in \bar\J(S)$. Then $\{X_1, \ldots , X_m\}$ is a \textbf{foundation set} for $X$ if
\begin{enumerate}[(I)]
  \item  $X_i\subseteq X$ for all $i$;
  \item for any nonempty constructible right ideal $Y\in \mathcal{J}(S)\setminus \{\emptyset\}$ satisfying $Y\subseteq X$, there is an index $i_ 0$ such that $Y\cap X_{i_0}\neq \emptyset$.
\end{enumerate}
\end{definition}

Every $\chi \in \Omega(S)$ extends naturally to a character on $\bar\J(S)$ by letting $\chi(X\setminus \cup_{i=1}^m X_i)=1$ if $\chi(X)=1$ and $\chi(X_i)=0$ for all $i$, and $0$ otherwise.\\

Define an equivalence relation $\sim_2$ on $\G_P(S)$ by declaring $[g, \chi]\sim_2 [h, \chi]$ iff there exists $X\in \bar\J(S)$ with $\chi(X)=1$ such that $g_{|X}=h_{|X}$. Consider the quotient groupoid
$$
\G(S):=\G_P(S)/\sim_2.
$$
Note that the unit space of $\G(S)$ is the same as that of $\G_P(S)$, namely $\Omega(S)$. To avoid confusion with the previous groupoid, the left regular representation of $C_r ^*(\G(S))$ associated with a character $\chi\in \Omega(S)$, will be denoted $(\rho_\chi)_2$. Again the left regular representation associated to the character~$\chi_e$ gives a surjective $*$-homomorphism
$$(\rho_{\chi_e})_2: C_r^*(\G(S)) \rightarrow C_r^*(S).$$
The negation of Condition \ref{condition2} in the groupoid $\G(S)$ for the subset $Y=\{\chi_s~\mid~s\in~S\}$ and every $x\in \Omega(S)\setminus Y$ gives the following condition:

\begin{definition}[{\cite[Definition~2.11]{NS}}]\label{def:regularity2}
We say that $S$ is \textbf{C$^*$-regular} if, given $h_1,\dots,h_n\in I_\ell(S)$ and $X\in\bar\J(S)$ satisfying
\begin{equation*}
\emptyset\ne X\subseteq \bigcup_{k=1}^n\{s\in S \mid h_ks=s\},
\end{equation*}
there are sets $Y_1,\dots, Y_l\in\bar\J(S)$ and indices $1\le k_j\le n$ ($j=1,\dots,l$) such that
\begin{equation*}
X\subseteq\bigcup_{j=1}^lY_j\qquad\text{and}\qquad h_{k_j}p_{Y_j}=p_{Y_j}\quad\text{for all}\quad 1\le j\le l.
\end{equation*}
\end{definition}

Again $\partial\Omega(S)$ is an invariant subset of $\G(S)$ and we may form the boundary groupoid
$$
\partial\G(S):={\G(S)}_{\partial\Omega(S)}.
$$

\subsection{Spielberg's Boundary Groupoid}
As with $\G_P(S)$, $\partial\Omega(S)$ is an invariant subset of $\G(S)$ and we may form the boundary groupoid
$$
\partial\G(S):={\G(S)}_{\partial\Omega(S)}.
$$
This is another candidate for a groupoid model for the boundary quotient. In this subsection we show that the analogous results of Subsection \ref{subsec:3.2} hold for $\G(S)$.
As before, we occasionally suppress $S$ from our notation and write $\G$ and $\partial \G$ for~$\G(S)$ and $\partial\G(S)$. First and foremost, there is a short exact sequence analogous to Short Exact Sequence~\eqref{eq: short-exact-boundary}:
\begin{equation}\label{eq:short-exact-sequences}
0 \rightarrow C_r^*(\G\setminus \partial \G) \rightarrow C_r^*(\G) \xrightarrow[]{\phi_2} C_{e_2}^*(\partial \G) \rightarrow 0.
\end{equation}
Here $e_2$ is an exotic norm completion of $C_c(\partial \G)$.\\

We have the following $\bar\J(S)$ generalization of Proposition~\ref{prop:foundation-maximal}:

\begin{proposition}\label{prop:foundation-maximal-G_2}
Let $X, X_1, \ldots, X_m\in \bar\J(S)$, $m\geq 1$, $X_i\subseteq X$ for all $i$. Then the clopen set
$$U=\{\eta\in\Omega(S)\mid \eta(X)=1,\ \eta(X_i)=0\ \text{for}\ i=1,\dots,m\}$$
is disjoint from $\partial\Omega$ if and only if the $X_i$'s form a foundation set for $X$.
\end{proposition}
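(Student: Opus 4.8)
The plan is to mirror the proof of Proposition~\ref{prop:foundation-maximal}, which is the special case $X,X_1,\dots,X_m\in\J(S)$, proving both implications in contrapositive form. The one genuinely new feature is that $X$ and the $X_i$ now lie in $\bar\J(S)$, so I must work with the natural extension of each $\chi\in\Omega(S)$ to $\bar\J(S)$. The whole argument rests on two ``bridging'' observations that translate the value of the extended character on a difference set into statements about honest constructible subideals: (A) for $\chi\in\Omega(S)$ and $A\in\bar\J(S)$, if some nonempty $Y\in\J(S)$ with $Y\subseteq A$ satisfies $\chi(Y)=1$ then $\chi(A)=1$, and the converse holds when $\chi$ is maximal; and (B) for maximal $\chi$, one has $\chi(A)=0$ precisely when there is a nonempty $Y\in\J(S)$ with $\chi(Y)=1$ and $Y\cap A=\emptyset$. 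Both are verified by writing $A=A'\setminus\bigcup_k B_k$ with $A',B_k\in\J(S)$ and unwinding the rule $\chi(A)=1\iff\chi(A')=1$ and $\chi(B_k)=0$ for all $k$, invoking the union property characterising $\Omega(S)$ and, for the maximal separation statements, the separation property Lemma~\ref{lem:maximal-facts}(\ref{lem:maximal-fact3}).

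For the direction ``$U\cap\partial\Omega=\emptyset\Rightarrow$ foundation set'' I argue the contrapositive. If $\{X_1,\dots,X_m\}$ is not a foundation set for $X$, then Definition~\ref{def:foundationset-generalized}(II) furnishes a nonempty $Y\in\J(S)$ with $Y\subseteq X$ and $Y\cap X_i=\emptyset$ for all $i$. Lemma~\ref{lem:maximal-facts}(\ref{lem:maximal-fact2}) provides a maximal $\chi$ with $\chi(Y)=1$. Observation (A) gives $\chi(X)=1$, while $Y\cap X_i=\emptyset$ together with the union property forces $\chi(X_i)=0$ for each $i$ (were $\chi(X_i)=1$, the union property applied to $Y\cap A_i\subseteq\bigcup_k B_k$ would assign the value $1$ to a constructible piece of $Y$ lying in a subtracted part of $X_i$). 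Hence $\chi\in U\cap\Omega_{\text{max}}\subseteq U\cap\partial\Omega$.

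Conversely, suppose $U\cap\partial\Omega\neq\emptyset$. Since $U$ is open and $\partial\Omega=\overline{\Omega_{\text{max}}}$, the set $U$ already meets $\Omega_{\text{max}}$; fix a maximal $\chi\in U$, so $\chi(X)=1$ and $\chi(X_i)=0$ for all $i$. By (A) there is a nonempty constructible $Y_0\subseteq X$ with $\chi(Y_0)=1$, and by (B) each $X_i$ admits a nonempty constructible $F_i$ with $\chi(F_i)=1$ and $F_i\cap X_i=\emptyset$. Setting $Y:=Y_0\cap\bigcap_{i=1}^m F_i$, multiplicativity gives $\chi(Y)=1$, so $Y$ is a nonempty constructible ideal with $Y\subseteq X$ and $Y\cap X_i\subseteq F_i\cap X_i=\emptyset$ for every $i$; thus $\{X_1,\dots,X_m\}$ fails Definition~\ref{def:foundationset-generalized}(II). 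The degenerate case $0\notin I_\ell(S)$, where $\partial\Omega=\Omega_{\text{max}}$ is the single character that is $1$ on all of $\J(S)$, is subsumed: (B) then simply takes $F_i$ to be a nonempty subtracted part of $X_i$, and the intersection $Y$ is automatically nonempty because $0\notin I_\ell(S)$ means every finite intersection of nonempty constructible ideals is nonempty.

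I expect the main obstacle to be establishing the bridging facts (A) and (B) with the difference sets handled correctly, in particular the maximal separation half of (B): one must manufacture from $\chi(X_i)=0$ an actual constructible ideal disjoint from the difference $X_i$, splitting according to whether it is the ``$A'$-part'' (handled by Lemma~\ref{lem:maximal-facts}(\ref{lem:maximal-fact3})) or one of the ``subtracted parts'' $B_k$ that is responsible for the vanishing. Once (A) and (B) are in place, the two directions follow the template of Proposition~\ref{prop:foundation-maximal} almost verbatim, the only extra care being the case $0\notin I_\ell(S)$, which as noted collapses cleanly.
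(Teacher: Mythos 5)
Your proof is correct and takes essentially the same route as the paper: your bridging facts (A) and (B) are precisely the inline computations in the paper's argument (writing each element of $\bar\J(S)$ as $A'\setminus\bigcup_k B_k$, using the union property of $\Omega(S)$ for the forward direction and Lemma~\ref{lem:maximal-facts}(\ref{lem:maximal-fact3}) to manufacture the separating ideals in the converse), and your witness $Y=Y_0\cap\bigcap_{i=1}^m F_i$ matches the paper's $X^0\cap\bigcap_j Y^j\cap\bigcap_i A_i$ exactly. The handling of the degenerate case $0\notin I_\ell(S)$ is likewise consistent with the paper, so nothing further is needed.
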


\begin{proof}
For the forward direction, suppose that the $X_i$'s do not form a foundation set for $X$. Then there exists a nonempty $Y\in \J(S)$ contained in $X$ which is disjoint from each $X_i$. There exists a maximal character $\chi\in \Omega$ with $\chi(Y)=1$; we claim that $\chi\in U$. Since $Y\subseteq X$ we clearly have $\chi(X)=1$. Now, fix an $X_i$ and write it as a finite difference of constructible ideals

\begin{equation}\label{eq:differences-of-constructible-ideals}
X_i=X_i^0 \setminus \bigcup\limits_{j=1}^{m_i}X_i^j.
\end{equation}

Suppose that $\chi(X_i^0)=1$. Then $\chi(Y\cap X_i^0)=1$ and since $Y$ is disjoint from $X_i$ one has
$$Y\cap {X_i}^0\subseteq \bigcup\limits_{j=1}^{m_i} X_i^j.$$
Since $\chi\in \Omega$ we must have $\chi({X_i}^{j'})=1$ for some $j'$. This shows that $\chi(X_i)=0$ for all $i$, hence $\chi\in U$. In particular $U$ intersects $\partial \Omega$.\\

For the opposite direction, suppose $U$ intersects $\partial\Omega$. In particular $U$ contains a maximal character, say $\chi$. Write $X_1, \ldots , X_m$ as in~\eqref{eq:differences-of-constructible-ideals} and similarly write
$$X=X^0 \setminus \bigcup\limits_{j=1}^{m_0}X^j,$$
We have $\chi(X^j)=0$ for $1\leq j\leq m_0$ so by maximality there is a constructible ideal, $Y^j\in \mathcal{J}(S)$, disjoint from $X^j$, satisfying $\chi(Y^j)=1$. Now, for each $i$, $1\leq i\leq m$, such that $\chi(X_i^0)=0$, let $A_i\in \mathcal{J}(S)$ be a constructible ideal disjoint from $X_i^0$ satisfying $\chi(A_i)=1$. Amongst the $i$'s for which $\chi(X_i^0)=1$ there must be an index~$j_i$ such that $\chi({X_i}^{j_i})=1$. In this case, set $A_i=X_i^{j_i}$. Let

$$Y=X^0\cap \bigcap\limits\limits_{j=1}^{m_0} Y^j\cap \bigcap\limits_{i=1}^m A_i.$$
By construction $Y$ is contained in $X$ and is disjoint from each $X_i$. Moreover, $\chi(Y)=1$ so $Y$ is nonempty. Therefore the $X_i$'s do not form a foundation set for~$X$.
\end{proof}

With Proposition~\ref{prop:foundation-maximal-G_2} at our disposal it is not hard to check that Proposition~\ref{prop:comdiagramexists} and Corollaries~\ref{cor:iso-sufficient-condition-1} and~\ref{cor:iso-sufficient-condition-2} carry over for $\G$. Namely, we have the following diagram
\begin{equation}\label{eq:commutativediagram-G_2}
\xymatrix{
C^*_r(\G)\ar[d]^{\phi_2} \ar[r]^{(\rho_{\mathcal{X}_e})_2} & C^*_r(S) \ar[d]^q\\
C^*_{e_2}(\partial\G) \ar[r] & C_r^*(S)/I
}
\end{equation}
where $e_2$ is the norm from Short Exact Sequence~\eqref{eq:short-exact-sequences}. Furthermore
\begin{proposition}\label{prop:comdiagramexists-G_2}
In Diagram~\eqref{eq:commutativediagram-G_2} we have $\ker(q\circ (\rho_{\chi_e})_2)=\ker(\rho_{\chi_e})_2+C_r^*(\G\setminus \partial \G)$. In particular the bottom map in Diagram~\eqref{eq:commutativediagram-G_2} exists.
\end{proposition}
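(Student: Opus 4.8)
The plan is to follow the proof of Proposition~\ref{prop:comdiagramexists} almost verbatim, the only structural input requiring re-verification being the behaviour of the representation $(\rho_{\chi_e})_2$. Exactly as there, it suffices to establish the image identity
\[
(\rho_{\chi_e})_2\big(C_r^*(\G\setminus\partial\G)\big)=I.
\]
Granting this, the kernel equality $\ker(q\circ(\rho_{\chi_e})_2)=\ker(\rho_{\chi_e})_2+C_r^*(\G\setminus\partial\G)$ is deduced by the same elementary decomposition $x=(x-y)+y$ used in Proposition~\ref{prop:comdiagramexists}, and the existence of the bottom map in Diagram~\eqref{eq:commutativediagram-G_2} follows at once.

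The crucial preliminary observation is that $(\rho_{\chi_e})_2$ acts on functions supported on the unit space exactly as $\rho_{\chi_e}$ does. Since $\G$ and $\G_P$ share the unit space $\Omega(S)$ together with the same range map, and since $(\G)_{\chi_e}\cong S$ via $s\mapsto[s,\chi_e]$, the left regular representation at $\chi_e$ collapses any $f\in C_c(\Omega)$ to the diagonal operator $(\rho_{\chi_e})_2(f)\delta_s=f(\chi_s)\delta_s$; in particular, for a clopen set $U=\{\eta\mid\eta(X)=1,\ \eta(X_i)=0\ (i=1,\dots,m)\}$ one gets $(\rho_{\chi_e})_2(\un_U)=\un_{X\setminus\bigcup_i X_i}$, precisely as in the $\G_P$ case. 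With this in hand both inclusions transcribe directly. For $(\rho_{\chi_e})_2(C_r^*(\G\setminus\partial\G))\subseteq I$, I would invoke \cite[Proposition~1.1~and~Corollary~1.2]{NS} to reduce to $C_c(\Omega\setminus\partial\Omega)$, cover the support of a fixed $f\in C_c(\Omega\setminus\partial\Omega)$ by basic clopen neighbourhoods disjoint from $\partial\Omega$, and apply Proposition~\ref{prop:foundation-maximal-G_2} to recognise each as cut out by a foundation set; then $(\rho_{\chi_e})_2(f)$ is supported on a finite union of foundation differences, so $(\rho_{\chi_e})_2(f)=(\rho_{\chi_e})_2(f)\,\un_{\bigcup_j(X^j\setminus\bigcup_i X_i^j)}\in I$. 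For the reverse inclusion it is enough to hit the generators of $I$: given a foundation set $\{X_1,\dots,X_m\}$ for $X$, Proposition~\ref{prop:foundation-maximal-G_2} places the corresponding clopen set $U$ inside $\Omega\setminus\partial\Omega$, whence $\un_{X\setminus\bigcup_i X_i}=(\rho_{\chi_e})_2(\un_U)$ lies in the closed ideal $(\rho_{\chi_e})_2(C_r^*(\G\setminus\partial\G))$.

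I expect no genuine obstacle beyond the two structural checks already isolated: that $(\rho_{\chi_e})_2$ restricts to the diagonal action on $C_c(\Omega)$, and that $C_c(\Omega\setminus\partial\Omega)$ generates $C_r^*(\G\setminus\partial\G)$ as a closed ideal, which is \cite[Proposition~1.1~and~Corollary~1.2]{NS}, valid for any étale groupoid and hence for Spielberg's $\G$. Once these are noted, the combinatorial heart of the argument is identical to that of Proposition~\ref{prop:comdiagramexists}: the topology on the common unit space $\Omega(S)$ is generated by the clopen sets indexed by $\J(S)$ (the intersection of two conditions $\eta(A)=\eta(B)=1$ being $\eta(A\cap B)=1$ by multiplicativity), so every basic neighbourhood has the form needed to feed into the relevant foundation-set dictionary supplied by Proposition~\ref{prop:foundation-maximal-G_2}.
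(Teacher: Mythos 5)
Your proof is correct and is essentially the paper's own route: the paper leaves Proposition~\ref{prop:comdiagramexists-G_2} as a verbatim transcription of the proof of Proposition~\ref{prop:comdiagramexists} with Proposition~\ref{prop:foundation-maximal-G_2} replacing Proposition~\ref{prop:foundation-maximal}, and the two structural checks you isolate (the diagonal action $(\rho_{\chi_e})_2(f)\delta_s=f(\chi_s)\delta_s$ on $C_c(\Omega)$, and that $C_c(\Omega\setminus\partial\Omega)$ generates $C_r^*(\G\setminus\partial\G)$ as a closed ideal via \cite[Proposition~1.1~and~Corollary~1.2]{NS}) are exactly what legitimizes that transcription. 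One cosmetic remark: since the basic clopen neighbourhoods in $\Omega(S)$ are cut out by $\J(S)$-conditions, the original Proposition~\ref{prop:foundation-maximal} already suffices at every step of this particular proposition, with the $\bar\J(S)$-version only becoming essential for the boundary-regularity results that follow.
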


\begin{corollary}\label{cor:iso-sufficient-condition-1-G_2}
The bottom map in Diagram~\eqref{eq:commutativediagram-G_2} is an isomorphism if and only if $\ker{(\rho_{\chi_e})_2}\subseteq C_r^*(\G(S)\setminus \partial \G(S))$.
\end{corollary}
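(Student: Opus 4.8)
The plan is to run exactly the argument that yields Corollary~\ref{cor:iso-sufficient-condition-1}, now invoking Proposition~\ref{prop:comdiagramexists-G_2} in place of Proposition~\ref{prop:comdiagramexists}. Write $\psi$ for the bottom map of Diagram~\eqref{eq:commutativediagram-G_2}, so that $\psi\circ\phi_2 = q\circ(\rho_{\chi_e})_2$. By the Short Exact Sequence~\eqref{eq:short-exact-sequences}, $\phi_2$ is a surjection with kernel $C_r^*(\G\setminus\partial\G)$, so $C_{e_2}^*(\partial\G)$ is canonically $C_r^*(\G)/C_r^*(\G\setminus\partial\G)$ and $\psi$ is precisely the $*$-homomorphism induced on this quotient by $q\circ(\rho_{\chi_e})_2$.

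First I would dispose of surjectivity: since $(\rho_{\chi_e})_2$ and $q$ are both surjective, the composite $q\circ(\rho_{\chi_e})_2 = \psi\circ\phi_2$ is surjective, and hence so is $\psi$. Therefore $\psi$ is an isomorphism if and only if it is injective, and the whole corollary reduces to characterizing injectivity.

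Next I would identify $\ker\psi$. Because $\phi_2$ is the quotient map onto $C_r^*(\G)/C_r^*(\G\setminus\partial\G)$ and $\psi\circ\phi_2 = q\circ(\rho_{\chi_e})_2$, an element of $C_{e_2}^*(\partial\G)$ lies in $\ker\psi$ exactly when it is the $\phi_2$-image of an element of $\ker(q\circ(\rho_{\chi_e})_2)$; thus $\ker\psi = \phi_2\big(\ker(q\circ(\rho_{\chi_e})_2)\big)$, and $\psi$ is injective if and only if $\ker(q\circ(\rho_{\chi_e})_2) = C_r^*(\G\setminus\partial\G) = \ker\phi_2$. Finally, Proposition~\ref{prop:comdiagramexists-G_2} gives $\ker(q\circ(\rho_{\chi_e})_2) = \ker(\rho_{\chi_e})_2 + C_r^*(\G\setminus\partial\G)$, so the displayed equality becomes $\ker(\rho_{\chi_e})_2 + C_r^*(\G\setminus\partial\G) = C_r^*(\G\setminus\partial\G)$, which holds if and only if $\ker(\rho_{\chi_e})_2 \subseteq C_r^*(\G\setminus\partial\G)$. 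This is the asserted equivalence.

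There is no genuine obstacle at this stage: all the content has already been absorbed into Proposition~\ref{prop:comdiagramexists-G_2}, which itself rests on the $\bar\J(S)$-version of the foundation/maximal-character dictionary (Proposition~\ref{prop:foundation-maximal-G_2}). The only points requiring care are the bookkeeping of which maps are surjective and the correct identification of $\ker\psi$ as the $\phi_2$-image of $\ker(q\circ(\rho_{\chi_e})_2)$; both are standard first-isomorphism-theorem manipulations, identical to the ones underlying Corollary~\ref{cor:iso-sufficient-condition-1}, so I expect the proof to be a two-line diagram chase.
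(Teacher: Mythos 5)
Your proof is correct and is essentially the paper's argument: the paper states this corollary without separate proof, noting that Corollary~\ref{cor:iso-sufficient-condition-1} carries over verbatim, and your diagram chase (surjectivity of $\psi$ from that of $q\circ(\rho_{\chi_e})_2$, then injectivity reduced via Proposition~\ref{prop:comdiagramexists-G_2} to $\ker(\rho_{\chi_e})_2 + C_r^*(\G\setminus\partial\G) = C_r^*(\G\setminus\partial\G)$, i.e.\ to the stated inclusion) is exactly that intended argument. No gaps.
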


\begin{corollary}\label{cor:iso-sufficient-condition-2-G_2}
If the norm $e_2$ from Short Exact Sequence~\eqref{eq:short-exact-sequences} coincides with the reduced norm and $\bigoplus_{z\in \partial \Omega}(\rho_z)_2$ is weakly contained in $(\rho_{\chi_e})_2\sim \bigoplus_{s\in S}(\rho_{\chi_s})_2$, then the bottom map in Diagram ~\eqref{eq:commutativediagram-G_2} defines an isomorphism $C_r^*(\partial\G(S)) \cong C_r^*(S)$.
\end{corollary}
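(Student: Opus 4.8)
The plan is to carry over verbatim the argument already used to prove Corollary~\ref{cor:iso-sufficient-condition-2} in the Paterson case, simply replacing $\G_P$ by $\G$, $\partial\G_P$ by $\partial\G$, the representations $\rho_z, \rho_{\chi_e}$ by $(\rho_z)_2, (\rho_{\chi_e})_2$, and the norm $e$ by $e_2$. The underlying structure is identical because, as noted in the paragraph preceding the corollary, Proposition~\ref{prop:foundation-maximal-G_2} lets Proposition~\ref{prop:comdiagramexists} and Corollaries~\ref{cor:iso-sufficient-condition-1} and~\ref{cor:iso-sufficient-condition-2} all pass to $\G$. Concretely, the proof of Corollary~\ref{cor:iso-sufficient-condition-1-G_2} shows that the bottom map in Diagram~\eqref{eq:commutativediagram-G_2} is an isomorphism precisely when $\ker{(\rho_{\chi_e})_2}\subseteq C_r^*(\G\setminus\partial\G)$, so the task reduces to verifying this kernel inclusion under the two stated hypotheses.

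First I would write down the commutative diagram analogous to~\eqref{eq:reducedcommutativediagram}, namely
\begin{equation*}
\xymatrix{
C^*_r(\G)\ar[d]_{\pi_2} \ar[dr]^{\bigoplus\limits_{z\in \partial \Omega}(\rho_z)_2}\\
C^*_r(\partial\G) \ar[r] & \bigoplus\limits_{z\in \partial \Omega} B(\ell^2 ((\G)_z))
}
\end{equation*}
where $\pi_2$ extends restriction of functions to $\partial\G$ and the horizontal map is the direct sum of all left regular representations of $C_r^*(\partial\G)$. Since this bottom map is faithful, we get $\ker{\pi_2}=\ker\bigl(\bigoplus_{z\in\partial\Omega}(\rho_z)_2\bigr)$. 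The hypothesis that $e_2$ coincides with the reduced norm means $C_r^*(\partial\G)=C_{e_2}^*(\partial\G)$, so Short Exact Sequence~\eqref{eq:short-exact-sequences} identifies both of these kernels with $C_r^*(\G\setminus\partial\G)$.

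Next I would invoke the weak containment hypothesis: since $\bigoplus_{z\in\partial\Omega}(\rho_z)_2$ is weakly contained in $(\rho_{\chi_e})_2\sim\bigoplus_{s\in S}(\rho_{\chi_s})_2$, we obtain
$$\ker{(\rho_{\chi_e})_2}\subseteq\ker\Bigl(\bigoplus_{z\in\partial\Omega}(\rho_z)_2\Bigr)=C_r^*(\G\setminus\partial\G),$$
which is exactly the condition in Corollary~\ref{cor:iso-sufficient-condition-1-G_2}, yielding the desired isomorphism. I do not anticipate a genuine obstacle here, as every ingredient is already in place; the only real content is checking that the faithfulness of the regular representations of $C_r^*(\partial\G)$ and the short exact sequence behave exactly as in the Paterson case, which they do since $\G$ is again a locally compact étale groupoid with $\partial\Omega$ invariant. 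One point worth noting for correctness, however, is the target of the isomorphism: Corollary~\ref{cor:iso-sufficient-condition-1-G_2} gives an isomorphism between $C_r^*(\partial\G)$ and the codomain of the bottom map in~\eqref{eq:commutativediagram-G_2}, which is $C_r^*(S)/I$; the identification with $C_r^*(S)$ itself then requires the additional input that under these hypotheses $I=0$, equivalently that $(\rho_{\chi_e})_2$ is already injective, so that the quotient map $q$ is an isomorphism.
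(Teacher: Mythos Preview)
Your proof is correct and follows exactly the approach the paper indicates: it simply asserts that Corollaries~\ref{cor:iso-sufficient-condition-1} and~\ref{cor:iso-sufficient-condition-2} ``carry over for $\G$'' once Proposition~\ref{prop:foundation-maximal-G_2} is in hand, and you have faithfully spelled out that carry-over.

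Your closing observation is also sharp: the target in the printed statement is a typo and should read $C_r^*(S)/I$, matching Corollary~\ref{cor:iso-sufficient-condition-2}. One small correction, though: your parenthetical ``equivalently that $(\rho_{\chi_e})_2$ is already injective'' is not right. From the analogue of~\eqref{eq:image1} one has $I=(\rho_{\chi_e})_2\bigl(C_r^*(\G\setminus\partial\G)\bigr)$, so $I=0$ means $C_r^*(\G\setminus\partial\G)\subseteq\ker(\rho_{\chi_e})_2$, whereas injectivity of $(\rho_{\chi_e})_2$ means $\ker(\rho_{\chi_e})_2=0$; these agree only in the degenerate case $\partial\Omega=\Omega$. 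This does not affect your main argument.
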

Negating Condition \ref{condition2} for the groupoid $\G(S)$, the subset $Y=\{\chi_s \mid s\in S\}$ and every $x\in \partial\Omega\setminus Y$ leads to the following condition on $S$ which guarantees that the second part of the hypothesis in Corollary~\ref{cor:iso-sufficient-condition-2-G_2} is true:

\begin{definition}\label{def:regularity-on-boundary}
We say that $S$ is \textbf{C*-regular on the boundary} if, for any elements $h_1, \ldots, h_n\in I_\ell(S)$ and constructible ideals $X, X_1, \ldots, X_m\in \J(S)$, $X_i\subseteq X$ for all $i$, satisfying
$$\emptyset \neq X\setminus \bigcup_{i=1}^m X_i \subseteq \bigcup_{k=1}^n \{s\in S \mid h_ks=s\},$$
there are $Y_1, \ldots ,Y_l\in \bar\J(S)$ and indices $1\leq k_j\leq n$, $j=1, \ldots, l$ such that $h_{k_j}p_{Y_j}=p_{Y_j}$, and $\{X_1, \ldots ,X_m,Y_1, \ldots ,Y_l\}$ is a foundation set for $X$, i.e. $Y_j\subseteq X$ for all $j$ and the set
$$X\setminus (\bigcup_{i=1}^m X_i\cup \bigcup_{j=1}^l Y_j)$$
does not contain a nonempty constructible ideal. Here we allow the set $\{Y_1, \ldots ,Y_l\}$ to be empty.
\end{definition}
\begin{remark}
By the same argument as in Remark \ref{rem:strong-regularity-on-boundary-remark-2}, in order to check C$^*$-regularity on the boundary it suffices to consider $h_k$ such that $X\subseteq \dom h_k$ for all $k$. Note also that the only difference between strong C$^*$-regularity on the boundary and C$^*$-regularity on the boundary is that the sets $Y_j$ are required to be in $\J(S)$ in the former and in $\bar\J(S)$ in the latter. In particular, strong C$^*$-regularity on the boundary implies C$^*$-regularity on the boundary.
\end{remark}

The analogue of Theorem~\ref{prop:iso-sufficient-condition-final} holds:
\begin{theorem}
Let $S$ be a left cancellative monoid. If the norm $e_2$ from Short Exact Sequence~\eqref{eq:short-exact-sequences} coincides with the reduced norm and $S$ is C*-regular on the boundary, then the bottom map in Diagram~\eqref{eq:commutativediagram-G_2} defines an isomorphism $C_r^*(\partial\G(S)) \cong C_r^*(S)/I$.
\end{theorem}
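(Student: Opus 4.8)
The plan is to follow verbatim the route used to prove Theorem~\ref{prop:iso-sufficient-condition-final}, with $\G_P(S)$ replaced by $\G(S)$ throughout. The three ingredients are: (i) Proposition~\ref{prop:weak-containmenttwo}, which holds for any locally compact étale groupoid and hence for $\G(S)$; (ii) Corollary~\ref{cor:iso-sufficient-condition-2-G_2}, which is already in place; and (iii) the $\G(S)$-analogue of Lemma~\ref{lem:boundaryregular}, asserting that $S$ is C*-regular on the boundary if and only if Condition~\ref{condition2} fails for $\G=\G(S)$, $Y=\{\chi_s\mid s\in S\}$, and every $\chi\in\partial\Omega\setminus Y$. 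Only (iii) requires genuine work; once it is established the theorem is immediate, since C*-regularity on the boundary then forces, via Proposition~\ref{prop:weak-containmenttwo}, the weak containment $(\rho_z)_2\prec\bigoplus_{s\in S}(\rho_{\chi_s})_2$ for every $z\in\partial\Omega$, and feeding this together with the hypothesis that $e_2$ is the reduced norm into Corollary~\ref{cor:iso-sufficient-condition-2-G_2} yields the desired isomorphism $C_r^*(\partial\G(S))\cong C_r^*(S)/I$.

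To prove (iii) I would transcribe the two directions of the proof of Lemma~\ref{lem:boundaryregular}, substituting Proposition~\ref{prop:foundation-maximal-G_2} for Proposition~\ref{prop:foundation-maximal}. For the direction ``C*-regular on the boundary $\Rightarrow$ Condition~\ref{condition2} fails'', suppose Condition~\ref{condition2} holds at some $\chi\in\partial\Omega\setminus Y$, witnessed by nontrivial isotropy elements $g_k=[h_k,\chi]_2\in\G(S)_\chi^\chi$, bisections which we may normalize to the images of $D(h_k,\Omega)$, and a basic neighbourhood $U=\{\eta\mid\eta(X)=1,\ \eta(X_i)=0\}$ of $\chi$. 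The inclusion $Y\cap U\subseteq\bigcup_k D(h_k,\Omega)$ translates exactly into the hypothesis $\emptyset\ne X\setminus\bigcup_i X_i\subseteq\bigcup_k\{s\mid h_ks=s\}$ of Definition~\ref{def:regularity-on-boundary}. C*-regularity on the boundary then supplies $Y_1,\dots,Y_l\in\bar\J(S)$ with $h_{k_j}p_{Y_j}=p_{Y_j}$ such that $\{X_1,\dots,X_m,Y_1,\dots,Y_l\}$ is a foundation set for $X$; Proposition~\ref{prop:foundation-maximal-G_2} then shows the corresponding clopen set avoids $\partial\Omega$, so $\chi$ must satisfy $\chi(Y_j)=1$ for some $j$, whence $g_{k_j}$ collapses to the unit $\chi$, contradicting nontriviality.

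For the converse, if $S$ is not C*-regular on the boundary, I would produce the offending $h_k$ and $X,X_i$, and this time let $\F$ range over the finite subsets of $\bar\J(S)$ whose members are contained in $X$ and fixed by some $h_k$. Proposition~\ref{prop:foundation-maximal-G_2} guarantees each compact set $U_F$ meets $\partial\Omega$, so one may choose a tight character $\chi\in\bigcap_{F\in\F}U_F\cap\partial\Omega$. After the same bookkeeping as in Lemma~\ref{lem:boundaryregular} (absorbing into $\{X_i\}$ those $h_k$ with $(h_k,\chi)\notin\Sigma$ or $\chi(h_k^{-1}(\cdot))\ne\chi$, and shrinking $U$), Condition~\ref{condition2} is verified at $\chi$, provided the $g_k=[h_k,\chi]_2$ are nontrivial in $\G(S)$. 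This last point is the crux: triviality of $g_k$ in $\G(S)$ means $h_k$ agrees with the identity on some $Y\in\bar\J(S)$ with $\chi(Y)=1$, and it is precisely to rule this out that we arranged $\chi(Y)=0$ for every $Y\in\bar\J(S)$ fixed by some $h_k$.

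The main obstacle is exactly this nontriviality check in $\G(S)$, and it is the single place where the argument genuinely diverges from that of Lemma~\ref{lem:boundaryregular}: because $\G(S)$ identifies partial bijections agreeing on $\bar\J(S)$ sets rather than merely on $\J(S)$ sets, its isotropy groups are smaller, and ruling out collapse of $g_k$ requires controlling $\chi$ on all of $\bar\J(S)$. This is the structural reason that the definition of C*-regularity on the boundary permits $Y_j\in\bar\J(S)$, in contrast to $Y_j\in\J(S)$ for strong C*-regularity on the boundary; everything else is a faithful transcription of the $\G_P(S)$ argument.
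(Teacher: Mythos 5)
Your proposal is correct and is precisely the argument the paper intends: the paper states this theorem without proof as ``the analogue of Theorem~\ref{prop:iso-sufficient-condition-final},'' whose proof combines Proposition~\ref{prop:weak-containmenttwo}, Corollary~\ref{cor:iso-sufficient-condition-2-G_2}, and the $\G(S)$-version of Lemma~\ref{lem:boundaryregular} obtained by substituting Proposition~\ref{prop:foundation-maximal-G_2} for Proposition~\ref{prop:foundation-maximal} and letting the fixed sets range over $\bar\J(S)$. Your identification of the nontriviality check in the isotropy of $\G(S)$ as the one genuinely new point --- requiring $\chi(Y)=0$ for all $Y\in\bar\J(S)$ fixed by some $h_k$, which is exactly why Definition~\ref{def:regularity-on-boundary} allows $Y_j\in\bar\J(S)$ --- correctly fills in the detail the paper leaves implicit.
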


We remark that the universal C$^*$-algebra $C^*(\partial\G(S))$ has a presentation in terms of generators and relations from \cite[Theorem~10.10]{MR4151331} similar to that of $C^*(\partial\G_P(S))$. We therefore have two candidates for groupoid models for the boundary quotient $C_r^*(S)/I$. We note the following result.

\begin{proposition}
Let $S$ be a left cancellative monoid such that the bottom map in Diagram~\eqref{eq:commutativediagram} defines an isomorphism $C_e^*(\partial\G_P(S))\cong C_r^*(S)/I$. Then $\partial\G_P(S)=\partial\G(S)$.
\end{proposition}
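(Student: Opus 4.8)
The plan is to route everything through Corollary~\ref{cor:iso-sufficient-condition-1}, which tells us that the hypothesis ``the bottom map in Diagram~\eqref{eq:commutativediagram} is an isomorphism'' is \emph{equivalent} to the inclusion $\ker\rho_{\chi_e}\subseteq C_r^*(\G_P\setminus\partial\G_P)$, and by Short Exact Sequence~\eqref{eq: short-exact-boundary} the latter ideal is exactly $\ker\phi$. So it suffices to show the contrapositive: if $\partial\G_P(S)\neq\partial\G(S)$, then there is an element $a\in C_c(\G_P)$ with $\rho_{\chi_e}(a)=0$ but $\phi(a)\neq0$, i.e. $a\in\ker\rho_{\chi_e}\setminus\ker\phi$, contradicting the hypothesis.

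First I would unwind the inequality of boundary groupoids. Since $\G(S)=\G_P(S)/\sim_2$ and both boundary groupoids have unit space $\partial\Omega$, the canonical map $\partial\G_P\to\partial\G$ (restriction of the quotient $\G_P\to\G$) is a continuous open surjective homomorphism that is the identity on units; as both $\sim$ and $\sim_2$ preserve source and range, it fails to be an isomorphism exactly when it is non-injective, and such non-injectivity is detected in the isotropy groups. Concretely there are $\chi_0\in\partial\Omega$ and $k\in I_\ell(S)$ with $\chi_0(\dom k)=1$ such that $[k,\chi_0]$ becomes the unit $\chi_0$ in $\G(S)$ but $[k,\chi_0]\neq\chi_0$ in $\G_P(S)$. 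Translating the definitions of $\sim_2$ and $\sim$, this means there is some $X\in\bar\J(S)$ with $\chi_0(X)=1$ and $X\subseteq\{s\in S\mid ks=s\}$, while \emph{no} constructible ideal $A\in\J(S)$ with $\chi_0(A)=1$ satisfies $A\subseteq\{s\in S\mid ks=s\}$. This is the step where the distinction between $\J(S)$ and $\bar\J(S)$ — equivalently, between $\G_P$ and $\G$ — is isolated, and it is the combinatorial heart of the statement.

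Next I set $U:=\{\chi\in\Omega\mid\chi(X)=1\}$, which is clopen (as $X\in\bar\J(S)$), contains $\chi_0$, and lies inside $\{\chi\mid\chi(\dom k)=1\}$ since $X\subseteq\dom k$. I then consider
$$a:=\un_{D(k,U)}-\un_U\in C_c(\G_P).$$
The two computations are: (i) $\rho_{\chi_e}(a)=0$, and (ii) $\phi(a)\neq0$. For (i), on $\ell^2(S)\cong\ell^2((\G_P)_{\chi_e})$ one has $\rho_{\chi_e}(\un_{D(k,U)})\delta_t=\delta_{k(t)}$ when $t\in\dom k$ and $\chi_t\in U$, and $0$ otherwise; as $U=\{\chi(X)=1\}$ the condition $\chi_t\in U$ reads $t\in X$, and $X\subseteq\{s\mid ks=s\}$ forces $k(t)=t$ there, so $\rho_{\chi_e}(\un_{D(k,U)})=\un_X$. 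Likewise $\rho_{\chi_e}(\un_U)$ is multiplication by the range value of $\un_U$, namely the projection $\un_X$, so $\rho_{\chi_e}(a)=\un_X-\un_X=0$. For (ii), $\phi(a)$ is the restriction $\un_{D(k,U)\cap\partial\G_P}-\un_{U\cap\partial\Omega}$ of $a$ to $\partial\G_P$; since $\chi_0\in U\cap\partial\Omega$ while $[k,\chi_0]\neq\chi_0$ in $\G_P$, the function $\phi(a)$ takes the value $1$ at the non-unit $[k,\chi_0]\in\partial\G_P$, hence is nonzero in $C_c(\partial\G_P)$ and, since $e\geq r$, nonzero in $C_e^*(\partial\G_P)$.

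Combining, $a\in\ker\rho_{\chi_e}\subseteq C_r^*(\G_P\setminus\partial\G_P)=\ker\phi$ would force $\phi(a)=0$, contradicting (ii); therefore $\partial\G_P(S)=\partial\G(S)$. The main obstacle I anticipate is not either final computation — both are short once $a$ is chosen — but the careful reduction of $\partial\G_P\neq\partial\G$ to a single isotropy witness $(\chi_0,k,X)$ with the precise meaning that some $\bar\J(S)$-set, but no $\J(S)$-set, sits inside $\{s\mid ks=s\}$ at $\chi_0$, together with verifying the action formula for $\rho_{\chi_e}(\un_{D(k,U)})$ on $\ell^2(S)$. The conceptual crux is that an identification made by $\sim_2$ but not by $\sim$ over a boundary point produces exactly such an $a$: the projection $\un_X\in C_r^*(S)$ (legitimate because $X\in\bar\J(S)$) is hit by two different elements of $C_r^*(\G_P)$ that disagree precisely on $\partial\G_P$, so $\ker\rho_{\chi_e}$ escapes $C_r^*(\G_P\setminus\partial\G_P)$.
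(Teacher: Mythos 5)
Your proposal is correct and follows essentially the same route as the paper's own proof: both argue the contrapositive, extract from $\partial\G_P(S)\neq\partial\G(S)$ an isotropy witness $[g,\chi]\sim_2\chi$ with $[g,\chi]\neq\chi$ over a boundary character, and test the same element $\un_{D(g,U)}-\un_U$ (your $a$, the paper's $f$) against Corollary~\ref{cor:iso-sufficient-condition-1}, computing $\rho_{\chi_e}(\un_{D(g,U)}-\un_U)=(\lambda_g-1)\un_{X\setminus\cup_i X_i}=0$ while the restriction to $\partial\G_P$ is nonzero at the non-unit $[g,\chi]$, so the element lies in $\ker\rho_{\chi_e}\setminus C_r^*(\G_P\setminus\partial\G_P)$. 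The only cosmetic difference is that you phrase the witness via a single $X\in\bar\J(S)$ and the extended character, where the paper writes $X\setminus\bigcup_{i=1}^m X_i$ with $X,X_i\in\J(S)$ explicitly; these are the same data and yield the same clopen set $U$.
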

\begin{proof}
We show the contrapositive so assume that $\partial\G_P(S)\neq \partial\G(S)$. Then there exists $[g, \chi]\in \partial\G_P(S)$ with $[g, \chi]\neq \chi$ and $[g, \chi]\sim_2 \chi$. Let $X, X_1,\dots ,X_m\in \J(S)$ be such that $gs=s$ for $s\in X\setminus \cup_{i=1}^m X_i$, $\chi(X)=1$ and $\chi(X_i)=0$ for all $i$. Consider the clopen set $U$ defined by
$$U=\{\eta\in\Omega\mid \eta(X)=1,\ \eta(X_i)=0\ \text{for}\ i=1,\dots,m\}
$$

and put $f=\un_{D(g, U)}-\un_U\in C_r^*(\G_P(S))$. Then $f$ is nonzero on $\partial\Omega$ so $f\notin C_r^*(\G_P\setminus \partial\G_P)$, but identifying $\ell^2((\G_P(S))_{\chi_e})$ with $\ell^2(S)$ we have
$$
\rho_{\chi_e}(f)=(\lambda_g-1)\un_{X\setminus \cup_{i=1}^m X_i}=0.
$$
Then Corollary~\ref{cor:iso-sufficient-condition-1} implies that the bottom map in Diagram~\eqref{eq:commutativediagram} is not an isomorphism.
\end{proof}

To our knowledge the bottom map in Diagram~\eqref{eq:commutativediagram} being an isomorphism does not imply that the two exotic norms $e$ and $e_2$ coincide or that the bottom map in Diagram~\eqref{eq:commutativediagram-G_2} defines an isomorphism. In other words, the above proposition does not tell us that $\partial \G(S)$ is a better model for the boundary quotient than $\partial \G_P(S)$. This is in contrast to the non-boundary case where we showed that $\G(S)$ is the superior groupoid model for $C_r^*(S)$, see \cite[Proposition~2.13]{NS}.\\

There is a condition on $S$ which guarantees precisely that $\partial\G_P(S)=\partial\G(S)$. It is reminiscent of the condition in \cite[Lemma~2.15]{NS} which characterized when $\G_P(S)=\G(S)$.

\begin{proposition}\label{prop:equality-of-boundary-groupoids}
Let $S$ be a left cancellative monoid. We have $\partial\G_P(S)=\partial\G(S)$ if and only if $S$ satisfies the following property: 
for every $g\in I_\ell(S)$ and $X, X_1, \ldots, X_m\in \J(S)$, $X_i\subseteq X$ for all $i$, such that 
$gs=s$ for all $s\in X\setminus \cup_{i=1}^m X_i$, there are $Y_1, 
\ldots ,Y_l\in \J(S)$ such that $X_1, \ldots , X_m, Y_1, \ldots ,Y_l$ form 
a foundation set for $X$ and $g p_{Y_j}=p_{Y_j}$ for all $j$. Here we allow the set $\{Y_1, \dots Y_l\}$ to be empty.
\end{proposition}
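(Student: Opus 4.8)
The plan is to reformulate the equality $\partial\G_P(S)=\partial\G(S)$ as injectivity of the canonical quotient map and then reduce everything to the isotropy groups. Since $\G(S)=\G_P(S)/\!\sim_2$, the quotient gives a canonical surjection $q\colon\G_P(S)\to\G(S)$ which is the identity on the common unit space $\Omega(S)$; as $\partial\Omega(S)$ is invariant in both groupoids, $q$ restricts to a surjection $\partial\G_P(S)\to\partial\G(S)$ fixing $\partial\Omega(S)$ pointwise, so $\partial\G_P(S)=\partial\G(S)$ precisely when this restriction is injective. I would first note that $q$ preserves the source map, so if $q([g,\chi])=q([h,\psi])$ with both elements in $\partial\G_P(S)$ then $\chi=\psi\in\partial\Omega$; multiplying by $[h,\chi]^{-1}$ and using that $q$ is a homomorphism reduces the comparison to the isotropy element $[k,\chi]$ with $k=h^{-1}g$. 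Indeed $[h,\chi]^{-1}[g,\chi]=[h^{-1}g,\chi]\in\G^\chi_\chi$, and $[g,\chi]\sim_2[h,\chi]$ (respectively $[g,\chi]=[h,\chi]$ in $\G_P$) holds if and only if $[k,\chi]\sim_2[\mathrm{id},\chi]$ (respectively $[k,\chi]=[\mathrm{id},\chi]$ in $\G_P$). Unwinding the two equivalence relations, injectivity on the boundary becomes the isotropy statement: for every $\chi\in\partial\Omega$ and $g\in I_\ell(S)$ with $(g,\chi)\in\Sigma$, if there is $W\in\bar\J(S)$ with $\chi(W)=1$ and $gs=s$ on $W$, then there is $A\in\J(S)$ with $\chi(A)=1$ and $gs=s$ on $A$. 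Writing $W=X\setminus\bigcup_{i=1}^m X_i$ with $X_i\subseteq X$, the condition $\chi(W)=1$ becomes $\chi(X)=1$ and $\chi(X_i)=0$, which is exactly the configuration in the statement.

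For the forward implication, assume the stated property and take $\chi\in\partial\Omega$, $g$, and $W=X\setminus\bigcup_i X_i$ with $gs=s$ on $W$. Applying the property to $g,X,X_1,\dots,X_m$ yields $Y_1,\dots,Y_l\in\J(S)$ each fixed by $g$ such that $\{X_1,\dots,X_m,Y_1,\dots,Y_l\}$ is a foundation set for $X$. By Proposition~\ref{prop:foundation-maximal} the clopen set $\{\eta\in\Omega(S)\mid\eta(X)=1,\ \eta(X_i)=0,\ \eta(Y_j)=0\}$ is disjoint from $\partial\Omega$; since $\chi\in\partial\Omega$ satisfies $\chi(X)=1$ and $\chi(X_i)=0$, it cannot lie in this set, so $\chi(Y_{j_0})=1$ for some $j_0$. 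Then $A:=Y_{j_0}\in\J(S)$ witnesses $[g,\chi]=[\mathrm{id},\chi]$ in $\G_P$, giving the isotropy statement and hence injectivity.

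For the converse I would argue by contraposition and construct a boundary character by a compactness argument parallel to the proof of Lemma~\ref{lem:boundaryregular}. If the property fails, fix a witnessing $g\in I_\ell(S)$ and $X,X_1,\dots,X_m\in\J(S)$ with $gs=s$ on $X\setminus\bigcup_i X_i$ for which no finite family of constructible ideals contained in $X$ and fixed by $g$ completes $\{X_1,\dots,X_m\}$ to a foundation set for $X$. Let $\F$ be the collection of such finite families $F$; by hypothesis $\{X_1,\dots,X_m\}\cup F$ is never a foundation set, so by Proposition~\ref{prop:foundation-maximal} the clopen set $U_F:=\{\eta\in\Omega(S)\mid\eta(X)=1,\ \eta(X_i)=0,\ \eta(Y)=0\ \text{for }Y\in F\}$ meets $\partial\Omega$. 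The sets $U_F$ are closed and have the finite intersection property, since $U_{F_1}\cap U_{F_2}=U_{F_1\cup F_2}$, so by compactness of $\partial\Omega$ I can choose $\chi\in\bigcap_{F\in\F}U_F\cap\partial\Omega$. This $\chi$ satisfies $\chi(X)=1$, $\chi(X_i)=0$ for all $i$, and $\chi(Y)=0$ for every constructible $Y\subseteq X$ fixed by $g$.

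Finally I would check that $\chi$ witnesses non-injectivity. With $W=X\setminus\bigcup_i X_i\in\bar\J(S)$ we have $\chi(W)=1$; using that $\chi$ is multiplicative on $\bar\J(S)$ and $W\subseteq\dom g\in\J(S)$ one deduces $\chi(\dom g)=1$, so $(g,\chi)\in\Sigma$ and $[g,\chi]\in\partial\G_P(S)$, while $gs=s$ on $W$ gives $[g,\chi]\sim_2[\mathrm{id},\chi]$. If instead $[g,\chi]=[\mathrm{id},\chi]$ held in $\G_P$, there would be $A\in\J(S)$ with $\chi(A)=1$ fixed by $g$; replacing $A$ by $A\cap X$ keeps it constructible, contained in $X$, and fixed by $g$, with $\chi(A\cap X)=1$, contradicting $\chi(Y)=0$ for all such $Y$. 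Hence $q$ identifies the distinct elements $[g,\chi]$ and $[\mathrm{id},\chi]$ of $\partial\G_P(S)$, so the boundary map is not injective and $\partial\G_P(S)\neq\partial\G(S)$. The main obstacle is this converse construction: the delicate point is producing, via the finite intersection property and compactness of $\partial\Omega$, a single boundary character simultaneously killing every candidate ideal $Y$, together with the bookkeeping that reduces the general comparison $[g,\chi]\sim_2[h,\chi]$ to the isotropy case.
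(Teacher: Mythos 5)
Your proof is correct and follows essentially the same route as the paper's: the forward direction extracts some $\chi(Y_{j_0})=1$ from Proposition~\ref{prop:foundation-maximal} exactly as in the paper, and the converse builds the boundary character $\chi\in\bigcap_{F\in\F}U_F\cap\partial\Omega$ by the same finite-intersection/compactness argument on the clopen sets $U_F$. The only difference is that you spell out details the paper leaves implicit (the reduction of $[g,\chi]\sim_2[h,\chi]$ to the isotropy case via $[h^{-1}g,\chi]$, and the verification that $\chi(\dom g)=1$ so that $[g,\chi]\in\partial\G_P(S)$), which is sound bookkeeping rather than a different method.
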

\begin{proof}
Assume first that $S$ satisfies the condition formulated in the lemma. In order 
to prove that $\partial\G_P(S)=\partial\G(S)$ we assume that $[g, \chi]\sim_2 \chi$ for some $[g, \chi]\in \partial\G_P(S)$ and prove that then $[g, \chi]=\chi$. Let $X, X_1, \ldots, X_m\in\J(S)$ be such that $gs=s$ for all $s\in X\setminus \cup_{i=1}^m X_i$, $\chi(X)=1$ and $\chi(X_i)=0$ for all~$i$. By assumption, there 
are $Y_1, \ldots , Y_l\in \J(S)$ such that $X_1, \ldots , X_m, Y_1, \ldots ,Y_l$ 
form a foundation set for $X$ and $g p_{Y_j}=p_{Y_j}$ for all $j$. The reason the collection $\{Y_1, \dots ,Y_l\}$ afforded to us cannot be empty is due to Proposition~\ref{prop:foundation-maximal}.
By the same proposition we must have $\chi(Y_j)=1$ for some $j$, hence $[g, \chi]=\chi$.\\

Assume now that the condition formulated in the lemma is not satisfied. Choose $g\in 
I_\ell(S)$ and $X, X_1, \ldots, X_m \in \J(S)$, $X_i\subseteq X$ for all $i$, which witness the negation of that condition. Then $gp_{X\setminus \cup_{i=1}^m X_i}=p_{X\setminus \cup_{i=1}^m X_i}$ and denoting by $\F$ the set of all finite  collections of constructible ideals such that for each member~$Y$, we have $Y\subseteq X$ and $gp_Y=p_Y$, then for any $F\in \F$, $\{X_1, \dots ,X_m\}\cup F$ is not a foundation set for $X$. In other words, the clopen set
\begin{equation}
U_F:=\{\eta\in\Omega(S)\mid \eta(X)=1,\ \eta(X_i)=0\ \text{for}\ i=1,\dots,m,\ \eta(Y)=0\ \text{for}\ Y\in F\}
\end{equation}
must intersect $\partial\Omega$ and by compactness there exists $\chi \in \bigcap_{F\in \F} U_F\cap \partial\Omega$. Then $\chi(X)=1$, $\chi(X_i)=1$ for all $i$ and $\chi(Y)=0$ for all $Y\in \J(S)$ satisfying $Y\subseteq X$ and $gp_{Y}=p_{Y}$. So if $A\in \J(S)$ satisfies $gp_{A}=p_{A}$ then $\chi(A\cap X)=0$ which implies $\chi(A)=0$. Then $[g, \chi]\sim_2 \chi$, but $[g, \chi]\neq \chi$, hence $\partial\G_P(S)\neq \partial\G(S)$.
\end{proof}
\begin{remark}\label{rem: strong-regularity-implies-equality-of-boundary-groupoids}
The condition in Proposition \ref{prop:equality-of-boundary-groupoids} is the same as the condition in strong C$^*$-regularity on the boundary with $n=1$. In particular, it is weaker than strong C$^*$-regularity on the boundary.
\end{remark}
\begin{remark}
The argument in Remark \ref{rem:strong-regularity-on-boundary-remark-2} shows that in the above proposition we may in addition assume that $X\subseteq \dom g$.
\end{remark}

Later we shall see examples of left cancellative monoids $S$ for which $\partial\G_P(S)\neq \partial \G(S)$. In particular this implies that $\G_P(S)\neq \G(S)$.\\

\subsection{The Diagonal Map}
We finish this section by proving that, on the diagonal, the bottom maps in diagrams~\eqref{eq:commutativediagram} and \eqref{eq:commutativediagram-G_2}, which coincide thereon, restrict to isomorphisms.\\

Recall that for any open bisection $U$ in an étale groupoid $\G$ we have $\|f\|_r=\|f\|_u=\|f\|_\infty$ for $f\in C_c(U)$. Here $\|\cdot\|_u$ denotes the norm on $C^*(\G)$ and $\|\cdot\|_\infty$ the usual supremum norm. It follows that $C_0(\Gu)\subseteq C_e^*(\G)$ for any C$^*$-norm $e$ on $C_c(\G)$. The algebra $C_0(\Gu)$ will be referred to as the \textit{diagonal} of the groupoid C*-algebra. Thus $C(\Omega(S))$ is the diagonal of both $C_r^*(\G_P(S))$ and $C_r^*(\G(S))$.\\

On the semigroup side there is a diagonal subalgebra $D_r^*(S)\subseteq C_r^*(S)$, defined by
$$D_r^*(S):=C^*\{\un_X \mid X\in \J(S)\}\subseteq C_r^*(S).$$
By construction $C(\Omega(S))$ is canonically isomorphic to $D_r^*(S)$. The image of $D_r^*(S)$ under the quotient map $q: C_r^*(S) \rightarrow C_r^*(S)/I$ is $D_r^*(S)/(D_r^*(S)\cap I)$. In the end, the bottom maps in diagrams \eqref{eq:commutativediagram} and \eqref{eq:commutativediagram-G_2} restrict to the canonical map
\begin{equation}\label{eq:diagonal-map}
C(\partial\Omega)\rightarrow D_r^*(S)/(D_r^*(S)\cap I),
\end{equation}
on the diagonal. Under the isomorphism $C(\Omega(S))\cong D_r^*(S)$, the ideal $D_r^*(S)\cap I$ is defined by the vanishment on a closed invariant subset $K\subseteq \Omega(S)$,
$$K=\{\eta\in \Omega(S) \mid f(\eta)=0 \ \text{for all }f\in D_r^*(S)\cap I\}.$$
Then $D_r^*(S)/(D_r^*(S)\cap I)\cong C(K)$ and to show that the map~\eqref{eq:diagonal-map} is an isomorphism we need only show that $K=\partial\Omega$. Merely the existence of~\eqref{eq:diagonal-map} implies that $K\subseteq \partial\Omega$. To establish equality the next proposition will be useful. The argument is due to Xin Li, but we include it since it simplifies in the monoid case.

\begin{proposition}[cf.~{\cite[Proposition~5.21]{Li-1}}]
Let $S$ be a left cancellative monoid. Then $\partial\Omega(S)$ is the smallest nonempty closed invariant subset of the groupoid $\G_P(S)$ or $\G(S)$.
\end{proposition}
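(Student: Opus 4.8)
The plan is to verify the two defining properties of a smallest nonempty closed invariant subset: first that $\partial\Omega(S)$ is itself nonempty, closed and invariant, and second that it is contained in every nonempty closed invariant subset $C$. Throughout I work with $\G_P(S)$, since $\G_P(S)$ and $\G(S)$ share the unit space $\Omega(S)$ and every closed $\G(S)$-invariant set is closed $\G_P(S)$-invariant (orbits only grow under the quotient $\G_P(S)\to\G(S)$); the case of $\G(S)$ will then follow at the end.

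For the first property, closedness is immediate from the definition $\partial\Omega(S)=\overline{\Omega_{\text{max}}(S)}$. Nonemptiness follows from Lemma~\ref{lem:maximal-facts}(\ref{lem:maximal-fact2}) applied to $X=S$, which produces a maximal character, and invariance of $\overline{\Omega_{\text{max}}(S)}$ was already recorded immediately after that lemma (and likewise for $\G(S)$).

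The heart of the matter is the second property. Fix a nonempty closed invariant $C$, pick any $\psi\in C$, and let $\chi\in\Omega_{\text{max}}(S)$ be arbitrary; since $C$ is closed and $\partial\Omega(S)=\overline{\Omega_{\text{max}}(S)}$, it suffices to show $\chi\in C$, and I would do so by checking that $C$ meets every basic neighbourhood of $\chi$. The first step is to reduce the neighbourhood basis at $\chi$ to sets of the form $\{\eta\in\Omega(S)\mid\eta(X')=1\}$ with $X'\in\J(S)$ and $\chi(X')=1$. Indeed, a general basic neighbourhood imposes finitely many conditions $\eta(X)=1$ and $\eta(Z_j)=0$ with $\chi(X)=1$, $\chi(Z_j)=0$; when $0\notin I_\ell(S)$ there are no $Z_j$ by part~(iv) of Lemma~\ref{lem:maximal-facts}, and when $0\in I_\ell(S)$ maximality of $\chi$ via Lemma~\ref{lem:maximal-facts}(\ref{lem:maximal-fact3}) yields $W_j\in\J(S)$ with $\chi(W_j)=1$ and $Z_j\cap W_j=\emptyset$, so $X':=X\cap\bigcap_j W_j$ satisfies $\chi(X')=1$ and $\{\eta\mid\eta(X')=1\}$ lands inside the given neighbourhood.

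It then remains to hit each such neighbourhood with a point of $C$, and this is where the key (and pleasantly simple) observation enters: since $X'$ is a nonempty \emph{right} ideal, any $x\in X'$ satisfies $xS\subseteq X'$, so the left translation $x\in I_\ell(S)$, whose domain is all of $S$, maps $S$ into $X'$. Hence $[x,\psi]$ is a well-defined arrow with source $\psi$, and its range $\eta:=r([x,\psi])=\psi(x^{-1}(\cdot))$ satisfies $\eta(X')=\psi(x^{-1}(X'))=\psi(S)=1$, because $x^{-1}(X')=S$. As $C$ is invariant and $\psi\in C$, we get $\eta\in C$ and $\eta(X')=1$, so $C$ meets this neighbourhood. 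Varying $X'$ shows $\chi\in\overline{C}=C$, whence $\Omega_{\text{max}}(S)\subseteq C$ and, by closedness, $\partial\Omega(S)\subseteq C$. The same arrow $[x,\psi]$ also lives in $\G(S)$, so the identical computation, together with the fact that closed $\G(S)$-invariant sets are closed $\G_P(S)$-invariant, gives the statement for $\G(S)$ as well. The only point needing care is the neighbourhood reduction in the case $0\in I_\ell(S)$, where maximality must be invoked exactly through Lemma~\ref{lem:maximal-facts}(\ref{lem:maximal-fact3}); one might at first expect the difficulty to lie in reaching an arbitrary constructible ideal from an arbitrary orbit, but the right-ideal observation dissolves this entirely.
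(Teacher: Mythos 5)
Your proof is correct and takes essentially the same route as the paper: your reduction of neighbourhoods of a maximal character to sets $\{\eta\in\Omega(S)\mid\eta(X')=1\}$ via Lemma~\ref{lem:maximal-facts}, followed by the observation that any $x\in X'$ gives an arrow $[x,\psi]$ whose range charges $X'$ because $x^{-1}(X')=S$, is precisely the paper's choice $A=X\cap\bigcap_{i=1}^m Y_i$ and $\chi_A=r([s_A,\eta])$ with $s_A\in A$. Your explicit treatment of the case $0\notin I_\ell(S)$ and of the passage between $\G_P(S)$ and $\G(S)$ only spells out details the paper leaves implicit.
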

\begin{proof}
Let $\chi\in \Omega_{\text{max}}(S)$ and $\eta\in \Omega(S)$. For every $A\in \J(S)$ such that $\chi(A)=1$, take $s_A\in A$ and put $\chi_A=r([s_A, \eta])$. Then $\chi_A(A)=\eta({s_A}^{-1}A)=\eta(S)=1$. We claim that $\chi \in \overline{\{\chi_A \mid \chi(A)=1\}}$ which will complete the proof. For this, let $U$ be a neighboorhood of $\chi$ of the form~\eqref{eq:U-neighbourhood2}. By maximality there exists $Y_i\in \J(S)$ disjoint from $X_i$ with $\chi(Y_i)=1$. Put $A=X\cap \bigcap_{i=1}^m Y_i$. Then $\chi(A)=1$ and~$\chi_A\in U$.
\end{proof}

By the proposition, to establish that $K=\partial\Omega$, it suffices to check that $K$ is nonempty. The next lemma implies this, thus completing the proof that the map~\eqref{eq:diagonal-map} is an isomorphism.

\begin{lemma}\label{lem:I-does-not-contain-diagonal}
Let $S$ be a left cancellative monoid. For any nonempty constructible ideal \mbox{$A\in \J(S)\setminus \{\emptyset\}$} we have $\un_A \notin I$. In particular
$$I\cap D_r^*(S)\subsetneq D_r^*(S).$$
\end{lemma}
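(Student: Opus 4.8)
The plan is to separate $\un_A$ from $I$ by a single state, namely the evaluation at a maximal character composed with the canonical expectation onto the diagonal. First I would use that $A$ is nonempty: by Lemma~\ref{lem:maximal-facts}\,(\ref{lem:maximal-fact2}) there is a maximal character $\chi\in\Omega_{\text{max}}(S)\subseteq\partial\Omega$ with $\chi(A)=1$. Let $E\colon C_r^*(S)\to D_r^*(S)$ be the canonical conditional expectation obtained by restricting operators on $\ell^2(S)$ to the diagonal; it is contractive and fixes $D_r^*(S)$ pointwise, so in particular $E(\un_A)=\un_A$. Composing $E$ with the character $\mathrm{ev}_\chi$ on $D_r^*(S)\cong C(\Omega)$ given by $f\mapsto f(\chi)$ yields a state $\omega:=\mathrm{ev}_\chi\circ E$ on $C_r^*(S)$, and the goal becomes to show that $\omega(\un_A)\neq 0$ while $\omega$ annihilates $I$.

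The value $\omega(\un_A)=\un_A(\chi)=\chi(A)=1$ is immediate, so the crux is the behaviour of $\omega$ on the ideal. Here I would use the identity $I=\rho_{\chi_e}(C_r^*(\G_P\setminus\partial\G_P))$ from Equation~\eqref{eq:image1} (established in the proof of Proposition~\ref{prop:comdiagramexists}). The key computation is that $E\circ\rho_{\chi_e}$ is nothing but restriction to the unit space: under the identification $S\cong(\G_P)_{\chi_e}$, for $x\in C_c(\G_P)$ and $s\in S$ one checks directly that the diagonal matrix coefficient $\langle\rho_{\chi_e}(x)\delta_s,\delta_s\rangle$ equals $x(\chi_s)$, the only contribution to the convolution sum coming from the unit with range $\chi_s$. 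Hence $E(\rho_{\chi_e}(x))$ is the element of $D_r^*(S)\cong C(\Omega)$ corresponding to the continuous function $x|_{\Omega}$. When $x$ is supported in $\G_P\setminus\partial\G_P$ this restriction is supported in $\Omega\setminus\partial\Omega$ and so vanishes on $\partial\Omega$; since $E$ and the restriction-to-units map are contractive, the same conclusion passes to the norm closure $C_r^*(\G_P\setminus\partial\G_P)$. Consequently $E(I)\subseteq C_0(\Omega\setminus\partial\Omega)$, so every element of $E(I)$ vanishes at $\chi\in\partial\Omega$ and therefore $\omega(I)=0$.

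Combining the two points, $\omega(\un_A)=1\neq 0=\omega(y)$ for all $y\in I$, so $\un_A\notin I$. The final assertion then follows by specialising $A=S$: since $\un_S=1\notin I$ (equivalently, any nonempty $A$ gives $\un_A\in D_r^*(S)\setminus I$), the intersection $I\cap D_r^*(S)$ cannot be all of $D_r^*(S)$, giving $I\cap D_r^*(S)\subsetneq D_r^*(S)$. The step I expect to require the most care is the identification $E\circ\rho_{\chi_e}=(\,\cdot\,)|_{\Omega}$ together with the passage from $C_c(\G_P\setminus\partial\G_P)$ to its closure: one must verify the diagonal-coefficient formula honestly and invoke continuity of the diagonal expectation and of restriction to the (Hausdorff) unit space, taking into account that $\G_P$ itself may fail to be Hausdorff.
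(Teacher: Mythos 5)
Your key object does not exist in the generality of the paper, and this is a genuine gap rather than a technicality. The diagonal compression $E(T)\delta_s=\langle T\delta_s,\delta_s\rangle\delta_s$ maps $C_r^*(S)$ into $\ell^\infty(S)$ but \emph{not} into $D_r^*(S)$: for $g\in I_\ell(S)$ one has $E(\lambda(g))=\un_{\{s\in S\,\mid\,gs=s\}}$, and fixed-point sets of elements of the inverse hull are in general not approximable by constructible ideals --- this is precisely the phenomenon the paper's regularity conditions are designed to measure. Concretely, for the monoid $R$ of Section~\ref{sec:example}, $E(\lambda_a)$ takes the value $1$ at every $bx_n$ and $0$ at every $by_n$, while $\lim_n\chi_{bx_n}=\lim_n\chi_{by_n}$ in $\Omega(R)$ (any constructible ideal containing two $bx_n$'s contains all $bx_n$'s and $by_n$'s, Lemma~\ref{lem:multiple x_n elements}); hence $\|E(\lambda_a)-F\|\geq 1/2$ for every $F\in D_r^*(R)$, and no conditional expectation ``restriction to the diagonal'' onto $D_r^*(R)$ exists. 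On the groupoid side the same failure hits your identification $E(\rho_{\chi_e}(x))=x|_\Omega\in C(\Omega)$: your matrix-coefficient formula $\langle\rho_{\chi_e}(x)\delta_s,\delta_s\rangle=x(\chi_s)$ is correct, but when $\G_P(S)$ is non-Hausdorff the restriction of $x\in C_c(\G_P)$ to the unit space is in general only Borel, not continuous (a net $\chi_{s_i}$ may converge simultaneously to $\chi\in\Omega$ and to a nontrivial isotropy element at which $x\neq 0$), so $\mathrm{ev}_\chi\circ E$ is not a well-defined state; moreover pointwise vanishing of $x|_\Omega$ on $\partial\Omega$ --- which is all that support in $\G_P\setminus\partial\G_P$ gives you --- would not by itself force such a functional to kill $x$. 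The caveat you flag at the end is thus exactly where the argument breaks, and it breaks on the paper's own central example.

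The strategy can be repaired, and it is instructive to see what the repair costs. Pick a net $(s_i)$ with $\chi_{s_i}\to\chi$ and let $\omega$ be a weak-$*$ cluster point of the vector states $\langle\,\cdot\,\delta_{s_i},\delta_{s_i}\rangle$; then $\omega(\un_A)=\chi(A)=1$. To get $\omega(I)=0$ one argues on the dense span of $I$: for $T=\lambda_g\un_{X\setminus\bigcup_i X_i}\lambda_h$ one has $\langle T\delta_s,\delta_s\rangle\neq 0$ only if $s\in h^{-1}X\setminus\bigcup_i h^{-1}X_i$, the family $\{h^{-1}X_i\}$ is again a foundation set for $h^{-1}X$, and by Proposition~\ref{prop:foundation-maximal} the corresponding clopen subset of $\Omega$ misses $\partial\Omega$, so the vector states eventually vanish on $T$; contractivity passes this to the closed span (and this also lets you bypass Equation~\eqref{eq:image1} entirely). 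But note that the two nontrivial ingredients of this repair --- localizing the diagonal of $T$ in $h^{-1}X\setminus\bigcup_i h^{-1}X_i$ and the stability of foundation sets under $h^{-1}$ --- are exactly the computations in the paper's proof, which then finishes more elementarily: Proposition~\ref{prop:noncovering-of-foundation-differences} produces, for each finite sum $T$, an honest point $s\in A$ with $T\delta_s=0$, whence $\|\un_A-T\|\geq 1$, with no characters or states at all. So your character-separation route is viable after surgery, but the surgery reinstates the combinatorial core you were hoping to outsource to the (nonexistent) expectation.
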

\begin{proof}
Let us generalize some previous notation. For a subset $B\subseteq S$, let $p_B$ denote the identity function on $B$, viewed as a partial bijection on $S$. For a partial bijection $f$ on $S$, let $\lambda_f\in B(\ell^2(S))$ denote the operator determined by
\begin{equation}
\lambda_f(\delta_s)=\begin{cases}
\delta_{fs} & \text{if }s\in\dom f,\\
0 & \text{otherwise.}
\end{cases}
\end{equation}
Then $I$ is densely spanned by operators of the form
\begin{equation}\label{eq:operatorsinI}
\lambda_g \un_{X\setminus \bigcup\limits_{i=1}^m X_i} \lambda_h=\lambda_{gp_{X\setminus \bigcup\limits_{i=1}^m X_i}h}\neq 0
\end{equation}
where $g, h\in I_\ell(S)$ and $\{X_1, \ldots, X_m\}$ is a foundation set for $X$.
Now
$$\dom(gp_{X\setminus \bigcup\limits_{i=1}^m X_i}h)\subseteq \dom(p_{X\setminus \bigcup\limits_{i=1}^m X_i}h)=h^{-1}(X\setminus \bigcup\limits_{i=1}^m X_i)=h^{-1}X\setminus \bigcup\limits_{i=1}^m h^{-1}X_i.$$
Therefore
\begin{align}\label{eq:kernelcontains}
\overline{\text{span}}\left\{\delta_s \mid s\notin h^{-1}X \setminus \bigcup\limits_{i=1}^m h^{-1}X_i\right\}
&\subseteq 
\overline{\text{span}}\left\{\delta_s \mid s\notin \dom\left(gp_{X\setminus \bigcup\limits_{i=1}^m X_i}h\right)\right\} \notag \\
&= \ker \left(\lambda_g \un_{X\setminus \bigcup\limits_{i=1}^m X_i} \lambda_h\right)
\end{align}

We assert that $\{h^{-1}X_1, \ldots ,h^{-1}X_m\}$ is a foundation set for $h^{-1}X$. Indeed, suppose that there exists $Y\in \J(S)$ such that
$$\emptyset \neq Y\subseteq h^{-1}X \setminus \bigcup\limits_{i=1}^m h^{-1}X_i.$$
Applying $h$ to both sides yields
$$\emptyset \neq hY \subseteq h(h^{-1}X\setminus \bigcup\limits_{i=1}^m h^{-1}X_i)=h(h^{-1}(X\setminus \bigcup\limits_{i=1}^m X_i))\subseteq X\setminus \bigcup\limits_{i=1}^m X_i$$
which is a contradiction. In conclusion, if $T$ is in the span of operators of the form~\eqref{eq:operatorsinI}, there are finitely many differences $X^j\setminus 
\bigcup_{i=1}^{m_j}X_i^j$, $j=1, \ldots, n$, of constructible ideals 
such that $\{X_1^j, \ldots ,X_{m_j}^j\}$ is a foundation set for $X^j$, and
$$\overline{\text{span}}\left\{\delta_s \mid s\notin X^j\setminus 
\bigcup\limits_{i=1}^{m_j}X_i^j \text{ for }j=1, \ldots, n\right\}\subseteq \ker(T).$$
Then Proposition~\ref{prop:noncovering-of-foundation-differences} implies that for any nonempty $A\in \J(S)$ there is an $s\in A$ such that $T(\delta_s)=0$. Then $\|\un_A-T\|\geq 1$. Since this holds for any $T$ in the linear span of operators of the form~\eqref{eq:operatorsinI}, we have $\un_A\notin I$.
\end{proof}

\section{A non-Regular Monoid which is Strongly Regular on the Boundary}\label{sec:example}

Consider the monoid $R$ with identity $e$ given by the monoid representation
\begin{align}
R=\langle a, b, c, d, f, x_n, y_n \ (n\in\mathbb{Z}) \mid & \ \ abx_n=bx_n,\ aby_n=by_{n+1},\nonumber\\
 &\ \ cbx_n=bx_{n+1},\  cby_n=by_n,\nonumber\\ 
 &\ \ dbx_nx=bx_nx,\  fby_nx=by_nx \nonumber\\  
 &\ \ (n\in \mathbb{Z},\ x\in\{a, b, c, d, f, x_k, y_k \ (k\in\mathbb{Z})\})\rangle.\label{eq:monoid-R}
\end{align}
We note that the last two relations are equivalent to $$dbx_nw'=bx_nw' \quad \text{and} \quad fby_nw'=by_nw'$$ for all $w'\in R\setminus \{e\}$ and $n\in \Z$.\\

The goal of this section is to show the following.

\begin{theorem}\label{thm:Rproperties}
The monoid $R$ defined by~\eqref{eq:monoid-R} is left cancellative, strongly C$^*$-regular on the boundary and the groupoid $\partial\G_P(R)=\partial\G(R)$ is Borel amenable. The bottom maps in diagrams~\eqref{eq:commutativediagram} and~\eqref{eq:commutativediagram-G_2} coincide and define an isomorphism $C_r^*(\partial\G(R)) \cong C_r^*(R)/I$. $R$ is not C*-regular and the map $(\rho_{\chi_e})_2: C_r^*(\G(R))~\rightarrow~C_r^*(R)$ is not an isomorphism.
\end{theorem}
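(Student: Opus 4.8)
The plan is to establish a normal form for $R$ first and to read everything else off from it. All six defining relations strictly decrease word length when oriented left to right, so the associated rewriting system terminates; I would then resolve the finitely many types of critical pairs to prove confluence, obtaining unique normal forms, from which left cancellativity is immediate. The same normal form lets me describe $I_\ell(R)$, the constructible ideals $\J(R)$, and, most importantly, the fixed sets $\{s\in R\mid hs=s\}$ for $h\in I_\ell(R)$. The structure to extract is: $a$ fixes the whole branch $bx_nR$ (including the \emph{bare points} $bx_n$) while shifting $by_k\mapsto by_{k+1}$; $c$ does the mirror image; and $d$ (resp.\ $f$) fixes only the branch-interior $bx_nR\setminus\{bx_n\}=bx_n(R\setminus\{e\})$, which is itself constructible, while not fixing the bare point. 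The presentation is invariant under the symmetry $a\leftrightarrow c,\ x_n\leftrightarrow y_n,\ d\leftrightarrow f$, which I would use to halve the casework. I would also record that $B:=\bigcup_n bx_nR\cup\bigcup_k by_kR$ lies in $\J(R)$ (e.g.\ as $\ran(a)\cap bR$), whereas the single branch-union $\bigcup_n bx_nR$ does \emph{not} lie in $\bar\J$, and describe $\Omega(R)$, $\Omega_{\text{max}}(R)$ and $\partial\Omega(R)$ concretely in terms of these ideals.

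Next I would verify strong C$^*$-regularity on the boundary and the two groupoid equalities. By Remark~\ref{rem:strong-regularity-on-boundary-remark-2} it suffices to treat $X\subseteq\dom h_k$. Given $\emptyset\ne X\setminus\bigcup_iX_i\subseteq\bigcup_k\{s\mid h_ks=s\}$, the point is that the \emph{constructible} ideals fixed by the $h_k$ (the branch-interiors $bx_n(R\setminus\{e\})$, the full branches $bx_nR$, and their $\Phi$-images) always form a foundation set for $X$: the only points they can miss are bare points, and a single bare point contains no nonempty constructible ideal, so the foundation condition of Definition~\ref{def:strong-regularity-on-boundary} is met. This is exactly the gap between ``foundation set'' and ``cover''. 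For $\G_P(R)=\G(R)$ and $\partial\G_P(R)=\partial\G(R)$ I would check the criterion of Proposition~\ref{prop:equality-of-boundary-groupoids} (the $n=1$ case of the above): every $\bar\J$-witness that $g$ fixes a neighbourhood can be refined to a $\J$-witness, again because $d,f$ supply the constructible branch-interior ideals.

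The crux, and the main obstacle, is the simultaneous failure of C$^*$-regularity and of the isomorphism $(\rho_{\chi_e})_2$. For non-regularity I would take $h_1=a$, $h_2=c$ and $X=B\in\J(R)$, so that $\emptyset\ne B\subseteq\{s\mid as=s\}\cup\{s\mid cs=s\}$, and claim $B$ admits no cover by finitely many $\bar\J$-ideals fixed by $a$ or by $c$. Indeed an $a$-fixed ideal lies in the $x$-branches and a $c$-fixed one in the $y$-branches, so covering the bare points $bx_n$ (fixable only by $a$, and then only through the full branch $bx_nR$) would require infinitely many branch ideals, the essential lemma being $\bigcup_n bx_nR\notin\bar\J$; this contradicts finiteness and shows $R$ is not C$^*$-regular (Definition~\ref{def:regularity2}). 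For the representation I would locate the interior, non-Hausdorff point $z_\infty=\lim_n\chi_{bx_n}\in\Omega(R)\setminus\partial\Omega(R)$, at which $a$ gives ghost isotropy $[a,z_\infty]\ne z_\infty$ detected by the net $(\chi_{bx_n})$ in $Y$, so that Condition~\ref{condition2} holds at $z_\infty$. Since this only shows that the \emph{sufficient} condition of Proposition~\ref{prop:weak-containmenttwo} fails, the hard part is to prove \emph{directly} that $\rho_{z_\infty}$ is not weakly contained in $\bigoplus_s\rho_{\chi_s}$: I would do this by a norm comparison on the infinite amenable isotropy group $\G(R)_{z_\infty}^{z_\infty}$, using that it contains the trivial character to obtain a lower bound $|\sum_g c_g|\le\lVert\rho_{z_\infty}(f)\rVert$ for $f=\sum_g c_g\un_{D(g,U)}$ supported on the isotropy, while the partial-shift operators $\rho_{\chi_s}(f)$ on the principal orbits have strictly smaller norm for a suitable choice of coefficients. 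This produces $f$ with $\lVert f\rVert_r>\sup_s\lVert\rho_{\chi_s}(f)\rVert=\lVert(\rho_{\chi_e})_2(f)\rVert$, so $(\rho_{\chi_e})_2$ is not isometric and hence not an isomorphism.

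Finally I would prove Borel amenability of the boundary groupoid and assemble the isomorphism. On $\partial\Omega(R)$ the bare points disappear (maximal characters see full branch-interiors), so the residual dynamics is governed only by the commuting shifts and is an amenable action with amenable isotropy; I would exhibit a Borel approximate invariant mean to conclude Borel amenability in the sense of \cite[Definition~2.1]{R-2}. As $\partial\G_P(R)=\partial\G(R)$ is $\sigma$-compact, \cite[Corollary~6.10]{BGHL} then gives $C^*(\partial\G)=C^*_r(\partial\G)$, i.e.\ the exotic norms $e$ and $e_2$ both equal the reduced norm. Combined with strong C$^*$-regularity on the boundary, Theorem~\ref{prop:iso-sufficient-condition-final} and its $\G(S)$-analogue (via Corollary~\ref{cor:iso-sufficient-condition-2}) give isomorphisms $C^*_r(\partial\G_P(R))\cong C^*_r(R)/I$ and $C^*_r(\partial\G(R))\cong C^*_r(R)/I$, and these bottom maps coincide because the two boundary groupoids are equal. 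Besides the norm comparison above, I expect this Borel amenability argument to be the other substantial technical step.
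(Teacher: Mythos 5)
Your overall architecture matches the paper's: normal forms for the word problem (the paper's equations \eqref{eq:equiv-bx_n}--\eqref{eq:equiv-by_nw} are exactly normal-form data, though ``left cancellativity is immediate'' overstates it --- unique normal forms alone do not give injectivity of left multiplication, and the paper still needs the first-letter case analysis of Lemma~\ref{lem:R-cancellation}); the classification of $\J(R)$; the $a$/$c$ branch asymmetry plus the fact that a $\bar\J$-set containing two bare points $bx_n$ must contain $y$-branch elements (Lemma~\ref{lem:multiple x_n elements}) for non-regularity; the foundation-set slack at bare points for strong boundary regularity; and Borel amenability plus $\sigma$-compactness plus \cite{BGHL} to identify $e$ and $e_2$ with the reduced norm, concluded via Theorem~\ref{prop:iso-sufficient-condition-final} and Corollary~\ref{cor:iso-sufficient-condition-2-G_2}. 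Your one real methodological deviation is the proof that $(\rho_{\chi_e})_2$ is not an isomorphism: you plan a quantitative norm comparison at $z_\infty=\lim_n\chi_{bx_n}$, proving failure of weak containment of $\rho_{z_\infty}$ in $\bigoplus_s\rho_{\chi_s}$. That route can be made to work, but it is far harder than necessary: a surjective $*$-homomorphism fails to be injective as soon as it has one nonzero kernel element, and the paper simply exhibits $f=(\un_{D(a,\Omega)}-\un_{\Omega})*(\un_{D(c,\Omega)}-\un_{\Omega})*\un_U$ with $\rho_{\chi_e}(f)=(\lambda_a-1)(\lambda_c-1)\un_X=0$ (since $a$ fixes the $x$-branches and $c$ the $y$-branches pointwise) while $f(z_\infty)=1$, so $\|\rho_{z_\infty}(f)\|\geq 1$ and $f\neq 0$. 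In fact your scheme, optimized, collapses to exactly this element: for coefficients $c$ on the isotropy $\mathbb{Z}^2$ generated by $[a,z_\infty],[c,z_\infty]$, the orbit representations only see the two slice transforms $\hat c(1,\cdot)$ and $\hat c(\cdot,1)$, and the choice $\hat c(s,t)=(1-s)(1-t)$ makes them vanish identically --- no Toeplitz truncation estimates or amenability of the isotropy group are needed.

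The genuine gap is the Borel amenability step, which is the paper's main technical content and which your proposal compresses into a single assertion. The heuristic ``on $\partial\Omega(R)$ the bare points disappear, so the residual dynamics is governed by commuting shifts'' is not an adequate description of the boundary: by Proposition~\ref{prop:boundary-characters-of-R}, $\partial\Omega(R)$ decomposes into \emph{three} invariant strata --- the germ characters $\langle w\bigcup_{n,w'\neq e}x_nw'R\rangle$, their $y$-counterparts, and the characters $\chi_w$ of infinite reduced words, with only the type-$1$ words giving $\Omega_{\text{max}}(R)$ --- and the approximate invariant mean has to be built separately on each stratum (Ces\`aro averages $\frac1n\sum_{m=1}^n\delta_{[w_m^{-1}\cdots w_1^{-1},\,x]}$ along prefixes on the word stratum, and $\frac1n\sum_{m=1}^n\delta_{[c^{-m}bw^{-1},\,x]}$, resp.\ $\frac1n\sum_{m=1}^n\delta_{[a^{-m}bw^{-1},\,x]}$, on the germ strata), with invariance resting on the shift lemmas (Corollary~\ref{lem:shifts x_n or x_nx}) and on uniqueness of finite and infinite reduced words. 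Establishing the stratification itself requires the infinite-word normal-form machinery and the separation lemmas of the appendix, none of which your plan sketches. Moreover, since $\partial\G(R)$ is non-Hausdorff, Borel measurability of $x\mapsto\int f\,d\mu_n^x$ is not automatic: the condition $[h,\chi_w]\neq[w_m^{-1}\cdots w_1^{-1},\chi_w]$ is not open, and the paper needs a genuine argument (continuity of the relevant function on the infinite-word stratum via Lemma~\ref{lem:reducedinfinitewordsneighborhood}, plus countability of the complementary strata). Finally, ``amenable action with amenable isotropy'' does not by itself yield Borel amenability in this topological, non-Hausdorff setting, so until the boundary classification and the explicit means are supplied, the central claim of the theorem remains unproven in your outline.
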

Note that by Proposition~\ref{prop:equality-of-boundary-groupoids}, the strong C$^*$-regulariy of $R$ on the boundary will imply that $\partial\G_P(R)=\partial\G(R)$. 
The proof of the theorem requires several lemmas. Some are proved in this section, others are proved in the appendix. The main challenge is to prove that the boundary groupoid $\partial\G_P(R)=\partial\G(R)$ is Borel amenable, which will imply that the $e$ and $e_2$ norms from the short exact sequences \eqref{eq: short-exact-boundary} and \eqref{eq:short-exact-sequences} are the reduced norm. Then Corollary~\ref{cor:iso-sufficient-condition-2} implies that the map $C_e^*(\partial\G(R))=C_r^*(\partial\G(R))\rightarrow C_r^*(R)/I$ is an isomorphism.\\

\subsection{Cancellation Property}
We begin by proving that $R$ is left cancellative.  Consider the set $\RR$ of finite words in the alphabet $\{a,b,c, d, f, x_n,y_n\ (n\in\Z)\}$. We include the empty word in $\RR$ and denote it by $\epsilon$. For a word $s$ and letter $x$, let $m(x: s)$ denote the number of occurences of $x$ in $s$. We write $\con(s)$ for the set of letters which occur in $s$, and~$\alpha(s)$ and $\beta(s)$ for the first and last letter of $s$. If $s$ begins with the word $t$, $s- t$ denotes the word obtained by removing $t$ from $s$. If $s$ ends with the word $t$, $s-_Rt$ denotes the word obtained by removing $t$ from $s$.\\

We say that two words $s$ and $t$ are equivalent, and write $s\sim t$ or $s=t$, if they represent the same element of $R$. If $s$ and $t$ are equal as words, we write $s\equiv t$. Let $\tau\subset\RR\times\RR$ be the symmetric set of relations defining~$R$,~so
\begin{align}
\tau=\{(abx_n,bx_n),(bx_n,abx_n),(aby_n,by_{n+1}),(by_{n+1},aby_n),\nonumber \\
 (cbx_n,bx_n),(bx_n,cbx_n),(cby_n,by_{n+1}),(by_{n+1},cby_n),\nonumber \\
 (dbx_nx,bx_nx),(bx_nx, dbx_nx),(fby_nx,by_nx),(by_nx,fby_nx)\nonumber \\
 (n\in\Z,\ x\in\{a, b, c, d, f, x_n, y_n \ (n\in\mathbb{Z})\})\}.
\end{align}

We refer to words in the defining relations of $R$, $abx_n, bx_n, aby_n,$ etc. as \emph{$\tau$-words}. By a \emph{~$\tau$-sequence} we mean a finite sequence $s_0,\dots,s_n$ of words such that for every $i=1,\dots, n$ we can write $s_{i-1}\equiv c_ip_id_i$ and $s_i\equiv c_iq_id_i$ with $(p_i,q_i)\in\tau$. Then by definition, $s\sim t$ if and only if there is a $\tau$-sequence $s_0,\dots,s_n$ with $s_0\equiv s$ and $s_n\equiv t$. If there exists such a $\tau$-sequence of length one (i.e. $n=1$) we say that $s$ and $t$ are elementary equivalent.\\

Adopting notation from Paper $1$, we shall write $s \perp_0 t$ if every $\tau$-word in $st$ which begins in $s$, ends in $s$. We write $s \perp t$ if for all words $s'$ and $t'$ such that $s\sim s'$, $t \sim t'$, we have $s'\perp_0 t'$. For example, we have
$$a \perp b^2ac, \quad bx_nb \perp bx_k, \quad f \perp bx_n.$$
Also note that $bx_n \not\perp a$, for instance. We will repeatedly use that if $s\perp t$ and $st\sim w$ for a word $w$, then $w\equiv s't'$ for some $s'\sim s$ and $t' \sim t$.

We have the following:
\begin{equation}\label{eq:equiv-bx_n}
[bx_n]=\{rbx_{n-m(c:r)} \mid \con(r)\subseteq\{a,c\}\},
\end{equation}
\begin{equation}\label{eq:equiv-by_n}
[by_n]=\{rby_{n-m(a:r)} \mid \con(r)\subseteq\{a,c\}\}.
\end{equation}
For $w\neq \epsilon$ we have
\begin{equation}\label{eq:equiv-bx_nw}
[bx_nw]=\{sbx_{n-m(c:s)}w' \mid \con(s)\subseteq\{a,c,d\}, \ w'\sim w\},
\end{equation}
\begin{equation}\label{eq:equiv-by_nw}
[by_nw]=\{sby_{n-m(a:s)}w' \mid \con(s)\subseteq\{a,c, f\}, \ w'\sim w\}.
\end{equation}

We limit ourselves to proving Equation~\eqref{eq:equiv-bx_nw}, by symmetry this also 
proves~\eqref{eq:equiv-by_nw}. Clearly the RHS of~\eqref{eq:equiv-bx_nw} is 
contained in the LHS and we can focus on the reverse inclusion. Since 
$bx_nw\in \text{RHS}$ it suffices, by induction on the length of a $\tau$-sequence, to see that the RHS is closed with respect to elementary equivalence.

Suppose $w' \sim w$ and $sbx_{n-m(c:s)}w'\equiv z'pz''$, where $\con(s)\subseteq \{a, c,d\}$ and $(p,q)\in \tau$. We must show that $z'qz''$ is of the same form. If $p$ 
begins in $w'$ then $z'qz''\equiv sbx_{n-m(c:s)}w''$ where $w''\sim w'\sim w$ as 
desired. If $p$ does not begin in $w'$ then either $p\equiv bx_{n-m(c:s)}$, $p\equiv abx_{n-m(c:s)}$, $p\equiv~cbx_{n-m(c:s)}$, $p\equiv dbx_{n-m(c:s)}\alpha(w)$  or $p \equiv bx_{n-m(c:s)}\alpha(w)$. Then we have the following possibilities:
\begin{align*}
z'qz'' &\equiv sabx_{n-m(c:s)}w',         &\quad z'qz'' &\equiv scbx_{n-m(c:s)-1}w', \\
z'qz'' &\equiv sdbx_{n-m(c:s)}w',         &\quad z'qz'' &\equiv (s-_Ra)bx_{n-m(c:s)}w', \\
z'qz'' &\equiv (s-_Rc)bx_{n-m(c:s)+1}w',  &\quad z'qz'' &\equiv (s-_Rd)bx_{n-m(c:s)}w'.
\end{align*}
In any case $z'qz''$ is of desired form. Hence~\eqref{eq:equiv-bx_nw} holds.

\begin{lemma}\label{lem:R-cancellation}
The monoid $R$ is left cancellative.
\end{lemma}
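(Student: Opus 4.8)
The plan is to reduce left cancellativity to cancellation by single generators, and then to treat the generators according to how they interact with the defining relations, using the perpendicularity relation $\perp$ together with the explicit equivalence classes \eqref{eq:equiv-bx_n}--\eqref{eq:equiv-by_nw}. First I would note that it suffices to prove that every generator $g$ is left cancellable, i.e.\ that $gs\sim gt$ implies $s\sim t$: writing an arbitrary element as a product $g_1\cdots g_k$ of generators and cancelling the $g_i$ one at a time from the left yields the general statement. The generators $x_n$ and $y_n$ are immediate. No $\tau$-word begins with $x_n$ or $y_n$, and in every $\tau$-word such a letter occurs only immediately after a $b$; hence in any $\tau$-sequence realizing $x_n s\sim x_n t$ the initial letter can never lie inside a replaced $\tau$-word, so it survives every step and may be stripped, leaving a $\tau$-sequence from $s$ to $t$. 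The same argument handles $y_n$.

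The substance lies in the generators $a,b,c,d,f$, which occur at the front of the length-three defining relations and can therefore be absorbed or shift an adjacent index. Here I would organize the argument around $\perp$. Since each such letter $g$ is a single letter, no $\tau$-word (all of length $\ge 2$) is a subword of $g$, so $[g]=\{g\}$; moreover the $\tau$-words beginning with $g$ are precisely those listed in the presentation. One then checks that $g\perp s$ holds exactly when $[s]$ contains no representative of the corresponding special shape: e.g.\ $a\perp s$ iff $s$ is equivalent to no word beginning $bx_\bullet$ or $by_\bullet$, while $b\perp s$ iff $s$ is equivalent to no word beginning $x_\bullet$ or $y_\bullet$. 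When both $s$ and $t$ are \emph{generic} in this sense, cancellation is immediate from the factorization property quoted in the text: from $gs\sim w$ and $g\perp s$ we get $w\equiv gs'$ with $s'\sim s$, and symmetrically $w\equiv gt'$ with $t'\sim t$, whence $gs'\equiv gt'$ as words and $s\sim s'\equiv t'\sim t$.

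It remains to handle the case where $s$ (or $t$) has the special shape, and here I would invoke \eqref{eq:equiv-bx_n}--\eqref{eq:equiv-by_nw}. If, say, $b\perp t$, then by factorization every representative of $[bt]$ is of the form $bt'$ with $t'\sim t$, which by these descriptions cannot coincide with a member of $[bx_nw]$ or $[by_nw]$ unless $t\sim x_nw$ or $t\sim y_nw$; this rules out the mixed case $gs\sim gt$ with one side special and the other generic \emph{without} circularly invoking cancellation. When both sides are special, prepending $g$ lands in one of the described classes --- absorbing $g$ in the cases $abx_\bullet$, $cby_\bullet$, $dbx_\bullet$, $fby_\bullet$, or shifting the index by one in the cases $aby_\bullet$, $cbx_\bullet$ --- and the descriptions, together with the invariance of the numbers of $b$'s, of $x$-letters and of $y$-letters (all preserved by $\tau$), let one read off the index and the tail $[w]$ and recover $[s]=[t]$.

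The main obstacle is precisely this special-case bookkeeping for $a,c,d,f,b$: one must verify that the front interaction is completely accounted for by \eqref{eq:equiv-bx_n}--\eqref{eq:equiv-by_nw}, tracking the coupling between deleting a trailing operator and shifting the neighbouring index (the source of the arithmetic $x_{n-m(c:r)}$ and $y_{n-m(a:r)}$ appearing there), keeping the $x$- and $y$-branches separate, and respecting the subtlety that the relations for $d$ and $f$ fire only when a further letter follows, so that $d$ and $f$ are genuinely absorbed only before a nonempty tail $bx_nw$ or $by_nw$. The generic cases, by contrast, fall out formally from $\perp$-factorization, so the real work is confined to the finitely many special shapes.
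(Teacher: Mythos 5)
Your proposal is correct and follows essentially the same route as the paper's proof: reduce to cancelling a single generator, dispose of the non-interacting cases via the factorization property of $\perp$, and resolve the interacting cases (prefixes meeting $bx_\bullet$ or $by_\bullet$ shapes, with the $d$/$f$ nonempty-tail subtlety) using the explicit class descriptions \eqref{eq:equiv-bx_n}--\eqref{eq:equiv-by_nw}. The only difference is organizational --- you dichotomize each generator's argument by a generic/special criterion phrased through $\perp$, while the paper peels off the maximal $\{a,c,d,f\}$-prefix and splits on occurrences of $d$ and $f$ --- but the tools and case content coincide.
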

\bp
It suffices to show that for all letters $x$ and words $w$ and $w'$, the equivalence $xw\sim xw'$ implies that $w\sim w'$.\\

\underline{Case $x=x_n, y_n$:}\\
The only word equivalent to $x$ is $x$ itself, and we have $x\perp w$, so the equivalence $xw\sim xw'$ furnishes the equivalence $w\sim w'$.\\

\underline{Case $x=b$:}\\
If $w=\epsilon$ or $\alpha(w)=a, b, c, d, f$, then $x\perp w$ and we are done similarly to the first case. If $\alpha(w)=x_n, y_n$ we may by symmetry focus on the former case and write $xw\equiv bx_nv$.\\
\indent If $v=\epsilon$, then~\eqref{eq:equiv-bx_n} implies that $xw'\equiv rbx_{n-m(c:r)}$ where $\con(r)\subseteq\{a,c\}$. The only such word beginning in $b$ is $bx_n$ itself. Then $w'=x_n$ and we are done.\\
\indent If $v\neq \epsilon$, then~\eqref{eq:equiv-bx_nw} implies that $xw'\equiv sbx_{n-m(c:s)}v'$ with $\con(s)\subseteq \{a, c, d\}$ and $v'\sim v$. The only such words that begin with $b$ are $bx_nv'$ with $v' \sim v$. Then $w\equiv x_nv\sim x_nv'\equiv w'$.\\

\underline{Case $x=a, c, d, f$:}\\
Write $xw\equiv sv$ where $\con(s)\subseteq \{a, c, d, f\}$ and $\alpha(v)\neq a, c, d ,f$. If $v=\epsilon$ or $\alpha(v)=x_n ,y_n$, then $xw\equiv s\perp v$ and we are done since $s$ is only equivalent to itself. Otherwise we can write $xw\equiv sbu$. If  $u=\epsilon$ or $\alpha(u)=a, b, c, d, f$ we are similarly done and otherwise we can write $xw\equiv sbx_n \omega$ or $xw\equiv sby_n\omega$. By symmetry we may focus on the former case. Then, if the letter $f$ occurs in $s$ we can write $s\equiv s_1fs_2$. Then $xw\equiv s_1f\perp s_2bx_n\omega$ and are done, similar to before. Hence we may assume that $f$ does not occur in $s$, i.e. that $\con(s)\subseteq \{a, c, d\}$.\\

Suppose that $\omega=\epsilon$. If the letter $d$ occurs in $s$ we can write $s\equiv s_1ds_2$ and then $xw\equiv s_1d\perp s_2bx_n$, so we are done. If $d$ does not occur in $s$ then by~\eqref{eq:equiv-bx_n}, $xw'\equiv s'bx_{n+m(c:s)-m(c:s')}$ where $\con(s')\subseteq \{a, c\}$ and $s'\neq \epsilon$. The same equation then shows that $w\equiv (s- x)bx_n\sim (s'-x)bx_{n+m(c:s)-m(c: s')}\equiv ~w'$.\\

If $\omega\neq \epsilon$ then by~\eqref{eq:equiv-bx_nw} we have that $xw'\equiv s'bx_{n+m(c:s)-m(c: s')}\omega'$ where $\con(s')\subseteq \{a, c, d\}$ $\omega'\sim \omega$. The same equation then shows that $w\equiv (s- x)bx_n\omega\sim (s'-x)bx_{n+m(c:s)-m(c: s')}\omega'\equiv w'$.\\
\ep

In the appendix we prove that constructible right ideals of $R$ consists of precisely the sets:
\begin{equation}\label{eq:R-constructible-ideals-first}
wR,\text{ }w(\bigcup\limits_n x_nR\cup \bigcup\limits_n y_nR), \text{ }w(\bigcup\limits_{n, w'\neq e} x_nw'R), \text{ }w(\bigcup\limits_{n, w'\neq e} y_nw'R), \text{ }w(R\setminus\{e\}), \text{ }\emptyset \quad (w\in R).
\end{equation}
To prove that $R$ is not C$^*$-regular we need the technical lemma which is proved in the appendix.

\begin{lemma}\label{lem:multiple x_n elements}
If $r\in R$ and a constructible ideal $A\in \J(R)$ contains two $rx_n$ (resp. $ry_n$) elements, then $A$ contains all $rx_n$ and $ry_n$ elements. If $r, t_1, t_2\in R$ and a constructible ideal of $R$ contains $rx_{n_1}t_1$ and $rx_{n_2}t_2$ (resp. $ry_{n_1}t_1$ and $ry_{n_2}t_2$), $n_1\neq n_2$, then it contains the constructible ideal $r\Big(\bigcup\limits_{n, w'\neq e} x_nw'R\Big)$ (resp. $r\Big(\bigcup\limits_{n, w'\neq e} y_nw'R\Big)$).
\end{lemma}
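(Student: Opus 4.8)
The plan is to run a case analysis over the explicit list of constructible right ideals in~\eqref{eq:R-constructible-ideals-first}. I would write the given ideal as $A=wT$, where $w\in R$ and $T$ is one of the tail types $R$, $R\setminus\{e\}$, $\bigcup_n x_nR\cup\bigcup_n y_nR$, $\bigcup_{n,w'\neq e}x_nw'R$, $\bigcup_{n,w'\neq e}y_nw'R$, or $\emptyset$ (the empty ideal being vacuous). Each tail type is itself a right ideal, and crucially \emph{no} tail type pins down an $x$- or $y$-index; an index is committed to only through the prefix $w$. Thus the heart of both assertions is the \emph{non-commitment claim}: a common left divisor $w$ of $rx_{n_1}$ and $rx_{n_2}$ (resp.\ of $rx_{n_1}t_1$ and $rx_{n_2}t_2$) with $n_1\neq n_2$ cannot reach past the terminal $x$-block carrying the differing index. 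Using the equivalence class descriptions~\eqref{eq:equiv-bx_n}--\eqref{eq:equiv-by_nw} together with left cancellativity (Lemma~\ref{lem:R-cancellation}), I would show that $w$ must then left-divide $r$, or, when $r=r''b$, left-divide a word $r''\rho$ with $\con(\rho)\subseteq\{a,c\}$; in either case $w$ stops no later than the $b$ preceding the terminal $x_{n_i}$. A later $w$ is impossible because committing to the index $n_1$ would force $x_{n_1}$ to occur in $rx_{n_2}$ as well, contradicting $n_1\neq n_2$ by the rigidity of the classes in~\eqref{eq:equiv-bx_n}--\eqref{eq:equiv-by_nw}.

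For the first assertion, once $w$ is known to stop before the $x$-block I would check, for each admissible $T$, that $A=wT$ contains every $rx_n$ and every $ry_n$. Two mechanisms drive this. First, the relations $cbx_n=bx_{n+1}$, $abx_n=bx_n$, $aby_n=by_{n+1}$, $cby_n=by_n$ guarantee that all the $bx_m$ (resp.\ all the $by_m$) share the same left divisors up to the $b$, so that $w$ dividing the element for one index forces it to divide the elements for all indices, keeping $rx_m,ry_m\in A$ for all $m$. Second, the divisibility facts $bx_n,by_n\in aR\cap cR$ (for instance $by_n=a\,by_{n-1}$ and $by_n=c\,by_n$) produce the crossover from the $x$-block to the $y$-block in exactly the cases where $w$ absorbs part of the $\{a,c\}$-prefix, e.g.\ $w=r''a$, where $rx_n=w\,bx_n$ and $ry_n=w\,by_{n-1}$ both lie in $wR$.

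For the second assertion the same non-commitment of $w$ is established verbatim for $rx_{n_1}t_1$ and $rx_{n_2}t_2$. Since each tail type is a right ideal and the target $r\big(\bigcup_{n,w'\neq e}x_nw'R\big)=\bigcup_n rx_n(R\setminus\{e\})$ is generated as a right ideal by the elements $rx_n\ell$ with $n\in\Z$ and $\ell$ a letter, it suffices to exhibit each $rx_n\ell$ as $w$ times an element of $T$; here the relation $dbx_n\ell=bx_n\ell$ (for $\ell\neq e$) is convenient when $r$ ends in $b$. The parenthetical $y$-statements follow from the evident symmetry $a\leftrightarrow c$, $x_n\leftrightarrow y_n$, $d\leftrightarrow f$ of the defining relations. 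The main obstacle, and where essentially all the work lies, is the non-commitment claim: proving rigorously that a common left divisor cannot extend into the differing terminal $x$-block. This is precisely where the normal-form analysis~\eqref{eq:equiv-bx_n}--\eqref{eq:equiv-by_nw} and left cancellativity must be combined carefully, and where the case $r=r''b$---in which the $\{a,c\}$-relations create extra left divisors and enable the $x\leftrightarrow y$ crossover---requires the most attention.
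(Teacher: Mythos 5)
Your overall strategy --- a case analysis over the classification \eqref{eq:R-constructible-ideals-first} driven by a structural claim about common left divisors --- is a genuinely different and much heavier route than the paper's, and its load-bearing step is false as stated. Your non-commitment claim asserts that a common left divisor $w$ of $rx_{n_1}$ and $rx_{n_2}$ must left-divide $r$, or, when $r=r''b$, left-divide some $r''\rho$ with $\con(\rho)\subseteq\{a,c\}$, so that $w$ ``stops no later than the $b$ preceding the terminal $x$-block''. Take $r=bx_0$ and $w=dbx_0$. Since $dbx_0x=bx_0x$ for every generator $x$, the principal ideal $A=dbx_0R$ contains $rx_n=bx_0x_n=dbx_0x_n$ for \emph{every} $n\in\Z$, so the hypothesis of the lemma is met; yet $w$ does not left-divide $r$ (by \eqref{eq:equiv-bx_n} no word representing $bx_0$ begins with $d$, and $dbx_0u=bx_0$ with $u\neq e$ would give $bx_0u=bx_0$, contradicting left cancellativity), $r$ is not of the form $r''b$, and $w$ reaches past the unique $b$ of $rx_{n_i}$, absorbing the whole block $bx_0$. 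The same phenomenon occurs with $f$ on the $y$-side and whenever $r$ contains an internal $bx_m$ or $by_m$ block followed by further letters: by \eqref{eq:equiv-bx_nw} the admissible prefixes $s$ satisfy $\con(s)\subseteq\{a,c,d\}$, not $\{a,c\}$. Since you locate ``essentially all the work'' in this claim, and your subsequent verification of the tail types quantifies only over the $w$'s your dichotomy produces, the planned case analysis is incomplete: the lemma's conclusion does still hold for the missed divisors (indeed $rx_n,ry_n\in dbx_0R$ for all $n$), but your proof would never visit them. The remainder of your outline (the crossover relations $bx_n,by_n\in aR\cap cR$, the right-ideal generation argument for the second assertion, and the symmetry $a\leftrightarrow c$, $x_n\leftrightarrow y_n$, $d\leftrightarrow f$) is sound.

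For contrast, the paper sidesteps this combinatorics entirely by a pullback reduction: since $r^{-1}\in I_\ell(R)$, the ideal $r^{-1}A$ is again constructible and contains $x_{n_1},x_{n_2}$ (resp.\ $x_{n_1}t_1$, $x_{n_2}t_2$); for $r=e$ the classification analysis is trivial, because a word beginning with the rigid letter $x_{n_1}$ lies in $wT$ only if $w=e$ or $w$ itself begins with $x_{n_1}$, and the latter is excluded by the presence of the second element with index $n_2\neq n_1$; finally $r(r^{-1}A)\subseteq A$ transports the conclusion back to general $r$. If you wish to keep your direct approach, the non-commitment claim must be restated in accordance with \eqref{eq:equiv-bx_nw} and \eqref{eq:equiv-by_nw}: $w$ may absorb $\{a,c,d\}$-prefixes (resp.\ $\{a,c,f\}$-prefixes) and entire $bx_m$ (resp.\ $by_m$) blocks of $r$ that are followed by a nonempty tail, and only the final index-carrying letter is off-limits --- a considerably messier statement both to prove and to consume in the tail-type verifications.
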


\begin{proposition}
The canonical map $(\rho_{\chi_e})_2: C_r^*(\G(R)) \rightarrow C_r^*(R)$ is not an isomorphism. In particular, $R$ is not C*-regular.
\end{proposition}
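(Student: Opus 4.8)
The plan is to prove directly that $(\rho_{\chi_e})_2$ fails to be injective by exhibiting an explicit nonzero element of $C_c(\G(R))$ in its kernel; non-C$^*$-regularity will then follow formally. Everything is driven by the two partial bijections $g,g'\in I_\ell(R)$ with domain $bR$ given by $g(bw)=abw$ and $g'(bw)=cbw$ (so $g=\lambda_a\lambda_b\lambda_b^*$ and $g'=\lambda_c\lambda_b\lambda_b^*$ at the operator level). Reading off the defining relations of $R$, the map $g$ fixes every element of $\bigcup_n bx_nR$ and shifts $by_nw\mapsto by_{n+1}w$, while $g'$ fixes every element of $\bigcup_n by_nR$ and shifts $bx_nw\mapsto bx_{n+1}w$. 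Hence, setting $X=b\big(\bigcup_n x_nR\cup\bigcup_n y_nR\big)\in\J(R)$ (a constructible ideal by~\eqref{eq:R-constructible-ideals-first}), one has $\{s\in R\mid gs=s\}=\bigcup_n bx_nR$, $\{s\in R\mid g's=s\}=\bigcup_n by_nR$, and $X\subseteq\{s\mid gs=s\}\cup\{s\mid g's=s\}$.

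Next I would form the clopen set $U=\{\chi\in\Omega(R)\mid\chi(X)=1\}$ and the element
\[
F=\un_{D(g,U)}+\un_{D(g',U)}-\un_{D(g'g,U)}-\un_{U}\in C_c(\G(R)),
\]
where $g'g$ is the composite, of domain $X$, satisfying $g'g=g'$ on $\bigcup_n bx_nR$ and $g'g=g$ on $\bigcup_n by_nR$. Using $(\rho_{\chi_e})_2(\un_{D(h,U)})\delta_s=[\chi_s\in U]\,\delta_{hs}$ and identifying $\ell^2((\G(R))_{\chi_e})$ with $\ell^2(R)$, I would compute $(\rho_{\chi_e})_2(F)\delta_s=0$ for every $s$: for $s=bx_nw$ the four contributions are $\delta_{bx_nw}+\delta_{bx_{n+1}w}-\delta_{bx_{n+1}w}-\delta_{bx_nw}=0$, for $s=by_nw$ they are $\delta_{by_{n+1}w}+\delta_{by_nw}-\delta_{by_{n+1}w}-\delta_{by_nw}=0$, and for $\chi_s\notin U$ all four terms vanish. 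Thus $F\in\ker(\rho_{\chi_e})_2$.

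It remains to see that $F\ne 0$ in $C_r^*(\G(R))$, and this is where the combinatorics of $R$ enter. I would take a cluster point $x\in\Omega(R)$ of the net $(\chi_{bx_i})_{i\to\infty}$, so that $x(X)=1$ while $x(bx_nR)=x(by_nR)=0$ for all $n$. The crucial claim is that $[g,x]\ne x$ in $\G(R)$, i.e. that no $A\in\bar\J(R)$ satisfies $x(A)=1$ and $A\subseteq\bigcup_n bx_nR$. This is exactly the point needing Lemma~\ref{lem:multiple x_n elements}: writing $A=Z^0\setminus\bigcup_j Z^j$, the condition $x(Z^0)=1$ forces $Z^0$ to contain two distinct $bx_n$, hence (by the lemma, and since $Z^0$ is a right ideal) all of $X$, in particular $\bigcup_n by_nR$; then $A\subseteq\bigcup_n bx_nR$ forces $\bigcup_n by_nR\subseteq\bigcup_j Z^j$, so some $Z^{j_0}$ contains two distinct $by_n$, hence again all of $X$ and in particular $\bigcup_n bx_nR$, which gives $x(A)=0$, a contradiction. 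The same fixed/shift dichotomy shows the agreement sets are too small to meet the filter of $x$, giving $[g,x]\ne[g',x]$ and $[g,x]\ne[g'g,x]$. Evaluating the four indicator functions at the point $[g,x]\in(\G(R))_x$ then yields $F([g,x])=1$, so $(\rho_x)_2(F)\delta_x\ne 0$ and $F\ne 0$. Therefore $(\rho_{\chi_e})_2$ has nontrivial kernel and is not an isomorphism.

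The ``in particular'' is then formal: were $R$ C$^*$-regular, Condition~\ref{condition2} would fail for $\G(R)$, $Y=\{\chi_s\mid s\in R\}$ and every $x\in\Omega(R)\setminus Y$, so by Proposition~\ref{prop:weak-containmenttwo} each $(\rho_x)_2$ would be weakly contained in $\bigoplus_{s}(\rho_{\chi_s})_2\sim(\rho_{\chi_e})_2$, forcing $(\rho_{\chi_e})_2$ to be isometric and hence an isomorphism, contradicting the previous paragraph. I expect the verification that $[g,x]\ne x$ in $\G(R)$, i.e. the nontriviality of the isotropy produced by $g$ at the cluster point $x$, to be the main obstacle, since it is precisely where the infinite union $\bigcup_n bx_nR$ resists being separated from $\bigcup_n by_nR$ inside any single member of $\bar\J(R)$ and Lemma~\ref{lem:multiple x_n elements} must be invoked twice.
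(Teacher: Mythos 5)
Your proposal is correct and takes essentially the same approach as the paper: your four-term element $F$ is, up to sign and the immaterial restriction of $a$, $c$ to $bR$, exactly the expansion of the paper's $f=(\un_{D(a,\ \Omega(R))}-\un_{\Omega(R)})*(\un_{D(c,\ \Omega(R))}-\un_{\Omega(R)})*\un_U$, and you certify $F\neq 0$ at the same witness character $\chi=\lim_n\chi_{bx_n}$ via the same Lemma~\ref{lem:multiple x_n elements}. The only difference is that you spell out the $\bar\J(R)$-isotropy verifications (that $[g,x]$, $[g',x]$, $[g'g,x]$ and $x$ are pairwise distinct in $\G(R)$) which the paper compresses into the single assertion $f(\chi)=1$.
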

\bp
Let $X=\bigcup\limits_n bx_nR\cup \bigcup\limits_n by_n \in \J(R)$ and let $U=\{\chi\in \Omega(R) \mid \eta(X)=1\}$. Consider the function
$$f=(\un_{D(a, \Omega(R))}-\un_{\Omega(R)})*(\un_{D(c, \Omega(R))}-\un_{\Omega(R)})*\un_U\in C_c(\G(R)).$$
Then $\rho_{\chi_e}(f)=(\lambda_a-1)(\lambda_c-1)\un_X=0$. If we define $\chi=\lim_n \chi_{bx_n}$ then $\chi\in U$ by Lemma~\ref{lem:multiple x_n elements} and $f(\chi)=1$, so $f\neq 0$.
\ep

One can also verify that $R$ is not C*-regular from the definition.

\begin{proposition}
$R$ is not C*-regular.
\end{proposition}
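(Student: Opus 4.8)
The plan is to contradict Definition~\ref{def:regularity2} directly, using $a$ and $c$ as the isotropy elements and the constructible ideal they jointly fix on the $b$-level. Concretely, I would take $h_1=a$, $h_2=c$ (as left translations in $I_\ell(R)$) together with
$$X=\bigcup_{n\in\Z}bx_nR\ \cup\ \bigcup_{n\in\Z}by_nR,$$
which is $b\bigl(\bigcup_n x_nR\cup\bigcup_n y_nR\bigr)$ and hence lies in $\J(R)$ by the list~\eqref{eq:R-constructible-ideals-first}. First I would check the hypothesis: the relations $abx_n=bx_n$ and $cby_n=by_n$ give $a\cdot bx_nr=bx_nr$ and $c\cdot by_nr=by_nr$ for all $r\in R$, so $\emptyset\ne X\subseteq\{s\mid as=s\}\cup\{s\mid cs=s\}$. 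The complementary relations $aby_n=by_{n+1}$ and $cbx_n=bx_{n+1}$, combined with left cancellation (Lemma~\ref{lem:R-cancellation}) and the fact that no defining relation touches a leading $x_n$ or $y_n$, show that $a$ fixes no element $by_nr$ and $c$ fixes no element $bx_nr$. In particular $\{s\mid as=s\}$ is disjoint from every $by_n$, and $\{s\mid cs=s\}$ from every $bx_n$.

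Next I would suppose, for contradiction, that C$^*$-regularity provides $Y_1,\dots,Y_l\in\bar\J(R)$ with $X\subseteq\bigcup_j Y_j$ and indices $k_j\in\{1,2\}$ so that each $Y_j=X^{(j)}\setminus\bigcup_i X_i^{(j)}$ is fixed by $h_{k_j}$. Since the $bx_n$ are pairwise distinct (by~\eqref{eq:equiv-bx_n}) and each lies in some $Y_j$, a pigeonhole argument yields an index $j_0$ with $bx_n\in Y_{j_0}$ for all $n$ in an infinite set $N$. Because $c$ moves every $bx_n$ while $a$ fixes them, $Y_{j_0}$ must be the piece fixed by $a$, hence $Y_{j_0}$ is disjoint from every $by_m$. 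At the same time $X^{(j_0)}\supseteq Y_{j_0}$ contains at least two elements $bx_\bullet$, so Lemma~\ref{lem:multiple x_n elements} forces $X^{(j_0)}$ to contain every $bx_n$ and every $by_m$.

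The contradiction then comes from a second application of the lemma on the subtracted part. Since each $by_m\in X^{(j_0)}$ but $by_m\notin Y_{j_0}$, we get $by_m\in\bigcup_i X_i^{(j_0)}$ for all $m$; pigeonholing, some fixed $X_{i_1}^{(j_0)}$ contains infinitely many $by_m$, so Lemma~\ref{lem:multiple x_n elements} (applied on the $y$-side) makes $X_{i_1}^{(j_0)}$ contain every $bx_n$. But for $n\in N$ we have $bx_n\in Y_{j_0}$, i.e.\ $bx_n\notin X_{i_1}^{(j_0)}$ — impossible. I expect the main obstacle to be the book-keeping at the level of $\bar\J(R)$: the covering pieces are differences of constructible ideals, and the whole argument rests on using Lemma~\ref{lem:multiple x_n elements} twice, once to blow up a positive part $X^{(j_0)}$ and once to blow up a subtracted part $X_{i_1}^{(j_0)}$, thereby showing that the $x$- and $y$-generators, which $a$ and $c$ treat in opposite ways, can never be separated by sets on which a single $h_k$ acts as the identity.
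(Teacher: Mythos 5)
Your proposal is correct and takes essentially the same route as the paper's proof: the same witnesses $X=\bigcup_n bx_nR\cup\bigcup_n by_nR$, $h_1=a$, $h_2=c$, with Lemma~\ref{lem:multiple x_n elements} applied twice, once to the positive part of the difference and once to the subtracted constructible ideals. The paper merely organizes the endgame slightly differently (each subtracted ideal can remove at most one $bx_n$ and one $by_n$ element, so $Y_{j'}$ retains elements of both types and is fixed by neither $a$ nor $c$), which is a cosmetic variation on your argument.
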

\begin{proof}
Let $X=\bigcup\limits_{n\in \mathbb{Z}} bx_nR\cup \bigcup\limits_{n\in \mathbb{Z}}by_nR\in \J(R)$ and $h_1=a$, $h_2=c$. Then
$$X\subseteq \bigcup\limits_{k=1}^2 \{s \mid h_k s=s\}.$$
Now suppose $X\subseteq \cup_{j=1}^l Y_j$ for some $Y_1, \ldots , Y_l\in \bar{\J}(R)$. Then there is a $Y_{j'}$, containing at least two $bx_n$ elements. Write
$$Y_{j'}=A\setminus \bigcup\limits_{i=1}^m A_k$$
for some $A, A_1, \dots ,A_m \in \J(R)$. By Lemma~\ref{lem:multiple x_n elements}, $A$ contains all~$bx_n$ and $by_n$ elements. By the same lemma, each $A_i$ contains at most one $bx_n$ and $by_n$ element. Then $Y_{j'}$ certainly contains both $bx_n$ and $by_n$ elements; in particular $Y_{j'}$ is not fixed by $h_1$ or $h_2$.
\end{proof}

\subsection{Regularity on the Boundary}
The next post on the program is to prove that $R$ is strongly C$^*$-regular on the boundary. We require a technical result which we prove in the appendix. By a generator (of $R$) we mean a letter from the generating set $\{a, b, c, d, f, x_n, y_n \ (n\in \Z)\}$.

\begin{lemma}\label{lem:fix x_n or x_nx}
If $h\in I_\ell(R)$ fixes an $x_n$ element and $\dom h$ contains multiple $x_n$ 
elements, then $h$ fixes all $x_n$ elements. If $x$ is a generator, $h\in I_\ell(R)$ fixes an 
$x_nx$ element and $\dom h$ contains multiple~$x_nx$ elements, then $h$ 
fixes the constructible ideal $\bigcup\limits_{n, w'\neq e} x_nw'R$.
\end{lemma}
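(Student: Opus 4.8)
The plan is to reduce both statements to a single rigidity principle: every $h\in I_\ell(R)$ acts on the family of $x_n$-elements (and, in the presence of a nonempty tail, on the family of elements $x_nw'r$) by a transformation that is \emph{uniform in the index $n$}, in the sense that the image of $x_n$ (resp.\ $x_nw'r$) is obtained from a fixed template by substituting the index. Granting this, the two hypotheses combine cleanly. Since $\dom h$ is a constructible right ideal that, by assumption, contains two $x_n$-elements with distinct indices (resp.\ two $x_nx$-elements with distinct indices), Lemma~\ref{lem:multiple x_n elements} forces $\dom h$ to contain \emph{all} $x_n$-elements (resp.\ the whole ideal $\bigcup_{n,\,w'\neq e}x_nw'R$). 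So $h$ is defined on the entire family and it remains to compute the action.

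To establish uniformity I would write $h=s_{2k}^{-1}s_{2k-1}\cdots s_2^{-1}s_1$ with $s_i\in R$ and compute the image of $x_n$ (resp.\ $x_nw'r$) by applying the left translations $\lambda_{s_i}$ and their inverses one at a time, carrying along the inductive claim (on $k$, equivalently on the word length of $h$) that each partial composition sends the starting element to a word of the form $(\text{prefix})\,x_{n+\delta}\,(\text{suffix})$, in which the prefix, the suffix and the shift $\delta\in\Z$ depend only on the letters applied so far and not on $n$. The bare symbol $x_n$ is rigid (it contains no $\tau$-subword, so $[x_n]=\{x_n\}$), which prevents the $x$ from being created or destroyed in an $n$-dependent way and also prevents it from being converted to a $y$; and whenever a letter $b$ is produced in front of $x_{n+\delta}$, the descriptions \eqref{eq:equiv-bx_n} and \eqref{eq:equiv-bx_nw} show that the only moves available are the index shifts $cbx_m\leftrightarrow bx_{m+1}$, the absorption $abx_m=bx_m$, and (only when the tail is nonempty) $dbx_mw''=bx_mw''$, each of which acts on the index uniformly and leaves the inert tail $w'r$ unchanged. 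The $y$-versions are handled symmetrically via \eqref{eq:equiv-by_n} and \eqref{eq:equiv-by_nw}.

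With uniformity in hand the conclusions are immediate. For the first statement the tail is empty, so $h(x_n)=(\text{prefix})\,x_{n+\delta}$; evaluating at $n_0$ gives $x_{n_0}$, and since $[x_{n_0}]=\{x_{n_0}\}$ this forces the prefix to be empty and $\delta=0$, i.e.\ $h(x_n)=x_n$ for every $n$. For the second statement the tail $w'r$ is nonempty; here the template reads $h(x_nw'r)=(\text{prefix})\,x_{n+\delta}\,w'r$ because, as noted above, all applicable relations (now including the $d$-move) leave $w'r$ untouched. Evaluating at the element $x_{n_0}x$ and using that $x_{n_0}x$ is rigid again forces an empty prefix and $\delta=0$, so $h(x_nw'r)=x_nw'r$ for all $n$ and all $w'\neq e$, $r\in R$; that is, $hp_Y=p_Y$ with $Y=\bigcup_{n,\,w'\neq e}x_nw'R$.

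The main obstacle is the uniformity step, and within it the right-division substeps coming from the inverse translations $\lambda_{s_i}^{-1}$: one must check that membership of the current template $(\text{prefix})\,x_{n+\delta}\,(\text{suffix})$ in $s_iR$, together with the resulting quotient, is itself uniform in $n$. This is exactly where the explicit equivalence classes \eqref{eq:equiv-bx_n}--\eqref{eq:equiv-by_nw} and the perpendicularity calculus ($\perp$) are indispensable, since they let one decide membership in $s_iR$ and read off the quotient purely from the template, without reference to the numerical value of $n$. A secondary point is to confirm that the index-carrying symbol always survives as a single $x$ (never splitting, vanishing, or turning into a $y$); this follows from rigidity of $x_n$ and the fact that the defining relations of $R$ never exchange $x$- and $y$-generators.
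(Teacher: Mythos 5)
Your argument is correct in outline and finishes the same way as the paper's, but the engine is different. The paper does not track orbits: it proves a global normal-form claim, namely that any $g\in I_\ell(R)$ with $\dom g\supseteq X=\bigcup_{n,\,w'\neq e}x_nw'R$ has one of the three shapes in \eqref{eq:gform}, i.e.\ $w\un_A$, $wa^{-k_1}c^{-k_2}b\un_A$ or $wb^{-1}a^{l_1}c^{l_2}b\un_A$, established by checking that this family of shapes is stable under left division by each generator; the lemma then follows by evaluating at the fixed element and using $[x_{n_0}]=\{x_{n_0}\}$ and $[x_{n_0}x]=\{x_{n_0}x\}$, exactly as you do (the reduction via Lemma~\ref{lem:multiple x_n elements} is also identical). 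Your uniform-template invariant is the pointwise shadow of those normal forms, and your letter-by-letter induction is viable: since the intermediate compositions are defined on all of $X$ (resp.\ on all $x_n$'s), every division step must succeed for every $n$, which is precisely what kills the $n$-dependent branches (e.g.\ a fixed divisor can never absorb or produce the index-carrying letter). Two things the paper's packaging buys that you should make explicit if you execute this plan. First, your final evaluation at the single element $x_{n_0}x$ only kills the prefix and the shift for the tail $w'=x$, $r=e$; the conclusion for all of $X$ needs the prefix and $\delta$ to be independent of the tail $w'r$ as well. Your inductive claim does assert this (``depend only on the letters applied so far''), but it is genuine content that must be carried through every division step, whereas the normal forms deliver it for free, since $w\un_A$, $wa^{-k_1}c^{-k_2}b\un_A$ and $wb^{-1}a^{l_1}c^{l_2}b\un_A$ act with one prefix and one shift on all of $X$ simultaneously. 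Second, the division step $x^{-1}(P\,x_{n+\delta}\,S)$ for an \emph{arbitrary} word $P$ requires a description of the equivalence class of $P\,x_{n+\delta}\,S$ in the spirit of \eqref{eq:equiv-bx_nw} and Lemma~\ref{lem:ubx_nw}; the paper keeps this case analysis finite by corralling the possible prefixes into three shapes, and your induction will in practice re-derive the same bookkeeping. What your route buys in exchange is that you never track the domain projections $\un_A$, since membership in the relevant domains is guaranteed pointwise by hypothesis.
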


\begin{corollary}\label{lem:shifts x_n or x_nx}
If $h\in I_\ell(R)$ maps $x_{n_0}$ to $x_{n_0+k}$ for some $n_0, k\in\mathbb{Z}$, and $\dom h$ contains multiple~$x_n$ elements, then $h$ maps $x_n$ to $x_{n+k}$ for all $n\in \mathbb{Z}$.\\

If $h\in I_\ell(R)$ maps $x_{n_0}x$ to $x_{n_0+k}x$ for some $n_0, k\in\mathbb{Z}$ and generator $x$, and $\dom h$ contains multiple~$x_nx$ elements, then $h$ maps $x_nw'$ to $x_{n+k}w'$ for all $n\in \mathbb{Z}$ and $w'\neq e$.
\end{corollary}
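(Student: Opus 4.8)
The plan is to reduce both shift statements to the fixing statements of Lemma~\ref{lem:fix x_n or x_nx} by composing $h$ with an explicit shift operator in $I_\ell(R)$, chosen so that the composition retains a large domain.

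First I would produce the shift operator. Set $\sigma := b^{-1}cb \in I_\ell(R)$. Using the defining relations $cbx_n = bx_{n+1}$ and $cby_n = by_n$ together with the normal-form analysis from the cancellation section, one checks that $\sigma$ has domain $P := \bigcup_n x_nR\cup\bigcup_n y_nR$ and acts by $\sigma(x_nu) = x_{n+1}u$ and $\sigma(y_nu)=y_nu$; in particular $\sigma$ restricts to a bijection of $P$ onto itself. Consequently, for every $k\in\mathbb{Z}$ the power $\sigma^k$ is again a bijection of $P$ with $\sigma^k(x_n) = x_{n+k}$, and by right-equivariance of elements of $I_\ell(R)$ (each is a composite of left translations and their partial inverses, hence commutes with right multiplication on its domain) one has $\sigma^k(x_nw') = x_{n+k}w'$ for every word $w'$.

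For the first assertion I would argue as follows. Since $\dom h$ is a constructible ideal containing at least two $x_n$ elements, Lemma~\ref{lem:multiple x_n elements} gives $\dom h \supseteq \{x_n,y_n \mid n\in\mathbb{Z}\}$, hence $\dom h \supseteq P$. Now form $g := h\sigma^{-k}$. Because $\sigma^{-k}$ maps $P$ bijectively onto $P\subseteq\dom h$, the composite has $\dom g = P$, which contains every $x_n$; moreover $g(x_{n_0+k}) = h(\sigma^{-k}(x_{n_0+k})) = h(x_{n_0}) = x_{n_0+k}$, so $g$ fixes the $x$-element $x_{n_0+k}$. The first part of Lemma~\ref{lem:fix x_n or x_nx} then yields that $g$ fixes every $x_n$, i.e.\ $h(x_{n-k}) = x_n$ for all $n$, which is exactly $h(x_m) = x_{m+k}$ for all $m\in\mathbb{Z}$. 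The second assertion runs in parallel: if $\dom h$ contains two $x_nx$ elements $x_{n_1}x, x_{n_2}x$ with $n_1\neq n_2$, then Lemma~\ref{lem:multiple x_n elements} (with $r=e$, $t_1=t_2=x$) gives $\dom h \supseteq Q := \bigcup_{n,w'\neq e} x_nw'R$. Setting $g := h\sigma^{-k}$ and using $\sigma^{-k}(x_nx)=x_{n-k}x\in Q\subseteq\dom h$, the composite $g$ is defined on every $x_nx$ and satisfies $g(x_{n_0+k}x) = h(x_{n_0}x) = x_{n_0+k}x$. Thus $g$ fixes $x_{n_0+k}x$ and its domain contains multiple $x_nx$ elements, so the second part of Lemma~\ref{lem:fix x_n or x_nx} shows $g$ fixes the ideal $Q$; that is, $h(x_{n-k}w') = x_nw'$ for all $n$ and all $w'\neq e$, i.e.\ $h(x_mw')=x_{m+k}w'$.

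The main obstacle is the construction and verification of $\sigma$, specifically pinning down $\dom\sigma = P$ and the fact that $\sigma$ preserves $P$, which rests on the word combinatorics of $R$ developed for the cancellation proof. The decisive idea that makes the remainder routine is to compose on the \emph{right} with $\sigma^{-k}$ rather than on the left: right composition keeps the domain equal to $P$ (respectively rich in $x_nx$ elements, since $\sigma^{-k}$ preserves $Q$), so the domain hypotheses of Lemma~\ref{lem:fix x_n or x_nx} are satisfied automatically and no separate analysis of the images $h(x_n)$ is needed.
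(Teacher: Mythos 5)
Your proof is correct, and at its core it is the paper's proof: the paper likewise reduces to Lemma~\ref{lem:fix x_n or x_nx} by composing $h$ with the shift $b^{-1}c^{-k}b$, the only difference being that it multiplies on the \emph{left}, forming $b^{-1}c^{-k}bh$, whereas you multiply on the right, forming $h\,b^{-1}c^{-k}b$. The difference is small but not vacuous. In the left-handed version one must check that $\dom(b^{-1}c^{-k}bh)=h^{-1}(\ran h\cap Z)$, where $Z=\bigcup_n x_nR\cup\bigcup_n y_nR$, still contains multiple $x_n$ elements (resp.\ multiple $x_nx$ elements); the paper asserts this without comment, and verifying it honestly requires some extra input --- a priori $h$ could send the other $x_m$ outside $Z$, and among the constructible ideals containing $x_{n_0}$ the possibility $x_{n_0}R$ has to be excluded, e.g.\ via the normal form \eqref{eq:gform} for elements of $I_\ell(R)$ whose domain contains $\bigcup_{n,w'\neq e}x_nw'R$. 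Your right-handed composition dissolves exactly this point: Lemma~\ref{lem:multiple x_n elements}, together with the fact that domains of elements of $I_\ell(R)$ are right ideals, gives $\dom h\supseteq Z$ (resp.\ $\dom h\supseteq\bigcup_{n,w'\neq e}x_nw'R$), and since $\sigma^{\pm k}$ is a bijection of $Z$ preserving $\bigcup_{n,w'\neq e}x_nw'R$, the domain of $h\sigma^{-k}$ contains everything the fixing lemma needs, with no analysis of the images $h(x_n)$. Your verification of $\dom\sigma=Z$ and of the action $\sigma(x_nu)=x_{n+1}u$, $\sigma(y_nu)=y_nu$ is consistent with the appendix computations (e.g.\ $bR\cap cbR=bZ$ and $b^{-1}s_0bx_n\omega=x_{n+m(c:s_0)}\omega$), and the right-equivariance argument for $g(ru)=g(r)u$ when $r\in\dom g$ is sound by induction over composites of translations and their partial inverses. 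So nothing is missing; if anything, your variant is slightly more self-contained than the printed proof.
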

\bp
Suppose $h$ satisfies the hypothesis in the first part. Then $b^{-1}c^{-k}bh\in I_\ell(R)$ fixes $x_{n_0}$ and its domain contains multiple $x_n$ elements. Therefore it fixes all $x_n$, $n\in \mathbb{Z}$. Now,
$$(b^{-1}c^{-k}bh)x_n=x_n$$
$$\Rightarrow hx_n=b^{-1}c^kbx_n=x_{n+k}$$

Now suppose $h$ satisfies the hypothesis in the second part. Then $b^{-1}c^{-k}bh \in I_\ell(R)$ fixes $x_{n_0}x$ and its domain contains multiple $x_nx$ elements. Therefore it fixes all elements $x_nw'$, $n\in \mathbb{Z}$, $w'\neq e$. Now,
$$(b^{-1}c^{-k}bh)x_nw'=x_nw'$$
$$\Rightarrow hx_n=b^{-1}c^kbx_nw'=x_{n+k}w'.$$
\ep

\begin{proposition}\label{prop:Rstronglyboundaryregular}
$R$ is strongly C*-regular on the boundary.
\end{proposition}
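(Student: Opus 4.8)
The plan is to reformulate the foundation-set condition in terms of the boundary $\partial\Omega$ via Proposition~\ref{prop:foundation-maximal}, reduce by compactness to a local statement at each boundary character, and settle that using the explicit list~\eqref{eq:R-constructible-ideals-first} of constructible ideals of $R$ together with the appendix lemmas. By Remark~\ref{rem:strong-regularity-on-boundary-remark-2} I may assume $X\subseteq\dom h_k$ for every $k$. By Proposition~\ref{prop:foundation-maximal}, a collection $\{X_1,\dots,X_m,Y_1,\dots,Y_l\}$ of subideals of $X$ is a foundation set for $X$ exactly when the clopen set $\{\eta\in\Omega\mid\eta(X)=1,\ \eta(X_i)=0,\ \eta(Y_j)=0\}$ is disjoint from $\partial\Omega$. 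Writing $U_0=\{\eta\in\Omega\mid\eta(X)=1,\ \eta(X_i)=0\ \text{for all }i\}$, the task is therefore to produce finitely many constructible $Y_j\subseteq X$, each with $h_{k_j}p_{Y_j}=p_{Y_j}$, so that every $\eta\in\partial\Omega\cap U_0$ satisfies $\eta(Y_j)=1$ for some $j$.

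The heart is the following local claim: for each $\eta\in\partial\Omega\cap U_0$ there are an index $k$ and a constructible $Y\subseteq X$ with $h_kp_Y=p_Y$ and $\eta(Y)=1$. Granting it, the clopen sets $\{\psi\mid\psi(Y_\eta)=1\}$ form an open cover of $\partial\Omega\cap U_0$, which is compact (closed in the compact space $\Omega$, since $U_0$ is clopen), and a finite subcover yields the required $Y_1,\dots,Y_l$. To prove the claim I would choose a net of principal characters $\chi_{s_\alpha}\to\eta$ lying in the neighbourhood $U_0$, so that $s_\alpha\in X\setminus\bigcup_iX_i$; the inclusion~\eqref{eq:hypstronggreg} and passage to a subnet give a single $k$ with $h_ks_\alpha=s_\alpha$ for all $\alpha$. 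Now the principal characters have trivial isotropy in $\G_P(S)$, because $s\mapsto[s,\chi_e]$ identifies $S$ with $(\G_P(S))_{\chi_e}$ and $r([s,\chi_e])=\chi_s$ recovers $s$; hence the isotropy at each $\chi_{s_\alpha}$ is trivial as well. Since $h_ks_\alpha=s_\alpha$ forces $[h_k,\chi_{s_\alpha}]$ into that isotropy group, we get $[h_k,\chi_{s_\alpha}]=\chi_{s_\alpha}$, and unwinding the equivalence relation defining $\G_P(S)$ produces a constructible ideal $A_\alpha\ni s_\alpha$, which I intersect with $X$, satisfying $h_kp_{A_\alpha}=p_{A_\alpha}$. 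Thus the hypothesis that $h_k$ fixes the \emph{element} $s_\alpha$ upgrades for free to the statement that $h_k$ fixes an entire constructible \emph{ideal} through $s_\alpha$.

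The main obstacle is to replace the varying ideals $A_\alpha$ by a single $Y$ with $\eta(Y)=1$. This is exactly the step that fails for non-boundary characters: the character $\lim_n\chi_{bx_n}$ witnessing the failure of C$^*$-regularity sees $X=\bigcup_nbx_nR\cup\bigcup_nby_nR$ but no proper $h_k$-fixed subideal, and is correctly \emph{not} in $\partial\Omega$. For $\eta\in\partial\Omega$ I expect to show that $\eta$ must localise. Using the classification~\eqref{eq:R-constructible-ideals-first}, Lemma~\ref{lem:multiple x_n elements} (a constructible ideal containing two $bx_n$, resp.\ $by_n$, elements already contains a full ideal of type $b\big(\bigcup_{n,w'\ne e}x_nw'R\big)$), and Lemma~\ref{lem:fix x_n or x_nx} with Corollary~\ref{lem:shifts x_n or x_nx} (which pin down how an element of $I_\ell(R)$ can fix the $x_n$- and $x_nw'$-patterns), one argues that each $A_\alpha$ enlarges to one of finitely many standard $h_k$-fixed ideals — of the form $bx_nw'R$, $by_nw'R$, or a prefix or index-shift of these — and that a boundary character charges one of them. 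The underlying dichotomy is that a boundary character cannot be supported across infinitely many indices $n$ without already charging a type-(c) or type-(d) ideal on which the relevant generator $a$, $c$, $d$ or $f$ acts as the identity. Carrying out this localisation through the cases of~\eqref{eq:R-constructible-ideals-first} is where the specific relations of $R$ enter, and it is the only genuinely combinatorial part; everything above reduces the Proposition to it.
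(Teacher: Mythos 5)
Your topological reframing is sound, and it is in substance the same equivalence the paper itself exploits (compare Lemma~\ref{lem:boundaryregular}): by Proposition~\ref{prop:foundation-maximal}, producing the required foundation set is equivalent to your local claim that every $\eta\in\partial\Omega\cap U_0$ charges some $h_k$-fixed constructible $Y\subseteq X$, and the compactness step (clopen $U_0$, closed $\partial\Omega$) correctly converts the local claim into finitely many $Y_j$. Two remarks on the setup: the isotropy detour is correct for $R$ only because $R$ has trivial unit group (for a general left cancellative monoid the isotropy at $\chi_e$ is the unit group), but it is also unnecessary — since $\dom h_k$ is a right ideal and $h_k(su)=h_k(s)u$ for $s\in\dom h_k$, the identity $h_ks_\alpha=s_\alpha$ directly gives $h_kp_{s_\alpha R}=p_{s_\alpha R}$. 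Also, your case analysis over~\eqref{eq:R-constructible-ideals-first} will have to handle ideals of the form $w\cdot(\text{standard form})$; the paper does this up front by conjugating the $h_k$ by $w$ and transporting foundation sets along left multiplication (the transport argument is the one inside Lemma~\ref{lem:I-does-not-contain-diagonal}), and your plan is silent on this reduction.

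The genuine gap is that the local claim — the only statement that actually needs the specific relations of $R$ — is not proved; everything after ``The main obstacle'' is a gesture, and the net construction preceding it contributes nothing toward closing it. The net gives fixed principal ideals $s_\alpha R$, but $\eta$ need not charge any of them (your own witness $\lim_n\chi_{bx_n}$ charges no $bx_nR$), so the claim ``each $A_\alpha$ enlarges to one of finitely many standard $h_k$-fixed ideals, and a boundary character charges one of them'' is precisely the proposition restated, not a consequence of the cited lemmas. In the paper this content occupies the whole proof: after reducing to $X$ of one of the five forms in~\eqref{eq:Xsimpleform} with $X\subseteq\dom h_k$, one argues case by case — for $X=R$ the difference contains $e$, forcing some $h_k=p_R$; for $X=R\setminus\{e\}$ one assembles a foundation set from $aR,\dots,fR$ and $\bigcup_{n,w'\neq e}x_nw'R$, $\bigcup_{n,w'\neq e}y_nw'R$, where Lemma~\ref{lem:multiple x_n elements} is needed to show the difference then contains \emph{cofinitely many} $x_n$ \emph{and} $y_n$ elements (so that the hypothesis ``$\dom h$ contains multiple $x_n$ elements'' of Lemma~\ref{lem:fix x_n or x_nx} is met and ``fixes one'' upgrades to ``fixes all''), and similarly for the two remaining nontrivial forms. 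Note in particular that in the $R\setminus\{e\}$ case one must find \emph{two} indices $k,k'$, one fixing all $x_n$'s and one fixing all $y_n$'s; your dichotomy phrased as ``cannot be supported across infinitely many $n$ without charging a type-(c) or type-(d) ideal'' captures only part of this bookkeeping. So while your skeleton would yield a correct (if more roundabout) proof once the localization is established, as written the combinatorial heart of Proposition~\ref{prop:Rstronglyboundaryregular} is missing.
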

\bp
By Equation \eqref{eq:R-constructible-ideals-first} it suffices to show that $R$ satisfies the condition in Definition~\ref{def:strong-regularity-on-boundary} when $X$ is of the form
\begin{equation}\label{eq:Xsimpleform}
R,\text{ }\bigcup\limits_n x_nR\cup \bigcup\limits_n y_nR, \bigcup\limits_{n, w'\neq e} x_nw'R, \bigcup\limits_{n, w'\neq e} x_nw'R,\text{ }R\setminus{\{e\}}
\end{equation}
and $X\subseteq \dom h_k$ for all $k$. Indeed, suppose that $X\in \J(R)$ has the property that whenever $X$, $h_1, \ldots h_n\in I_\ell(S)$ and $X_1, \ldots ,X_m\in \J(S)$ satisfy~\eqref{eq:hypstronggreg} and $X\subseteq \dom h_k$ for all $k$, then the conclusion in the definition holds. Let us show that then the same is true for $rX$, for any $r\in R$. Let $X_1, \ldots, X_m\in \J(R)$ and $h_1, \ldots h_n\in I_\ell(R)$ be such that
$$(rX)\setminus \bigcup\limits_{i=1}^m X_i \subseteq \bigcup\limits_{k=1}^n\{s \mid h_ks=s\}$$
and $rX\subseteq \dom h_k$ for all $k$. Then
\begin{equation}\label{eq:pconjugate}
X\setminus \bigcup\limits_{i=1}^m r^{-1}X_i\subseteq \bigcup\limits_{k=1}^n\{s \mid (r^{-1}h_kr)s=s\}
\end{equation}
and $X \subseteq \dom (r^{-1}h_kr)$. By assumption we are granted $Y_1, \ldots Y_l \in \mathcal{J}(R)$ such that each $Y_j$ is fixed by some $r^{-1}h_{k_j}r$ and $r^{-1}X_1,\ldots r^{-1}X_m, Y_1, \ldots ,Y_l$ form a foundation set for $X$. Then each~$rY_j$ is fixed by $h_{k_j}$ and to show that the condition in Definition~\ref{def:strong-regularity-on-boundary} is met we need to argue that $X_1, \ldots ,X_m, rY_1, \ldots rY_l$ forms a foundation set for $rX$. But we already wrote down such an argument in the proof of Lemma~\ref{lem:I-does-not-contain-diagonal}.\\

Now suppose we are given $X$ of the form~\eqref{eq:Xsimpleform}, $h_1, \ldots, h_n\in I_\ell(R)$ and $X_1, \ldots ,X_m\in \J(R)$ such that~\eqref{eq:hypstronggreg} is satisfied and $X\subseteq\dom h_k$ for all $k$. Consider the various subcases.\\

\underline{Case $X=R$:}\\
Since $R\setminus \cup_{i=1}^m X_i= X\setminus \cup_{i=1}^m X_i \neq \emptyset$, it contains $e$. Then some $h_k$ fixes $e$, so $h_k=p_R$ and $\{R\}$ is the required foundation set for $X=R$.\\

\underline{Case $X=R\setminus\{e\}$:}\\
We construct a collection $\F$ of constructible ideals as follows.\\

\indent If $X\setminus \cup_{i=1}^m X_i$ contains $a, b, c, d$ or $f$, respectively,  then we add, respectively, $aR$, $bR$, $cR$, $dR$ or $fR$ to $\mathcal{F}$. If $X\setminus \cup_{i=1}^m X_i$ contains an $x_n$ or $y_n$ element, then we add both $\cup_{n, w'\neq e} x_nw'R$ and $\cup_{n, w'\neq e} y_nw'R$ to $\F$.\\

We argue that each member of $\F$ is fixed by some $h_k$: if $xR\in \mathcal{F}$, 
$x=a,b,c,d,f$, then $x\in X\setminus \cup_{i=1}^m X_i$, so there is an $h_k$ which fixes $x$. If $\cup_{n, w'\neq e} 
x_nw'R,\ \cup_{n, w'\neq e} y_nw'R\in \mathcal{F}$, then $
X\setminus \cup_{i=1}^m X_i$ contains either an $x_n$ 
or $y_n$ element, in particular none of the $X_i$'s contain all $x_n$ and $y_n$ elements, so by Lemma~\ref{lem:multiple x_n elements} they contain at most one $x_n$ or $y_n$ element. Thus $
X\setminus \cup_{i=1}^m X_i$ contains cofinitely 
many $x_n$'s and $y_n$'s. In particular there is an $h_k$ which fixes an $x_n$ and an $h_{k'}$ which fixes a $y_n$. By Lemma~\ref{lem:fix x_n or x_nx}, $h_k$ 
fixes all $x_n$'s and $h_{k'}$ fixes all $y_n$'s.\\

By construction, if $xR\notin\mathcal{F}$ for $x\in \{a,b,c,d,f\}$ then $x\in X_i$ for some $i$. In particular $xR\subseteq X_i$. Similarly, if $\cup_{n, w'\neq e} x_nw'R$ and $\cup_{n, w'\neq e} y_nw'R$ do not lie in $\mathcal{F}$, then we have $\bigcup_n x_nR\cup\bigcup_n y_nR\subseteq~X_i$ for some $i$. This shows that
$$aR\cup bR\cup cR\cup dR\cup fR\cup \bigcup\limits_{n, w'\neq e} x_nw'R\cup \bigcup\limits_{n, w'\neq e} y_nw' \subseteq \bigcup\limits_{F\in \F} F \cup \bigcup\limits_{i=1}^m X_i.$$
Hence $\mathcal{F}\cup \{X_1, \ldots , X_m\}$ is a foundation set for $X=R\setminus \{e\}$.\\

\underline{Case $X=\bigcup_{n, w'\neq e} x_nw'R$:}\\
In this case $X\setminus \cup_{i=1}^m X_i$ surely contains some $x_nx$ element, and the $h_k$ element which fixes $x_nx$ fixes~$X$ by Lemma~\ref{lem:fix x_n or x_nx}.\\

\underline{Case $X=\bigcup_n x_nR\cup \bigcup_n y_nR$:}\\
By Lemma~\ref{lem:multiple x_n elements}, $X\setminus \cup_{i=1}^m X_i$ must contain cofinitely many $x_n$ and $y_n$. By Lemma~\ref{lem:fix x_n or x_nx}, the $h_k$ which fixes any one $x_n$ fixes all $x_n$'s and the $h_{k'}$ which fixes any one $y_n$ fixes all $y_n$'s. Then $\{\bigcup_{n, w'\neq e} x_nw'R, \bigcup_{n, w'\neq e} y_nw'R\}$ is the required foundation set.
\ep

As per Remark \ref{rem: strong-regularity-implies-equality-of-boundary-groupoids}, an immediate consequence of the above proposition is that $\partial \G(R)=\partial \G_P(R)$.

\subsection{Amenability of the Boundary Groupoid}
The goal of this subsection is to prove that the boundary groupoid $\partial \G(R)$ is Borel amenable. To achieve this we require a description of the boundary characters~$\partial \Omega(R)$. Once this is in place, proving Borel amenability is similar to the free monoid case.\\

As expected, some of the boundary characters are given by infinite words.
\begin{definition}\label{def: infinite-word}
An infinite word $w$ in $R$ is a sequence $\{w_i\}_{i\in 
\mathbb{N}}$ of generators of $R$, i.e. a sequence of letters from 
the alphabet $\{a, b, c, d, f, x_n, y_n \mid n\in \mathbb{Z}
\}$. We write $w\equiv 
w_1w_2\cdots w_n \cdots$.  The collection of infinite words in $R$ 
is denoted $\RR^\infty$.
\end{definition}
Recall that the collection of finite words in the generators of $R$ was denoted~$\RR$. We have a left action $\RR\curvearrowright \RR^\infty$ given by concatenation: for $u\equiv u_1 \cdots u_m\in \RR$ and $w\in \RR^\infty$, let
$$u\cdot w := u_1 \cdots u_m w_1 w_2 \cdots.$$
To $w\in \RR^\infty$ we associate the character
$$\chi_w := \lim_n \chi_{\text{\footnotesize $w_1 w_2 \cdots w_n$}} \in \Omega(R)$$
On the RHS $w_1 w_2\cdots w_n$ is viewed as an element of $R$. Note that this limit exists since the filters $\chi_{w_1\cdots w_n}^{-1}(1)$ are increasing in $n$. More transparently we have
$$\chi_w(A)=\begin{cases}
1 & \text{ if }w_1\cdots w_n\in A \text{ for sufficiently large }n,\\
0 & \text{ otherwise.}
\end{cases} \quad A\in\J(S)$$

We call a finite or infinite word $w$ \textit{reduced} if in $w$ 
there are no occurences of the strings $abx_n, aby_n$, $cbx_n$, 
$cby_n$,  $dbx_n$ or $fby_n$ for any $n\in \mathbb{Z}$. The 
collection of infinite reduced words will be denoted~$\RR_{\text{red}}^\infty$.\\

\begin{proposition}\label{prop:distinctfinitereducedwords}
Suppose $u, v\in \RR$ are two finite reduced words such that $u\sim v$. Then $u\equiv v$.
\end{proposition}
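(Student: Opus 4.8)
The plan is to induct on the length $|u|$, reducing everything to the claim that equivalent reduced words have the same first letter. The base case $u\equiv\epsilon$ is immediate: no defining relation involves the empty word, so $\epsilon$ is equivalent only to itself. For the inductive step, suppose we have shown $\alpha(u)=\alpha(v)=:\ell$. Writing $u\equiv\ell u'$ and $v\equiv\ell v'$, the words $u'$ and $v'$ are again reduced (any forbidden substring of a suffix is a forbidden substring of the whole word), and left cancellation (Lemma~\ref{lem:R-cancellation}) applied to $\ell u'\sim\ell v'$ gives $u'\sim v'$; since $|u'|<|u|$, the induction hypothesis yields $u'\equiv v'$ and hence $u\equiv v$. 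So the entire content is the first-letter claim, which I would prove by a case analysis on $\alpha(u)$ paralleling the proof of Lemma~\ref{lem:R-cancellation}.

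The two easy cases are handled by rigidity and perpendicularity. If $\alpha(u)\in\{x_n,y_n\}$, then no $\tau$-word begins with $\alpha(u)$ and $\alpha(u)$ is only equivalent to itself, so $\alpha(u)\perp\bigl(u-\alpha(u)\bigr)$ and every word equivalent to $u$ begins with $\alpha(u)$. If $\alpha(u)=b$, then either the letter after $b$ lies in $\{a,b,c,d,f\}$, in which case $b\perp\bigl(u-b\bigr)$ forces the first letter of $v$ to be $b$, or $u\equiv bx_n\omega$ (resp. $by_n\omega$), and the descriptions \eqref{eq:equiv-bx_n} and \eqref{eq:equiv-bx_nw} (resp. \eqref{eq:equiv-by_n}, \eqref{eq:equiv-by_nw}) show that the only reduced words in $[u]$ again begin with $b$.

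The main case, and the main obstacle, is $\alpha(u)\in\{a,c,d,f\}$. Here I would factor $u\equiv sV$, where $s$ is the maximal prefix with $\con(s)\subseteq\{a,c,d,f\}$ (so $s\neq\epsilon$) and $V$ is empty or begins with a letter outside $\{a,c,d,f\}$. The goal is to prove $s\perp V$: granting it, any $v\sim u$ factors as $v\equiv s'V'$ with $s'\sim s$, and since a word over $\{a,c,d,f\}$ is rigid we get $s'\equiv s$ and $\alpha(v)=\alpha(s)=\alpha(u)$. The key observation is that a $\tau$-word straddling the $s\,|\,V$ boundary must begin at the \emph{last} letter of $s$, because every earlier letter of $s$ is followed by another letter of $\{a,c,d,f\}$ and hence cannot start a $\tau$-word (all of which have $b$ as their second letter). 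When $V$ is empty or begins with some $x_m,y_m$ the boundary letter after $s$ is not $b$, so no straddling $\tau$-word exists. The delicate subcase is when $V$ begins with $b$, and here reducedness of $u$ is essential: if $V$ begins with $bx_n$, then the forbidden substrings $abx_n,cbx_n,dbx_n$ force the last letter of $s$ to be $f$, and $fbx_n$ is not a $\tau$-word; symmetrically a leading $by_n$ forces the last letter of $s$ to be $d$. One then uses \eqref{eq:equiv-bx_n}–\eqref{eq:equiv-by_nw} to check that no representative $V'\sim V$ can begin with the complementary pattern ($by_m$, resp. $bx_m$) or with an $x_m/y_m$, so that no $\tau$-word straddles the boundary for any $s'\sim s$, $V'\sim V$; this is precisely $s\perp V$.

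I expect the bookkeeping in this last subcase — tracking exactly which leading letters \eqref{eq:equiv-bx_n}–\eqref{eq:equiv-by_nw} permit for $V$ and its translates — to be the only delicate point, the rest being routine appeals to rigidity, perpendicularity, and the induction hypothesis. As a consistency check, the proposition is equivalent to confluence of the length-reducing rewriting system $abx_n\to bx_n$, $cbx_n\to bx_{n+1}$, $aby_n\to by_{n+1}$, $cby_n\to by_n$, $dbx_nz\to bx_nz$, $fby_nz\to by_nz$: since the left-hand side of each rule contains a forbidden substring, every reduced word is irreducible and hence equal to its own normal form, so two equivalent reduced words would have equal normal forms and coincide. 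Confluence itself reduces to checking that the finitely many critical pairs — all of the form where a $d$- or $f$-rule overlaps a following rule at the shared letter — resolve, which they do.
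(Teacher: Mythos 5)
Your proposal is correct and follows essentially the same route as the paper's proof: the same reduction via left cancellativity (Lemma~\ref{lem:R-cancellation}) to the first-letter claim, the same factorization along the maximal prefix over $\{a,c,d,f\}$ combined with rigidity and $\perp$, the same use of reducedness to force $f$ before a leading $bx_n$ (resp.\ $d$ before $by_n$) so that no $\tau$-word can straddle the boundary, and the same appeal to \eqref{eq:equiv-bx_n}--\eqref{eq:equiv-by_nw} when the word begins with $b$. Two cosmetic quibbles: your case list in the main case silently skips the easy subcase where $V$ begins with $b$ followed by a letter of $\{a,b,c,d,f\}$ (disposed of exactly as the paper's $rb\perp u_{k+1}\cdots u_m$, since no $V'\sim V$ can acquire an $x_k$ or $y_k$ in second position), and in your final remark only the direction ``confluence implies the proposition'' is immediate, because reduced is strictly stronger than irreducible for your rewriting system (a word ending in $dbx_n$ is irreducible but not reduced).
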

\bp
Write $u\equiv u_1\cdots u_m$ and $v\equiv v_1 \cdots v_l$ where the $u_i$'s and $v_i$'s are letters. $R$ is left cancellative, so by induction it will suffice to verify that $u_1= v_1$.\\

Assume that $u_1\neq b$. Then we can write $u\equiv ru_k\cdots u_m$ where $\con(r)\subseteq~\{a, c, d, f\}$ and $u_k=b, x_n, y_n$. If $u_k=x_n, y_n$, then $u\equiv ru_k \perp u_{k+1}\cdots u_m$ where $ru_k$ is only equivalent to itself. If $u_k=b$ and $u_{k+1}\notin \{x_n, y_n \mid n\in \Z\}$ then again $u\equiv rb \perp u_{k+1}\cdots u_m$ where $rb$ is only equivalent to itself. If $u_k=b$ and $u_{k+1}=x_n$, then since $u$ is reduced, we must have $u_{k-1}=f$ and $u\equiv r \perp bu_{k+1}\cdots u_n$ where $r$ is only equivalent to itself. Similarly if $u_k=b$ and $u_{k+1}=y_n$.\\

Now assume that $u_1=b$. If $u_2\notin \{x_n, y_n \mid n\in \Z\}$ we have $u\equiv b \perp u_2\cdots u_m$. If $u_2=x_n, y_n$, then equations~\eqref{eq:equiv-bx_n} to~\eqref{eq:equiv-by_nw} show that then, since $v$ is reduced, we also have have $v\equiv bu_2 \cdots$.
\ep

\begin{proposition}\label{prop:distinctinfinitereducedwords}
Suppose $w, v\in \RR_{\text{red}}^\infty$ are two distinct reduced infinite words. Then $\chi_w\neq \chi_v$. 
\end{proposition}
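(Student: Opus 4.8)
The plan is to reduce the statement to the case where $w$ and $v$ already differ in their first letter, and then to separate the two characters by a principal constructible ideal $gR$, exploiting that membership $z\in gR$ forces the first letter of the reduced form of $z$ to lie in a small set depending on $g$.

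First I would let $k$ be the least index with $w_k\ne v_k$ and put $u\equiv w_1\cdots w_{k-1}\equiv v_1\cdots v_{k-1}$. The tails $w'\equiv w_kw_{k+1}\cdots$ and $v'\equiv v_kv_{k+1}\cdots$ are again reduced, since any forbidden substring of a tail would already be a forbidden substring of $w$ or $v$, and they now differ in their first letter. For $n\ge k$ we have $w_1\cdots w_n\equiv u\,(w_k\cdots w_n)$, so left cancellativity (Lemma~\ref{lem:R-cancellation}) gives, for every constructible ideal $B$, that $u(w_k\cdots w_n)\in uB$ iff $w_k\cdots w_n\in B$. For a principal ideal $B=gR$ one has $uB=(ug)R\in\J(R)$, whence $\chi_w((ug)R)=\chi_{w'}(gR)$ and likewise for $v$. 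Hence it is enough to separate $\chi_{w'}$ and $\chi_{v'}$ by a principal ideal, and we may assume from now on that $w_1\ne v_1$.

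The technical core is the following observation about leading letters, which I would read off from the equivalence-class descriptions \eqref{eq:equiv-bx_n}--\eqref{eq:equiv-by_nw} and the case analysis in the proof of Lemma~\ref{lem:R-cancellation}. Reading each defining relation as a length-reducing rewriting, every $z\in R$ has a reduced form $\bar z$, unique by Proposition~\ref{prop:distinctfinitereducedwords}, and its first letter satisfies $\alpha(\overline{x_mz})=x_m$, $\alpha(\overline{y_mz})=y_m$, $\alpha(\overline{bz})=b$, and $\alpha(\overline{gz})\in\{g,b\}$ for $g\in\{a,c,d,f\}$. The point is that the only rewritings able to change the leading letter are those absorbing a leading $a,c,d$ or $f$ into a following $bx_n$ or $by_n$, which is exactly what produces the set $\{g,b\}$. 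Consequently, if $z\in gR$ for a generator $g$, then $\alpha(\bar z)=g$ when $g\in\{b,x_m,y_m\}$ and $\alpha(\bar z)\in\{g,b\}$ when $g\in\{a,c,d,f\}$.

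It remains to separate $\chi_w$ and $\chi_v$ when $w_1\ne v_1$. Every prefix $w_1\cdots w_n$ is a reduced word, so $\alpha(\overline{w_1\cdots w_n})=w_1$, and similarly for $v$. If exactly one of $w_1,v_1$ equals $b$, say $v_1=b$, I take $A=bR$: then $\chi_v(bR)=1$, whereas no prefix $w_1\cdots w_n$ lies in $bR$ (its reduced form starts with $w_1\ne b$), so $\chi_w(bR)=0$. If neither $w_1$ nor $v_1$ equals $b$, I take $A=w_1R$: then $\chi_w(w_1R)=1$, while $v_1\cdots v_n\in w_1R$ would force $v_1=\alpha(\overline{v_1\cdots v_n})\in\{w_1,b\}$, contradicting $v_1\notin\{w_1,b\}$, so $\chi_v(w_1R)=0$. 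In either case $A$ separates the two characters, so $\chi_w\ne\chi_v$. The main obstacle is precisely the leading-letter observation of the third paragraph; granting it, the rest is a short bookkeeping argument.
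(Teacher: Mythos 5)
Your proposal is correct, and its skeleton is the same as the paper's: you first reduce to the case where $w$ and $v$ differ in the first letter (the paper does this by showing $\chi_{uw}=\chi_{uv}\Rightarrow\chi_w=\chi_v$ for arbitrary constructible ideals; you lift principal ideals along the common prefix via left cancellativity --- same mechanism), and you then separate the characters by a principal ideal generated by a single letter. The difference lies in how the key technical step is packaged. The paper proves the perpendicularity statement \eqref{eq:vposs1}, $w_1\perp w_2\cdots w_k$ for $w_1\in\{a,c,d,f\}$, exploiting reducedness of $w$ to pin down the letter preceding any string $bx_n$ or $by_n$, and then separates with $v_1R$; you instead prove a leading-letter lemma about which ideals $gR$ a reduced word can belong to, and separate with $bR$ or $w_1R$. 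These carry equivalent content --- $z\in gR$ precisely when some word representing $z$ begins with the letter $g$, so both lemmas control first letters of representatives of reduced words --- but your formulation treats the alphabets $\{b\}$, $\{x_m,y_m\mid m\in\Z\}$ and $\{a,c,d,f\}$ uniformly and shortens the final case analysis, where the paper appeals to symmetry.

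Two caveats, neither fatal. First, your blanket claim that every $z\in R$ has a reduced form is false: $dbx_0$ is a counterexample, since by \eqref{eq:equiv-bx_n}-type reasoning its representatives are exactly the words $drbx_{-m(c:r)}$ with $\con(r)\subseteq\{a,c\}$, each of which contains one of the forbidden strings ($dbx_n$ is forbidden even at the end of a word, although the relation $dbx_nx=bx_nx$ requires a following letter). This is harmless for your argument, because you only ever apply the lemma to prefixes $w_1\cdots w_n$ of reduced infinite words, which are themselves reduced and are their own unique reduced representatives by Proposition~\ref{prop:distinctfinitereducedwords}. Second, your one-line justification of the leading-letter claim is too quick as stated: the reverse $\tau$-moves (insertions such as $bx_nu\to cbx_{n-1}u$) also change the leading letter, and to an \emph{arbitrary} element of $\{a,c,d,f\}$, not merely back to $g$. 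The claim nevertheless holds because once the leading letter changes at all, the element lies in a class $[bx_nu]$ or $[by_nu]$, and by \eqref{eq:equiv-bx_n}--\eqref{eq:equiv-by_nw} every representative of such a class has the form $sbx_mu'$ (resp.\ $sby_mu'$) with $\con(s)\subseteq\{a,c,d\}$ (resp.\ $\{a,c,f\}$), which is reduced only when $s=\epsilon$, so any reduced representative begins with $b$. Since you cite exactly these equations the fix is available to you, but this step should be made explicit.
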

\begin{proof}
There exists a minimal $i_0\in \mathbb{N}$ such that $w_{i_0}\neq v_{i_0}$ and it suffices to consider the case $i_0=1$. Indeed, it will be valid to pass to this case if we can show that for a finite word $u$, $\chi_w \neq \chi_v \Rightarrow \chi_{uw}\neq \chi_{uv}$:\\

We show the contrapositive of this so assume $\chi_{uw}= \chi_{uv}$. Then, if $A\in \mathcal{J}(R)$ contains $w_1 \cdots w_n$ for sufficiently large $n$, $uA$ contains $uw_1 \cdots w_n$ for sufficiently large $n$ and by assumption $uA$ contains $uv_1\cdots v_{n'}$
for sufficiently large $n'$. Then~$A$ contains $v_1\cdots v_{n'}$ for sufficiently large $n'$. Hence $\chi_w(A)=1 \Rightarrow \chi_v(A)=1$. Similarly $\chi_v(A)=1 \Rightarrow \chi_w(A)=1$ so that $\chi_w=\chi_v$.\\

Now we assume that $w_1\neq v_1$ and show that then $\chi_w\neq \chi_v$. If $w_1\in~\{x_n,~y_n~\mid n\in~\mathbb{Z}\}$, then $w_1R$ does not contain $v_1\cdots v_n$ for any $n$ since then a word beginning in $w_1$ is never equivalent to a word not beginning in $w_1$. So $\chi_w(w_1R)=1$, but $\chi_v(w_1R)=0$.\\

Suppose that $w_1\in \{a, c, d, f\}$. We claim that
\begin{equation}\label{eq:vposs1}
w_1\perp w_2\cdots w_k \qquad \text{ for }k\geq 2.
\end{equation}
If $w$ does not contain $b$'s, $x_n$'s or $y_n$'s, then this is clear.
If $w$ contains $b$'s, $x_n$'s or $y_n$'s, choose $m\geq 2$ such that $w_i \in\{a, c, d ,f\}$ for $1\leq i \leq m-1$ and $w_m\in \{b, x_n, y_n \mid n\in \Z\}$. If $w_m=x_n, y_n$, or $w_m=b$ and $w_{m+1}\neq x_n, y_n$, then
\begin{equation}\label{eq:vposs2}
w_1\cdots w_{m-1}\perp w_m \cdots w_{k} \qquad \text{for }k\geq m.
\end{equation}
Otherwise we have $w_mw_{m+1}\equiv bx_n$, or $w_mw_{m+1}\equiv by_n$, and then, since $w$ is reduced, we must have $w_{m-1}=f$, resp. $w_{m-1}=d$, and~\eqref{eq:vposs2} again holds. Since $w_i \in\{a, c, d ,f\}$ for $1\leq i \leq m-1$,~\eqref{eq:vposs2} implies~\eqref{eq:vposs1}.  Then it is clear that $v_1R$ cannot contain $w_1 \cdots w_n$ for any $n$. So $\chi_w(v_1R)=0$, but $\chi_v(v_1R)=1$. This proves that $\chi_w\neq \chi_v$ in the cases $w_1\in \{x_n, y_n \mid n\in \mathbb{Z}\}$ and $w_1\in \{a, c, d, f\}$. By symmetry we are done.
\end{proof}

\begin{definition}
We say that a reduced infinite word $w\equiv w_1w_2\cdots w_n \cdots\in \RR_{\text{red}}^\infty$ is of \textbf{type} $\mathbf{1}$ if either of the following is satisfied:
\begin{itemize}
\item Infinitely many $w_i$'s belong to the set $\{b, x_n, y_n\mid n\in \mathbb{Z}\}$.
\item We have $w_i=d$ for infinitely many $i$ \textit{and} $w_i=f$ for infinitely many $i$. 
\end{itemize}

If $w$ is not of type $1$ we say it is of \textbf{type} $\mathbf{2}$ in which case we can write $w\equiv w'w''$, where $w'$ is a finite word and we have either $\con(w'')\subseteq \{a, c, d\}$ or $\con(w'')\subseteq \{a, c, f\}$.\\
\end{definition}
We need to describe another type of character. Let $S$ be an arbitrary left cancellative monoid. For a constructible ideal $A\in \mathcal{J}(S)$ we write $\langle A \rangle$ for the character corresponding to the principal filter of $A$. In other words,

$$\langle A\rangle (X)=\begin{cases}
1 \text{ if }X\supseteq A,\\
0 \text{ otherwise.}
\end{cases} \quad X\in \J(S).$$
\text{ }\\
Note that such a character need not belong to $\Omega(S)$. The lemma below implies that $\{\langle A \rangle \mid A\in \J(S),\ \langle A \rangle\in \Omega(S)\}$ is an invariant subset of $\G_P(S)$ or $\G(S)$.

\begin{lemma}\label{lem:range}
Let $S$ be a left cancellative monoid. Assume  that $A\in\mathcal{J}(S)$, $h\in I_\ell(S)$ are such that $\langle A \rangle\in \Omega(S)$ and $[h, \langle A \rangle]\in \G_P(S)$. Then the range of $[h, \langle A \rangle]$, either in the groupoid $\G_P(S)$ or $\G(S)$, is $\langle hA\rangle$.
\end{lemma}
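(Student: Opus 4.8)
The plan is to read off the range character directly from the defining formula $r([h,\langle A\rangle]) = \langle A\rangle(h^{-1}(\cdot))$ and to reduce the whole statement to the elementary set-theoretic equivalence
$$A\subseteq h^{-1}X \iff hA\subseteq X,\qquad X\in\J(S),$$
where $h^{-1}X=\{s\in\dom h\mid hs\in X\}$ denotes the preimage.

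First I would unwind the hypotheses. The requirement $[h,\langle A\rangle]\in\G_P(S)$ means $\langle A\rangle(\dom h)=1$, i.e.\ $A\subseteq\dom h$; in particular $hA$ is well defined. Since $hp_Ah^{-1}$ is the idempotent in $I_\ell(S)$ acting as the identity on $\{t\in\ran h\mid h^{-1}t\in A\}=hA$, we have $hA\in\J(S)$, so $\langle hA\rangle$ is a genuine character on $\J(S)$. Similarly, for any $X\in\J(S)$ the set $h^{-1}X$ is the domain of the idempotent $h^{-1}p_Xh$, hence constructible, so that $\langle A\rangle(h^{-1}X)$ makes sense.

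Next, evaluating the range character on an arbitrary $X\in\J(S)$ gives
$$r([h,\langle A\rangle])(X)=\langle A\rangle(h^{-1}X),$$
which by definition of $\langle A\rangle$ equals $1$ precisely when $A\subseteq h^{-1}X$, whereas $\langle hA\rangle(X)=1$ precisely when $hA\subseteq X$. Thus it remains to verify the displayed equivalence, and both directions are immediate from $A\subseteq\dom h$. If $A\subseteq h^{-1}X$ and $t=ha\in hA$, then $a\in h^{-1}X$ forces $t=ha\in X$; conversely, if $hA\subseteq X$ and $a\in A$, then $a\in\dom h$ and $ha\in hA\subseteq X$ give $a\in h^{-1}X$. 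This identifies $r([h,\langle A\rangle])=\langle hA\rangle$ in $\G_P(S)$, and since the range of a groupoid element is automatically a unit, it also shows $\langle hA\rangle\in\Omega(S)$.

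Finally, for $\G(S)=\G_P(S)/\!\sim_2$ I would invoke that the quotient map $\G_P(S)\to\G(S)$ is a groupoid homomorphism which restricts to the identity on the common unit space $\Omega(S)$; it therefore intertwines the two range maps, so the range in $\G(S)$ of the image of $[h,\langle A\rangle]$ is again $\langle hA\rangle$. The argument is a direct computation, so I do not anticipate a genuine obstacle; the only point requiring care is reading $h^{-1}X$ correctly as the preimage $\{s\in\dom h\mid hs\in X\}$ and noting that $A\subseteq\dom h$ is exactly what makes both directions of the set identity go through.
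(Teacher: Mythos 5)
Your proposal is correct and takes essentially the same route as the paper's proof: both evaluate the range character via $r([h,\langle A\rangle])(X)=\langle A\rangle(h^{-1}X)$ and reduce the lemma to the equivalence $A\subseteq h^{-1}X \iff hA\subseteq X$, verified in both directions using exactly the hypothesis $A\subseteq\dom h$ (the paper argues at the level of sets with $h(h^{-1}X)\subseteq X$ and $h^{-1}(hA)=A$, you argue element-wise, which is the same computation). Your additional checks --- that $hA$ and $h^{-1}X$ are constructible via the idempotents $hp_Ah^{-1}$ and $h^{-1}p_Xh$, and that the identification descends to $\G(S)$ because the quotient map fixes the unit space and intertwines range maps --- are correct tidying of points the paper leaves implicit.
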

\bp
Set $\chi=\langle A \rangle$. For $X\in \J(S)$ we have
$$r([h, \chi])(X)=\chi(h^{-1}X)=\begin{cases}
1 & \text{if } h^{-1}X \supseteq A,\\
0 & \text{otherwise.}
\end{cases}$$
We have then to prove that $h^{-1} X\supseteq A \Leftrightarrow X \supseteq h A$.\\

Assuming $h^{-1} X\supseteq A$ immediately yields $h(h^{-1} X)\supseteq h A$. Clearly then, $X\supseteq h(h^{-1}X)\supseteq h A$, as desired.\\

Conversely, assume that $X\supseteq h A$. Then $h^{-1}X\supseteq h^{-1}(h A)$. Since $\dom h \supseteq A$ by assumption, we have $h^{-1}X\supseteq h^{-1}(h A)=A$, as desired.
\ep

We are ready to state the proposition which describes the boundary characters~$\Omega(R)$.\\

\begin{proposition}\label{prop:boundary-characters-of-R}
We have
$$\Omega_{\text{max}}(R)=\{\chi_w \mid\text{ }w\in \RR^\infty_{\text{red}}\text{ is of type }1\}$$
and
$$\partial\Omega(R) \setminus \Omega_{\text{max}}(R)=\{\langle w\bigcup\limits_{n, w'\neq e}x_nw'R\rangle \mid w\in R\}$$ $$\sqcup \{\langle w\bigcup\limits_{n, w'\neq e}y_nw'R\rangle \mid w\in R\} \sqcup \{\chi_w \mid\ w\in \RR^\infty_{\text{red}}\text{ is of type }2\}.$$

Moreover, the three sets in the union are invariant subsets of $\partial \G(R)$.
\end{proposition}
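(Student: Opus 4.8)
The plan is to classify $\Omega(R)$ explicitly, read off the maximal characters, take the closure, and finally check invariance. Since $\emptyset\in\J(R)$ we have $0\in I_\ell(R)$, so maximality is governed by Lemma~\ref{lem:maximal-facts}(\ref{lem:maximal-fact3}): $\chi$ is maximal iff every $X\in\J(R)$ with $\chi(X)=0$ admits a $Y\in\J(R)$ with $\chi(Y)=1$ and $X\cap Y=\emptyset$. First I would show that every $\chi\in\Omega(R)$ is either a word character $\chi_w$ (with $w$ a finite or infinite reduced word, the finite reduced form being unique by Proposition~\ref{prop:distinctfinitereducedwords}) or a principal filter character $\langle A\rangle$. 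To see this, take a net $\chi_{s_i}\to\chi$ with each $s_i$ reduced and pass to a subnet: either the $s_i$ share arbitrarily long common prefixes, in which case $\chi=\chi_w$ for the limiting word, or the reduced words eventually branch among cofinitely many $x_n$- or $y_n$-indices after a common prefix $w$, in which case Lemma~\ref{lem:multiple x_n elements} forces $\chi=\langle w\bigcup_{n,w'\neq e}x_nw'R\rangle$ (resp. the $y$-analogue). This is where the explicit list~\eqref{eq:R-constructible-ideals-first} and the equivalence-class descriptions~\eqref{eq:equiv-bx_n}--\eqref{eq:equiv-by_nw} do the bookkeeping.

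Next I would settle the maximality dichotomy. For $w$ of type $1$, given $X$ with $\chi_w(X)=0$ I use that $w$ re-enters $\{b,x_n,y_n\}$ infinitely often, or alternates $d$ and $f$ infinitely often, to select a prefix $w_1\cdots w_n$ whose minimal containing ideal is disjoint from $X$; the orthogonality afforded by~\eqref{eq:equiv-bx_n}--\eqref{eq:equiv-by_nw} guarantees such a separating $Y$ exists, so $\chi_w$ is maximal. Conversely, a type-$2$ word $w$ has a tail contained in $\{a,c,d\}$ or $\{a,c,f\}$, and the absorption relations $dbx_nx=bx_nx$, $abx_n=bx_n$, $cbx_n=bx_{n+1}$ (resp. their $f,y$-versions) force every $Y$ with $\chi_w(Y)=1$ to contain a shifted block $w'bx_kR$ (resp. $w'by_kR$); this makes every such $Y$ meet a fixed ideal $X$ with $\chi_w(X)=0$ (the prototype being $X=bR$ for $w=a^\infty$), so $\chi_w$ is not maximal. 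A principal character $\langle A\rangle$ with $A=w\bigcup_{n,w'\neq e}x_nw'R$ is not maximal either, since $wx_0R$ meets but does not contain $A$, leaving no separating $Y$. This identifies $\Omega_{\text{max}}(R)$ with the type-$1$ word characters.

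I would then compute $\partial\Omega(R)=\overline{\Omega_{\text{max}}(R)}$. Each non-maximal character on the list is a limit of type-$1$ ones: $\langle w\bigcup_{n,w'\neq e}x_nw'R\rangle=\lim_{k\to\infty}\chi_{wx_kv}$ for any fixed type-$1$ tail $v$ (Lemma~\ref{lem:multiple x_n elements} again ensures an ideal capturing infinitely many of the $wx_kv\cdots$ already contains $A$), with the $y$-family symmetric, and a type-$2$ word $\chi_w$ is approximated by splicing a type-$1$ tail onto longer and longer prefixes of $w$. For the reverse inclusion I invoke the classification of the first step together with Proposition~\ref{prop:distinctinfinitereducedwords}: a limit of type-$1$ characters is again a word character (necessarily of type $1$ or type $2$) or a principal character of the displayed $x$/$y$ form, and the finite-word characters $\chi_s$ are excluded because each is separated from $\Omega_{\text{max}}(R)$ by an ideal.

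Finally, invariance. As $\Omega_{\text{max}}(R)$ and $\partial\Omega(R)$ are already invariant (Lemma~\ref{lem:maximal-facts}(i) and the invariance of $\partial\Omega$ noted thereafter), the union of the three families is invariant, so only non-mixing remains. For the two principal families I apply Lemma~\ref{lem:range}: the range of $[h,\langle A\rangle]$ is $\langle hA\rangle$, which is constructible of the same $x$- or $y$-type because no element of $I_\ell(R)$ turns an $x_n$-element into a $y_n$-element (the defining relations never interchange $x$'s and $y$'s); in particular a principal character never maps to a word character. For a type-$2$ word, the range of $[h,\chi_w]$ is the word character of the left-translate of $w$, and by Corollary~\ref{lem:shifts x_n or x_nx} left translation only shifts indices and rewrites a bounded prefix, so the eventual tail, hence the type, is preserved. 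The main obstacle throughout is the first step: the exhaustive classification of $\Omega(R)$ and the exact matching of \emph{maximal} with \emph{type $1$}, which hinges on controlling limits of reduced words under the index-shifting by $a,c$ and the right-absorption by $d,f$; Lemmas~\ref{lem:multiple x_n elements} and~\ref{lem:fix x_n or x_nx} together with~\eqref{eq:equiv-bx_n}--\eqref{eq:equiv-by_nw} carry that weight.
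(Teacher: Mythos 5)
The decisive gap is in your first step: the classification of $\Omega(R)$ you propose is false, and your reverse inclusion for $\partial\Omega(R)$ leans on it. Your branching dichotomy conflates the two halves of Lemma~\ref{lem:multiple x_n elements}. If the branching occurs with \emph{trivial} tails, say $s_i=wx_{n_i}$ with the $n_i$ distinct, then a constructible ideal containing two of the $s_i$ contains all $wx_n$ and $wy_n$ elements, hence contains $wZ$ where $Z=\bigcup_n x_nR\cup\bigcup_n y_nR$, and so $\lim_i\chi_{wx_{n_i}}=\langle wZ\rangle$, \emph{not} $\langle w\bigcup_{n,w'\neq e}x_nw'R\rangle$: indeed $\chi_{wx_{n_i}}\big(w\bigcup_{n,w'\neq e}x_nw'R\big)=0$ for every $i$, since no relation applies to the single letter $x_{n_i}$, so the limit assigns the $X$-ideal the value $0$ while $\langle wX\rangle$ assigns it $1$. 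The characters $\langle wZ\rangle$, $w\in R$, therefore genuinely belong to $\Omega(R)$ (being limits of principal characters) and appear on neither of your lists. They happen not to lie in $\partial\Omega(R)$, but your proposal never proves this, so your step ``a limit of type-$1$ characters is again a word character or a principal character of the displayed $x$/$y$ form'' does not follow from what you established. Excluding $\langle wZ\rangle$ from $\overline{\Omega_{\text{max}}}$ requires exactly the tool your route tries to bypass: Proposition~\ref{prop:foundation-maximal}, applied to the neighbourhood $\{\eta\mid \eta(wZ)=1,\ \eta(wX)=\eta(wY)=0\}$, which misses $\partial\Omega$ because $\{wX,wY\}$ is a foundation set for $wZ$. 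This is why the paper never classifies $\Omega(R)$ at all: its Step~2 identifies only boundary characters, building the word letter by letter from the fact that a $\chi\in\partial\Omega$ with $\chi(wR)=1$ must recognize some member of the foundation set $\{waR,\dots,wfR,wX,wY\}$ of $wR$.

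There is a second, quantitative gap in your maximality argument for type-$1$ words. The separating ideal must be produced for an arbitrary constructible $X$ with $\chi_w(X)=0$, and by~\eqref{eq:R-constructible-ideals-first} such $X$ is in general an \emph{infinite} union $\bigcup_i r_iR$ of principal ideals; disjointness of a prefix ideal $w_1\cdots w_{m_i}R$ from each $r_iR$ gives prefix lengths $m_i$ that a priori are unbounded, whereas you need a single $m$ working simultaneously. The paper's Claim in Step~1 (the prefix length depends only on $m_b(r)$, $m_x(r)$, $m_y(r)$, $m_d(r)$) combined with Lemma~\ref{lem:uniformly-bounded} (these quantities are uniformly bounded over the generators $r_i$ of a constructible ideal) exists precisely to secure this uniformity; your appeal to the orthogonality relations~\eqref{eq:equiv-bx_n}--\eqref{eq:equiv-by_nw} does not address it. (Your invariance step is essentially the paper's for the two principal families; for the type-$2$ family the paper sidesteps your direct computation of $r([h,\chi_w])$ for general $h\in I_\ell(R)$ by observing that this set is the complement of the two invariant principal families inside the invariant set $\partial\Omega\setminus\Omega_{\text{max}}$, which is cleaner than extending Corollary~\ref{lem:shifts x_n or x_nx} to arbitrary elements of the inverse hull.)
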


To prove the proposition we need a series of lemmas. We omit the more technical ones here, but their formulations and proofs can be found in the appendix. We include the lemma below which states that if the characters corresponding to two infinite reduced words are sufficiently close, then they agree on the first $n$ letters, for any prescribed $n\geq 1$. The proof is in the appendix.

\begin{lemma}\label{lem:reducedinfinitewordsneighborhood}
Let $x=\chi_w$, where $w\in\RR_{\text{red}}^\infty$, and let $n\geq 1$. Then
\begin{align*}
U:=\{\eta\in \Omega(R) \mid \ & \eta(w_1\cdots w_nR)=1, \\ & \eta(w_1\cdots w_{n-1}b(\bigcup_{n'} x_{n'}R\cup \bigcup_{n'} y_{n'}R))=0 \text{ if }w_n\in\{a, c, d, f\},\\
& \eta(w_1\cdots w_{n-2}b(\bigcup_{n'} x_{n'}R\cup \bigcup_{n'} y_{n'}R))=0 \text{ if }w_{n-1}\in\{a, c, d, f\}\}
\end{align*}
is a neighbourhood of $x$ such that for any $\chi_v\in U$, where $v\in \RR_{\text{red}}^\infty$, we have $v_1 v_2\cdots v_n\equiv w_1 w_2\cdots w_n$.
\end{lemma}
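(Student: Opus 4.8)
The plan is to prove the two assertions separately: that $U$ is an open neighbourhood of $x=\chi_w$, and that membership of $\chi_v$ in $U$ (for reduced $v$) forces $v_1\cdots v_n\equiv w_1\cdots w_n$. For the first assertion, note that every defining condition of $U$ has the form $\eta(A)=1$ or $\eta(A)=0$ for a fixed constructible ideal $A$ (the sets $w_1\cdots w_nR$ and $w_1\cdots w_{n-j}b(\bigcup_{n'}x_{n'}R\cup\bigcup_{n'}y_{n'}R)$ are constructible by~\eqref{eq:R-constructible-ideals-first}), so each cuts out a clopen subset of $\Omega(R)$ and $U$, being a finite intersection, is clopen. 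To see $x\in U$: the words $w_1\cdots w_j$ lie in $w_1\cdots w_nR$ for $j\ge n$, giving $\chi_w(w_1\cdots w_nR)=1$; and when $w_n\in\{a,c,d,f\}$ I would show $\chi_w(w_1\cdots w_{n-1}b(\cdots))=0$ by left cancellation (Lemma~\ref{lem:R-cancellation}), which reduces the claim to the statement that the word $w_n\cdots w_j$, being a subword of the reduced word $w$ and hence reduced and beginning with a letter $\ne b$, is never equivalent to an element of $b(\bigcup_{n'}x_{n'}R\cup\bigcup_{n'}y_{n'}R)$. This last point follows from~\eqref{eq:equiv-bx_n}--\eqref{eq:equiv-by_nw}, which show that every reduced representative of such an element begins with $b$, together with uniqueness of reduced representatives (Proposition~\ref{prop:distinctfinitereducedwords}). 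The third condition is handled identically with $w_{n-1}$ in place of $w_n$.

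For the second assertion, fix $\chi_v\in U$ with $v$ reduced. Since $\chi_v(w_1\cdots w_nR)=1$, for all large $k$ we may write $v_1\cdots v_k=w_1\cdots w_n\,r_k$ with $r_k\in R$ long. By Proposition~\ref{prop:distinctfinitereducedwords} it suffices to prove that the reduced word $v_1\cdots v_k$ has $w_1\cdots w_n$ as a genuine prefix. I would establish this by analysing the junction between $w_1\cdots w_n$ and $r_k$: since $w_1\cdots w_n$ is reduced, the prefix $w_1\cdots w_n$ can only be destroyed upon passing to the reduced form through a defining relation straddling position $n$. Enumerating these, such a relation must begin at position $n$ (forcing $w_n\in\{a,c,d,f\}$ with $r_k$ presenting as $bx_m$- or $by_m$-type) or at position $n-1$ (forcing $w_n=b$ and $w_{n-1}\in\{a,c,d,f\}$ with $r_k$ presenting as $x_m$- or $y_m$-type); a relation beginning at position $n-2$ is impossible, as it would require the forbidden substring $dbx_m$ or $fby_m$ inside the reduced word $w$.

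In the first family of cases one computes, using $abx_m=bx_m$, $cbx_m=bx_{m+1}$, $dbx_mz=bx_mz$ and the $y$-analogues, that $v_1\cdots v_k\in w_1\cdots w_{n-1}b(\bigcup_{n'}x_{n'}R\cup\bigcup_{n'}y_{n'}R)$, so $\chi_v$ evaluates to $1$ on this ideal, contradicting the second defining condition of $U$. In the second family one lands similarly in $w_1\cdots w_{n-2}b(\bigcup_{n'}x_{n'}R\cup\bigcup_{n'}y_{n'}R)$, contradicting the third condition. Hence no straddling relation occurs, $w_1\cdots w_n$ survives as a prefix of $v_1\cdots v_k$, and $v_1\cdots v_n\equiv w_1\cdots w_n$.

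The main obstacle is making the junction analysis watertight. The delicate point is that not every element of $R$ admits a reduced representative (a word ending in $dbx_m$ is irreducible yet not reduced), so one cannot simply pass $r_k$ to a reduced form and read off the junction. The clean way around this is to avoid reducing $r_k$ directly and instead argue at the level of equivalence classes via~\eqref{eq:equiv-bx_n}--\eqref{eq:equiv-by_nw} and the perpendicularity relation $\perp$: these give complete control over which reduced words are equivalent to a left multiple of $b(\bigcup_{n'}x_{n'}R\cup\bigcup_{n'}y_{n'}R)$, which is exactly the information encoded in the defining conditions of $U$. Taking $k$ large enough to discard the short degenerate cases (where $r_k$ is itself too short for the relevant relation to apply) then completes the argument.
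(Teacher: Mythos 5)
Your first half is fine: the clopenness of $U$ is immediate, and your verification that $\chi_w\in U$ (left cancellation plus the observation, via \eqref{eq:equiv-bx_n}--\eqref{eq:equiv-by_nw} and uniqueness of reduced representatives, that a reduced word not beginning with $b$ cannot represent an element of $b(\bigcup_{n'}x_{n'}R\cup\bigcup_{n'}y_{n'}R)$) is in substance the paper's argument. Your junction case analysis and the way the two boundary conditions of $U$ kill the two straddling families are also exactly the paper's. The gap is the step you yourself flag as ``the main obstacle'': the claim that the prefix $w_1\cdots w_n$ ``can only be destroyed upon passing to the reduced form through a defining relation straddling position $n$'' is the crux, and it is never proved. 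Read as a statement about $\tau$-sequences it is in fact false as stated: $\tau$ contains expansions such as $bx_j\mapsto abx_j$, so a rewriting sequence from $w_1\cdots w_n r'$ to $v_1\cdots v_k$ may first insert letters \emph{inside} the reduced prefix --- destroying it as an initial segment with no straddling application --- and only contract later; the paper develops no confluence or normal-form machinery that would let you localize the loss of the prefix to a single junction move. Your proposed repair (``argue at the level of equivalence classes via \eqref{eq:equiv-bx_n}--\eqref{eq:equiv-by_nw} and $\perp$'') names the right tools but defers the entire difficulty: those descriptions only control elements of the special forms $bx_nw$ and $by_nw$, and you do not say how they yield prefix stability at a junction where no such pattern is visible.

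The paper closes precisely this gap by a two-step maneuver absent from your sketch. Writing $v_1\cdots v_m=w_1\cdots w_n p$, it first shows that $p$ admits a \emph{reduced} representative $p_1\cdots p_l$: repeatedly removing forbidden strings from a word for $p$ can stall only at a terminal $dbx_{n'}$ or $fby_{n'}$, and in that case $w_1\cdots w_np$ is represented by a word $u\,d\,bx_{n'}$ (say); Lemma~\ref{lem:dbx_n-separation} gives $ud\perp bx_{n'}$, so by Lemma~\ref{lem:vrequivalent} and \eqref{eq:equiv-bx_n} the reduced word $v_1\cdots v_m$ would factor as $z_1z_2$ with $z_1$ ending in $d$ and $z_2\equiv rbx_j$, $\con(r)\subseteq\{a,c\}$, forcing a forbidden string $dbx_j$, $abx_j$ or $cbx_j$ in $v$ --- this is where reducedness of $v$ enters, resolving exactly the ``no reduced representative'' obstacle you identified. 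With $p_1\cdots p_l$ in hand, the junction analysis becomes a purely \emph{syntactic} check on the single word $w_1\cdots w_np_1\cdots p_l$: a concatenation of two reduced words fails to be reduced only if a forbidden three-letter string straddles the junction, which yields your two case families (your position-$(n-2)$ exclusion included), each contradicting a defining condition of $U$; then Proposition~\ref{prop:distinctfinitereducedwords} gives $v_1\cdots v_m\equiv w_1\cdots w_np_1\cdots p_l$ and the conclusion. So your endgame agrees with the paper's, but without the reduced-representative step your argument does not close.
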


We are almost ready to prove the proposition, but first we need some notions of "length" in $R$.

\begin{definition}\label{def: length-of-R-elements}
For an element $r\in R$, let
\begin{align*}
m_b(r)& :=\text{the number of }b\text{'s in }r,\\
m_x(r)& :=\text{the number of occurences of letters from the set }\{x_n \mid n\in \Z\} \text{ in }r,\\
m_y(r)&:=\text{the number of occurences of letters from the set }\{y_n \mid n\in \Z\} \text{ in }r, \\
m_a(r)&:=\min \{m(a:w)\text{ }|\text{ } w\text{ is a word representing }r\},\\
m_c(r)&:=\min \{m(c:w)\text{ }|\text{ } w\text{ is a word representing }r\},\\
m_d(r)&:=\min \{m(d:w)\text{ }|\text{ } w\text{ is a word representing }r\},\\
m_f(r)&:=\min \{m(f:w)\text{ }|\text{ } w\text{ is a word representing }r\}.
\end{align*}
\end{definition}
The first three quantities $m_b(\cdot), m_x(\cdot)$ and $m_y(\cdot)$ are well defined since they measure properties of words that are invariant under $\tau$-transitions. Some examples are in order:
$$m_{x}(x_{-10}^{\text{ }3} x_7^{\text{ }2})=5, \quad m_{y}(x_0abcy_1^{\text{ }3})=3, \quad m_a(a^{6}bx_n)=0, \quad m_d(d^{\text{ }9}bx_nd^{\text{ }3})=3.$$

\begin{lemma}\label{lem:uniformly-bounded}
Any nonempty constructible ideal of $R$ can be written as $\bigcup\limits_{i\in I} r_i R$ such that the quantities in Definition~\ref{def: length-of-R-elements} associated to the $r_i$'s are uniformly bounded, i.e. the set
$$\{m_b(r_i),\ m_x(r_i),\ m_y(r_i),\ m_a(r_i),\ m_c(r_i),\ m_d(r_i),\ m_f(r_i)\mid\ i\in I\}$$
is bounded.
\end{lemma}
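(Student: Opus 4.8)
The plan is to reduce everything to the explicit list of nonempty constructible ideals in~\eqref{eq:R-constructible-ideals-first} and, for each of the five forms, to exhibit a concrete writing $\bigcup_{i\in I}r_iR$ whose generators $r_i$ are all controlled by a single underlying element $w$. The engine is an elementary \emph{appending estimate}: for any $w\in R$ and any generator $x$, each of the quantities $m_\bullet$ of Definition~\ref{def: length-of-R-elements} satisfies $m_\bullet(wx)\le m_\bullet(w)+1$. Indeed, $m_b,m_x,m_y$ are invariant under the relations $\tau$ and merely count letters, so $m_\bullet(wx)=m_\bullet(w)+m(\bullet:x)$ exactly; and for $\bullet\in\{a,c,d,f\}$, if $v_0\sim w$ is a word realizing the minimum defining $m_\bullet(w)$, then $v_0x\sim wx$ gives $m_\bullet(wx)\le m(\bullet:v_0x)=m_\bullet(w)+m(\bullet:x)$. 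Iterating, appending any two generators changes each quantity by at most $2$.

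With this in hand, the forms $wR$, $w(\bigcup_n x_nR\cup\bigcup_n y_nR)$ and $w(R\setminus\{e\})$ are immediate. The first is a single principal ideal, so there is nothing to bound. For the second I would use $w(\bigcup_n x_nR\cup\bigcup_n y_nR)=\bigcup_n wx_nR\cup\bigcup_n wy_nR$ and index by the generators $x_n,y_n$; every $r_i$ is of the form $wx_n$ or $wy_n$, so the appending estimate bounds each quantity by $m_\bullet(w)+1$. For $w(R\setminus\{e\})$ I would use that $R\setminus\{e\}=\bigcup_x xR$ as $x$ ranges over the generators (which holds because no nonempty word reduces to $\epsilon$), whence $w(R\setminus\{e\})=\bigcup_x wxR$ and again each quantity is $\le m_\bullet(w)+1$.

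The only forms requiring a small idea are $w(\bigcup_{n,\,w'\neq e}x_nw'R)$ and its $y$-analogue, since the naive generators $wx_nw'$ carry arbitrarily long tails $w'$ and hence have unbounded length. The point is that we may choose a coarser decomposition. Writing $R\setminus\{e\}=\bigcup_x xR$ over generators $x$ yields $\bigcup_{w'\neq e}x_nw'R=x_n(R\setminus\{e\})=\bigcup_x x_nxR$, so that
\[
w\Big(\bigcup_{n,\,w'\neq e}x_nw'R\Big)=\bigcup_{n\in\Z}\bigcup_{x}wx_nxR.
\]
Now every generator of the decomposition is $wx_nx$, obtained from $w$ by appending the two generators $x_n$ and $x$; by the appending estimate each quantity is at most $m_\bullet(w)+2$, uniformly in $n$ and in $x$. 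The $y$-case is identical.

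I expect the main (and essentially only) obstacle to be this last rewriting: recognizing that the tail $w'\neq e$ should be absorbed into a single extra generator via $R\setminus\{e\}=\bigcup_x xR$, rather than literally ranging over all $w'$. Everything else is bookkeeping with the appending estimate, together with the invariance of $m_b,m_x,m_y$ already recorded after Definition~\ref{def: length-of-R-elements}.
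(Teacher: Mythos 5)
Your proposal is correct and follows essentially the same route as the paper: the paper also reduces to the explicit list of forms in~\eqref{eq:R-constructible-ideals-first}, decomposes each ideal into principal ideals whose generators are $w$ with at most two generators appended (e.g.\ $w\big(\bigcup_{n,\,w'\neq e}x_nw'R\big)=\bigcup_{n}\bigcup_{x}wx_nxR$ over all generators $x$), and bounds the quantities via the same observation that $m_b,m_x,m_y$ are exact letter counts while $m_a(rw)\le m_a(r)+m(a\colon w)$ and similarly for $m_c,m_d,m_f$. Your packaging of the key rewriting through $R\setminus\{e\}=\bigcup_x xR$ is just a cleaner way of stating the paper's explicit unions.
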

\begin{proof}
The result is trivial for principal ideals. Turning to the remaining ideals, for $r\in R$ we have

\begin{equation}\label{eq:p_i1}
r\big(\bigcup\limits_n x_nR\cup \bigcup\limits_n y_nR \big)=\bigcup\limits_n rx_nR\cup \bigcup\limits_n ry_nR
\end{equation}

\begin{equation}
r(R\setminus \{e\})=raR\cup rbR\cup rcR\cup rdR \cup rfR\cup \big(\bigcup\limits_n rx_nR\cup \bigcup\limits_n ry_nR \big)
\end{equation}

\begin{align}
r\big(\bigcup\limits_{n, w'\neq e}x_nw' R\big)=& \bigcup\limits_n rx_naR\cup \bigcup\limits_n rx_nbR\cup \bigcup\limits_n rx_ncR \nonumber \\
& \cup \bigcup\limits_n rx_ndR \cup \bigcup\limits_n rx_nfR\cup \big(\bigcup\limits_{n, m} rx_nx_mR\cup \bigcup\limits_{n, m} rx_ny_mR \big)
\end{align}

\begin{align}\label{eq:p_i4}
r\big(\bigcup\limits_{n, w'\neq e}y_nw' R\big)
=& \bigcup\limits_n ry_naR\cup \bigcup\limits_n ry_nbR\cup \bigcup\limits_n ry_ncR \nonumber \\
& \cup \bigcup\limits_n ry_ndR \cup \bigcup\limits_n ry_nfR\cup \big(\bigcup\limits_{n, m} ry_nx_mR\cup \bigcup\limits_{n, m} ry_ny_mR \big).
\end{align}

It is clear that as $\boldsymbol{\cdot}$ runs over all the generators of the principal ideals in the unions in~\eqref{eq:p_i1} to~\eqref{eq:p_i4}, the quantities $m_b(\boldsymbol{\cdot})$, $m_x(\boldsymbol{\cdot})$ and $m_y(\boldsymbol{\cdot})$ are no greater than $m_b(r)+1$, $m_x(r)+2$ and $m_y(r)+2$, respectively. For any word $w$ we have $m_a(rw)\leq m_a(r)+m(a: w)$, and similarly for $m_c(rw)$, $m_d(rw)$ and $m_f(rw)$ so as~$\boldsymbol{\cdot}$ runs over all the generators of the principal ideals in the unions above, $m_a(\boldsymbol{\cdot})$, $m_c(\boldsymbol{\cdot})$, $m_d(\boldsymbol{\cdot})$ and $m_f(\boldsymbol{\cdot})$ are bounded by $m_d(r)+1$.
\end{proof}

We are now ready to tackle Proposition~\ref{prop:boundary-characters-of-R}.\\

\begin{proof}[Proof of proposition ~\ref{prop:boundary-characters-of-R}]
We prove the proposition in four steps:\\

\underline{Step 1:} $\{\chi_w\mid\ w\in \RR^\infty_{\text{red}}\text{ is of type }1\}\subseteq  \Omega_{\text{max}}$.\\

Indeed, let $w\equiv w_1w_2\cdots  w_n\cdots \in \RR^\infty_{\text{red}}$ be of type $1$. It suffices to show that if $\chi_w(A)=0$ for some $A\in\mathcal{J}(R)$, then there is a 
$B\in \mathcal{J}(R)$, disjoint from $A$, such that $\chi_w(B)=1$.\\
We argue that it is enough to verify the following claim:\\

\begin{claim}
If $r\in R$ is such that $w_1\cdots w_n\notin rR$ for all $n\geq 1$, then there is an $m\geq 1$, depending only on the quantities $m_b(r)$, $m_x(r)$, $m_y(r)$ and  $m_d(r)$, such that $w_1\cdots w_mR$ is disjoint from $rR$.\\
\end{claim}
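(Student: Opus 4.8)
The plan is to establish the contrapositive in a quantitative form: I will produce a threshold $m$, determined by $w$ together with the four quantities $m_b(r),m_x(r),m_y(r),m_d(r)$, such that a nonempty intersection $w_1\cdots w_mR\cap rR$ would force $r$ to left-divide a prefix of $w$, i.e.\ $w_1\cdots w_j\in rR$ for some $j$, contradicting the hypothesis. Since the right ideals $w_1\cdots w_mR$ decrease with $m$, it suffices to find one such $m$: once the intersection is empty it remains empty for all larger indices. So I will analyse a hypothetical common right multiple $w_1\cdots w_m\,u=r\,v$ and derive the forbidden divisibility whenever $m$ is past the threshold.

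The main tool is the uniqueness of reduced forms (Proposition~\ref{prop:distinctfinitereducedwords}) combined with the observation that every defining relation preserves the letters $b,x_n,y_n$: no relation creates, destroys, or reorders a $b$-, an $x$-type, or a $y$-type letter; it only inserts or deletes letters from $\{a,c,d,f\}$ and shifts the index of an $x$- or $y$-letter lying immediately to the right of a deleted $a$ or $c$. Consequently the \emph{structural skeleton} of an element (the ordered list of its $b$-, $x$-type, and $y$-type letters, with indices) is a well-defined invariant, and $m_b,m_x,m_y$ are additive under concatenation. Writing $B_r:=m_b(r)+m_x(r)+m_y(r)$ and $N:=\#\{i\le m: w_i\in\{b,x_n,y_n\}\}$, I will compute the skeleton of the common value $z=w_1\cdots w_m\,u=r\,v$ from both factorizations. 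Since $w_1\cdots w_m$ is reduced (a prefix of the reduced word $w$), junction rewriting with $u$ can only delete trailing letters of $w_1\cdots w_m$ from $\{a,c,d,f\}$ and shift indices of $u$; hence the first $N$ structural letters of $z$ equal those of $w$. Dually, the first $B_r$ structural letters of $z$ equal those of $r$. Thus if $z$ lies in both ideals, the skeleton of $r$ must coincide with an initial segment of that of $w$; if this already fails then the intersection is empty, which is what we want.

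Type $1$ now splits into two parallel cases. If $w$ has infinitely many structural letters, I take $m$ minimal with $N>B_r$; then $w_1\cdots w_m$ carries strictly more structural content than all of $r$, and the skeleton comparison either exhibits a mismatch (giving disjointness outright) or matches throughout $r$, which together with the flexible‑letter bookkeeping upgrades to $w_1\cdots w_j\in rR$. If instead $w$ has only finitely many structural letters, then by type $1$ it has infinitely many $d$'s and infinitely many $f$'s; beyond the last structural letter, at some position $j_0$ determined by $w$, the tail of $w$ lies in $\{a,c,d,f\}$ and is totally rigid, because no defining relation applies in the absence of $b$-, $x$-, and $y$-letters. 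Here I take $m>j_0$ large enough that $w_{j_0+1}\cdots w_m$ contains more than $m_d(r)$ occurrences of $d$; since $r$ has at most $m_d(r)$ $d$'s and the tail is rigid, the same skeleton argument (now with $d$ playing the role of a rigid letter, $f$ symmetric) again forces $w_1\cdots w_m\in rR$. In both cases the chosen $m$ is a function of $w$ and of $m_b(r),m_x(r),m_y(r),m_d(r)$ alone, which is exactly what is needed so that the uniform bound of Lemma~\ref{lem:uniformly-bounded} later yields a single $m$ working for every generator of a constructible ideal.

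The hard part will be the junction bookkeeping in the upgrade step, that is, converting the combinatorial statement ``the skeleton of $r$ is an initial segment of the skeleton of $w$'' into the algebraic statement $w_1\cdots w_j\in rR$. This means tracking precisely which leading letters of the co-factors $u$ and $v$ get absorbed at the two junctions, and how the $x$/$y$ indices shift, and then verifying via the explicit equivalence-class descriptions \eqref{eq:equiv-bx_n}--\eqref{eq:equiv-by_nw} and the factorization principle for $\perp$ (if $s\perp t$ and $st\sim z$ then $z\equiv s't'$ with $s'\sim s$, $t'\sim t$) that the matching skeletons are realized by a genuine factorization through $r$. The delicate configurations are those where the letter bordering a structural block is $a$ or $c$, so that an index shift is in play; it is exactly the rigidity carried by $b,x,y$ and by $d$ (symmetrically $f$), enforced by the reducedness of $w$, that prevents such shifts from producing an unbounded mismatch, and this is why the threshold depends only on $m_b,m_x,m_y,m_d$ and not on the flexible counts $m_a,m_c,m_f$.
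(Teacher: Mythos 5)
Your overall architecture matches the paper's: prove the quantitative contrapositive, exploit the invariance of the counts $m_b,m_x,m_y$ of structural letters, split according to whether $w$ has infinitely many structural letters, and in the remaining case use the $d$-count. But there is a genuine gap at exactly the point you label ``the hard part,'' and in the flexible-tail case the one concrete mechanism you do offer is wrong. You assert that beyond the last structural letter the tail of $w$ is \emph{totally rigid} ``because no defining relation applies in the absence of $b$-, $x$-, and $y$-letters.'' This ignores the junction with the co-factor $u$ in $z=w_1\cdots w_m u$: if $u$ begins with $bx_nt$ ($t\neq\epsilon$), the relations $abx_n=bx_n$, $cbx_n=bx_{n+1}$, $dbx_nt'=bx_nt'$ absorb an entire maximal suffix of $w_1\cdots w_m$ lying in $\{a,c,d\}$ (e.g.\ $dac\cdot bx_5t\sim bx_6t$), and symmetrically a $\{a,c,f\}$-suffix is absorbed at a $by_nt$ junction. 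So $d$ is not a rigid letter, and absorption is blocked only by the interleaving of $d$'s and $f$'s: a $\{a,c,d\}$-suffix cannot reach past the last $f$, and dually. This is precisely why the paper's proof in this case does not simply take $m$ with more than $m_d(r)$ many $d$'s past $j_0$: it places the cut by choosing $k_2$ (a segment with $m_d(r)+1$ many $d$'s ending in $d$), then $k_3$ (the next $f$), then $m$ (the next $d$), so that Lemma~\ref{lem:dbx_n-separation} yields $w_1\cdots w_{k_3-1}f\perp w_{k_3+1}\cdots w_{m-1}dr'$. Without such a choreographed cut your intended $\perp$-separation can fail outright (a prefix ending in, say, $fb$ followed by a $y$-junction in $u$ loses its $f$).

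Two further points. First, your statement that ``$r$ has at most $m_d(r)$ $d$'s'' misreads the invariant: $m_d(r)$ is a \emph{minimum} over representing words, and representatives of $r$ can contain more $d$'s. What the argument actually requires is a lower bound $m_d(w_1\cdots w_{k_3-1}fu)\geq m_d(r)+1$ for every $u$, which is not automatic (one must rule out that $w_1\cdots w_{k_3-1}f$ is equivalent to a \emph{subword} of $r$); the paper gets this from Lemma~\ref{lem:vrequivalent}, which pins the rigid block $w_{k_1}\cdots w_{k_2-1}d$ inside every representative. Second, in the infinitely-many-structural-letters case, taking $m$ minimal with $N>B_r$ does not control where the cut falls relative to block boundaries; the paper instead cuts after $m_b(r)+2$ complete $v_ibz_i$ blocks (note: one more than naive counting suggests), or at positions with $w_{m-1}\neq b,\ w_m=x_k$, resp.\ $w_{m-1}=b,\ w_m\neq x_n,y_n$, precisely so that the $\perp$-factorization principle applies and the skeleton match upgrades to $w_1\cdots w_j\in rR$. (A small slip besides: the skeleton ``with indices'' is not an invariant, since $aby_n=by_{n+1}$ and $cbx_n=bx_{n+1}$ shift indices; only the ordered type sequence and the counts are.) So the plan identifies the right ingredients, including the equivalence-class descriptions \eqref{eq:equiv-bx_n}--\eqref{eq:equiv-by_nw}, but the deferred junction bookkeeping is the proof, and the shortcut you propose for the flexible-tail case would fail as stated.
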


Indeed, assume the claim is established. Then if a constructible ideal $\bigcup_{i\in I} r_i R$, as in the statement of Lemma~\ref{lem:uniformly-bounded}, does not  contain $w_1\cdots w_n $ for any $n\geq 1$, there are $m_i\geq 1$, depending only on $m_b(r_i)$, $m_x(r_i)$, $m_y(r_i)$ and $m_d(r_i)$, such that $w_1\ldots w_{m_i}R$ is disjoint from $r_iR$. The quantities $m_b(r_i)$, $m_x(r_i)$, $m_y(r_i)$ and $m_d(r_i)$ are bounded so the $m_i$'s can be chosen to be bounded as well. Setting $m=\max\limits_{i}m_i$ we see that $w_1\ldots w_mR$ is disjoint from $\bigcup\limits_{i\in I} r_i R$.\\

Let us formulate the contrapositive of the claim: given $r\in R$ there is an $m\geq 1$, which only depends on $m_b(r)$, $m_x(r)$, $m_y(r)$ and $m_d(r)$, such that whenever the equality
\begin{equation}\label{eq:mequality}
w_1w_2 \cdots w_m r_1=rr_2
\end{equation}
holds for some $r_1, r_2\in R$, then $w_1w_2\cdots w_n\in rR$ for some $n\geq 1$. Let us therefore assume Equation~\eqref{eq:mequality} and consider various cases depending on the nature of the reduced infinite word $w$.\\

\underline{Case 1: there are infinitely many strings $bx_n$ or $by_n$, $n\in \mathbb{Z}$, in $w$.}\\

We  may write $w\equiv v_1bz_1v_2bz_2v_3\cdots$ where $z_i\in \{x_n, y_n \mid n\in \mathbb{Z}\}$ and the $v_i$'s are words that do not contain the strings $bx_n$ or $by_n$ for any $n\in \mathbb{Z}$. Let $m$ be such that
$$w_1w_2\cdots w_m\equiv \prod\limits_{i=1}^{m_b(r)+2} v_ibz_i$$
(note that the product on the RHS is ordered according to the index $i$). To avoid cumbersome notation we will assume that the $z_i's$ are $x_n$ elements, say $z_i=x_{n_i}$. Then

$$w_1w_2\cdots w_m\equiv \prod\limits_{i=1}^{m_b(r)+2} v_ibx_{n_i}.$$

For $r_1\neq \epsilon$ we have

$$[w_1w_2\cdots w_mr_1]=[\Big(\prod\limits_{i=1}^{m_b(r)+2} v_ibx_{n_i}\Big) r_1]$$
$$=\{\Big(\prod\limits_{i=1}^{m_b(r)+2} v_i s_ibx_{n_i+m(c: s_i)}\Big) r_1'\mid \text{ }r_i\text{ are words such that }\forall i\text{ }\con(s_i)\subseteq \{a, c,d\}, \text{ }r_1'\sim r_1\}.$$
\text{ }\\

Then the equivalence~\eqref{eq:mequality} means that $r$ is a subword of a word in the above set. Furhermore~$r$ contains $m_b(r)$ many $b$'s so $r$ must be a subword of a word $\prod\limits_{i=1}^{m_b(r)+1} v_is_i'bx_{n_i+m(c:s_i')}$ for some words $s_i'$ such that $\con(s_i')\subseteq \{a, c, d\}$ for all $i$. Then

$$w_1w_2\cdots w_m= \prod\limits_{i=1}^{m_b(r)+2} v_ibx_{n_i}=\Big(\prod\limits_{i=1}^{m_b(r)+1} v_i s_i'bx_{n_i+m(c:s_i')}\Big) v_{m_b(r)+2}bx_{n_{m_b(r)+2}}$$ $$=r\Big(\Big(\prod\limits_{i=1}^{m_b(r)+1} v_i s_i'bx_{n_i+m(c:s_i')}\Big)-r\Big)v_{m_b(r)+2}bx_{n_{m_b(r)+2}}\in rR.$$
\text{ }\\

\underline{Case 2: there are finitely many strings $bx_n$ and $by_n$, $n\in \mathbb{Z}$, in $w$.}\\

\underline{Subcase 1: There are infinitely many $x_n$ or $y_n$, $n\in \mathbb{Z}$, 
in $w$.}\\

Assume without loss of generality that $w$ contains infinitely many $x_n$ elements. Choose an $m\geq 1$ such that $w_{m-1}\neq b$, $w_m=x_k$ for some $k\in \mathbb{Z}$, and $w_1w_2\cdots w_{m-1}w_m$ contains more $x_n$ elements than $r$.  Then $w_1w_2 \cdots w_{m-1}w_m \perp r'$ for any $r'\in R$. Hence~\eqref{eq:mequality} implies that $w_1w_2 \cdots w_{m-1}w_m$ is equivalent to some subword of $rr_2$. Since the former contains more $x_n$ elements than $r$ does, we must have $w_1w_2 \cdots w_{m-1}w_m=rr_3$ for some $r_3\in R$, so that $w_1w_2\cdots w_m\in rR$. Note that here the choice of $m$ only depends on $m_x(r)$.\\

\underline{Subcase 2: There are finitely many $x_n$ and $y_n$, $n\in \mathbb{Z}$, 
in $w$.}\\

Let us first assume that there are infinitely many $b$'s in $w$. Choose an $m\geq 1$ such  that $w_{m-1}=b$, $w_m\neq x_n, y_n$ and $w_1w_2\cdots w_{m-1}$ contains more $b$'s than $r$ does. Then $w_1w_2\cdots w_{m-2}b\perp w_mr'$ for any $r'\in R$. Hence~\eqref{eq:mequality} implies that $w_1w_2\cdots w_{m-2}b$ is equivalent to a subword of $rr_2$. Since the first word contains more $b$'s than $s$ does, we must have $w_1w_2\cdots w_{m-2}b=w_1w_2\cdots w_{m-1}=rr_3$ for some $r_3\in R$, so that $w_1w_2\cdots w_{m-1}\in rR$. Here the choice of $m$ only depends on~$m_b(r)$.\\

Now let us assume that there are finitely many $b$'s in $w$. Then, since it is type $1$, $w$ contains infinitely many $d$'s  and $f$'s and there is a $k_1\geq 1$ such that $w_i\in \{a, c, d, f\}$ for $i\geq k_1$. Choose $k_2>k_1$ such that $w_{k_1}w_{k_1+1}\cdots w_{k_2}$ contains $m_d(r)+1$ many $d$'s and $w_{k_2}=d$. Choose $k_3>k_2$ so that $w_{k_3}$ is the next letter after $w_{k_2}$ which is an $f$. Then choose $m$ so that $w_m$ is the next letter after $w_{k_3}$ which is a $d$. Then Lemma~\ref{lem:dbx_n-separation} implies that
$$w_1\cdots w_{k_3-1}f\perp w_{k_3+1}\cdots w_{m-1}dr' \quad \text{ for all }r'\in R.$$
Hence~\eqref{eq:mequality} implies that $w_1\cdots w_{k_3-1}f$ is equivalent to some subword of $rr_2$. We claim that this subword is actually of the form $rr_3$ for some $r_3\in R$, so that $w_1\cdots w_{k_3-1}f=w_1\cdots w_{k_3}=rr_3\in r R$. To ascertain this we have to rule out the possibility that $w_1\cdots w_{k_3-1}f$ is equivalent to a subword of $r$. We do this by proving that for any $u\in R$ we have $m_d(w_1\cdots w_{k_3-1}fu)\geq m_d(r)+1$. This will imply that an equality $r=w_1\cdots w_{k_3-1}fu$ is impossible.\\

For $u\in R$ we have
$$w_1\cdots w_{k_3-1}fu\equiv w_1\cdots w_{k_2-1}d w_{k_2+1}\cdots w_{k_3-1}fu,$$
and since $w_{k_2+1}\cdots w_{k_3-1}fu$ is \textit{not} of the form $sbx_nu'$ where $n\in \mathbb{Z}$, $\con(s)\subseteq \{a, c, d\}$ and $u'\neq \epsilon$, Lemma~\ref{lem:dbx_n-separation} implies that
$$[w_1\cdots w_{k_3-1}fu]=[w_1\cdots w_{k_2-1}d w_{k_2+1}\cdots w_{k_3-1}fu]$$ $$=\{vv'\mid v\sim w_1\cdots w_{k_2-1}d, \text{ }v'\sim w_{k_2+1}\cdots w_{k_3-1}fu\}.$$
Now, $w_1\cdots w_{k_2-1}d\equiv w_1\cdots w_{k_1-1}w_{k_1}\cdots w_{k_2-1}d$, where $w_i\in \{a, c, d ,f\}$ for $k_1\leq i~\leq~k_2$, so Lemma~\ref{lem:vrequivalent} implies that the above set is contained in
$$\{v''w_{k_1}\cdots w_{k_2-1}dv'\mid \text{ }v''\text{ is a word},\text{ }v'\sim w_{k_2+1}\cdots w_{k_3-1}fu\}.$$
This shows that any word equivalent to $w_1\cdots w_{k_3-1}f u$ contains the string $w_{k_1}\cdots w_{k_2-1}d$ which in particular contains $m_d(r)+1$ many $d$'s. Hence $m_d(w_1w_2\cdots w_{k_3-1}fu)\geq m_d(r)+1$.\\

\underline{Step 2:}  
\begin{align*}
\partial \Omega\subseteq\{\langle w\bigcup\limits_{n, w'\neq e}x_nw'R\rangle\mid w\in R\}
\sqcup \{\langle w\bigcup\limits_{n, w'\neq e}y_nw'R\rangle \mid w\in R\}
\sqcup\{\chi_w \mid w\in \RR^\infty_{\text{red}}\}.
\end{align*}

Denote
$$X=\bigcup\limits_{n, w'\neq e}x_nw'R, \quad Y=\bigcup\limits_{n, w'\neq e}y_nw'R, \quad Z=\bigcup\limits_n x_nR\cup \bigcup\limits_n y_nR.$$
For characters $\chi_1, \chi_2$ we write $\chi_1\leq\chi_2$ (resp. $<$) whenever we have the (resp. proper) inclusion $\chi_1^{-1}(1)\subseteq \chi_2^{-1}(1)$ of filters. \\

Fix $\chi\in \partial\Omega$. We give an inductive procedure which either produces a reduced infinite word $w$ such that $\chi=\chi_w$, or terminates at some finite step and produces an element $w\in R$ such that $\chi=\langle wX\rangle$ or $\langle wY\rangle$.\\

Since $\chi$ is non-zero we have $\chi(R)=1$. Inductively, suppose that $\chi(wR)=1$ for some finite word $w\equiv w_1w_2\cdots w_m$. 
Since $waR, wbR, wcR, wdR, wfR, wX$ and $wY$ form a foundation set for $wR$, $
\chi$ recognizes at least one of these ideals. Consider two cases.\\

Suppose that $\chi(wbZ)=0$. If $\chi$ recognizes a $wxR$ ideal for some \mbox{$x \in \{a,b,c,d,f\}$}, put $w_{m+1}=x$ so that $\chi(w_1 \cdots w_m w_{m+1}R)=1$. If instead $\chi(wX)=1$, consider the two possibilities: either there is a $k\in\mathbb{Z}$ such that $\chi$ recognizes the constructible ideal $wx_k(R\setminus \{e\})$ or there is not. In the former case $\chi$ must also recognize $wx_kR$ and we put $w_{m+1}=x_k$. In the latter case we claim that $\chi=\langle wX \rangle$. Indeed, to establish this it is enough to ascertain that any constructible ideal which is properly contained in $wX$, is contained in some $wx_n(R\setminus\{e\})$ ideal, $n\in \mathbb{Z}$, but this follows from Lemma~\ref{lem:multiple x_n elements}. The case where $\chi(wY)=1$ is handled analogously.\\

Now suppose that $\chi(wbZ)=1$. Since the ideals $wbX$ and $wbY$ form a foundation set for~$wbZ$, $\chi$ must recognize one of them. Similar to the previous paragraph, either $\chi=\langle wbX \rangle$, resp. $\chi=\langle wbY \rangle$, or $\chi$ recognizes a $wbx_kR$, resp. $wby_kR$, ideal, in which case we put $w_{m+1}w_{m+2}\equiv bx_k$, resp. $w_{m+1}w_{m+2}\equiv by_k$.\\

If the above process does not terminate at any step it produces an infinite word $w\equiv w_1w_2\cdots$ such that $\chi(w_1\cdots w_nR)=1$ for all $n\geq 1$. We claim that $w$ is a reduced and that $\chi=\chi_w$.\\

To see that $w$ is reduced, suppose it contains, say, a string of the form $xbx_n$ where $x=a, c, d$. Then we can write $w\equiv w'xbx_n\cdots$. We have $\chi(w'xbx_nR)=1$. Since $w'xbx_nR\subseteq w'bZ$ we also have $\chi(w'bZ)=1$. By how we constructed $w$ we see that the immediate letters in $w$ after $w'$ should have been $bx_k$ or $by_k$ for some $k$, not $x$. Hence $w$ does not contain strings of the form $xbx_n$ where $x=a, c, d$. Similarly $w$ does not contain strings $yby_n$ where $y=a, c, f$, so $w$ is reduced.\\

By construction, $\chi(w_1\cdots w_nR)=1$ for all $n\geq 1$, in other words, $\chi_w\leq \chi$. When $w$ is type $1$ then $\chi_w$ is maximal by step 1, so $\chi=\chi_w$ follows.\\

Now suppose that $w$ is of type $2$. Then eventually all $w_i$'s are contained in one of the sets $\{a, c ,d\}$ or $\{a, c ,f\}$. Assume without loss of generality that $w_i\in \{a, c ,d\}$ for $i\geq m$. Suppose that $\chi_w(A)=0$ for some $A\in \mathcal{J}(R)$. Then $A$ does not contain $w_1w_2\cdots w_n$ for any $n$ and Lemma~\ref{lem:infinitewordtype2help} gives a $k\geq m$ such that
$$A\cap w_1w_2\cdots w_k R\subseteq w_1w_2\cdots w_{k-1} bZ.$$
By construction of $w$ we have $\chi(w_1w_2\cdots w_{k-1}bZ)=0$. Therefore $\chi(A\cap w_1w_2\cdots w_k R)=0$. Since $\chi(w_1w_2\cdots w_k R)=1$ this implies that $\chi(A)=0$. Hence $\chi=\chi_w$.\\

\underline{Step 3:} 
\begin{align*}
\{\langle vX\rangle \mid v\in R\}\sqcup \{\langle vY\rangle \mid v\in R\}
\sqcup\{\chi_w \mid w\in \RR^\infty_{\text{red}}\text{ is of type }2\} \subseteq \partial \Omega\setminus \Omega_{\text{max}}.
\end{align*}

The characters $\langle vX\rangle$ and $\langle vY\rangle$ are not maximal for $v\in R$ since $\langle vX\rangle< \langle vx_kw'R\rangle$ and $\langle vY\rangle<\langle vy_kw'R\rangle$  for any $k\in \Z$ and $w'\neq \epsilon$. To see that $\langle vX\rangle\in \partial \Omega$, assume $A, A_1, \dots , A_m\in \J(R)$, $A\supseteq vX$ and $A_1, \dots , A_m\subsetneq vX$. Then it suffices to show that the $A_i$'s do not form a foundation set for $A$.  But this is clear since, as already noted, Lemma~\ref{lem:multiple x_n elements} implies that each $A_i$ is contained in some $vx_{n_i}(R\setminus \{e\})$ ideal. Similarly $\langle vY \rangle \in \partial\Omega$.\\

Now fix a reduced infinite word $w$ of type 2. Write $w\equiv w'w''$ where $w'$ is finite and $\con(w'')\subseteq \{a, c, d\}$ or $\con(w'')\subseteq \{a, c, f\}$. Without loss of generality, assume that  $\con(w'')\subseteq \{a, c, d\}$.\\

Then $\chi_w$ is not maximal since $\chi_w<\langle w'bX\rangle$. To show that $\chi_w\in \partial \Omega$, fix $A, A_1, \dots , A_m\in \J(R)$ with $\chi_w(A)=1$ and $\chi_w(A_i)=0$ for $i=1, \dots, m$. We will find a nonempty constructible ideal contained in $A$ which does not intersect any of the $A_i$'s. For $p\in R$ define $$m_{a+c+d}(p):=m_a(p)+m_c(p)+m_d(p).$$ By Lemma~\ref{lem:uniformly-bounded} we can write $A_i=\bigcup_j p_j^iR$ such that $m_{a+c+d}(p_j^i)$ is uniformly bounded in $i$ and $j$. Pick $n\geq 1$ so that $w_1w_2\cdots w_n \in A$, $w_n\in \{a, c, d\}$ \textit{and} $m_{a+c+d}(w_1w_2\cdots w_n)$ exceeds $m_{a+c+d}(p_j^i)$ for all $i$ and $j$. Consider $B:=w_1w_2\cdots w_nx_0 R\subseteq A$.\\

We claim that $B$ does not intersect an $A_i$ ideal. Indeed, suppose for contradiction that $B$ intersects~$A_{i'}$. Then $w_1w_2\cdots w_nx_0q_1=p_{j'}^{i'}q_2$ for some $q_1, 
q_2\in R$ and $j'$. We have $w_1w_2\cdots w_n x_0  \perp q_1$ so $w_1w_2\cdots w_nx_0$ is either a subword of $p_{j'}^{i'}$ or we have 
\begin{equation}\label{eq:w_1w_n}
w_1w_2\cdots w_nx_0=p_{j'}^{i'}q
\end{equation}
for some $q\in R\setminus \{e\}$. However, the former is impossible since
$m_{a+c+d}(w_1w_2\cdots w_nx_0u)$ exceeds $m_{a+c+d}(p_{j'}^{i'})$ for any $u\in R$. Therefore Equation \eqref{eq:w_1w_n} holds. Since $w_1\cdots w_n \perp x_0$,~\eqref{eq:w_1w_n} implies that $p_{j'}^{i'}q\equiv w'x_0$ for some $w'\sim w_1w_2\cdots w_n$. Since $q\neq e$ this implies that $w_1w_2\cdots w_n=p_{j'}^{i'}q'$ for some $q'\in R$. Then $w_1w_2\cdots w_n\in p_{j'}^{i'}R$ which contradicts that $\chi_w(A_{i'})=0$.\\

Step 1-3 establishes both equalities in the proposition.\\

\underline{Step 4:} The sets $\{\langle w\bigcup\limits_{n, w'\neq e}x_nw'R\rangle \mid w\in R\}$, $\{\langle w\bigcup\limits_{n, w'\neq e}y_nw'R\rangle \mid w\in R\}
$\\ and $\{\chi_w \mid w\in \RR^\infty_{\text{red}}\text{ is of type }2\}$ are invariant subsets of $\partial \G(R)$.\\

To see that $\{\langle vX\rangle \mid v\in R\}$ is invariant it is by Lemma~\ref{lem:range} enough to see that for any $v\in R$ and $h\in I_\ell(R)$ such that $\dom h\supseteq vX$, the ideal
$$h(vX)$$
is again of the form $v' X$ for some $v'\in R$. But this will follow from our computations in classifying the constructible ideals of $R$, which is done in the Appendix. Similarly $\{\langle vY\rangle \mid v\in R\}$ is invariant.\\

Now, the set $\partial\Omega\setminus \Omega_{\text{max}}$ is clearly invariant. It follows that the set $\{\chi_w\mid w\in \RR^\infty_{\text{red}}\text{ is of type }2\}$ is invariant, being the relative complement of the two invariant sets $\{\langle vX\rangle \mid v\in R\}$ and $\{\langle vY\rangle \mid v\in R\}
$ in $\partial\Omega\setminus \Omega_{\text{max}}$.
\end{proof}

Let us recall the definition of Borel amenability for topological groupoids.

\begin{definition}[{\cite[Definition~2.1]{R-2}}]\label{def:Borelamenable}
A topological groupoid $\G$ is said to be Borel amenable if there exists a Borel approximate invariant mean, i.e. a sequence $\{\mu_n\}_{n=1}^\infty$, where each $\mu_n$ is a family of probability measures $\{\mu_n^x\}_{x\in \G^{(0)}}$ where $\mu_n^x$ is supported on $\G_x$, such that
\begin{enumerate}[(i)]
\item For all $n\in \mathbb{N}$, $\mu_n$ is Borel in the sense that for all bounded Borel functions $f$ on $\G$, the map
\begin{equation}\label{eq:measurability}
x \mapsto \int f \ d\mu_n^x
\end{equation}
is Borel;\\
\item 
\begin{equation}\label{eq:preapproximateinvariance}
\|\mu_n^{s(g)}-\mu_n^{r(g)}(\cdot \ g^{-1})\|_1 \rightarrow 0
\end{equation}
for all $g\in \G$.
\end{enumerate}
\end{definition}
We are ready to prove the Borel amenability of $\partial\G(R)$.
\newpage

\begin{theorem}
The groupoid $\partial \G_P(R)=\partial \G(R)$ is Borel amenable.
\end{theorem}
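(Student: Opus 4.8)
The plan is to produce a Borel approximate invariant mean $\{\mu_n\}_{n\ge 1}$ in the sense of Definition~\ref{def:Borelamenable}, using the complete description of the unit space $\partial\Omega(R)$ obtained in Proposition~\ref{prop:boundary-characters-of-R}. Every boundary character is encoded by a finite or infinite reduced word: the maximal characters are the $\chi_w$ with $w\in\RR^\infty_{\text{red}}$ of type $1$, while the remaining points of $\partial\Omega(R)$ are the $\chi_w$ with $w$ of type $2$ together with the singular characters $\langle wX\rangle$ and $\langle wY\rangle$ for $w\in R$. An arrow of $\partial\G(R)$ with source $\chi$ is a class $[h,\chi]$ with $h\in I_\ell(R)$ and $\chi(\dom h)=1$; by the germ description such an $h$ rewrites only a bounded prefix of the underlying word and, by Corollary~\ref{lem:shifts x_n or x_nx}, shifts the $x_n$- and $y_n$-indices by a bounded amount. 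Thus, exactly as for the free monoid, the orbit relation is a tail/shift equivalence on reduced words, and this is the structural source of amenability.

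The amenability mechanism I would isolate is twofold. On the one hand there is a tail (AF) part: the subrelation ``$w$ and $v$ agree after coordinate $n$'' has finite source fibres, and its union over $n$ is hyperfinite, hence amenable; on the other hand there are the index shifts of Corollary~\ref{lem:shifts x_n or x_nx}, which give $\Z$-valued cocycles (one tracking the $x$-indices and one the $y$-indices) whose values are amenable group data. The concrete construction of $\mu_n^\chi$ then combines a uniform counting average over the finite ``rewrite a prefix of length $\le n$'' fibre with a Fo\o lner-type window of width $n$ in the shift directions. The reason for dividing by the window size is that a fixed groupoid element $g=[h,\chi]$ changes only a bounded amount of prefix data and shifts indices by a bounded amount, so applying $g$ displaces the support of $\mu_n$ by finitely many coordinates, whence the symmetric difference has relative $\ell^1$-mass $O(1/n)$; this is precisely what is needed for \eqref{eq:preapproximateinvariance}.

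With the family defined, I would verify the two conditions as follows. For \eqref{eq:preapproximateinvariance} it suffices to treat $g$ of the form $[s,\chi]$ and $[s^{-1},\chi]$ with $s$ a generator, since these generate $\partial\G(R)$, and to check that for such $g$ the measures $\mu_n^{s(g)}$ and $\mu_n^{r(g)}(\cdot\,g^{-1})$ differ in $\ell^1$-norm by at most a fixed multiple of $1/n$, which tends to $0$. For the Borel condition \eqref{eq:measurability} I would observe that the weights $\mu_n^\chi(\{[h,\chi]\})$ depend on $\chi$ only through membership in the finitely many clopen cylinder sets of the form $D(h,U)$ entering the length-$n$ window, so the map in \eqref{eq:measurability} is in fact locally constant, in particular Borel; that each $\mu_n^\chi$ is a probability measure supported on $(\partial\G(R))_\chi$ is built into the normalisation.

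The main obstacle I anticipate is incorporating the singular characters $\langle wX\rangle$, $\langle wY\rangle$ and the residual index-shift isotropy uniformly with the infinite-word characters. Near $\langle wX\rangle$ the prefixes $wx_k$ ($k\in\Z$) accumulate, so the naive ``prefix of length $\le n$'' window degenerates and must be replaced by an averaging over the index $k$ that matches, in the limit, the window used on the nearby type-$1$ and type-$2$ characters $\chi_v$. Producing a \emph{single} Borel family $\{\mu_n\}$ that simultaneously satisfies \eqref{eq:preapproximateinvariance} across all of $\partial\Omega(R)$ — in particular across the transitions between the three invariant pieces identified in Proposition~\ref{prop:boundary-characters-of-R} — is the delicate point, and I expect the bulk of the technical work (together with the accumulation and separation lemmas proved in the appendix) to be spent there. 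Once such a family is exhibited and the estimates above are checked, Borel amenability of $\partial\G_P(R)=\partial\G(R)$ follows immediately from Definition~\ref{def:Borelamenable}.
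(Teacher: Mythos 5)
Your construction is essentially the paper's: for $x=\chi_w$ with $w\in\RR_{\text{red}}^\infty$ the paper takes $\mu_n^x=\frac1n\sum_{m=1}^n\delta_{[w_m^{-1}\cdots w_1^{-1},\,x]}$, and for the singular characters $\langle wX\rangle$ and $\langle wY\rangle$ it averages over exactly the F\o lner window in the shift direction that you describe, namely $\mu_n^x=\frac1n\sum_{m=1}^n\delta_{[c^{-m}bw^{-1},\,x]}$ resp.\ $\frac1n\sum_{m=1}^n\delta_{[a^{-m}bw^{-1},\,x]}$; the invariance estimate \eqref{eq:preapproximateinvariance} is precisely your $O(1/n)$ displacement bound, carried out via Corollary~\ref{lem:shifts x_n or x_nx} for the singular pieces and a telescoping reduction to generator arrows for the word characters. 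One of your anticipated difficulties is, however, a non-issue: since the three pieces in Proposition~\ref{prop:boundary-characters-of-R} are \emph{invariant}, no arrow of $\partial\G(R)$ has source and range in different pieces, so \eqref{eq:preapproximateinvariance} is verified piece by piece and there are no ``transitions'' to match; moreover the two singular pieces are countable, so for the Borel condition only the word piece matters.

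The genuine gap is in your verification of \eqref{eq:measurability}. You assert that the weights $\mu_n^\chi(\{[h,\chi]\})$ depend on $\chi$ only through membership in finitely many clopen sets $D(h,U)$, making $\chi\mapsto\int f\,d\mu_n^\chi$ locally constant. This is false for two reasons. First, the support of $\mu_n^\chi$ is built from the reduced word encoding $\chi$ itself, not from a fixed finite family of bisections. Second, and more seriously, evaluating $\int\un_{D(h,U)}\,d\mu_n^{\chi_w}$ requires deciding whether $[h,\chi_w]=[w_m^{-1}\cdots w_1^{-1},\chi_w]$, and while equality of germs is an open condition, its \emph{negation} is not, precisely because $\partial\G(R)$ is non-Hausdorff: distinct germs over the same unit need not be separated, so the set where the integral vanishes need not be open. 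The paper flags exactly this (``it seems obvious\dots however, since $\G$ is non-Hausdorff it is not so simple'') and devotes the entire second half of its proof to it, showing that the function $F(x)=\int\un_{D(h,U)}\,d\mu_n^x$ is continuous on the co-countable invariant set $A=\{\chi_w\mid w\in\RR_{\text{red}}^\infty\}$: given $\chi_w$ with $F(\chi_w)=0$ and $\chi_w(\dom h)=1$, one picks $N\geq n$ with $w_1\cdots w_N\in\dom h$ and proves, using Lemma~\ref{lem:reducedinfinitewordsneighborhood}, Proposition~\ref{prop:distinctfinitereducedwords}, the separation lemmas and Lemma~\ref{lem:ubx_nw}, that any $\chi_v$ agreeing with $w$ on the first $N+4$ letters also satisfies $F(\chi_v)=0$. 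Without an argument of this kind the Borel condition is not established; the remainder of your plan (the means themselves, the $|A_n|/n\to1$ estimates, the reduction to generators, and the need to check that the $n$ Dirac masses are distinct so that $\mu_n^x$ is a probability measure) is sound and coincides with the paper's proof.
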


\bp
Put $\G:=\partial \G_P(R)=\partial \G(R)$. We will prove that $\G$ satisfies Definition~\ref{def:Borelamenable} above. Specifically we define a Borel approximate invariant mean $\{\mu_n\}_n$ where each $\mu_n^x$ is finitely supported. Since $\G$ is étale, showing~\eqref{eq:preapproximateinvariance} translates to showing that for any $y\in \partial \Omega$ and $g\in \G_y^x$ we have
\begin{equation}\label{eq:approximateinvariance}
\|\mu_n^y-\mu_n^x(\cdot g^{-1})\|_1=\sum\limits_{g'\in \mathcal{G}_y} |\mu_n^y(g')-\mu_n^x(g'g^{-1})|\xrightarrow[n \rightarrow \infty]{} 0.
\end{equation}
In Proposition~\ref{prop:boundary-characters-of-R} we showed that
\begin{equation}\label{eq:the3sets}
\partial\Omega= \{\langle w\bigcup\limits_{n, w'\neq e}x_nw'R \rangle \mid w\in R\}\sqcup \{\langle w\bigcup\limits_{n, w'\neq e}y_nw'R \rangle \mid w\in R\} \sqcup \{\chi_w \mid \text{ }w\in \RR_{\text{red}}^\infty\}
\end{equation}
and that the three sets in this union are invariant. Accordingly, we will define $\mu_n ^x$ in three separate cases.\\

Suppose $x=\chi_w$ for some $w\in \RR_{\text{red}}^\infty$. Then we define, for $n \geq 1$,
\begin{equation}\label{eq:munx1}
\mu_n^x=\sum\limits_{m=1}^n \frac{1}{n}\delta_{\text{\footnotesize $[w_m^{-1}\cdots w_1^{-1}, x]$}}.
\end{equation}

Suppose $x=\langle w \bigcup\limits_{n, w'\neq e}x_nw'R\rangle$ for some $w\in R$. Fixing \textit{one} such $w$,\footnote{\label{note1}Actually, there is a canonical choice for $w$ and using these while modifying~\eqref{eq:munx1}-\eqref{eq:munx3} slightly, guarantees that~\eqref{eq:measurability} is continuous for continuous $\phi$. However, since we only require that the map~\eqref{eq:measurability} is Borel we need not bother with this.} we define for $n \geq 1$,
\begin{equation}\label{eq:munx2}
\mu_n^x=\sum\limits_{m=1}^n \frac{1}{n}\delta_{\text{\footnotesize $[c^{-m}bw^{-1}, x]$}}.
\end{equation}

Suppose $x=\langle w \bigcup\limits_{n, w'\neq e}y_nw'R\rangle$ for some $w\in R$. Fixing \textit{one} such $w$,\footref{note1} we define for $n\geq 1$,
\begin{equation}\label{eq:munx3}
\mu_n^x=\sum\limits_{m=1}^n \frac{1}{n}\delta_{\text{\footnotesize $[a^{-m}bw^{-1}, x]$}}.
\end{equation}

Note that $\mu_n^x$ in Equation~\eqref{eq:munx1} is well defined since whenever $x=\chi_w$ for some $w\in \RR_{\text{red}}^\infty$, then by Proposition~\ref{prop:distinctinfinitereducedwords} there is only one such $w$. We stress that in~\eqref{eq:munx2} and~\eqref{eq:munx3}, for a fixed $x$, the same $w$ is used to define $\mu_n^x$ for \textit{all} $n\geq 1$.\\

We argue that the Dirac measures in the sums~\eqref{eq:munx1}-\eqref{eq:munx3} are distinct. In the~\eqref{eq:munx2} and~\eqref{eq:munx3} case this is clear. For~\eqref{eq:munx1}, let $x=\chi_w$, $w\in \RR_\text{red}^\infty$, and suppose for contradiction that there are $m_1<m_2$ such that $[w_{m_1}^{-1}w_{m_1-1}^{-1}\cdots w_1^{-1}, x]=[w_{m_2}^{-1}w_{m_2-1}^{-1}\cdots w_1^{-1}, x]$. Then the partial bijections $w_{m_1}^{-1}w_{m_1-1}^{-1}\cdots w_1^{-1}$ and $w_{m_2}^{-1}w_{m_2-1}^{-1}\cdots w_1^{-1}$ agree on $w_1\cdots w_k$ for sufficienty large $k\geq m_2$. In other words,
\begin{equation}\label{eq:m_1m_2equality}
w_{m_1+1}w_{m_1+2}\cdots w_k=w_{m_2+1}w_{m_2+2}\cdots w_k.
\end{equation}
But this contradicts Proposition~\ref{prop:distinctfinitereducedwords}, therefore the elements $[w_m^{-1}w_{m-1}^{-1}\cdots w_1^{-1}, x]$, $m\geq 1$, are distinct.\\

Let us establish~\eqref{eq:approximateinvariance}. Since the three sets in~\eqref{eq:the3sets} are invariant we need to consider the cases where $x$ and $y$ both lie in the first set, second set or third set. We begin with the first.\\

Fix $x, y\in \partial \Omega$ and $g\in\G_y^x$, where $x=\langle v\bigcup\limits_{n, w'\neq e}x_nw'R \rangle$, $y=\langle w\bigcup\limits_{n, w'\neq e}x_nw'R \rangle$ and $v,w\in R$ are the monoid elements we used to define the measures $\mu_n^x$ and $\mu_n^y$, $n\geq 1$. Write $g=[h,y]$.  For $n\geq 1$, consider the subset
$$A_n:=\{m\in \{1,\ldots ,n\} \mid \text{for the element }[h', y]:=[c^{-m}bw^{-1}, y]$$ $$\text{ we have }[h'h^{-1}, x]=[c^{-m'}bv^{-1}, x] \text{ for some }m'\in \{1, \ldots ,n\}\}.$$

Since the map $\G_y \rightarrow \G_x$ given by right multiplication by $g^{-1}$ is a bijection, to establish~\eqref{eq:approximateinvariance}, it is enough to establish that $\lim\limits_{n \rightarrow \infty}\frac{|A_n|}{n}= 1$.  By Lemma \ref{lem:range} we have
\begin{equation}\label{eq:range2}
hw \Big( \bigcup\limits_{n, w'\neq e}x_nw'R \Big)=v\Big( \bigcup\limits_{n, w'\neq e}x_nw'R\Big).
\end{equation}
\begin{claim}\label{claim:v^-1hw}
There is a $k\in \Z$ such that $v^{-1}hw$ maps $x_nw'$ to $x_{n+k}w'$ for all $n\in \Z$ and $w'\neq e$.\\
\end{claim}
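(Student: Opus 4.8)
The plan is to set $g':=v^{-1}hw\in I_\ell(R)$ and to show that $g'$ acts on $X:=\bigcup_{n,\,w'\neq e}x_nw'R$ as the uniform index shift $x_nw'\mapsto x_{n+k}w'$, after which the claim is immediate. First I would record that $g'$ fixes $X$ setwise. By Equation~\eqref{eq:range2} we have $hw(X)=v(X)$; since $v$ is a left translation (injective, with $v^{-1}v=\id$ on $R$) and $v(X)\subseteq vR=\dom v^{-1}$, applying $v^{-1}$ gives $(v^{-1}hw)(X)=X$. In particular $X\subseteq\dom g'$ and $g'$ restricts to a bijection of $X$ onto itself.

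The structural input I would lean on is that every element of $I_\ell(R)$ is right $R$-linear on its domain: if $\xi\in\dom g'$ and $r\in R$ then $\xi r\in\dom g'$ and $g'(\xi r)=g'(\xi)\,r$. This holds for left translations and for their inverses, hence for all of $I_\ell(R)$ by composition, so $g'|_X$ is an automorphism of $X$ viewed as a right $R$-set. Consequently $g'$ carries minimal generators to minimal generators, where I call $\eta\in X$ minimal if it is not a proper right multiple $\zeta r$ with $\zeta\in X$, $r\neq e$; a short check identifies the minimal elements of $X$ as exactly the $x_ns$ with $s$ a single generator (using that the leading index of an element of $X$ is well defined and that a generator is equivalent only to itself). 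In particular $g'(x_{n_0}a)$ and $g'(x_{n_0}b)$ are again of the form $x_mu$ and $x_{m'}u'$ with $u,u'$ single generators.

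The heart of the argument is to pin down the tail. Using the defining relation $abx_j=bx_j$ together with right linearity, for every $j\in\Z$ one has $g'(x_{n_0}a)\,bx_j=g'(x_{n_0}a\cdot bx_j)=g'(x_{n_0}bx_j)=g'(x_{n_0}b\cdot x_j)=g'(x_{n_0}b)\,x_j$, that is $x_mu\,bx_j=x_{m'}u'x_j$. Comparing the leading index and cancelling $x_m$ forces $m=m'$ and $u\,bx_j=u'x_j$ for all $j$. A case check on the generator $u$ finishes this: the right-hand side has length two, and among single generators only $u\in\{a,c\}$ keep $u\,bx_j$ of length two ($abx_j=bx_j$, $cbx_j=bx_{j+1}$); moreover $u=c$ would demand $u'x_j=bx_{j+1}$ for all $j$, which no single generator $u'$ can satisfy. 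Hence $u=a$ (and $u'=b$), so $g'(x_{n_0}a)=x_ma$; I set $k:=m-n_0$. This tail-rigidity step, where the specific relations of $R$ enter, is the part I expect to require the most care.

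Finally I would feed this into Corollary~\ref{lem:shifts x_n or x_nx}. Since $\dom g'\supseteq X$ contains the infinitely many elements $x_na$ ($n\in\Z$), hence multiple $x_na$ elements, and $g'$ maps $x_{n_0}a$ to $x_{n_0+k}a$, the second part of that corollary yields $g'(x_nw')=x_{n+k}w'$ for all $n\in\Z$ and all $w'\neq e$. As $g'=v^{-1}hw$, this is exactly the asserted shift, which proves the claim.
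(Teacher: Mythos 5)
Your proof is correct, and although it ends the same way as the paper's --- both arguments reduce, via Corollary~\ref{lem:shifts x_n or x_nx}, to producing a single seed value $(v^{-1}hw)(x_{n_0}z)=x_{n_0+k}z$ for one letter $z$ --- the way you produce that seed is genuinely different. The paper applies the set identity \eqref{eq:range2} twice and uses injectivity of $hw$ on $X=\bigcup_{n,\,w'\neq e}x_nw'R$ to compare decompositions: from $hwx_0z=vx_kw'$ and $vx_k\alpha(w')\in vX=hwX$ it extracts $x_0z=x_mw'''w''$, forcing $m=0$, $w''=\epsilon$, $w'$ a letter, and then $w'=z$. You instead observe that $g'=v^{-1}hw$ is a right $R$-equivariant bijection of $X$ (domains of elements of $I_\ell(R)$ are right ideals and the generators of $I_\ell(R)$ commute with right multiplication), hence preserves the minimal elements $x_ns$ ($s$ a single generator), and then you use the defining relation $abx_j=bx_j$ to force tail-rigidity: $g'(x_{n_0}a)\,bx_j=g'(x_{n_0}b)\,x_j$ pins the image tail to $a$. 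Your route turns precisely the step the paper treats most tersely (``Equation~\eqref{eq:first} then implies that $w'=z$'') into an explicit case check, at the modest cost of extra scaffolding (equivariance, minimality) and of proving tail-preservation only for the letter $a$ --- which is all the corollary requires; the paper's route is shorter and stays entirely inside the manipulation of \eqref{eq:range2}, but leaves more of the word combinatorics implicit. One presentational point you should fix: word length is not invariant in $R$ (e.g.\ $abx_n\sim bx_n$), so the phrase ``only $u\in\{a,c\}$ keep $u\,bx_j$ of length two'' is loose as stated; what you are actually using is the explicit class description \eqref{eq:equiv-bx_n}, namely $[bx_j]=\{rbx_{j-m(c:r)}\mid \con(r)\subseteq\{a,c\}\}$ and its analogues for $[ubx_j]$ with $u\in\{b,d,f,x_l,y_l\}$, whose members of the form (generator)$\,x_j$ can be listed directly. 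With that rephrasing the case analysis, and hence the whole argument, is airtight.
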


By Corollary~\ref{lem:shifts x_n or x_nx} it is enough to establish that for some letter $z$ we have $(v^{-1}hw)(x_0z)=x_kz$. Now,~\eqref{eq:range2} implies that for any letter $z$ we have,
\begin{equation}\label{eq:first}
hwx_0z=vx_kw'
\end{equation}
for some $k\in \mathbb{Z}$ and $w'\neq \epsilon$. Write $w'\equiv \alpha(w')w''$. Applying~\eqref{eq:range2}  yields
$$hw x_0z=vx_k\alpha(w')w''=hw x_m w''' w''$$
for some $m\in \mathbb{Z}$ and $w'''\neq \epsilon$. This equation implies that $z=w'''w''$ and since $w'''\neq \epsilon$ we must have $w''=\epsilon$. Hence $w'$ is just a letter. Equation~\eqref{eq:first} then implies that $w'=z$. Rearranging~\eqref{eq:first} we have
$$(v^{-1}hw)(x_0z)=x_kz,$$
which proves the claim.\\

Now let $[h', y]=[c^{-m}bw^{-1}, y]$. We assert that 
\begin{equation}\label{eq:requiredresult}
[h'h^{-1}, x]=[c^{-m-k}bv^{-1}, x].
\end{equation}
By assumption we have $[h'h^{-1}, x]=[c^{-m}bw^{-1}h^{-1}, x]=[c^{-m}b(hw)^{-1}, x]$. Then for any $n\in\mathbb{Z}$ and $w'\neq \epsilon$ we have
$$c^{-m}b(hw)^{-1}(hwx_nw')=c^{-m}bx_nw'=bx_{n-m}w'.$$
On the other hand, the claim gives
$$c^{-m-k}bv^{-1}(hwx_nw')=c^{-m-k}b(v^{-1}hw)x_nw'=c^{-m-k}bx_{n+k}w'=bx_{n-m}w'.$$
This establishes that the partial bijections $c^{-m}b(hw)^{-1}$ and $c^{-m-k}bv^{-1}$ agree on Constructible Ideal~\eqref{eq:range2}, which proves Equality~\eqref{eq:requiredresult}. We have then shown that 
$\{1, \ldots ,n-k\}\subseteq A_n$, hence $\frac{|A_n|}{n} \rightarrow 1$.\\

Now let us establish~\eqref{eq:approximateinvariance} for $x=\chi_v$ and $y=\chi_w$ where $v,w \in \RR_{\text{red}}^\infty$. Suppose $[h, y]\in \mathcal{G}_y^x$.
We can write $h=r_{2m}^{-1}r_{2m-1}\cdots r_2^{-1}r_1$, $r_i\in R$. Put $x_0=y$ and 
$$x_i=r([r_i^{(-1)^{i+1}}\cdots r_2^{-1}r_1, x]), \quad i=1, \ldots, 2m.$$ 
Then $x_{2m}=x$ and we have the telescoping sum
$$\mu_n^y-\mu_n^x(\cdot [h, y]^{-1})=\sum\limits_{i=1}^{2m} 
\mu_n^{x_{i-1}}-\mu_n^{x_i}(\cdot [r_i^{(-1)^{i+1}}, x_i]^{-1}).$$
This shows that, in verifying~\eqref{eq:approximateinvariance}, we may assume that $g=[h, y]$, where $h$ is a generator or the inverse of a generator. In fact, by symmetry, we may assume that $h$ is a generator. Then
\begin{equation}\label{eq:xishw}
	\begin{split}
x=r([h,y])=r([h, \lim\limits_n \chi_{\text{\footnotesize $w_1\cdots w_n$}}])=r(\lim\limits_n[h,\chi_{\text{\footnotesize $w_1\cdots w_n$}}])\\
=\lim\limits_n r([h,\chi_{\text{\footnotesize $w_1\cdots w_n$}}])=\lim\limits_n \chi_{\text{\footnotesize $hw_1\cdots w_n$}}=\chi_{\text{\footnotesize $hw$}}.
	\end{split}
\end{equation}
For $n\geq 1$, consider the subset
\begin{align*}
A_n:=\{m\in \{1,\ldots ,n\} \mid \ & hw_1\cdots w_m=v_1\cdots v_{m'}\text{ (as elements of }R\text{) } \\
& \text{for some }m'\in \{1, \ldots ,n\}\}.
\end{align*}
To establish~\eqref{eq:approximateinvariance} it is enough to prove that $\lim\limits_{n \rightarrow \infty}\frac{|A_n|}{n}= 1$. Indeed, if $[h',y]=[w_m^{-1}\cdots w_1^{-1}, y]$ with $m\in A_n$, then taking inverses we find
$$[{h'}^{-1}, h'y]=[w_1\ldots w_m, h'y].$$
Multiply both sides on the left by the groupoid element $[h, y]$. Then
$$[h,y][{h'}^{-1}, h'y]=[h,y][w_1\ldots w_m, h'y]$$
$$\Rightarrow [h{h'}^{-1}, h'y]=[hw_1\ldots w_m, h'y]=[v_1 \ldots v_{m'}, h'y]$$
for some $m'\in \{1,\ldots, n\}$. Here we use that $m\in A_n$. Taking inverses yields
$$[h'h^{-1}, x]=[{v_{m'}}^{-1}\cdots {v_1}^{-1}, x].$$
Now, if $w_1\in \{a, c,d, f, x_l, y_l\mid l\in \mathbb{Z}\}$ then $hw\equiv hw_1w_2 \cdots$ is 
a reduced word for any letter $h$. If $w_1=b$ and $w_1\notin \{x_n, y_n \mid n\in \Z\}$ then $hw\equiv hw_1w_2 \cdots$ is also reduced for any letter $h$. Otherwise we have $w\equiv bx_l w_3 \cdots$ or $w\equiv by_l w_3 \cdots$ for some $l\in \Z$. The two final cases are symmetric and we divide our analysis as follows.\\

\underline{Case 1: $hw$ is reduced}\\
By Equation~\eqref{eq:xishw} we have $v\equiv hw$ so for any $m\in \{1, \ldots, n-1\}$ the element $hw_1\cdots w_m$ is the product of the first $m+1$ letters of $v$.\\

\underline{Case 2: $w\equiv bx_l w_3 \cdots$}\\
If $h=a, d$ we assert that $v\equiv w$. Indeed, for $m\geq 3$ we have
\begin{equation}\label{eq:reducedwordofxisw}
hw_1\cdots w_m=(hbx_lw_3)w_4\cdots w_m=bx_lw_3w_4\cdots w_m=w_1\cdots w_m
\end{equation}
This shows that $\chi_{hw}=\chi_w$. By Equation~\eqref{eq:xishw} we then have $\chi_v=\chi_w$ and it follows that $v\equiv w$. Then~\eqref{eq:reducedwordofxisw} also shows that for $m\geq 3$, $hw_1\cdots w_m$ is the product of the first $m$ letters of $v$.\\

If $h=c$ we assert that $v\equiv bx_{l+1}w_3\cdots$. Indeed, for $m\geq 2$ we have
\begin{equation}\label{eq:reducedwordofxisshift}
hw_1\cdots w_m=(cbx_l)w_3\cdots w_n=bx_{l+1}w_3\cdots w_n
\end{equation}
This shows that $\chi_v=\chi_{\text{\footnotesize $hw$}}=\chi_{\text{\footnotesize $bx_{l+1}w_3\cdots$}}$. Since the infinite word $bx_{l+1}w_3\cdots $ is reduced we conclude that $v\equiv bx_{l+1}w_3\cdots $. Then~\eqref{eq:reducedwordofxisshift} also shows that for $m\geq 2$, $hw_1\cdots w_m$ is the product of the first~$m$ letters of $v$.\\

If $h=f, x_n, y_n$, $n\in \Z$,  we are in Case 1.\\

In both cases we have $\frac{|A_n|}{n}\rightarrow 1$ as $n\rightarrow \infty$ and~\eqref{eq:approximateinvariance} is established.\\

To complete the proof we need to show that the map~\eqref{eq:measurability} is Borel. Clearly Borel measurability is preserved under taking bounded pointwise limits of $\phi$. Moreover $\G$ is the countable union of compact open second-countable Hausdorff subspaces so it suffices to verify Borel measurability for Borel functions $\phi$ supported on such subspaces. Since Borel functions on metrizable spaces are generated by continuous functions under taking bounded pointwise limits, it suffices to consider continuous $\phi$. By Stone-Weierstrass it further suffices to consider $\phi=\un_{D(h, U)}$ where $h\in I_\ell(R)$ and $U$ is a clopen subset of $\{\eta\in \partial \Omega \mid \eta(\dom h)=~1\}$. Fixing $n\geq 1$, we put
$$F(x):=\int \un_{D(h, U)}\ d\mu_n^x.$$
We argue that the restriction of $F$ to $A:=\{\chi_w \mid w\in \RR_{\text{red}}^\infty\}$ is continuous. Since $\partial \Omega \setminus A$ is countable, this will prove that $F$ is Borel. For $w\in \RR_{\text{red}}^\infty$ we have
$$
F(\chi_w)=\begin{cases}

1/n & \text{if }h(w_1\cdots w_k)=w_{m+1}w_{m+2}\cdots w_k\text{ for some }1\leq m\leq n\leq k, \\
0 & \text{if }\chi_w(\dom h)=0 \text{ or }[h, \chi_w]\neq [w_m^{-1}\cdots w_1^{-1}, \chi_w] \text{ for all }1\leq m\leq n.
\end{cases}
$$
Lemma~\ref{lem:reducedinfinitewordsneighborhood} yields at once that $F_{|A}$ is continuous at $\chi_w$ if $F(\chi_w)=1/n$. Clearly $F_{|A}$ is also continuous at $\chi_w$ if $\chi_w(\dom h)=0$ since this is an open condition.\\

It seems obvious that $[h, \chi_w]\neq [w_m^{-1}\cdots w_1^{-1}, \chi_w]$ is also an open condition, however, since $\G$ is non-Hausdorff it is not so simple. To treat this case, fix $w\in \RR_{\text{red}}^\infty$ such that $\chi_w(\dom h)=~1$ and $F(\chi_w)=~0$. Pick $N\geq n$ such that $w_1\cdots w_N\in \dom h$. Then fix $v\in\RR_{\text{red}}^\infty$ such that $v_1\cdots v_{N+4}\equiv~w_1 \cdots w_{N+4}$. We claim that $F(\chi_v)=0$; by Lemma~\ref{lem:reducedinfinitewordsneighborhood}, this will prove that $F_{|A}$ is continuous at~$\chi_w$. Suppose to the contrary that $[h, \chi_v]=[v_{m_0}^{-1}\cdots v_1^{-1}, \chi_v]$ for some $1\leq m_0\leq n$ and pick $k\geq N+5$ such that
\begin{equation}\label{eq:v_1v_kinitial}
h(v_1\cdots v_k)=v_{m_0+1}\cdots v_k.
\end{equation}
We assert that
\begin{equation}\label{eq:v_{N+4}}
h(v_1 \cdots v_{N+4})=v_{m_0+1}\cdots v_{N+4}.
\end{equation}
This will imply that $h(w_1\cdots w_{N+4})=w_{m_0+1}\cdots w_{N+4}$, contradicting that $F(\chi_w)=0$.\\

To establish the assertion, we rewrite \eqref{eq:v_1v_kinitial}:
\begin{equation}\label{eq:v_1v_k}
h(v_1 \cdots v_N)v_{N+1}\cdots v_k=v_{m_0+1}\cdots v_k.
\end{equation}

If $v_{N+1}\in \{a, c, d, f\}$, or $v_{N+1}=b$ and $v_{N+2}\not\equiv x_l, y_l$ for any $l\in \Z$, we have $h(v_1\cdots v_N)v_{N+1} \perp v_{N+2} \cdots v_k$. Equation \eqref{eq:v_1v_k} then gives a $k'$, $m_0+1\leq k' \leq k$, such that  $h(v_1 \cdots v_N)v_{N+1}=v_{m_0+1}\cdots v_{k'}$ and $v_{N+2}\cdots v_k=v_{k'+1}\cdots v_k$. By Proposition~\ref{prop:distinctfinitereducedwords} we must have $k'=N+1$. Then assertion \eqref{eq:v_{N+4}} holds.\\

If $v_{N+1}=x_{l_1}, y_{l_1}$ for some $l_1\in \Z$, then either $v_{N+2}v_{N+3}\not\equiv bx_l, by_l$ for any $l\in \Z$ or, $v_{N+2}v_{N+3}\equiv bx_{l_2}, by_{l_2}$ for some $l_2\in \Z$. In the former case one works out that $h(v_1 \cdots v_N)v_{N+1}v_{N+2} \perp v_{N+3}\cdots v_k$ (here we are using that $v$ is reduced) and we are done. In the latter case let us assume without loss of generality that $v_{N+1}=x_{l_1}$ and $v_{N+2}v_{N+3}\equiv bx_{l_2}$. Then

$$h(v_1 \cdots v_N)v_{N+1}\cdots v_k=[h(v_1 \cdots v_N)x_{l_1}]bx_{l_2}[v_{N+4}\cdots v_k].$$
Since $h(v_1 \cdots v_N)x_{l_1}$ does not end in $a, c, d$ or $f$ we can apply Lemma~\ref{lem:ubx_nw} to the word above with $u=h(v_1 \cdots v_N)x_{l_1}$. Then Equation \eqref{eq:v_1v_k} tells us that 
$$v_{m_0+1}\cdots v_k\equiv u'sbx_{l_2-m(c:s)}w'$$
for some words $u'$, $s$ and $w'$ such that $u'bx_{l_2}\sim h(v_1 \cdots v_N)x_{l_1}bx_{l_2}$, $\con(s)\subseteq \{a, c, d\}$ and $w' \sim v_{N+4}\cdots v_k$. By Proposition~\ref{prop:distinctfinitereducedwords} we must have $w'\equiv v_{N+4}\cdots v_k$ and therefore
$$u'sbx_{l_2-m(c: s)}\equiv v_{m_0+1}\cdots v_{N+3}.$$
Since the RHS is reduced we must have $s=\epsilon$ so that $u'bx_{l_2}\equiv v_{m_0+1}\cdots v_{N+3}$. The LHS is equivalent to $h(v_1\cdots v_N)x_{l_1}bx_{l_2}$ so in summary $h(v_1\cdots v_N)v_{N+1}v_{N+2}v_{N+3}=v_{m_0+1}\cdots v_{N+3}$ and assertion \eqref{eq:v_{N+4}} holds in this case.\\

The remaining case is when $v_{N+1}v_{N+4}\equiv bx_l, by_l$ for some $l\in \Z$. This is handled similarly. In conclusion, $F_{|A}$ is continuous, hence $F$ is Borel.
\ep

To complete the proof of Theorem \ref{thm:Rproperties} it remains to show that the Borel amenability of $\G:=\partial\G_P(R)=\partial\G(R)$ implies that the universal and reduced norm on $C_c(\G)$ coincide. Since $\G$ is $\sigma$-compact (because $I_\ell(R)$ is countable) this follows from a recent paper by Brix, Gonzales, Hume and Li, \cite[Corollary~6.10]{BGHL}. Let us mention that it also follows from work of Delaroche and Renault. In particular the proofs of \cite[Proposition~6.1.8]{AR} and the implications (\romannumeral 5) $\Rightarrow$ (\romannumeral 1) $\Rightarrow$ (\romannumeral 2) in \cite[Proposition~3.4]{R-3}, also work for non-Hausdorff étale groupoids. Moreover, Borel-amenability implies condition (\romannumeral 5) in \cite[Proposition~3.4]{R-3} since any Borel approximate invariant mean $(m_n^x)_{x\in \G^{(0)}}$, in the sense of \cite[Definition~2.1]{R-2}, gives rise to a sequence of sections $\xi_n(x):=(m_n^x)^{1/2}$ with the right properties.\\

In conclusion the $e$ norm on $C_c(\G)$ must be the reduced norm. Since $R$ is also strongly C*-regular on the boundary, this implies that the bottom maps in diagrams~\eqref{eq:commutativediagram} and ~\eqref{eq:commutativediagram-G_2} are isomorphisms and the proof of Theorem \ref{thm:Rproperties} is complete.\\

\section{Relations Between Regularity Notions}\label{sec:relations}

Consider six properties of left cancellative monoids $S$ we have touched upon in the paper: strong C$^*$-regularity, C$^*$-regularity,  strong C$^*$-regularity on the boundary, C$^*$-regularity on the boundary, the equality $\G_P(S)=\G(S)$ and the equality $\partial \G_P(S)=\partial \G(S)$. In this section we give a complete set of relations between these properties and construct examples of left cancellative monoids of each combination of properties that the relations permit.
We shall use \cite[Lemma~2.15]{NS} and Proposition \ref{prop:equality-of-boundary-groupoids} to characterize when $\G_P(S)=\G(S)$ and $\partial \G_P(S)=\partial \G(S)$.
We know that (1) strong C$^*$-regularity implies C$^*$-regularity, strong C$^*$-regularity on the boundary and $\G_P(S)=\G(S)$, (2) C$^*$-regularity implies C$^*$-regularity on the boundary, (3) strong C$^*$-regularity on the boundary implies C$^*$-regularity on the boundary and $\partial \G_P(S)=\partial \G(S)$, and (4) $\G_P(S)=\G(S)$ implies that $\partial \G_P(S)=\partial \G(S)$. We also have the following:

\begin{proposition}\label{prop:obstruction1}
Suppose a left cancellative monoid $S$ is C$^*$-regular and $\G_P(S)=\G(S)$. Then $S$ is strongly C$^*$-regular.
\end{proposition}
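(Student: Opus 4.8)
The plan is to feed the data witnessing the hypotheses of strong C$^*$-regularity (Definition~\ref{def:strong-regularity2}) into C$^*$-regularity (Definition~\ref{def:regularity2}), obtaining witnesses that a priori live only in $\bar\J(S)$, and then to exploit the equality $\G_P(S)=\G(S)$ to replace each such $\bar\J(S)$-witness by genuinely constructible ideals without losing the ``fixed by some $h_k$'' property. Recall that by \cite[Lemma~2.15]{NS} the equality $\G_P(S)=\G(S)$ is equivalent to the following covering condition, which is the set-theoretic counterpart of Proposition~\ref{prop:equality-of-boundary-groupoids}: for every $g\in I_\ell(S)$ and $X,X_1,\dots,X_m\in\J(S)$ with $X_i\subseteq X$ such that $gs=s$ for all $s\in X\setminus\bigcup_i X_i$, there are $Y_1,\dots,Y_l\in\J(S)$ with $g\,p_{Y_t}=p_{Y_t}$ and $X\setminus\bigcup_i X_i\subseteq\bigcup_t Y_t$.

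So suppose we are given $h_1,\dots,h_n\in I_\ell(S)$ and $X,X_1,\dots,X_m\in\J(S)$ satisfying $\emptyset\neq X\setminus\bigcup_i X_i\subseteq\bigcup_k\{s\mid h_k s=s\}$. First I would set $W:=X\setminus\bigcup_i X_i\in\bar\J(S)$ and apply C$^*$-regularity to $W$, which is legitimate since $\emptyset\neq W\subseteq\bigcup_k\{s\mid h_k s=s\}$. This produces sets $Z_1,\dots,Z_l\in\bar\J(S)$ and indices $k_j$ with $h_{k_j}p_{Z_j}=p_{Z_j}$ and $W\subseteq\bigcup_j Z_j$.

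The crucial step is to upgrade each $Z_j$ to constructible ideals. Writing $Z_j=A^j\setminus\bigcup_i A_i^j$ with $A_i^j\subseteq A^j$ constructible (intersecting with $A^j$ if necessary), the relation $h_{k_j}p_{Z_j}=p_{Z_j}$ says precisely that $h_{k_j}$ fixes $Z_j$ pointwise. Hence the covering condition above, applied with $g=h_{k_j}$ and $X=A^j$, provides constructible ideals $Y_1^j,\dots,Y_{l_j}^j\in\J(S)$, each fixed by $h_{k_j}$ (and therefore carrying the index $k_j$), with $Z_j\subseteq\bigcup_t Y_t^j$. Concatenating over $j$ gives $W\subseteq\bigcup_j Z_j\subseteq\bigcup_{j,t}Y_t^j$, and since each $Y_t^j$ is a constructible ideal fixed by $h_{k_j}$, the family $\{Y_t^j\}_{j,t}$ together with the indices $k_j$ is exactly what Definition~\ref{def:strong-regularity2} demands; note in particular that no inclusion $Y_t^j\subseteq X$ is required there. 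This verifies strong C$^*$-regularity.

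I expect the only genuine subtlety to be pinning down the exact form of the equivalence in \cite[Lemma~2.15]{NS} and checking that the relation $h_{k_j}p_{Z_j}=p_{Z_j}$ for $Z_j\in\bar\J(S)$ indeed unpacks into the pointwise-fixing hypothesis required to invoke it. Both are routine once one has the dictionary translating identities of the form $h\,p_Z=p_Z$ into pointwise fixing of $Z$; the heart of the argument is simply the observation that C$^*$-regularity alone fails to land in $\J(S)$, and that this is precisely the gap closed by $\G_P(S)=\G(S)$.
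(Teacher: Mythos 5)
Your proof is correct and is essentially the argument the paper intends: the paper leaves this proposition's proof implicit, calling it ``a simpler version'' of the proof of its boundary analogue, which proceeds exactly as you do — apply C$^*$-regularity to $X\setminus\bigcup_i X_i$, decompose each resulting $\bar\J(S)$-witness as a difference of constructible ideals, and invoke the characterization of $\G_P(S)=\G(S)$ from \cite[Lemma~2.15]{NS} with $g=h_{k_j}$ to replace it by $h_{k_j}$-fixed ideals in $\J(S)$. In the covering (non-foundation-set) setting the final combination step is just a union, so the maximal-subset argument needed in the boundary case disappears, which is precisely why your version is the ``simpler'' one.
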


The proof of the above proposition is just a simpler version of the proof of the following:
\begin{proposition}
Suppose a left cancellative monoid $S$ is C$^*$-regular on the boundary and $\partial\G_P(S)=\partial\G(S)$. Then $S$ is strongly C$^*$-regular on the boundary.
\end{proposition}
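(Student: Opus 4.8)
The plan is to derive strong C$^*$-regularity on the boundary (Definition~\ref{def:strong-regularity-on-boundary}) from the given data by a two-step refinement, turning the $\bar\J(S)$-ideals supplied by C$^*$-regularity on the boundary into genuine constructible ideals. Since the hypotheses of Definitions~\ref{def:strong-regularity-on-boundary} and~\ref{def:regularity-on-boundary} are literally the same, I would start from $h_1,\dots,h_n\in I_\ell(S)$ and $X,X_1,\dots,X_m\in\J(S)$ with $X_i\subseteq X$ and $\emptyset\ne X\setminus\bigcup_i X_i\subseteq\bigcup_k\{s\in S\mid h_ks=s\}$, and feed them into C$^*$-regularity on the boundary. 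This produces $Y_1,\dots,Y_l\in\bar\J(S)$ and indices $k_j$ with $h_{k_j}p_{Y_j}=p_{Y_j}$ such that $\{X_1,\dots,X_m,Y_1,\dots,Y_l\}$ is a foundation set for $X$. Writing each $Y_j=A_j\setminus\bigcup_i B_i^j$ with $A_j,B_i^j\in\J(S)$ and replacing $A_j$ by $A_j\cap X$ and $B_i^j$ by $B_i^j\cap A_j$ (which leaves $Y_j$ unchanged, as $Y_j\subseteq X$), I may assume $A_j\subseteq X$ and $B_i^j\subseteq A_j$.

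The second step removes the difference structure from the $Y_j$. Because $h_{k_j}p_{Y_j}=p_{Y_j}$ forces $Y_j\subseteq\dom h_{k_j}$ and $h_{k_j}s=s$ for all $s\in Y_j=A_j\setminus\bigcup_i B_i^j$, and because we are assuming $\partial\G_P(S)=\partial\G(S)$, I would invoke the characterization in Proposition~\ref{prop:equality-of-boundary-groupoids} with $g=h_{k_j}$, ambient ideal $A_j$, and subtracted ideals $B_i^j$. This yields genuine constructible ideals $Z_1^j,\dots,Z_{l_j}^j\in\J(S)$, each fixed by $h_{k_j}$, such that $\{B_1^j,\dots,B_{m_j}^j,Z_1^j,\dots,Z_{l_j}^j\}$ is a foundation set for $A_j$; in particular $Z_p^j\subseteq A_j\subseteq X$. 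The candidate foundation set for the conclusion is then $\{X_1,\dots,X_m\}\cup\{Z_p^j\mid j,p\}$, which lies entirely in $\J(S)$ and whose members are either among the original $X_i$ or fixed by some $h_{k_j}$.

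The final and main step is to check that this candidate is genuinely a foundation set for $X$; this is where the real difficulty lies. Unlike the non-boundary case underlying Proposition~\ref{prop:obstruction1}, foundation sets do not compose under naive set inclusion: the $Y_j$ only ``cover'' $X$ in the weak foundation sense, and a constructible ideal meeting $Y_j$ need not contain a constructible ideal lying inside the difference $A_j\setminus\bigcup_i B_i^j$, so a direct set-theoretic argument stalls. I would bypass this obstruction by arguing through the maximal-character criteria of Propositions~\ref{prop:foundation-maximal} and~\ref{prop:foundation-maximal-G_2}. Concretely, I would show the clopen set $U=\{\eta\in\Omega(S)\mid \eta(X)=1,\ \eta(X_i)=0,\ \eta(Z_p^j)=0\ \text{for all}\ i,j,p\}$ is disjoint from $\partial\Omega$. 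Supposing $\chi\in U\cap\partial\Omega$, the fact that $\{X_1,\dots,X_m,Y_1,\dots,Y_l\}$ is a foundation set for $X$ gives, via Proposition~\ref{prop:foundation-maximal-G_2}, that $\chi\notin\{\eta\mid\eta(X)=1,\ \eta(X_i)=0,\ \eta(Y_j)=0\}$, so $\chi(Y_{j_0})=1$ for some $j_0$, whence $\chi(A_{j_0})=1$ and $\chi(B_i^{j_0})=0$ for all $i$. But then, since $\chi(Z_p^{j_0})=0$ for all $p$ as well, $\chi$ lands in the set $\{\eta\mid\eta(A_{j_0})=1,\ \eta(B_i^{j_0})=0,\ \eta(Z_p^{j_0})=0\}$, which Proposition~\ref{prop:foundation-maximal} says is disjoint from $\partial\Omega$ because $\{B_i^{j_0},Z_p^{j_0}\}$ is a foundation set for $A_{j_0}$ --- contradicting $\chi\in\partial\Omega$. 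Hence $U\cap\partial\Omega=\emptyset$, and Proposition~\ref{prop:foundation-maximal} yields that $\{X_1,\dots,X_m\}\cup\{Z_p^j\}$ is a foundation set for $X$, which is exactly the conclusion required by Definition~\ref{def:strong-regularity-on-boundary}.
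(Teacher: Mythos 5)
Your proof is correct, and its first two steps coincide exactly with the paper's: you invoke C$^*$-regularity on the boundary to get $Y_1,\dots,Y_l\in\bar\J(S)$, write each as a difference $A_j\setminus\bigcup_i B_i^j$ of constructible ideals normalized so that $B_i^j\subseteq A_j\subseteq X$, and then apply Proposition~\ref{prop:equality-of-boundary-groupoids} with $g=h_{k_j}$ to each such difference to produce constructible ideals $Z_p^j$ fixed by $h_{k_j}$ with $\{B_i^j\}_i\cup\{Z_p^j\}_p$ a foundation set for $A_j$. Where you genuinely diverge is the final verification that $\{X_1,\dots,X_m\}\cup\{Z_p^j\}$ is a foundation set for $X$. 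The paper does this purely combinatorially: given a nonempty constructible ideal $B\subseteq X\setminus\bigcup_i X_i$, it picks a \emph{maximal} subset $F\subseteq\{1,\dots,l\}$ and indices $r_j$ with $C:=B\cap\bigcap_{j\in F}Y_j^{r_j}\neq\emptyset$, notes that $C$ must meet some $Y_{j'}$ with $j'\notin F$, hence meets $Y_{j'}^0$, hence meets a member of the $j'$-th foundation set, which by maximality of $F$ cannot be one of the $Y_{j'}^r$ and so lies in $\mathcal{A}_{j'}$ --- the same maximal-family device as in Proposition~\ref{prop:noncovering-of-foundation-differences} and Remark~\ref{rem:strong-regularity-on-boundary-remark-2}. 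You instead translate everything into the character space via the two equivalences of Propositions~\ref{prop:foundation-maximal} and~\ref{prop:foundation-maximal-G_2} and reach a contradiction at a boundary character $\chi\in U\cap\partial\Omega$. Your route is valid and arguably shorter given those propositions, since the foundation-set property becomes three applications of a clopen-set dichotomy; but it hides the combinatorics inside the proofs of those propositions (maximal characters, Lemma~\ref{lem:maximal-facts}) and leans on the well-definedness of the extension of characters to $\bar\J(S)$ when you pass from $\chi(Y_{j_0})=1$ to $\chi(A_{j_0})=1$ and $\chi(B_i^{j_0})=0$ (harmless here, since you can fix the representation of $Y_{j_0}$ once and for all, and the paper uses the same device). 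One small correction to your framing: your claim that ``a direct set-theoretic argument stalls'' is refuted by the paper itself --- the naive composition of foundation sets indeed fails, but the maximal-intersecting-family trick makes the direct argument go through, keeping the proof self-contained at the level of ideals.
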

\bp
Assume $S$ is a left cancellative monoid which is C*-regular on the boundary such that $\partial\G_P(S)=\partial\G(S)$. Suppose $X, X_1 ,\dots , X_m \in \J(S)$, $X_i\subseteq X$, and $h_1, \dots , h_n \in I_\ell(S)$ satisfy
$$X\setminus \bigcup\limits_{i=1}^m X_i \subseteq \bigcup\limits_{k=1}^n \{s \mid h_ks=s\}.$$
To show that $S$ is strongly C$^*$-regular on the boundary we must find a finite collection $\mathcal{A}\subset \J(S)$ such that each member is fixed by some $h_k$ and $\mathcal{A}\cup \{X_1, \dots , X_m\}$ is a foundation set for $X$. Using that $S$ is C$^*$-regular on the boundary we find $Y_1, \dots ,Y_l \in \bar{\J}(S)$ and indices $1\leq k_j \leq n$ ($j=1, \dots ,l$), such that $h_{k_j}p_{Y_j}=p_{Y_j}$ and $\{X_1, \dots, X_m, Y_1, \dots, Y_l\}$ is a foundation set for $X$. For each $1\leq j\leq l$ write
$$Y_j=Y_j^0\setminus \bigcup\limits_{r=1}^{m_j} Y_j^r$$
for some $Y_j^0, Y_j^1, \dots Y_j^{m_j}\in \J(S)$ such that $Y_j^r\subseteq Y_j^0\subseteq X$. By assumption, $\partial\G_P(S)=\partial\G(S)$ and employing Proposition~\ref{prop:equality-of-boundary-groupoids} gives a finite collection $\mathcal{A}_j \subset \J(S)$ such that $h_{k_j}p_A=p_A$ for each $A\in \mathcal{A}_j$ and $\mathcal{A}_j\cup \{Y_j^1, \dots, Y_j^{m_j}\}$ is a foundation set for $Y_j^0$. Put $\mathcal{A}:=\cup_{j=1}^l \mathcal{A}_j$. We claim that $\mathcal{A}\cup \{X_1, \dots , X_m\}$ is a foundation set for $X$.\\

Indeed, assume $B\subseteq X\setminus \cup_{i=1}^m X_i$ is a nonempty constructible ideal. Let $F\subseteq \{1, \ldots, l\}$ be a maximal set for which there exists indices $r_j$, $j\in F$, such that $B\cap \bigcap_{j\in F} Y_j^{r_j}\neq \emptyset$. Since $\{X_1, \dots, X_m, Y_1, \dots, Y_l\}$ is a foundation set for $X$ we must have $F\neq \{1, \ldots ,l\}$. For the same reason $C:=B\cap \bigcap_{j\in F} Y_j^{r_j}$ must intersect some $Y_{j'}$, $j'\notin F$. Then $C$ must intersect some member of $\mathcal{A}_{j'}\cup \{Y_{j'}^1, \dots, Y_{j'}^{m_{j'}}\}$. But $C\cap Y_{j'}^r=\emptyset$ for all $1\leq r \leq m_{j'}$ so $C$ must intersect some member of~$\mathcal{A}_{j'}$.
\ep

It would be satisfying to know that there are no other relations between the six properties than the ones above. Taking all of the obstructions above into account leaves the following $9$ combinations of properties that $S$ may possess; for  each point we shall exhibit a left cancellative monoid with the corresponding properties. 

\begin{enumerate}[(I)]
\item Strongly Regular.
\item Strongly Regular on the Boundary, non-Regular, $\G_P(S)=\G(S)$.
\item Strongly Regular on the Boundary, non-Regular, $\G_P(S)\neq \G(S)$.
\item Non-Strongly Regular, Strongly Regular on the Boundary, Regular. \label{examplecombo}
\item Regular, $\partial \G_P(S)\neq \partial \G(S)$.
\item Non-Regular, Regular on the Boundary, $\partial \G_P(S)\neq \partial \G(S)$.
\item Non-Regular on the Boundary, $\G_P(S)=\G(S)$.
\item Non-Regular on the Boundary, $\G_P(S)\neq \G(S)$, $\partial \G_P(S)=\partial \G(S)$.
\item Non-Regular on the Boundary, $\partial \G_P(S)\neq \partial \G(S)$.\\
\end{enumerate}

Note that the properties in each point completely determine which of the six properties $S$ has. For example, if $S$ satisfies the properties in \ref{examplecombo}, $S$ being strongly regular on the boundary implies that it is regular on the boundary and that $\partial \G_P(S)= \partial \G(S)$. By Proposition~\ref{prop:obstruction1} we have $\G_P(S)\neq \G(S)$ since $\G_P(S)=\G(S)$ and $S$ being regular would imply that  $S$ is strongly regular which it is not.\\

In constructing monoids for each point, the following proposition will be useful.

\begin{proposition}\label{prop:monoid-product}
Let $S$ and $T$ be left cancellative monoids. Then the product $S\times T$ is (strongly) C*-regular (on the boundary) if and only if $S$ and $T$ are (strongly) C*-regular (on the boundary). Moreover, we have $\G_P(S\times T)=\G(S\times T)$ if and only if $\G_P(S)=\G(S)$ and $\G_P(T)=\G(T)$, finally, $\partial\G_P(S\times T)=\partial\G(S\times T)$ if and only if $\partial\G_P(S)=\partial\G(S)$ and $\partial\G_P(T)=\partial\G(T)$.
\end{proposition}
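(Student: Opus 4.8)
The plan is to reduce every clause to a product decomposition of the combinatorial and groupoid data attached to $S\times T$, and then verify each property one factor at a time. First I would set up a dictionary. Writing the left translation by $(s,t)$ as $(u,v)\mapsto(su,tv)$ shows that $I_\ell(S\times T)$ is canonically $I_\ell(S)\times I_\ell(T)$, acting diagonally with $\dom(g,h)=\dom g\times\dom h$; hence $\J(S\times T)=\{X\times Y\mid X\in\J(S),\ Y\in\J(T)\}$ and $0\in I_\ell(S\times T)$ iff $0\in I_\ell(S)$ or $0\in I_\ell(T)$. Every character factors: putting $\chi_S(X):=\chi(X\times T)$ and $\chi_T(Y):=\chi(S\times Y)$ gives characters with $\chi(X\times Y)=\chi_S(X)\chi_T(Y)$, and since $\chi_{(s,t)}\leftrightarrow(\chi_s,\chi_t)$, taking closures yields a homeomorphism $\Omega(S\times T)\cong\Omega(S)\times\Omega(T)$ carrying $Y_{S\times T}=\{\chi_{(s,t)}\}$ to $Y_S\times Y_T$. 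The same bookkeeping identifies $\G_P(S\times T)\cong\G_P(S)\times\G_P(T)$ and $\G(S\times T)\cong\G(S)\times\G(T)$ (the basic bisections $D((g,h),U\times V)$ correspond to $D(g,U)\times D(h,V)$), compatibly with source, range, product, and the quotient $\G_P\to\G$. The one point needing genuine work is the boundary: I would prove $\Omega_{\text{max}}(S\times T)=\Omega_{\text{max}}(S)\times\Omega_{\text{max}}(T)$ by a case analysis on which factors contain $0$, using the ultrafilter characterization of maximality in Lemma~\ref{lem:maximal-facts}(iii)--(iv) (for instance, when $0\in I_\ell(S\times T)$ and $\chi_S(X)=0$, disjointify $X$ inside $S$ and pad with all of $T$). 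Taking closures then gives $\partial\Omega(S\times T)=\partial\Omega(S)\times\partial\Omega(T)$.

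For the four regularity properties I would work through the topological characterizations rather than the combinatorial definitions. By \cite{NS} and Lemma~\ref{lem:boundaryregular} (and its $\G(S)$-analogue), strong C$^*$-regularity, C$^*$-regularity, and their boundary variants are each equivalent to property~\eqref{eq:topproperty1} holding, for $Y=\{\chi_s\}$, at every $x$ in $\Omega\setminus Y$ (resp.\ $\partial\Omega\setminus Y$), with respect to $\G_P$ (resp.\ $\G$). Thus all four cases follow from one product lemma: for $Y=Y_S\times Y_T$ in $\G_P(S)\times\G_P(T)$, property~\eqref{eq:topproperty1} holds at every point of $(\Omega(S)\times\Omega(T))\setminus Y$ iff it holds at every point of $\Omega(S)\setminus Y_S$ and every point of $\Omega(T)\setminus Y_T$ (and identically for $\G$, and with $\partial\Omega$ in place of $\Omega$). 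The mechanism is that for a product net the set of accumulation points is the product of the accumulation-point sets, and the isotropy of a product groupoid is the product of the isotropies. For the ``if'' direction, given $(x,y)\notin Y$ I would combine the witnessing nets from the factors, using a constant net in whichever coordinate already lies in $Y$. For the ``only if'' direction I would project a witnessing net to the bad factor and, for any isotropy accumulation point $g\neq x$ of the projection, pair it with $y$ to manufacture a forbidden accumulation point $(g,y)$ of the product net, a contradiction; since this works for \emph{any} fixed $y$ in the other factor's (boundary) unit space, the boundary versions go through verbatim.

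For the groupoid equalities I would argue directly with $\sim_2$. Recall $\G_P=\G$ (resp.\ $\partial\G_P=\partial\G$) means $\sim_2$ is trivial on $\G_P$ (resp.\ on $(\G_P)_{\partial\Omega}$), i.e.\ $[(g,h),\chi]\sim_2\chi$ forces $[(g,h),\chi]=\chi$. Given such a relation, witnessed by $W=(X\times Y)\setminus\bigcup_i(X_i\times Y_i)\in\bar\J(S\times T)$ with $(g,h)|_W=\mathrm{id}$ and $\chi(W)=1$, I would slice: choosing $t_0\in Y\setminus\bigcup_{\{i:\chi_S(X_i)=1\}}Y_i$ (nonempty because $\chi_T$ equals $1$ there), the slice $W_{t_0}=X\setminus\bigcup_{\{i:t_0\in Y_i\}}X_i$ lies in $\bar\J(S)$, is fixed by $g$, and satisfies $\chi_S(W_{t_0})=1$, so $[g,\chi_S]\sim_2\chi_S$. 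The factor hypothesis, via Proposition~\ref{prop:equality-of-boundary-groupoids} (resp.\ \cite[Lemma~2.15]{NS}), then gives $[g,\chi_S]=\chi_S$, and symmetrically $[h,\chi_T]=\chi_T$, whence $[(g,h),\chi]=\chi$. For the converse, if $\sim_2$ is nontrivial in $S$, witnessed by a $g$ fixing some $V\in\bar\J(S)$ with $\chi_S(V)=1$ but no $A\in\J(S)$ with $\chi_S(A)=1$, then for any $\chi_T\in\partial\Omega(T)$ (or $\chi_{e_T}$ in the non-boundary case) the element $[(g,\mathrm{id}_T),(\chi_S,\chi_T)]$ is $\sim_2$-equivalent to its source unit via $V\times T$ yet unequal to it, so the product groupoids differ.

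The main obstacle is the boundary identification $\Omega_{\text{max}}(S\times T)=\Omega_{\text{max}}(S)\times\Omega_{\text{max}}(T)$, because neither $\bar\J(S\times T)$ nor the maximal-filter condition factors literally (the empty set has many product representations), which forces the $0\in I_\ell$ case split; and, relatedly, the slicing step in the groupoid equalities, where a non-product witness $W\in\bar\J(S\times T)$ must be cut down to a genuine single-factor witness. Once the dictionary, the product-net lemma, and the slicing are in place, the remaining verifications are routine bookkeeping.
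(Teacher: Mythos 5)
Your proposal is correct, but it takes a genuinely different route from the paper's. The paper never leaves the combinatorics: after the same dictionary ($I_\ell(S\times T)=\{f\times g\}$, $\J(S\times T)$ consisting of rectangles), it observes that the coordinate projections carry $\bar\J(S\times T)$ into $\bar\J(S)$ and $\bar\J(T)$, deduces the factor properties from the product property by padding the data with the other factor and projecting the resulting $Y_j$'s, and proves the converse by first invoking Proposition~\ref{prop:noncovering-of-foundation-differences} to reduce to the case $X=A\times B$, $X_i\in\{A_i\times T,\ S\times B_i\}$, $h_k=f_k\times g_k$, and then assembling a covering/foundation family of rectangles $A_{k,r}(K)\times B_{k,r'}(K')$ with $k\in K\cap K'$, indexed over the collections $\K(S),\K(T)$ of index sets whose fixed-point sets cover; the groupoid-equality clauses are then the $n=1$ specializations via \cite[Lemma~2.15]{NS} and Proposition~\ref{prop:equality-of-boundary-groupoids}. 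You instead establish structural product decompositions --- $\Omega(S\times T)\cong\Omega(S)\times\Omega(T)$, $\Omega_{\text{max}}(S\times T)=\Omega_{\text{max}}(S)\times\Omega_{\text{max}}(T)$ (hence $\partial\Omega$ factors), and $\G_P(S\times T)\cong\G_P(S)\times\G_P(T)$, likewise for $\G$ via your slicing lemma for $\sim_2$ --- and then transport the regularity notions through their topological characterizations (negation of Condition~\ref{condition2}, i.e.\ property~\eqref{eq:topproperty1}, via Lemma~\ref{lem:boundaryregular} and its analogues) using a product-net argument. Your route buys reusable structure that the paper nowhere records (in particular the factorization of the boundary and of both groupoids) and makes the groupoid-equality clauses, which the paper's proof handles only implicitly, completely transparent; the paper's route is uniform across all four regularity notions at the level of the definitions and avoids boundary-character analysis entirely --- it never needs a description of $\Omega_{\text{max}}(S\times T)$, which you correctly identify as the crux of your approach. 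Two details you should tighten, both of which do check out: your parenthetical claim that the set of accumulation points of a product net \emph{is} the product of the accumulation-point sets is false in general (only the inclusion of accumulation points of the product into the product of accumulation points holds, and that, together with pairing an accumulation point of one coordinate with the limit of the other, is all your argument actually uses); and the constant-net step in the ``if'' direction requires the small non-Hausdorff observation that a constant net at a unit $y$ cannot accumulate at any $g\in\G^y_y\setminus\{y\}$, which holds because a basic bisection $D(g',U)$ containing such a $g$ contains the point $y$ only if $[g',y]=y$.
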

\bp
One checks that $$I_\ell(S\times T)=\{f\times g \mid f\in I_\ell(S),\ g\in I_\ell(T)\}$$
where $(f\times g)(s, t)=fs\times gt$, and
$$\J(S\times T)=\{A\times B \mid A\in \J(S),\ B\in \J(T)\}.$$
Also, the cartesian product of members of $\bar{\J}(S)$  and $\bar{\J}(T)$ lie in $\bar{\J}(S\times T)$. Indeed, if $A, A_1, \dots, A_{m_1} \in \J(S)$ and $B, B_1, \dots, B_{m_2}\in \J(T)$, then
$$\big( A\setminus \bigcup\limits_{i} A_i\big)\times \big( B\setminus 
\bigcup\limits_{j} B_j\big)=(A\times B)\setminus \big( \bigcup\limits_i 
(A_i\times B) \cup \bigcup\limits_j (A\times B_j)\big).$$

Let $p_1: S\times T \rightarrow S$ and $p_2: S\times T \rightarrow T$ denote the projections onto the first and second coordinate. Then $p_1$ maps sets in $\bar{\J}(S\times T)$ to sets in $\bar{\J}(S)$ and $p_2$ maps sets in $\bar{\J}(S\times T)$ to sets in $\bar{\J}(T)$. Indeed, take $Y=(A\times B) \setminus \bigcup_{i=1}^m (A_i\times B_i) \in \bar{\J}(S\times T)$ where $A, A_i \in \J(S)$ and $B, B_i\in \J(T)$. Let $\mathcal{I}$ denote the collection of (minimal) subsets $F$ of $\{1, \dots, m\}$ such that $\bigcup_{i\in F}B_i=T$. Then
$$p_1(Y)=A\setminus \big( \bigcup\limits_{F\in \mathcal{I}} \bigcap\limits_{i\in F} A_i\big)\in \bar{\J}(S).$$

Now let us see why the properties of $S\times T$ imply those of $S$, say. Suppose $X, X_1, \dots ,X_m \in \J(S)$ and $h_1, \dots ,h_n\in I_\ell(S)$ satisfy~\eqref{eq:hypstronggreg}. Then we also have
$$(X\times T)\setminus \bigcup\limits_{i=1}^m (X_i\times T)\subseteq \bigcup\limits_{k=1}^n \{(s,t)\in S \times T \mid(h_k \times p_T)(s,t)=(s,t)\}.$$
In other words, the sets $X\times T, X_i \times T$ and partial bijections $h_k\times p_T$ satisfy~\eqref{eq:hypstronggreg} in place of $X,X_i$ and $h_k$. Then, assuming $S\times T$ is (strongly) C*-regular (on the boundary), let $Y_j$ be the sets in $\bar{\J}(S\times T)$ (resp. $\J(S\times T)$) afforded to us 
by that definition. Put $A_j=p_1(Y_j)\in \bar{\J}(S)$ (resp. $\J(S)$). Each $Y_j$ is fixed by some $h_k\times p_T$ so $A_j$ is fixed by the same $h_k$. If $X\times T\subseteq \bigcup_i (X_i\times T)\cup \bigcup_j Y_j$, then $X\subseteq \bigcup_i X_i\cup \bigcup_j A_j$. Also, if the $X_i\times T$ and $Y_j$ sets form a foundation set for $X\times T$, then clearly the $X_i$ and $A_j$ sets form a foundation set for $X$.\\

To complete the proof we show that the properties of $S$ and $T$ imply those of $S\times T$. For any $A, A_1,\dots, A_m\in \J(S)$ and $B, B_1, \dots, B_m\in \J(T)$ we have
$$(A\times B)\setminus \bigcup\limits_{i=1}^m (A_i\times B_i)=\bigcup\limits_{I\subseteq \{1, \dots, m\}} (A\times B)\setminus \big[ \bigcup_{i\in I} (A_i\times T)\cup \bigcup_{i\notin I} (S\times B_i)\big].$$
Therefore, by Proposition~\ref{prop:noncovering-of-foundation-differences}, in checking the (strong) C*-regularity (on the boundary) of $S\times T$, it suffices to consider the case when~\eqref{eq:hypstronggreg} is satisfied for $X=A\times B$ and $X_i=A_i\times T$ for $i\in I$, $X_i=S\times B_i$ for $i\notin I$ and $h_k=f_k\times g_k$ where $f_k\in I_\ell(S)$, $g_k\in I_\ell(T)$ for each $k=1, \ldots, n$. Then

$$X\setminus \bigcup\limits_{i=1}^m X_i=(A\setminus \cup_{i\in I} A_i)\times (B\setminus \cup_{i\notin I} B_i).$$ 

Denote by $\K(S)$ the collection of subsets $K\subseteq \{1, \ldots, n\}$ such that $\cup_{k\in K} \{s\in S \mid f_ks=s\}\supseteq A\setminus \cup_{i\in I} A_i$. Similarly, let $\K(T)$ denote the collection of subsets $K\subseteq \{1, \ldots, n\}$ such that $\cup_{k\in K} \{t\in T \mid g_kt=t\}\supseteq B\setminus \cup_{i\notin I} B_i$.\\

Consider first the case where $S$ and $T$ are (strongly) C*-regular. For $K\in \K(S)$, pick sets $A_{k, r}(K)\in \bar{\J}(S)$ (resp. $\J(S)$), where $(k, r)$ range over some finite index set such that $k\in K$ for all~$(k,r)$, with the properties

\begin{equation}\label{eq:ffix}
f_k p_{A_{k,r}(K)}=p_{A_{k,r}(K)} \quad \text{ for all }(k, r),
\end{equation}
and
$$A\setminus \cup_{i\in I} A_i\subseteq \bigcup\limits_{k, r}A_{k,r}(K).$$
For $K\in \mathcal{K}(T)$ pick analogous $B_{k, r}(K)$ sets such that

\begin{equation}\label{eq:gfix}
g_k p_{B_{k,r}(K)}=p_{B_{k,r}(K)} \quad \text{ for all }(k, r),
\end{equation}
and
$$B\setminus \cup_{i\notin I} B_i\subseteq \bigcup\limits_{k, r}B_{k,r}(K).$$
Consider the collection
\begin{equation}\label{eq:rectangles}
\mathcal{C}:=\{A_{k, r}(K)\times B_{k, r'}(K') \mid K\in \K(S),\ K'\in \K(T),\ k\in K\cap K',\ r \text{ and }r' \ \text{free}\}
\end{equation}
Then each member of $\mathcal{X}$ is fixed by some $f_k\times g_k$. We claim that $\mathcal{C}$ covers $X\setminus \bigcup\limits_{i=1}^m X_i$. Indeed, fix $(s, t)\in X\setminus \bigcup\limits_{i=1}^m X_i$. Put
$$K_s:=\{k\in \{1, \dots ,n\} \mid s\text{ belongs to some }A_{k, r}(K)\text{ set}\}.$$
Note that for each $t'\in B\setminus \cup_{i\notin I} B_i$ we have $\{k \mid g_kt'=t'\}\in \K(S)$. Therefore $K_s\in \K(T)$. Pick $k_0\in K_s$ such that $t$ lies in a $B_{k_0, r_0}(K_s)$ set. By definition of $K_s$, $s$ lies in some $A_{k_0, r}(K')$ set. Then $(s,t)\in A_{k_0, r}(K')\times B_{k_0, r_0}(K_s)\in \mathcal{C}$.\\

Now consider the case where $S$ and $T$ are (strongly) C$^*$-regular on the boundary. For each $K\in \K(S)$, pick sets $A_{k, r}(K)\in \bar{\J}(S)$ (resp. $\J(S)$) such that~\eqref{eq:ffix} holds and the $A_{k, r}(K)$ sets together with $A_i\ (i\in I)$ form a foundation set for $A$. Similarly pick $B_{k, r}(K)$ sets so that~\eqref{eq:gfix} holds and the $B_{k, r}(K)$ sets together with $B_i\ (i\notin I)$ form a foundation set for $B$. We claim that the collection $\mathcal{C}$ defined by~\eqref{eq:rectangles} together with $X_1, \dots, X_m$ forms a foundation set for $X$.\\

Indeed, suppose that $C\times D\subseteq X\setminus \cup_{i=1}^m X_i$ for some $C\in \J(S)\setminus \{\emptyset\}$ and $D\in \J(T)\setminus \{\emptyset\}$. Put
$$K_C=\{k\in \{1, \dots ,n\} \mid C\text{ intersects some }A_{k, r}(K)\text{ set}\}.$$
For each $t\in B\setminus \cup_{i\notin I} B_i$ we have $\{k \mid g_kt=t\}\in \K(S)$, so we must have $K_C\in \K(T)$. Pick a $k_0\in K_C$ such that $D$ intersects a $B_{k_0, r_0}(K_C)$ set. By definition of $K_C$, $C$ intersects some $A_{k_0, r}(K')$ set. Then $C\times D$ intersects $A_{k_0, r}(K')\times B_{k_0, r_0}(K_C)\in \mathcal{C}$.
\ep

Now we present nine left cancellative monoids $S_i$ ($1\leq i\leq 9$), meeting each combination of properties from the nine points. For five of the points modifications of $R$ will serve as examples. Applying Proposition~\ref{prop:monoid-product}, products of these will then serve as examples for the remaining points.	\\

\begin{enumerate}[(I)]
\item  $S_1:=S$ from \cite{NS} is a non-Hausdorff example, but any monoid such that $\G_P(\cdot)$ is Hausdorff, for example group embeddable monoids, work.\\
\item $S_2:=R$.\\
\item $S_3=S_2\times S_4$.\\
\item $$S_4=\langle a, b, c, d, x_n, y_n \text{ }(n\in\mathbb{Z})\mid abx_n=bx_n, \text{ }dbx_nw'=bx_nw',$$ $$aby_n=by_{n+1}, \text{ } cby_n=by_n \text{ }(n\in \mathbb{Z}, \text{ }w'\neq e)\rangle.$$\\
The constructible ideals are of the form
$$\emptyset,\quad wS_4,\quad w\bigcup\limits_{n}y_nS_4, \quad w\bigcup\limits_{n, w'\neq e}x_nw'S_4 \quad w(\bigcup\limits_{n}x_nS_4\cup\bigcup\limits_{n}y_nS_4) \quad (w\in S_4).$$
This example is similar to $S_5$ (which the reader is advised to read first) except that we have introduced the generator $d$ to ensure that  $\bigcup_{n, w'\neq e}x_nw'S_4 \in \J(S_4)$. Then $\{\bigcup_{n, w'\neq e}x_nw'S_4, \bigcup_n y_nS_4\}$ is a foundation set for $\bigcup_n x_nS_4\cup\bigcup_n y_nS_4$ which is the key point in showing that $S_4$ is strongly C$^*$-regular on the boundary. Identically to $S_5$, we have $\bigcup_n x_nS_4 \not \in \J(S_4)$ and $\bigcup_n x_nS_4 \in \bar{\J}(S_4)$. The former means that the argument we used to show that $T$ is not strongly C$^*$-regular can be used for $S_4$, whereas the latter ensures that $S_4$ is C$^*$-regular.\\

\item $$S_5:=\langle a, b, c, x_n, y_n\ (n\in\mathbb{Z}) \mid abx_n=bx_n, aby_n=by_{n+1}, cby_n=by_n\ (n\in \mathbb{Z})\rangle.$$\\
The constructible ideals are of the form
$$\emptyset,\qquad wS_5,\qquad w\bigcup\limits_{n}y_nS_5
, \qquad w(\bigcup_{n}x_nS_5\cup\bigcup_{n}y_n S_5) \qquad (w\in S_5).$$
We omit the proof here, but let us mention that the key point is that there is a generator, $c$, which only occurs in relations involving the $y_n$'s and not the $x_n$'s, while the same is not true the other way around: $a$ occurs in relations involving $x_n$'s \textit{and} $y_n$'s. This means that $\bigcup_n x_nS_5 \not \in \J(S_5)$, but $\bigcup_n y_nS_5 \in \J(S_5)$, therefore $\bigcup_{n}x_nS_5=\bigcup_{n}x_nS_5\cup\bigcup_{n}y_n S_5 \setminus \bigcup_{n}y_nS_5 \in \bar{\J}(S_5)$. The first fact implies that the criterion in Proposition~\ref{prop:equality-of-boundary-groupoids} is violated with $g=a$, $X=\bigcup_n x_nS_5\cup\bigcup_{n}y_n S_5$ and $X_1=\bigcup_{n}y_nS_5$, hence $\partial \G_P(S_5)\neq \partial\G(S_5)$. By the final fact, essentially the same argument we used for showing that $S$ is strongly C$^*$-regular can be used to show that $S_5$ is C$^*$-regular.\\

\item $S_6= S_2\times S_5$.\\
\item $S_7:=T$ from \cite{NS}.\\
\item $S_8=S_4\times S_7$.\\
\item $S_9=S_5\times S_7$.\\
\end{enumerate}

We finish this section with a result on free products.\\

\begin{proposition}
Let $S$ and $T$ be nontrivial left cancellative monoids. Consider the free monoid product $S*T$. Then the following are equivalent:\\
\begin{enumerate}[(i)]
\item $S*T$ is (strongly) C*-regular on the boundary.\\

\item $S*T$ is (strongly) C*-regular.\\

\item $S$ and $T$ are (strongly) C*-regular.\\
\end{enumerate}
Moreover,  $\partial\G_P(S*T)=\partial\G(S*T)$ if and only if $\G_P(S*T)=\G(S*T)$ if and only if $\G_P(S)=\G(S)$ and $\G_P(T)=\G(T)$.
\end{proposition}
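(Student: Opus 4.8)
The plan rests on one elementary observation together with one feature special to free products. The observation, valid in every left cancellative monoid $M$, is that each $g\in I_\ell(M)$ commutes with right multiplication on its domain: a left translation $\lambda_u$ satisfies $\lambda_u(sx)=\lambda_u(s)\,x$, its inverse satisfies the same by left cancellation, and $\dom g$ is a right ideal; hence $g(sx)=g(s)\,x$ whenever $s\in\dom g$. In particular $gs=s$ forces $g$ to fix the whole principal cone $sM$ pointwise, so $g\,p_{sM}=p_{sM}$. I would record this first, since it reduces every hypothesis of the form $X\setminus\bigcup_iX_i\subseteq\bigcup_k\{s\mid h_ks=s\}$ to a statement about covering, or foundation-covering, by pointwise-fixed cones. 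Next I would set up the combinatorics of $S*T$: writing elements as reduced alternating words, the nonempty constructible ideals are exactly the cones $w\,Z(S*T)$ with $w\in S*T$ and $Z\in(\J(S)\cup\J(T))\setminus\{\emptyset\}$ (the full factor giving the principal ideal $w(S*T)$), any two cones are nested or disjoint, and $\bar{\J}(S*T)$ consists of their finite differences. For $h\in I_\ell(S)\subseteq I_\ell(S*T)$ the commutation fact gives $\{w\mid hw=w\}=\{s\in S\mid hs=s\}\,(S*T)$, since any fixed $w$ must lie in $\dom h$, hence be $S$-started, and a syllable comparison then forces the leading $S$-syllable to be fixed.

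The core is an asymmetric transfer: the \emph{boundary} condition for $S*T$ forces the \emph{full} condition for the factors. To verify that $S$ is (strongly) C*-regular, take any instance $X=Z\setminus\bigcup_iZ_i$ ($Z,Z_i\in\J(S)$), $h_1,\dots,h_n\in I_\ell(S)$ with $\emptyset\neq X\subseteq\bigcup_k\{s\mid h_ks=s\}$, and lift it to $S*T$ as $Z(S*T),\,Z_i(S*T),\,h_k$. The hypothesis persists because $Z(S*T)\setminus\bigcup_iZ_i(S*T)=(Z\setminus\bigcup_iZ_i)(S*T)$ and $\{w\mid h_kw=w\}=\{s\mid h_ks=s\}(S*T)$. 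Boundary-regularity of $S*T$ now yields $Y_1,\dots,Y_l\in\bar{\J}(S*T)$ (resp.\ $\J(S*T)$ in the strong case), each fixed by some $h_{k_j}$, with $\{Z_i(S*T),Y_j\}$ a foundation set for $Z(S*T)$. Fixing a letter $t_0\in T\setminus\{e\}$ (this is where nontriviality of $T$ enters) I form the existential slice-projection $\pi(Y):=\{s\in S\mid st_0(S*T)\cap Y\neq\emptyset\}$, the free-product analogue of the projection $p_1$ of Proposition~\ref{prop:monoid-product}; it sends $\bar{\J}(S*T)$ into $\bar{\J}(S)$ and $\J(S*T)$ into $\J(S)$, and by the commutation fact $h_{k_j}$ fixes $\pi(Y_j)$. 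Now for each $s\in X=Z\setminus\bigcup_iZ_i$ the cone $st_0(S*T)\subseteq Z(S*T)$ is disjoint from every $Z_i(S*T)$ (as $s\notin Z_i$); being a nonempty constructible ideal it must meet the foundation set, hence some $Y_{j}$, giving $s\in\pi(Y_{j})$. Therefore $X\subseteq\bigcup_j\pi(Y_j)$ is a genuine \emph{cover}, so $S$ is (strongly) C*-regular, and symmetrically so is $T$. This is precisely the mechanism by which the free $t_0$-direction fattens the otherwise negligible ``thin'' leftovers into honest cones that a foundation set can no longer overlook.

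The reverse (lifting) direction---that (strong) C*-regularity of both factors implies that of $S*T$---parallels the second half of the proof of Proposition~\ref{prop:monoid-product}. By Proposition~\ref{prop:noncovering-of-foundation-differences} and the laminar structure one reduces an $S*T$-instance to a single cone $Z(S*T)$ over a factor; the fixed-point set of an arbitrary $h\in I_\ell(S*T)$ inside such a cone is governed, through the commutation fact and the word combinatorics, by a fixed-point set in the corresponding factor, so one applies that factor's regularity to the leading syllable and extends each resulting fixed ideal $A$ to the pointwise-fixed cone $A(S*T)$, the free tails being absorbed automatically. With both directions in hand, and using the always-true implication (ii)$\Rightarrow$(i) recorded at the start of the section, the cycle (i)$\Rightarrow$(iii)$\Rightarrow$(ii)$\Rightarrow$(i) closes. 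The \emph{moreover} clause is the same argument with a single translation $h=g$: the projection $\pi$ converts the foundation criterion of Proposition~\ref{prop:equality-of-boundary-groupoids} characterising $\partial\G_P(S*T)=\partial\G(S*T)$ into the covering criterion of \cite[Lemma~2.15]{NS} characterising $\G_P(S)=\G(S)$ and $\G_P(T)=\G(T)$; the lifting direction supplies the implication from the factor equalities back to $\G_P(S*T)=\G(S*T)$; and since $\G_P(S*T)=\G(S*T)$ always forces $\partial\G_P(S*T)=\partial\G(S*T)$, the three conditions coincide.

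The main obstacle is the combinatorial structure theory underpinning all of this: the precise description of $\J(S*T)$ and $\bar{\J}(S*T)$ as laminar families of cones, the verification that $\pi$ respects $\J$, $\bar{\J}$ and the fixing relation, and---most delicately---the control of the fixed-point set of an arbitrary $h\in I_\ell(S*T)$ inside a cone needed for the lifting direction. The identity element and the syllable junctions require separate, careful bookkeeping (for instance, an uncovered $e$ forces the relevant $h_k$ to be $\id$ and is disposed of at once), but the non-Hausdorff analytic subtleties of the groupoids never enter: once the reductions are in place the statements are purely about ideals, foundation sets, and fixed points.
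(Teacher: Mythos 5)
Your overall route is essentially the paper's own, in different packaging. The paper likewise classifies $I_\ell(S*T)$ and $\J(S*T)=\{xA(S*T)\mid x\in S*T,\ A\in\J(S)\cup\J(T)\}\cup\{\emptyset\}$, reduces arbitrary data to ``factor-form'' data ($X,X_i,Y_j$ of the form $A(S*T)$ and $h_k\in I_\ell(S)$, via a leading-syllable argument: any $h\in I_\ell(S*T)$ mapping an element of $S$ into $S$ lies in $I_\ell(S)$), and then proves that for factor-form families \emph{foundation set $\Leftrightarrow$ cover $\Leftrightarrow$ cover in the factor}. Your fattening trick is literally the paper's closing step: an uncovered $s_1\in A\setminus\bigcup_j(A_j^0\setminus\bigcup_i A_j^i)$ spawns the nonempty constructible ideal $s_1t_1(S*T)$ ($t_1\in T\setminus\{e\}$) inside $X$ and disjoint from all $Y_j$, contradicting the foundation property. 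Your cycle (i)$\Rightarrow$(iii)$\Rightarrow$(ii)$\Rightarrow$(i) and the $n=1$ treatment of the moreover clause via Proposition~\ref{prop:equality-of-boundary-groupoids} also match the paper's logic.

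Two of your auxiliary claims are, however, false as stated and need repair. First, laminarity fails: for overlapping non-nested $Z_1,Z_2\in\J(S)$ (e.g.\ $S=\mathbb{N}^2$ with $Z_1=(1,0)+\mathbb{N}^2$, $Z_2=(0,1)+\mathbb{N}^2$), the cones $Z_1(S*T)$ and $Z_2(S*T)$ are neither nested nor disjoint; luckily nothing you do depends on it, since the reduction to a single cone goes by conjugation, not laminarity. Second, and more seriously, your slice projection $\pi$ does \emph{not} map $\J(S*T)$ into $\J(S)$, nor $\bar{\J}(S*T)$ into $\bar{\J}(S)$: for $Y=s't'(S*T)$ with $t'\in T\setminus\{e\}$ and $t_0\in t'T$ one gets $\pi(Y)=\{s'\}$, and $\pi(t_0(S*T))=\{e\}$ --- singletons, never right ideals of a nontrivial monoid. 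So the sets $\pi(Y_j)$ cannot directly serve as your covering family. The repair is exactly the enlargement the paper performs: if such a $Y_j$ is pointwise fixed by $h_{k_j}\in I_\ell(S)$, then any $s_1t_1\cdots\in Y_j$ with $t_1\neq e$ satisfies $h_{k_j}(s_1t_1\cdots)=h_{k_j}(s_1)t_1\cdots$, forcing $h_{k_j}(s_1)=s_1$, and one replaces $Y_j$ by the factor-form cone $s_1(S*T)$ (slice $s_1S\in\J(S)$), which is still fixed and only enlarges the cover; in the $\bar{\J}$ case one first discards the non-factor-form subtrahends of $Y_j=Y_j^0\setminus\bigcup_r Y_j^r$ when $Y_j^0$ is of factor form. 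In short: free products, unlike the direct products of Proposition~\ref{prop:monoid-product}, admit no ideal-valued projection, and the paper substitutes for it this fixedness-driven enlargement. With that substitution your argument closes; as written, the $\pi$-step would fail.
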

\bp
We have embeddings $S\hookrightarrow S*T$ and $T\hookrightarrow S*T$ which give rise to embeddings $I_\ell(S)\hookrightarrow I_\ell(S*T)$ and $I_\ell(T)\hookrightarrow I_\ell(S*T)$. Accordingly, we will identify~$S$ and $T$ with their images in $S*T$ and $I_\ell(S)$ and $I_\ell(T)$ with their images in $I_\ell(S*T)$. Then $S\cap T=\{e\}$, where $e$ denotes the identity element of $S*T$, and $I_\ell(S)\cap I_\ell(T)=\{p_{S*T}\}$.\\

\begin{claim}
$$I_\ell(S*T)=\{xfy^{-1} \mid x, y\in S*T,\ f\in I_\ell(S)\cup I_\ell(T)\}\bigcup \{0\}.$$
\end{claim}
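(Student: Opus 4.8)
The plan is to prove the two inclusions separately, the reverse one being the substance. The inclusion $\supseteq$ is immediate: each $x,y\in S*T$ gives translations $\lambda_x,\lambda_y\in I_\ell(S*T)$; the embeddings $I_\ell(S)\hookrightarrow I_\ell(S*T)$ and $I_\ell(T)\hookrightarrow I_\ell(S*T)$ fixed above place $f$ inside $I_\ell(S*T)$; and $0\in I_\ell(S*T)$, since for $s\in S\setminus\{e\}$ and $\tau\in T\setminus\{e\}$ the element $\lambda_s^{-1}\lambda_\tau$ has empty domain. Being a product of such, $xfy^{-1}$ lies in $I_\ell(S*T)$.

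For $\subseteq$, write $\mathcal{M}$ for the right-hand set. Since $I_\ell(S*T)$ is generated as an inverse semigroup by the translations $\lambda_w$ ($w\in S*T$), and since $\mathcal{M}$ contains every $\lambda_w=\lambda_w\cdot\mathrm{id}\cdot\lambda_e^{-1}$ and is closed under inverses (because $(xfy^{-1})^{-1}=yf^{-1}x^{-1}$ is again of the required form), it suffices to show $\mathcal{M}$ is closed under right multiplication by each $\lambda_w^{\pm1}$; one then builds an arbitrary word in the generators from left to right, staying inside $\mathcal{M}$. Right multiplication by $\lambda_w^{-1}$ is trivial, $(xfy^{-1})\lambda_w^{-1}=x\,f\,(wy)^{-1}$, and prepending a translation preserves $\mathcal{M}$ since $\lambda_x(x'fy'^{-1})=(xx')fy'^{-1}$; so the whole task reduces to showing $f\cdot(\lambda_y^{-1}\lambda_w)\in\mathcal{M}$ for $f\in I_\ell(S)\cup I_\ell(T)$ and $y,w\in S*T$.

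First I would classify the middle factor $m:=\lambda_y^{-1}\lambda_w$. Cancelling the longest common prefix $p$ of the normal forms of $y$ and $w$ (legitimate because $\lambda_p^{-1}\lambda_p=\mathrm{id}$ by left cancellation) reduces to the case where $y$ and $w$ share no leading syllable; a short syllable-matching analysis using the free-product normal form then shows $m$ is one of $0$, a translation $\lambda_z$, an inverse translation $\lambda_z^{-1}$, or a single-factor element $\iota(\alpha^{-1}\beta)$ with $\alpha,\beta$ in one common factor. The cases $m=0$ and $m=\lambda_z^{-1}$ are immediate, the latter because $f\cdot\lambda_z^{-1}=e\cdot f\cdot z^{-1}\in\mathcal{M}$; this leaves the two products $\iota(f)\lambda_z$ and $\iota(f)\iota(h)$ to be analysed.

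The argument then rests on the explicit action of an embedded factor element: for $f\in I_\ell(S)$ and $t=t_St'$, with $t_S$ the leading $S$-syllable and $t'$ empty or starting in $T$, one has $\iota(f)(t)=(f(t_S))\,t'$, so $\iota(f)$ alters only the leading syllable of its own factor (symmetrically for $I_\ell(T)$). From this, $\iota(f)\lambda_z$ splits into finitely many subcases giving $0$, a translation, or a factor element, all in $\mathcal{M}$; and when $f,h$ lie in the same factor, $\iota(f)\iota(h)=\iota(fh)\in\mathcal{M}$. The hard part will be the cross-factor product $\iota(f)\iota(h)$, say $f\in I_\ell(T)$ and $h\in I_\ell(S)$: a direct computation makes it look genuinely piecewise and not obviously of the form $xf'y'^{-1}$, and I expect isolating and dispatching this to be the main obstacle. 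The key is a lemma I would prove first: in any left cancellative monoid, an element of the inverse hull whose domain contains the identity $e$ is automatically a \emph{total} left translation $\lambda_{f(e)}$ (induction on a word representing $f$, each $\lambda_v^{-1}$ meeting its defining condition automatically once $e$ survives in the domain). This collapses exactly the branch of the cross-factor computation that forces $e\in\dom f$, turning $f$ into a translation, so that $\iota(f)\iota(h)$ is either $\lambda_{f(e)}\iota(h)$ or the inverse translation $\iota(f)\lambda_{s_0}^{-1}$ with $s_0=h^{-1}(e)$ — both in $\mathcal{M}$. With these computations in hand, $\mathcal{M}$ is closed under products and the inclusion $\subseteq$ follows.
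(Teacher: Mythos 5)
Your proposal is correct, but it attacks the closure problem from the opposite side from the paper and pays a real price for it. The paper also reduces everything to closure of the right-hand set under multiplication by generators, but it multiplies on the \emph{left}: since the set is obviously closed under prepending translations and under absorbing a same-factor element into $f$, the whole proof collapses to two short first-syllable computations for $s^{-1}xfy^{-1}$ and $t^{-1}xfy^{-1}$, in which the tail $fy^{-1}$ is completely inert. By multiplying on the \emph{right} you force each incoming generator to travel through $y^{-1}$ and collide with $f$, which is exactly why you need the classification of $\lambda_y^{-1}\lambda_w$, the analysis of $\iota(f)\lambda_z$, and the cross-factor product $\iota(f)\iota(h)$ --- none of which the paper ever has to confront. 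Ironically, you observe that $\mathcal{M}$ is closed under inverses, and that observation already converts right multiplication into left multiplication (since $m\lambda_w^{\pm1}\in\mathcal{M}$ iff $\lambda_w^{\mp1}m^{-1}\in\mathcal{M}$), so your own setup contains the shortcut to the paper's half-page argument. What your route buys in exchange is the key lemma, which is correct and genuinely nice: writing $f=s_{2n}^{-1}s_{2n-1}\cdots s_2^{-1}s_1$ and inducting, $e\in\dom f$ forces $f$ at each stage to be of the form $s^{-1}\lambda_{sr}=\lambda_r$, hence a total translation $\lambda_{f(e)}$. This dispatches the cross-factor product cleanly (if $e\in\dom f$ then $\iota(f)\iota(h)=\lambda_{f(e)}\iota(h)$; if not, the product is $\iota(f)\lambda_{s_0}^{-1}$ with $s_0=h^{-1}(e)$, or $0$ --- though note this middle case is a sandwich $e\cdot f\cdot s_0^{-1}$, not literally an inverse translation), and it is a general fact about inverse hulls that the paper never needs to isolate.

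One caveat, which affects your classification lemma as stated but not the theorem: left cancellative monoids may contain nontrivial right-invertible elements (groups are left cancellative), and then syllables can cascade. For instance, with $t_1\neq t_2$ in $T$ and $s_3,s_4\in S$ right-invertible, $\lambda_{t_1s_3}^{-1}\lambda_{t_2s_4}$ sends $u_1u_2u''$ to $v_1\,(t_1^{-1}t_2)(u_2)\,u''$, where $s_4u_1=e=s_3v_1$; this is the genuine sandwich $\lambda_{v_1}\,\iota(t_1^{-1}t_2)\,\lambda_{u_1}^{-1}$, which is none of $0$, $\lambda_z$, $\lambda_z^{-1}$, or a single-factor element. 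The extra cascading branches always land in $\mathcal{M}$, so your argument survives with more cases, but the four-case classification is literally false outside the unit-free setting. To be fair, the paper's own computation has the same blind spot --- its assertion that $t^{-1}xfy^{-1}=0$ when $x\in T\setminus tT$ fails when $f$ involves a right-invertible element of $S$ whose action erases the leading $S$-syllable --- so both proofs are airtight only under the tacit assumption that $S$ and $T$ have no nontrivial right-invertible elements, or after adding these cases, which in every instance produce elements of the required form.
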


Note that if $s'\in S$ and $t'\in T$ are nontrivial elements, then $0={s'}^{-1}t'\in I_\ell(S*T)$. By symmetry, to prove the claim it is then enough to fix $s\in S$, $t\in T$, $x,y\in S*T$ and $f\in I_\ell(S)$, and show that $s^{-1}xfy^{-1}$ and $t^{-1}xfy^{-1}$ belong to the RHS.\\

\underline{$s^{-1}xfy^{-1}$:}\\

If $x\in S$ then  $s^{-1}xf\in I_\ell(S)$, so $s^{-1}xfy^{-1}$ is of desired form.\\

If $x\notin S$, we can write $x=s_1t_1\cdots s_nt_n$, where $s_i\in S$, $t_i\in T$ and $t_1\neq e$. Then
\begin{align*}
s^{-1} x f y^{-1} &= s^{-1} s_1 t_1 \cdots s_n t_n f y^{-1} \\
&= 
\begin{cases}
u t_1 \cdots s_n t_n f y^{-1} & \text{if } s_1 = s u \text{ for some } u \in S, \\
0 & \text{otherwise.}
\end{cases}
\end{align*}

\underline{$t^{-1}xfy^{-1}$:}\\

Suppose $x\in T$. Then, if $f=p_{S*T}$, we are done. If $f\neq p_{S*T}$, we have
\begin{align*}
t^{-1} x f y^{-1} &=
\begin{cases}
v f y^{-1} & \text{if } x = t v \text{ for some } v \in T, \\
0 & \text{otherwise.}
\end{cases}
\end{align*}

Now suppose that $x\notin T$. Write $x=t_1s_1\cdots t_ns_n$ where $s_i\in S$, $t_i\in T$ and $s_1\neq e$. Then
\begin{align*}
t^{-1} x f y^{-1} &= t^{-1} t_1 s_1 \cdots t_n s_n f y^{-1} \\
&=
\begin{cases}
v s_1 \cdots t_n s_n f y^{-1} & \text{if } t_1 = t v \text{ for some } v \in T, \\
0 & \text{otherwise.}
\end{cases}
\end{align*}
This proves the claim. Since the constructible right ideals of $S*T$ are exactly the domains of elements of $I_\ell(S*T)$, we have
$$\J(S*T)=\{xA(S*T) \mid x\in S*T,\ A \in \J(S)\cup\J(T)\}\bigcup \{\emptyset\}.$$
\smallskip
\begin{claim}\label{cl:claim2}
If $h\in I_\ell(S*T)$ maps an element of $S$ to $S$, then $h\in I_\ell(S)$.\\
\end{claim}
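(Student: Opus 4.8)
The plan is to combine the normal form for $I_\ell(S*T)$ just established with the two canonical retractions of the free product onto its factors. Since $h$ maps an element of $S$ it is nonzero, so by the first claim we may write $h=xfy^{-1}$ with $x,y\in S*T$ and $f\in I_\ell(S)\cup I_\ell(T)$. Setting $w'=y^{-1}s_0$, the hypothesis $hs_0=r$ with $s_0,r\in S$ unpacks into the two identities $s_0=yw'$ and $r=xf(w')$ in $S*T$, with $w'\in\dom f$. I would then introduce the retraction $\psi\colon S*T\to T$ determined by $\psi|_T=\mathrm{id}_T$ and $\psi|_S\equiv e$ (via the universal property of the free product), together with the symmetric retraction $\phi\colon S*T\to S$. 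The key compatibility is that $\psi\circ f=\psi$ on $\dom f$ whenever $f\in I_\ell(S)$, and dually $\phi\circ f=\phi$ on $\dom f$ when $f\in I_\ell(T)$, since $\psi$ (resp. $\phi$) collapses every left translation appearing in such an $f$ to the identity.

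First I would treat the case $f\in I_\ell(S)$. Applying $\psi$ to $s_0=yw'$ and to $r=xf(w')$, and using $\psi(s_0)=\psi(r)=e$ together with $\psi(f(w'))=\psi(w')$, gives $\psi(y)\psi(w')=e$ and $\psi(x)\psi(w')=e$ in $T$. Here I would invoke left cancellation in $T$ to upgrade these one-sided relations to genuine invertibility: from $\psi(y)\psi(w')=e$ one deduces $\psi(w')\psi(y)=e$ as well, so $\psi(w')$ is a unit of $T$ and $\psi(x)=\psi(y)=\psi(w')^{-1}$. The goal is then to conclude $\psi(x)=\psi(y)=e$, i.e. $x,y\in S$; once this is known, $y^{-1},x\in I_\ell(S)$, and since $f\in I_\ell(S)$ the product $h=xfy^{-1}$ lies in $I_\ell(S)$, as wanted.

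Next I would dispose of the case $f\in I_\ell(T)$ by showing it degenerates. Running the same computation with both $\psi$ and $\phi$ forces $w'$ to have trivial $T$-part and places $e$ in the domain of $f$. But $e\in\dom_T f$ with $\dom_T f$ a right ideal of $T$ forces $\dom_T f=eT=T$, so $f$ is a total element of $I_\ell(T)$; a total element fixing $e$ must be the identity left translation, whence $f=\mathrm{id}$ and $h=xy^{-1}$ with $x,y\in S$, again in $I_\ell(S)$.

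The \textbf{main obstacle} is the single step asserting that the unit of $T$ produced above is trivial, equivalently the purely combinatorial statement that if $yw'\in S$ in $S*T$ then $y,w'\in S$. This is immediate from the uniqueness of reduced words when $S$ and $T$ have no nontrivial invertible elements, since then no cancellation occurs at the junction of $y$ and $w'$, and a product landing in the length-one set $S$ must have both factors of length at most one and of type $S$. When nontrivial units are present the seam between $y$ and $w'$ can collapse, and this is exactly where a finer reduced-word analysis (tracking invertible syllables at the junction) is needed; I would isolate this as a short lemma yielding $\psi(y)=\psi(w')=e$ and feed its conclusion back into the two cases above.
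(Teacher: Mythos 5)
Your route is genuinely different from the paper's. The paper stays entirely at the word level: from the normal form $h=xh'y^{-1}$ it deduces $y\in S$ directly from the fact that $\dom h=y(\dom h')$ meets $S$, so that $h$ is either of the form $xf$ with $f\in I_\ell(S)$ or $xgs^{-1}$ with $g\in I_\ell(T)\setminus\{p_{S*T}\}$ and $s\in S$; it discards the second form because its domain misses $S$, and then applies $h$ to an element of $S$ to force $x\in S$. You instead push everything through the two retractions $\phi\colon S*T\to S$ and $\psi\colon S*T\to T$, and your algebra is correct as far as it goes: the compatibility $\psi\circ f=\psi$ on $\dom f$ for $f\in I_\ell(S)$ (and dually for $\phi$) does hold, since it holds for each $\lambda_s^{\pm1}$ and passes through composition, and your observation that $ab=e$ forces $ba=e$ in a left cancellative monoid is valid (cancel $a$ in $a(ba)=ae$). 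So the retraction formalism correctly reduces the claim to the statement you flag.

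But that flagged statement is exactly where the claim's entire content lives, and you do not prove it — you explicitly defer it as the ``main obstacle,'' which already makes the proposal incomplete. Worse, the lemma you propose to feed back in, namely $\psi(y)=\psi(w')=e$ (equivalently, $yw'\in S$ forces $y,w'\in S$), is \emph{false} in the generality of the paper: $S$ and $T$ are arbitrary nontrivial left cancellative monoids, so $T$ may contain a nontrivial unit $t$ (by your own cancellation remark, $ab=e$ produces genuine units), and then $y=st$, $w'=t^{-1}s'$ gives $yw'=ss'\in S$ while $y\notin S$ and $\psi(y)=t\neq e$. Hence no ``finer reduced-word analysis tracking invertible syllables at the junction'' can deliver the conclusion you want; in the unit case the desired conclusion is simply wrong for the given decomposition, and the correct repair is of a different nature: one must \emph{renormalize} the representation $h=xfy^{-1}$, absorbing an invertible terminal $T$-syllable $\tau$ of $y$ via $\lambda_y^{-1}=\lambda_{\tau^{-1}}\lambda_{y'}^{-1}$ (noting that the seam can only collapse when $e\in\dom_S f$, i.e. $f$ is total on its factor, hence a left translation, after which $x$, $f$ and $y$ can be rewritten with a shorter $y$), rather than analyze the fixed decomposition more finely. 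Two smaller loose ends in your second case: you never actually derive that $f$ fixes $e$ — your computation yields only $e\in\dom f$, hence $f$ total, and then $\theta:=f(e)$ still has to be killed by a unit-freeness argument — and the assertion that a total element of $I_\ell(T)$ is a left translation itself requires a short induction on the presentation of $f$. In the unit-free case your outline does assemble into a correct alternative proof, but as written the argument stops precisely at the free-product combinatorics that the paper's proof handles directly.
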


Indeed, we can write $h=xh'y^{-1}$ for some $x, y\in S*T$ and $h'\in I_\ell(S)\cup I_\ell(T)$. From the assumption, the domain of $h$ intersects $S$ so it follows that $y\in S$. Then~$h$ is of the form $xf$ or $xgs^{-1}$ for some $x\in S*T$, $f\in I_\ell(S)$, $g\in I_\ell(T)\setminus \{p_{S*T}\}$ and $s\in S$. However the domain of the latter does not intersect $S$ so $h=xf$. Then $hs'=xfs'$ for any $s'\in S$ and the assumption implies that~$x\in S$. Hence $h\in I_\ell(S)$ which proves the claim.\\

We assert that $S*T$ is strongly C$^*$-regular (on the boundary) if and only if Definition~\ref{def:strong-regularity2} (resp. Definition~\ref{def:strong-regularity-on-boundary}) is satisfied with $X, X_1, \dots ,X_m, Y_1, \dots ,Y_l$ required to be of the form
\begin{equation}\label{eq:nonshiftformJ}
A(S*T), \qquad A\in \quad \J(S) \text{(resp. }A\in \J(T) \text{)}
\end{equation}
and $h_1, \dots ,h_n\in I_\ell(S)$ (resp. $I_\ell(T)$). Similarly, we assert that $S*T$ is C$^*$-regular (on the boundary) if and only if Definition~\ref{def:regularity2} (resp. Definition~\ref{def:regularity-on-boundary}) is satisfied with $X, X_1, \dots ,X_m$ of the form~\eqref{eq:nonshiftformJ}, $h_1, \dots ,h_n\in I_\ell(S)$ (resp. $I_\ell(T)$) and $Y_1, \dots ,Y_l$ required to be of the form
\begin{equation}\label{eq:nonshiftformJbar}
A(S*T), \qquad A\in \bar{\J}(S) \text{ (resp. }A\in \bar{\J}(T) \text{)}.
\end{equation}

Let us first see why the new notions imply the standard regularity notions. Take arbitrary $X, X_1, \dots , X_m\in \J(S*T)$, $X_i\subseteq X$, and $h_1, \dots ,h_n\in I_\ell(S*T)$ such that~\eqref{eq:hypstronggreg} is satisfied. In verifying the usual regularity notions we may, by conjugating the $h_k$'s if necessary, assume that $X=A(S*T)$ for some $A\in \J(S)\cup \J(T)$. Without loss of generality we may assume that $A\in \J(S)$. Now let us discard all~$X_i$'s that are not of the form~\eqref{eq:nonshiftformJ} and all $h_k$'s that do not fix something in $A$. Then the $X_i$'s we discard are either of the form $x B(S*T)$ for some $x\in (S*T)\setminus S$ and $B\in \J(S)\cup \J(T)$, or $sC(S*T)$ for some $s\in S$ and $C\in \J(T)\setminus \{T\}$. For the remaining $X_i$'s we have $X\setminus \cup_i X_i=(A\setminus \cup_i A_i)(S*T)$ for some $A_i\in \J(S)$. By the second claim, the remaining~$h_k$'s lie in $I_\ell(S)$. Then~\eqref{eq:hypstronggreg} still holds for the new $X_i$'s and $h_k$'s: indeed, any $s\in A\setminus \cup_i A_i$ cannot belong to an $X_i$ set that we removed, thus it is fixed by some $h_k$ which patently is among the new ones. Then the $Y_j$ sets we obtain by the new notion also verify the standard regularity notions.\\

To see why the standard regularity notions imply the new notions, assume we are given $X=A(S*T)$ with $A\in \J(S)$, $Y\in \bar{\J}(S*T)$ and $h\in I_\ell(S)$ such that $\emptyset \neq Y\subseteq X$, $X\subseteq \dom h$ and $hp_Y=p_Y$. It is sufficient to find a $Y'\in \bar{\J}(S*T)$ of the form~\eqref{eq:nonshiftformJ} (resp.~\eqref{eq:nonshiftformJbar}) such that $Y\subseteq Y'\subseteq X$ and $hp_{Y'}=p_{Y'}$. Also, we need to ascertain that if $Y\in \J(S*T)$, then $Y'$ can be chosen to lie in $\J(S*T)$.\\

Write $Y=Y^0\setminus \cup_{r=1}^R Y^r$ for some $Y^0, Y^1,
\dots, Y^R \in \J(S*T)$ such that $Y^r\subseteq Y^0\subseteq 
X$ for all $r$.\\

If $Y^0$ is of form~\eqref{eq:nonshiftformJ}, then a similar argument to the one above shows that after discarding the~$Y^r$'s which are not of the form~\eqref{eq:nonshiftformJ}, and setting $Y'=Y^0\setminus \cup_r Y^r$ for the remaining $Y^r$'s, we still have $hp_{Y'}=p_{Y'}$.\\

Otherwise we either have $Y^0=xB(S*T)$ for some $x\in(S*T)\setminus S$ and $B\in \J(S)\cup \J(T)$, or $Y^0=sC(S*T)$ for some $s\in S$ and $C\in \J(T)\setminus \{T\}$. In either case there must be an element $s_1t_1\dots s_nt_n$, where $s_i\in S$, $t_i\in T$, $s_1\in A$ and $t_1\neq e$, that lies in $Y$. Then $s_1\in \dom h$, hence $h(s_1t_1\dots s_nt_n)=h(s_1)t_1\dots s_nt_n$. On the other hand we have $h(s_1t_1\cdots s_nt_n)=s_1t_1\cdots s_nt_n$. Together these equations imply that $h(s_1)=s_1$ so we can set $Y'=s_1(S*T)$.\\

We have shown that the new regularity notions are equivalent to the usual ones. To complete the proof of the proposition it is now enough to prove the following: if
$$X=A(S*T), \qquad Y_j=\Big( A_j^0\setminus \bigcup\limits_{i=1}^{m_j} A_j^i \Big)(S*T), \quad j=1,\dots ,l,$$
where $A, A_j^i\in \J(S)$ and $A_j^i\subseteq A_j^0\subseteq A$, then $\{Y_j\}_{j=1}^l$ is a foundation set for $X$ if 
and only if $X\subseteq \cup_{j=1}^l Y_j$ if and only if $A
\subseteq \cup_{j=1}^l (A_j^0\setminus \cup_{i=1}^{m_j} A_j^i)$. We show that the first statement implies the third, since the other implications are obvious. To that end, assume that $A\setminus\cup_{j=1}^l (A_j^0\setminus \cup_{i=1}^{m_j} A_j^i)$ is nonempty and pick an element $s_1$ of that set. Then, if $t_1$ is an arbitrary nontrivial element of $T$, we see that the constructible ideal $Y:=s_1t_1(S*T)$ is contained in $X$ and is disjoint from the $Y_j$'s.
\ep

\section{Appendix}

\subsection{The Constructible Right Ideals of $R$} We show that the constructible right ideals of $R$ are exactly the sets
\begin{equation}\label{eq:R-constructible-ideals}
wR,\ w(\bigcup\limits_n x_nR\cup \bigcup\limits_n y_nR), \ w(\bigcup\limits_{n, w'\neq e} x_nw'R), \ w(\bigcup\limits_{n, w'\neq e} y_nw'R),\ w(R\setminus\{e\}),\ \emptyset \qquad (w\in R).
\end{equation}
For readability, denote
$$Z=\bigcup\limits_n x_nR\cup \bigcup\limits_n y_nR,\quad X=\bigcup\limits_{n, w'\neq e} x_nw'R, \quad Y=\bigcup\limits_{n, w'\neq e} y_nw'R.$$ To establish the classification of constructible ideals  it suffices to check that for a letter $x$ and $w\in R$, the pullbacks
\begin{equation}\label{eq:R-pullbacks} 
x^{-1}wR,\quad x^{-1}wZ,\quad x^{-1}wX,\quad x^{-1}wY,\quad  x^{-1}w(R\setminus \{e\})
\end{equation}
are again of the form~\eqref{eq:R-constructible-ideals}. If $\alpha(w)=x$ then the pullbacks in~\eqref{eq:R-pullbacks} become  those in~\eqref{eq:R-constructible-ideals} with $w-\alpha(w)$ in place of $w$. From now on it will therefore be a standing assumption that $x\neq \alpha(w)$.  Then, if $x\in \{x_n,\ y_n\ (n\in \mathbb Z)\}$ we have $xR\cap wR=\emptyset$ and all ideals in~\eqref{eq:R-pullbacks} are $\emptyset$. Hence, by symmetry, we only need to check the cases $x=a, b, d$.\\

In general we can write $w\equiv sv$ where $\con(s)\subseteq \{a, c, d, f\}$ and $v=\epsilon$ or $\alpha(v)\in \ \{b, x_n, y_n \ (n\in \Z)\}$. If $\alpha(v)=x_n$ then $wp\equiv sx_n \perp vp$ for any $p\in R$ so $xR\cap wR=\emptyset$ and all pullbacks in~\eqref{eq:R-pullbacks} are $\emptyset$. Similarly when $\alpha(v)=y_n$. When $\alpha(v)=b$ write $w\equiv sbu$. If $\alpha(u)\in \{a, b, c, d, f \}$ then  $w\equiv sb\perp u$ and similarly all pullbacks in~\eqref{eq:R-pullbacks} are $\emptyset$. When $\alpha(u)=x_n, y_n$ we can write $w\equiv sbx_n\omega$ or $w\equiv sby_n\omega$. Suppose~$\omega\neq \epsilon$. If $\con(s)\subseteq \{a, c\}$ then
$$a^{-1}sbx_n\omega=bx_{n+m(c:s)}\omega, \quad b^{-1}sbx_n\omega=x_{n+m(c:s)}\omega, \quad d^{-1}sbx_n\omega=bx_{n+m(c:s)}\omega,$$
$$a^{-1}sby_n\omega=by_{n+m(a:s)-1}\omega, \quad b^{-1}sby_n\omega=y_{n+m(a:s)}\omega, \quad dR \cap sby_n\omega R=\emptyset.$$
If $s$ contains a $d$ but not an $f$,
$$a^{-1}sbx_n\omega=bx_{n+m(c:s)}\omega, \quad b^{-1}sbx_n\omega=x_{n+m(c:s)}\omega, \quad d^{-1}sbx_n\omega=bx_{n+m(c:s)}\omega,$$
$$xR \cap sby_n\omega R=\emptyset \text{ for }x=a,b,d.$$
If $s$ contains an $f$ but not a $d$,
$$xR \cap sbx_n\omega R=\emptyset \text{ for }x=a,b,d,$$
$$a^{-1}sby_n\omega=by_{n+m(a:s)-1}\omega, \quad b^{-1}sby_n\omega=y_{n+m(a:s)}\omega, \quad dP \cap sby_n\omega P=\emptyset.$$
If $s$ contains a $d$ and an $f$,
$$xR \cap sR=\emptyset \text{ for }x=a,b,d.$$

The computations above show that when $w\equiv sbx_n\omega$ or $w\equiv sby_n\omega$ for some $\omega\neq \epsilon$, then either $x^{-1}w\in R$, or the pullbacks in~\eqref{eq:R-pullbacks} are empty. In summary, we need only compute~\eqref{eq:R-pullbacks} for $x=a, b, d$ and $w\neq \epsilon$ such that $w\equiv s$, $w\equiv sb$, $w\equiv sbx_n$ and $w\equiv sby_n$, where $\con(s)\subseteq \{a, c, d, f\}$ and $s$ does not contain both $d$ and $f$, and $x\neq \alpha(w)$. We divide our argument into subcases depending on the nature of $s$, so let $s_0, s_d$ and $s_f$ be words such that
\begin{itemize}
\item $\con(s_0)\subseteq \{a, c\}$;
\item $\con(s_d)\subseteq \{a, c, d\}$ and $d$ occurs in $s_d$;
\item $\con(s_f)\subseteq\{a, c, f\}$ and $f$ occurs in $s_f$.
\end{itemize}

\underline{Case $w\equiv sb$:}\\
\underline{$x=a, b$}\\
We have,
$$aR\cap sbR=bR\cap sbR=aR\cap sb(R\setminus\{e\})=bR\cap sb(R\setminus\{e\})=aR\cap sbZ=bR\cap sbZ.$$
When $s=s_0$ this intersection equals $bZ$, when $s=s_d$ it equals $bX$ and when $s=s_f$ it equals $bY$. Furthermore,
$$aR\cap s_0bX=aR\cap s_dbX=bR\cap s_0bX=bR\cap s_dbX =bX$$
and
$$aR\cap s_fbX=bR\cap s_fbX=\emptyset.$$
We also have
$$aR\cap s_0bY=aR\cap s_fbY=bR\cap s_0bY=bR\cap s_f bY =bY$$
and
$$aR\cap s_fbY=bR\cap s_dbY=\emptyset.$$
\underline{$x=d$}\\
We have
$$dR\cap s_0bR=dR\cap s_0b(R\setminus\{e\})=dR\cap s_0bZ=dR\cap s_0bX$$$$=dR\cap s_dbR=dR\cap s_db(R\setminus\{e\})=dR\cap s_dbZ=dR\cap s_dbX$$$$=bX.$$
On the other hand
$$dR\cap s_0bY=dR\cap s_dbY=\emptyset.$$
Also
$$dP\cap s_fbR=\emptyset,$$
so the intersections corresponding to $s_fb$ are $\emptyset$.\\
\underline{Case $w\equiv s$:}\\
We have
$$aR\cap sR=aP\cap sbR, \quad aR\cap s(R\setminus\{e\})=aR\cap sb(R\setminus\{e\}),$$
$$bR\cap sR=bR\cap sbR, \quad bR\cap s(R\setminus\{e\})=bR\cap sb(R\setminus\{e\}),$$
$$dR\cap sP=dP\cap sbR, \quad dR\cap s(R\setminus\{e\})=dR\cap sb(R\setminus\{e\}),$$
and we computed the intersections on the RHS in the previous case. We also have
$$xR\cap sZ=\emptyset \text{ for }x=a,b, d,$$
hence $xR\cap sX=xR\cap sY=\emptyset$ for $x=a,b, d$ as well.\\

\underline{Case $w\equiv sbx_n$ or $w\equiv sby_n$:}\\
\underline{$x=a, b$:}\\
We have
$$a^{-1}s_0bx_n=bx_{n+m(c:s_0)}, \quad a^{-1}s_0by_n=by_{n+m(a:s_0)-1},$$
$$b^{-1}s_0bx_n=x_{n+m(c:s_0)}, \quad b^{-1}s_0by_n=x_{n+m(a:s_0)}.$$

which takes care of the $w\equiv s_0bx_n, s_0by_n$ cases. Turning to the $w\equiv s_dbx_n$ case,
$$aR\cap s_dbx_nR=aR\cap s_dbx_n(R\setminus\{e\})=bR\cap s_dbx_nR=bR\cap s_dbx_n(R\setminus\{e\})$$ $$=bx_{n+m(c:s_d)}(R\setminus\{e\}),$$
$$aR\cap s_dbx_nZ=bR\cap s_dbx_nZ=bx_{n+m(c: s_d)}Z,$$
$$aR\cap s_dbx_nX=bR\cap s_dbx_nX=bx_{n+m(c: s_d)}X,$$
$$aR\cap s_dbx_nY=bR\cap s_dbx_nY=bx_{n+m(c: s_d)}Y.$$

On the other hand, $aR\cap s_dby_nR=bR\cap s_dby_nR=\emptyset$ so the intersections corresponding to $w\equiv s_dby_n$ are  $\emptyset$.  Similarly, $aR\cap s_fbx_nR=bR\cap s_fbx_nR=\emptyset$ so the intersections corresponding to $w\equiv s_fbx_n$ are  $\emptyset$. Finally,
$$aR\cap s_fby_nR=aR\cap s_fby_n(R\setminus\{e\})=bR\cap s_fby_nP=bR\cap s_fby_n(R\setminus\{e\})$$ $$=by_{n+m(a:s_f)}(R\setminus\{e\}),$$
$$aR\cap s_fby_nZ=bR\cap s_fby_nZ=by_{n+m(a: s_f)}Z,$$
$$aR\cap s_fby_nX=bR\cap s_fby_nX=by_{n+m(a: s_f)}X,$$
$$aR\cap s_fby_nY=bR\cap s_fby_nY=by_{n+m(a: s_f)}Y.$$

\underline{$x=d$:}\\
We have already observed that $dR\cap s_fR=\emptyset$ so the intersections corresponding to $w\equiv s_fbx_n$ are~$\emptyset$. Similarly $dR\cap sby_nR=\emptyset$  so the intersections corresponding to $w\equiv sby_n$ are  $\emptyset$. For $s=s_0, s_d$ we have
$$dR\cap sbx_nR=sbx_nR\setminus \{sbx_n\}=dR\cap sbx_n(R\setminus\{e\})=sbx_n(R\setminus\{e\}),$$
$$dR\cap sbx_nZ=dR\cap dsbx_nZ=bx_{n+m(c: s)}Z,$$
$$dR\cap sbx_nX=dR\cap dsbx_nX=bx_{n+m(c: s)}X,$$
$$dR\cap sbx_nY=dR\cap dsbx_nY=bx_{n+m(c: s)}Y.$$
This completes our proof of the description of $\J(R)$.\\ 

\subsection{Miscellaneous Results} The following technical results about the monoid $R$ are needed for the main text, but are unenlightening enough that they belong here. 
\begin{replemma}{lem:multiple x_n elements}
If $r\in R$ and a constructible ideal $A\in \J(R)$ contains two $rx_n$ (resp. $ry_n$) elements, then $A$ contains all $rx_n$ and $ry_n$ elements. If $r, t_1, t_2\in R$ and a constructible ideal of $R$ contains $rx_{n_1}t_1$ and $rx_{n_2}t_2$ (resp. $ry_{n_1}t_1$ and $ry_{n_2}t_2$), $n_1\neq n_2$, then it contains the constructible ideal $r\Big(\bigcup\limits_{n, w'\neq e} x_nw'R\Big)$ (resp. $r\Big(\bigcup\limits_{n, w'\neq e} y_nw'R\Big)$). 
\end{replemma}

\bp
We begin by proving the first part of the lemma in the special case $r=e$;  let $A\in \J(R)$ be such that $x_{n_1}, x_{n_2}\in A$, $n_1\neq n_2$. We consider the various cases depending on what form in~\eqref{eq:R-constructible-ideals} $A$ has. If $A=wR$ for some $w\in R$ then $x_{n_1}=wp$ for some $p\in R$. We must have $w=e$ or $p=e$. If $w=e$, then $A=R$ and clearly contains all $x_n$ and $y_n$ elements. If $p=e$, then $w=x_{n_1}$, but then~$x_{n_2}$ cannot lie in $A=wR$. If $A$ is of the second type in~\eqref{eq:R-constructible-ideals} we again have $w=e$ and the conclusion holds. $A$ cannot be of the third or fourth type in~\eqref{eq:R-constructible-ideals}. Finally, assume that~$A=w(R\setminus\{e\})$ for some $w\in R$. Then $x_{n_1}=wp$ for some $p\neq e$. This forces $p=x_{n_1}$ so that $w=e$. Then $A$ clearly contains all $x_n$ and $y_n$ elements.\\

Now assume that $Y\in \J(R)$ contains $rx_{n_1}$ and $rx_{n_2}$, $n_1\neq n_2$. Then $x_{n_1}, x_{n_2}\in r^{-1}Y$, so by what we just proved, $r^{-1}Y$ contains all $x_n$ and $y_n$ elements. Then $r(r^{-1}Y)\subseteq Y$ contains all $rx_n$ and $ry_n$ elements.\\

Let us turn to the second part of the lemma. Again we start by proving it for $r=e$ so assume that $A\in \mathcal{J}(R)$ contains $x_{n_1}t_1$ and $x_{n_2}t_2$, $n_1\neq n_2$, $t_1, t_2\in R$. If $A=wR$, then $x_{n_1}t_1=wp$ for some $p\in R$. Then either $w=e$ or $w$ begins with $x_{n_1}$. But $w$ cannot begin with $x_{n_1}$ as this contradicts that $x_{n_2}x\in A$. Hence $w=e$ and the conclusion holds. Similarly, if $A$ is of the second, third or fifth type in~\eqref{eq:R-constructible-ideals} we must again have $w=e$ and the conclusion holds. Clearly $A$ cannot be of the fourth type. This completes the proof when $r=e$.\\

In the general case, suppose that $B\in \mathcal{J}(R)$ contains $rx_{n_1}t_1$ and $rx_{n_2}t_2$. Then $x_{n_1}t_1, x_{n_2}t_2\in r^{-1}B$ so by the previous part
$\bigcup\limits_{n, w'\neq e} x_nw'R\subseteq r^{-1}B$. Then

$$r(\bigcup\limits_{n, w'\neq e} x_nw'R)\subseteq r(r^{-1}B)\subseteq B.$$

This completes the proof of the lemma.
\ep

\begin{replemma}{lem:fix x_n or x_nx}
If $h\in I_\ell(R)$ fixes an $x_n$ element and $\dom h$ contains multiple $x_n$ 
elements, then $h$ fixes all $x_n$ elements. If $x$ is a letter, $h\in I_\ell(R)$ fixes an 
$x_nx$ element and $\dom h$ contains multiple $x_nx$ elements, then $h$ 
fixes the constructible ideal $\bigcup\limits_{n, w'\neq e} x_nw'R$.
\end{replemma}

\bp
For readability we will throughout this proof write $\un_A$ for the idempotent~$p_A$ corresponding to $A\in \J(S)$. It is sufficient to prove the following claim: if $X=\cup_{n, w'\neq e} x_nw'R$ and $g\in I_\ell(R)$ satisfies $\dom g \supseteq X$, then $g$ must have the form
\begin{equation}\label{eq:gform}
w\un_A, \quad wa^{-k_1}c^{-k_2}b\un_A \text{   or   }wb^{-1}a^{l_1}c^{l_2}b\un_A
\end{equation}
where $w\in R$,  $A\in \J(R)$, $k_1, k_2\geq 0$ and $l_1, l_2\in \mathbb Z$.\\

Indeed, if $h$ satisfies the hypothesis in the first part of the lemma then $\dom h \supseteq \bigcup_nx_nR\cup \bigcup_n y_nR\supseteq X$ by Lemma~\ref{lem:multiple x_n elements}. The claim implies that either $hx_n=wx_n$ for all $n \in \mathbb Z$, or $hx_n=wbx_{n-k_2}$  for all $n \in \mathbb Z$, or $hx_n=wx_{n+l_2}$  for all $n \in \mathbb Z$. By assumption, $h$ fixes some~$x_n$; this rules out the second case, in the first case it implies that $w=e$ and in the third case it implies that $w=e$ and $l_2=0$. In either case, $h$ must fix all $x_n$ elements.\\

If $h$ satisfies the hypothesis of the second part of the lemma then $\dom h \supseteq X$ by Lemma~\ref{lem:multiple x_n elements}. The claim implies that either $h(x_nx)=wx_nx$ for all $n \in \mathbb Z$, or $h(x_nx)=wbx_{n-k_2}x$ for all $n\in \mathbb Z$, or $h(x_nx)=wx_{n+l_2}x$ for all $n \in \mathbb Z$.  By assumption $h$ fixes some $x_nx$; this rules out the second case, in the first case it implies that $w=\epsilon$ and in the third case it implies that $w=\epsilon$ and $l_2=0$. In either case, $h$ must fix $X$.\\

Now, to prove the claim it is enough to show that for a letter $x$, a word $w$, $k_1, k_2\geq 0$ and $l_1, l_2\in \mathbb Z$,  the partial bijections $x^{-1}w$, $x^{-1}wa^{-k_1}c^{-k_2}b$ and $x^{-1}wb^{-1}a^{l_1}c^{l_2}b$  are either of the form~\eqref{eq:gform}
or their domains do not contain $X$.  If $w\neq \epsilon$ and $x=\alpha(w)$ then these partial bijections 
are clearly of form form~\eqref{eq:gform}. If $x \in  \{x_n, y_n \ (n\in \mathbb Z)\}$ and  $w=\epsilon$, then
$$\dom(x^{-1}w)= \dom(x^{-1}wb^{-1}a^{l_1}c^{l_2}b)=x_nR \text{ or }y_nR, \qquad \dom(x^{-1}wa^{-k_1}c^{-k_2}b)=\emptyset.$$
If $w\neq \epsilon$, $x\neq \alpha(w)$, and either $x\in  \{x_n, y_n \ (n\in \mathbb Z)\}$ or $\alpha(w)\in  \{x_n, y_n \ (n\in \mathbb Z)\}$, then
$$\dom(x^{-1}w)=\dom(x^{-1}wa^{-k_1}c^{-k_2}b)=\dom (x^{-1}wb^{-1}a^{l_1}c^{l_2}b)=\emptyset.$$
or $\alpha(w)\in \{x_n, y_n  (n\in \mathbb{Z})\}$. Throughout the argument below we will therefore assume that $x\in \{a, b, c, d, f\}$ and that if $w\neq \epsilon$, then $\alpha(w)\in \{a, b, c, d, f\}$ and $x\neq \alpha(w)$.\\

\underline{$x^{-1}wa^{-k_1}c^{-k_2}b$:}\\
Denote $Y=\bigcup_{n, w'\neq e} y_nw'R$ and $Z=\bigcup_n x_nR\cup \bigcup_n y_nR$.  If $w=\epsilon$ we have the following possibilities for $x^{-1}wa^{-k_1}c^{-k_2}b$:
$$a^{-1}a^{-k_1}c^{-k_2}b=a^{-k_1-1}c^{-k_2}b, \quad b^{-1}a^{-k_1}c^{-k_2}b,$$
$$c^{-1}a^{-k_1}c^{-k_2}b=c^{-1}a^{-k_1}c^{-k_2}b\un_Z=a^{-k_1}c^{-k_2-1}b,$$
$$d^{-1}a^{-k_1}c^{-k_2}b=a^{-k_1}c^{-k_2}b\un_X, \quad f^{-1}a^{-k_1}c^{-k_2}b=a^{-k_1}c^{-k_2}b\un_Y.$$
These are all of desired form~\eqref{eq:gform}. We may therefore assume that $w\neq\epsilon$. Then we can write $w\equiv sv$ with $\con(s)\subseteq \{a,c,d,f\}$, $\alpha(v)\neq a,c,d,f$. If $v=\epsilon$ we have the following possibilities for $x^{-1}wa^{-k_1}c^{-k_2}b=x^{-1}sa^{-k_1}c^{-k_2}b$.\\

\begin{itemize}
	\item If $\con(s)\subseteq\{a, c\}$:
$$a^{-1}sa^{-k_1}c^{-k_2}b=a^{m(a:s)-1-k_1}c^{m(c:s)-k_2}b \un_Z,$$
$$b^{-1}sa^{-k_1}c^{-k_2}b=b^{-1}a^{m(a:s)-k_1}c^{m(c:s)-k_2}b,$$
$$c^{-1}sa^{-k_1}c^{-k_2}b=a^{m(a:s)-k_1}c^{m(c:s)-1-k_2}b \un_Z,$$
$$ d^{-1}sa^{-k_1}c^{-k_2}b=c^{m(c:s)-k_2}b\un_X,$$
$$f^{-1}a^{-k_1}c^{-k_2}b=a^{m(a:s)-k_1}b\un_Y.$$

	\item If $s$ contains a $d$ but not an $f$:
$$a^{-1}sa^{-k_1}c^{-k_2}b=c^{m(c:s)-k_2}b \un_X, \quad b^{-1}sa^{-k_1}c^{-k_2}b=b^{-1}c^{m(c:s)-k_2}b \un_X, $$ $$c^{-1}sa^{-k_1}c^{-k_2}b=c^{m(c:s)-1-k_2}b \un_X, \quad d^{-1}sa^{-k_1}c^{-k_2}b=c^{m(c:s)-k_2}b\un_X,$$
$$f^{-1}sa^{-k_1}c^{-k_2}b=\un_\emptyset.$$

	\item If $s$ contains an $f$ but not a $d$:
$$a^{-1}sa^{-k_1}c^{-k_2}b=a^{m(a:s)-k_1-1}b \un_Y, \quad b^{-1}sa^{-k_1}c^{-k_2}b=b^{-1}a^{m(a:s)-k_1}b \un_Y, $$ $$c^{-1}sa^{-k_1}c^{-k_2}b=a^{m(a:s)-k_1}b \un_Y, \quad d^{-1}sa^{-k_1}c^{-k_2}b=\un_\emptyset,$$
$$f^{-1}sa^{-k_1}c^{-k_2}b=a^{m(a:s)-k_1}b\un_Y.$$

	\item If $s$ contains a $d$ and an $f$:
$$x^{-1}sa^{-k_1}c^{-k_2}b=\un_\emptyset \text{ for }x=a, b, c, d, f.$$
\end{itemize}
These are all of desired form~\eqref{eq:gform} and we can focus on the case $v\neq \epsilon$.  If $\alpha(v)=x_n, y_n$, then $s\alpha(v)\perp p$ for all $p\in R$ so $\dom(x^{-1}wa^{-k_1}c^{-k_2}b)=\emptyset$ in this case.  Otherwise $\alpha(v)=b$  and we can write $w\equiv sbu$. If $u=\epsilon$ or $\alpha(u)=a, b, c, d, f$ then $\ran(wa^{-k_1}c^{-k_2}b)=\ran(sbua^{-k_1}c^{-k_2}b)\subseteq sbubR$ which is disjoint from $xR$. Hence $\dom(x^{-1}wa^{-k_1}c^{-k_2}b)=\emptyset$.  Otherwise $\alpha(u)=x_n, y_n$ and we can write $w\equiv sbx_n\omega$ or $w\equiv sby_n\omega$. By symmetry we may focus on the former case.  Then we have the following possibilites for $x^{-1}w$ (*):\\

\begin{itemize}
	\item If $\con(s)\subseteq\{a, c\}$:
$$a^{-1}sbx_n\omega=sbx_n\omega, \quad b^{-1}sbx_n\omega=x_{n+m(c: s)}\omega, \quad c^{-1}sbx_n\omega=sbx_{n-1}\omega,$$
$$d^{-1}sbx_n\omega=sbx_n \un_{R\setminus \{e\}}\omega,\quad f^{-1}sbx_n\omega=\un_\emptyset.$$

	\item If $s$ contains a $d$ but not an $f$:
$$x^{-1}sbx_n\omega=sbx_n\un_{R\setminus \{e\}}\omega\text{ for }x=a, d, \quad b^{-1}sbx_n\omega=x_{n+m(c:s)}\un_{R\setminus \{e\}}\omega,$$
$$c^{-1}sbx_n\omega=sbx_{n-1}\un_{R\setminus \{e\}}\omega,\quad f^{-1}sbx_n\omega=\un_\emptyset.$$

	\item If $s$ contains an $f$:
$$x^{-1}sbx_n\omega=\un_\emptyset \text{ for }x=a, b, c, d, f.$$
\end{itemize}

In general we have $g_1\un_Ag_2=g_1g_2\un_{g_2^{-1}(A)}$ for any $g_1, g_2\in I_\ell(R),\ A\in \J(R)$, therefore one sees that when right multiplying any of the partial bijections above by $a^{-k_1}c^{-k_2}b$  we get something of desired form~\eqref{eq:gform}.\\

\underline{$x^{-1}w$:}\\
If $w=\epsilon$, then the possible domains of $x^{-1}w=x^{-1}$ are $aR, bR, cR, dR, fR$, none of which contain~$X$. We may therefore assume that $w\neq\epsilon$. Write $w\equiv sv$ with $\con(s)\subseteq \{a,c,d,f\}$, $\alpha(v)\neq a, c, d, f$. If $v=\epsilon$, then $s\neq \epsilon$ and $\dom(x^{-1}v)$ is one of
$$bZ, \quad bX, \quad bY, \quad \emptyset$$
None of these contain $X$, so we may assume that $v\neq \epsilon$. If $\alpha(v)=x_n, y_n$ then $s\alpha(v)\perp (v-\alpha(v))p$ for any $p\in R$ so that $\dom(x^{-1}w)=\emptyset$.\\

If $\alpha(v)=b$, write $w\equiv sbu$. If $u=\epsilon$ then we are in the previous \underline{$x^{-1}wa^{-k_1}c^{-k_2}b$} case with $k_1=k_2=0$. Assume therefore that $u\neq \epsilon$.\\

If $\alpha(u)=a,b,c,d,f$, then $sb\perp up$ for any $p\in R$ so  $\dom(x^{-1}w)=\emptyset$. If $\alpha(u)=x_n, y_n$ we can write $x^{-1}w\equiv x^{-1}sbx_n\omega$ or $x^{-1}w\equiv x^{-1}sby_n\omega$. This case was already dealt with in (*).\\

\underline{$x^{-1}wb^{-1}a^{l_1}c^{l_2}b$:}\\
Note that at least one of $l_1$ and $l_2$ is nonzero, otherwise $x^{-1}wb^{-1}a^{l_1}c^{l_2}b=x^{-1}w$ and we are in the first case. Therefore we have $\ran(b^{-1}a^{l_1}c^{l_2}b)=\bigcup_n x_nR \cup \bigcup_n y_nR$ and if $w=\epsilon$ we obtain $\dom(x^{-1}b^{-1}a^{l_1}c^{l_2}b)=\emptyset$. In the remainder we may therefore assume that $w\neq \epsilon$.\\

Write $w\equiv sv$ where $\con(s)\subseteq \{a, c, d, f\}$, $\alpha(v)\neq a, c, d, f$.  If $v=\epsilon$ or $\alpha(v)=x_n, y_n$ then $xR\cap wR=\emptyset$ so $\dom(x^{-1}wb^{-1}a^{l_1}c^{l_2}b)=\emptyset$. Otherwise we can write $w\equiv sbu$.\\

If $u=\epsilon$ then $x^{-1}wb^{-1}a^{l_1}c^{l_2}b=x^{-1}sbb^{-1}a^{l_1}c^{l_2}b=x^{-1}sa^{l_1}c^{l_2}b \un_{g^{-1}(bR)}$ where $g=a^{l_1}c^{l_2}b$ so we can apply the \underline{$x^{-1}wa^{-k_1}c^{-k_2}b$} case. Otherwise $u\neq \epsilon$. If $\alpha(u)=a, b,c,d,f$ then $sb\perp up$ for all $p\in R$ so $\dom(x^{-1}wb^{-1}a^{l_1}c^{l_2}b)=\emptyset$. Otherwise $ \alpha(u)=x_n, y_n$ and we can write $w\equiv sbx_n\omega$ or $w\equiv sby_n\omega$. Right multiplying the partial bijections in (*) by $b^{-1}a^{l_1}c^{l_2}b$ and applying the rule $g_1\un_Ag_2=g_1g_2\un_{g_2^{-1}(A)}$  we see that $x^{-1}wb^{-1}a^{l_1}c^{l_2}b$ is of desired form also in this case.
\ep

\begin{lemma}\label{lem:infinitewordtype2help}
Let $v$ be a word and $w$ be an infinite word such that $w_i\in \{a, c, d\}$ for all $i\in\mathbb{N}$. Then, if a constructible ideal $A\in \mathcal{J}(R)$ does not contain $vw_1w_2\cdots w_n$ for any $n\geq 1$, there exists a $k\geq 1$ such that
$$A\cap vw_1w_2\cdots w_k R\subseteq vw_1w_2\cdots w_{k-1}\big(\bigcup\limits_n bx_nR \cup \bigcup\limits_n by_nR\big).$$
\end{lemma}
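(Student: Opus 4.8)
The plan is to reduce to principal ideals and then to a single, length-controlled generator. Write $u_n:=vw_1\cdots w_n$ and $bZ:=\bigcup_n bx_nR\cup\bigcup_n by_nR$, so that the desired conclusion reads $A\cap u_kR\subseteq u_{k-1}(bZ)$. Using Lemma~\ref{lem:uniformly-bounded} I would write $A=\bigcup_{i\in I}r_iR$ with the lengths $m_b(r_i),m_x(r_i),m_y(r_i),m_d(r_i)$ uniformly bounded; since $A$ contains no $u_n$, each $r_iR$ omits every $u_n$. The whole lemma then follows from the claim that \emph{for any $r\in R$ with $u_n\notin rR$ for all $n$ there is a bound $k_0$, depending only on $m_b(r),m_x(r),m_y(r),m_d(r)$, such that $rR\cap u_kR\subseteq u_{k-1}(bZ)$ for every $k\ge k_0$.} Because the $k_0(r_i)$ are then uniformly bounded, any $k>\sup_i k_0(r_i)$ gives $A\cap u_kR=\bigcup_i(r_iR\cap u_kR)\subseteq u_{k-1}(bZ)$, as required.

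For the core claim fix such an $r$, take $p$ with $u_kp=rq$, and recall $w_j\in\{a,c,d\}$, so $u_k=vw_1\cdots w_k$ has a long $\{a,c,d\}$-suffix. The key point is that $u_k$ is itself some $u_n$, hence $u_k\notin rR$: the left divisor $r$ of $u_kp$ is \emph{not} a divisor of $u_k$, so it must reach into $p$. Now every defining relation of $R$ lets a leading letter from $\{a,c,d,f\}$ interact with its neighbour only across a string $bx_n$ or $by_n$, which is the content of the class descriptions~\eqref{eq:equiv-bx_n}--\eqref{eq:equiv-by_nw}. Using the $\perp$-calculus I would show, exactly as in the proof of the Claim in Step~1 of Proposition~\ref{prop:boundary-characters-of-R}, that once $k_0$ exceeds the length data of $r$ the suffix $w_1\cdots w_k$ dominates $r$ in one of the quantities of Definition~\ref{def: length-of-R-elements} (the number of $d$'s when $w$ has infinitely many $d$'s, otherwise the number of $a$'s and $c$'s). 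Consequently, if $p$ does \emph{not} begin with $bx_n$ or $by_n$, then $w_1\cdots w_k\perp p$, so $r$ would already divide $u_k$, a contradiction. Hence for $k\ge k_0$ every $p$ with $u_kp\in rR$ begins with $bx_n$ or $by_n$; and when $w_k\in\{a,c\}$ the identities $a\,bx_n=bx_n$, $c\,bx_n=bx_{n+1}$, $a\,by_n=by_{n+1}$, $c\,by_n=by_n$ give $w_kp\in bZ$, whence $u_kp=u_{k-1}(w_kp)\in u_{k-1}(bZ)$.

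The main obstacle is the case $w_k=d$, and this is where the remainder $bZ$ genuinely enters. Here $d$ interacts with $bx_n$ only through $dbx_nx=bx_nx$, which requires a nonempty tail; consequently $dbx_n$ (empty tail) and every $dby_np'$ are \emph{stuck}, in that by~\eqref{eq:equiv-bx_n}--\eqref{eq:equiv-by_nw} all of their representatives begin with $d$, so they do not lie in $bZ$, and the only $p$ with $dp\in bZ$ are those in $\bigcup_n bx_n(R\setminus\{e\})$. I would therefore have to discard the dangerous remainder, namely $p=bx_n$ and $p$ beginning with $by_n$. For these I would again invoke the domination bound: an equation $u_kp=rq$ with such a $p$ forces the trailing $d$-run of $u_k$ to cancel against $r$, and since $d^{\,l}bx_n$ (empty tail) satisfies $m_d=l$ with every representative beginning with $d$, this can occur only when $k$ lies within a bounded distance of the length data of $r$. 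Enlarging $k_0$ past that range removes these elements from $u_kR$ altogether, so that for all large $k$ every $p$ with $u_kp\in rR$ satisfies $w_kp\in bZ$. This establishes the core claim and hence the lemma; the delicate part throughout is the bookkeeping of the non-reduced "stuck" words $d^{\,l}bx_n$ and $dby_np'$, which lie outside $bZ$ and must be excluded by the $m_d$-count rather than absorbed.
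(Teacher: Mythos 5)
Your outer reduction (decompose $A=\bigcup_i r_iR$ via Lemma~\ref{lem:uniformly-bounded}, prove a per-generator claim with a uniform bound) is workable in outline, but the core claim is broken in two places. Write $bZ:=\bigcup_n bx_nR\cup\bigcup_n by_nR$ and $u_k:=vw_1\cdots w_k$. The inference ``if $p$ does not begin with $bx_n$ or $by_n$, then $w_1\cdots w_k\perp p$, so $r$ would already divide $u_k$'' is invalid: $\perp$ only prevents $\tau$-words from crossing the boundary, so it forces every word representing $u_kp=rq$ to factor as $u_k'p'$ with $u_k'\sim u_k$, $p'\sim p$; but a representative of $r$ may sit as a prefix of $u_k'p'$ that extends \emph{into} $p'$, and then $r$ need not divide $u_k$ --- $\perp$ blocks relations from crossing, not prefixes. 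Concretely, take $v=\epsilon$, $w=acacac\cdots$ and $r=(ac)^Md$. Then $u_n\notin rR$ for all $n$ (no representative of any $u_n$ contains a $d$), yet at $k=2M$ the element $rx_0=(ac)^Mdx_0=u_{2M}(dx_0)$ lies in $rR\cap u_{2M}R$; here $p=dx_0$ has singleton $\tau$-class, so no representative begins with $bx_n$ or $by_n$, indeed $w_1\cdots w_{2M}\perp p$, and still $r\nmid u_{2M}$ and $w_{2M}p=cdx_0\notin bZ$, so $rx_0\notin u_{2M-1}(bZ)$ and the inclusion fails at $k=2M$. The same example kills your uniformity claim: the failure index $2M$ is unbounded while $m_b(r)=m_x(r)=m_y(r)=0$ and $m_d(r)=1$, so no $k_0$ depending only on those four quantities exists; you would at least have to include $m_a(r)$ and $m_c(r)$ (which Lemma~\ref{lem:uniformly-bounded} does bound, so that part is repairable). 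Finally, the appeal to ``exactly as in the Claim in Step 1 of Proposition~\ref{prop:boundary-characters-of-R}'' is not available: that Claim treats type-1 words, and its case analysis (infinitely many $b\xi$ strings, or infinitely many $d$'s \emph{and} $f$'s) never covers a word with all letters in $\{a,c,d\}$, which is exactly the type-2 regime this lemma exists for. You also never address $v\neq\epsilon$, where relations can cross the $v\,|\,w_1$ boundary (e.g.\ $v$ ending in $dbx_n$ with $dbx_nw_1=bx_nw_1$), so even the factorization $u_k'\equiv v'w_1\cdots w_k$ needs Lemma~\ref{lem:vrequivalent}.

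The paper's proof avoids all of this by never decomposing $A$ into infinitely many principal ideals: it first pulls back along $v$ (if $v^{-1}A$ contains no $w_1\cdots w_n$, the case $v=\epsilon$ applies to it, and one pushes forward using $v(v^{-1}A)=A\cap vR$), then invokes the explicit classification~\eqref{eq:R-constructible-ideals}, so $A$ is governed by a \emph{single} element $p$. One takes $k$ to be the first index where a letter of $p$ differs from $w_k$ (or $k=m+1$ when $p=w_1\cdots w_m$ and $A$ is one of the non-principal types, in which case $A\cap u_kR=\emptyset$), and the inclusion follows from $p_kR\cap w_kR\subseteq bZ$ for distinct generators. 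Note the correct $k$ is \emph{chosen} at the discrepancy rather than claimed for all large $k$ by a counting bound: in the example above the paper's choice is $k=2M+1$, where $dR\cap aR\subseteq bZ$ does the work. If you want to salvage your route, you must replace the $\perp$-dichotomy with a genuine first-discrepancy analysis per $r_i$ and let $k_0$ depend on all seven quantities of Definition~\ref{def: length-of-R-elements}; at that point you have essentially reconstructed the paper's argument with extra bookkeeping.
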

\bp
Let us first assume that $v=\epsilon$. Suppose $A=pR$ or $A=p(R\setminus \{e\})$, $p\in R$, and that either ideal satisfies the hypothesis in the lemma. Then $p\neq e$ and we can write $p=p_1p_2\cdots p_n$ where the $p_i$'s are letters. There must be an index $i$ such that $p_i\neq w_i$. Let $k$ be the smallest such index. Then

\begin{equation}\label{eq:Aintersectioninclusion}
A\cap w_1w_2\cdots w_k R\subseteq p_1\cdots p_k R\cap w_1w_2\cdots w_k R=w_1\cdots w_{k-1}(p_k R\cap w_k R)
\end{equation}
$$\subseteq w_1\cdots w_{k-1}\big( \bigcup\limits_n bx_nR \cup \bigcup\limits_n by_nR \big).$$
Here we have used the fact that for any two distinct generators $x$ and $y$ we have $xR\cap yR \subseteq \bigcup\limits_n bx_nR \cup \bigcup\limits_n by_nR$.\\

Now suppose that ideals $p\big(\bigcup\limits_n x_nR \cup \bigcup\limits_n y_nR \big)$, $p\big(\bigcup\limits_{n, w'\neq e} x_nw'R\big)$ or $p\big(\bigcup\limits_{n, w'\neq e} y_nw'R\big)$ satisfy the hypothesis for $A$ in the lemma. If $p=w_1\cdots w_m$ for some $m\geq 0$, then

$$p\big(\bigcup\limits_n x_nR \cup \bigcup\limits_n y_nR \big)\cap w_1\cdots w_{m+1}R=w_1\cdots w_m\big( \big(\bigcup\limits_n x_nR \cup \bigcup\limits_n y_nR\big)\cap w_{m+1}R \big)=\emptyset$$
and similarly

$$p\big(\bigcup\limits_{n, w'\neq e} x_nw'R\big)\cap w_1\cdots w_{m+1}R=\emptyset, \qquad p\big(\bigcup\limits_{n, w'\neq e} y_nw'R\big)\cap w_1\cdots w_{m+1}R=\emptyset.$$
Setting $k=m+1$ yields the desired inclusion. If $p\neq w_1\cdots w_m$ for any $m\geq 0$, then writing $p=p_1p_2\cdots p_n$ where the $p_i$'s are letters, there must be a smallest $k$ such that $p_k\neq w_{k}$. Then~\eqref{eq:Aintersectioninclusion} holds. We have shown that the lemma holds for~$v=\epsilon$.\\

Let us turn to the general case. Suppose $A\in \mathcal{J}(R)$ does not contain $vw_1w_2\cdots w_n$ for any $n\geq 1$. Then the ideal $v^{-1}A$ does not contain $w_1w_2\cdots w_n$ for any $n\geq 1$. By the previous case we have

$$v^{-1}A\cap w_1w_2\cdots w_k R\subseteq w_1w_2\cdots w_{k-1}\big(\bigcup\limits_n bx_nR \cup \bigcup\limits_n by_nR\big)$$
for some $k\geq 1$. Then
$$v(v^{-1}A)\cap vw_1w_2\cdots w_k R=v(v^{-1}A\cap w_1w_2\cdots w_k R)$$
$$\subseteq vw_1w_2\cdots w_{k-1}\big(\bigcup\limits_n bx_nR \cup \bigcup\limits_n by_nR\big).$$
Now, the LHS equals $A\cap vw_1w_2\cdots w_k R$ because $v(v^{-1}A)=A\cap vS$. Then
$$A\cap vw_1w_2\cdots w_k R\subseteq vw_1w_2\cdots w_{k-1} \big(\bigcup\limits_n bx_nR \cup \bigcup\limits_n by_nR \big).$$
\ep

\begin{lemma}\label{lem:vrequivalent}
Assume $v$ and $r$ are words such that $\con(r)\subseteq \{a, c, d, f\}$. Then any word equivalent to $vr$ ends in $r$, i.e.

$$[vr]\subseteq \{v'r \mid v'\text{ is a word}\}.$$
\end{lemma}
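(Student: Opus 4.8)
The plan is to induct on the length of a $\tau$-sequence realizing the equivalence $vr\sim u$, so that it suffices to prove the one-step statement: if $u$ ends in $r$, say $u\equiv v'r$, and $u$ is elementary equivalent to $u'$, then $u'$ also ends in $r$. Writing $u\equiv cpd$ and $u'\equiv cqd$ with $(p,q)\in\tau$, the transition replaces the contiguous block occupied by the $\tau$-word $p$ by $q$, leaving the left and right contexts $c,d$ untouched. The base case is trivial, and the inductive step reduces everything to understanding how the block $p$ can meet the suffix $r$.

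The key structural observation I would isolate first concerns the right ends of $\tau$-words. Every $\tau$-word contains the letter $b$, and inspecting the list one sees the sharper fact that \emph{every suffix of a $\tau$-word of length $\ge 2$ contains a letter from $\{x_n,y_n\mid n\in\Z\}$}: in the words $abx_n,\,bx_n,\,aby_n,\,by_{n+1},\,cbx_n,\,cby_n$ the final letter itself lies in this set, while in $dbx_nx,\,bx_nx,\,fby_nx,\,by_nx$ the final letter is an arbitrary generator $x$ but the penultimate letter lies in $\{x_n,y_n\}$. Since $\con(r)\subseteq\{a,c,d,f\}$ contains no such letter, the overlap between the block occupied by $p$ and the suffix $r$ — which is necessarily a suffix of $p$ and simultaneously a prefix of $r$ — must have length at most $1$. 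In particular $p$ cannot lie wholly inside $r$, since $p$ contains a $b$.

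This reduces the proof to two cases. If the overlap is empty, then $p$ sits strictly to the left of $r$, the transition alters only letters lying left of $r$, and $u'$ visibly still ends in $r$. If the overlap has length exactly $1$, then the last letter of $p$ equals the first letter $x$ of $r$ and hence lies in $\{a,c,d,f\}$; by the observation above this forces $p$ to be one of $dbx_nx,\,bx_nx,\,fby_nx,\,by_nx$, since for the remaining $\tau$-words the final letter lies in $\{x_n,y_n\}$. In each of these four $\tau$-pairs the two words share the same final letter $x$ (the relation only inserts or deletes a $d$, resp.\ $f$, to the left of the common suffix $x_nx$, resp.\ $y_nx$). Writing $r\equiv x r'$ so that the right context is $d\equiv r'$, and $q\equiv q'x$, we get $u'\equiv cq'xr'$, which again ends in $xr'\equiv r$; thus $u'\equiv(cq')r$ is of the required form.

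I expect the only genuine obstacle to be this straddling, overlap-length-one case, where a $d$- or $f$-type defining relation spans the boundary between the prefix and $r$. The reason it causes no trouble is precisely that those relations act only on the letters preceding the shared suffix, so the first letter of $r$ is carried along unchanged. Accordingly, the step I would prove most carefully is the length-$\le 1$ overlap bound, since it is exactly what rules out any relation reaching two or more letters into $r$ and is the engine of the whole argument.
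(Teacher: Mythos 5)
Your proof is correct and takes essentially the same route as the paper's: reduce to a single elementary $\tau$-step, observe that $p$ cannot lie wholly inside $r$ (every $\tau$-word contains $b$), and show that the only $\tau$-words that can straddle into $r$ are $dbx_nx$, $bx_nx$, $fby_nx$, $by_nx$ with a one-letter overlap $\alpha(r)$, whose $\tau$-partners end in the same letter. Your explicit observation that every suffix of length $\geq 2$ of a $\tau$-word contains a letter from $\{x_n, y_n \mid n \in \Z\}$ is just a slightly more systematic justification of the overlap bound that the paper's proof uses implicitly.
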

\bp
It is enough to show that any word which is elementary equivalent to $vr$ also ends in $r$, so suppose $(p, q)\in\tau$ and $vr\equiv z_1pz_2$. If $p$ begins and ends in $v$ then clearly $z_1qz_2$ ends in $r$. If $p$ begins in $v$ and ends in $r$ then we must have $p\equiv dbx_n\alpha(r)$, $p\equiv bx_n\alpha(r)$, $p \equiv fby_n\alpha(r)$ or $p \equiv by_n\alpha(r)$ for some $n\in \mathbb{Z}$, and $z_2\equiv r-\alpha(r)$. In any case $q$ ends in $\alpha(r)$ so $z_1qz_2$ ends in $r$. Finally, $r$ does not contain any $\tau$-words so $p$ cannot begin and end in $r$.
\ep

\begin{lemma}\label{lem:dbx_n-separation}
Suppose $v$ is a word which is \textit{not} of the form $rbx_nv'$ for an $n\in \mathbb{Z}$ and words $r$ and $v'$ satisfying $\con(r)\subseteq \{a, c, d\}$ and $v'\neq \epsilon$. Then, for any word $u$,
$$ud\perp v$$
Symmetrically, if $v$ is \textit{not} of the form $rby_nv'$ for an $n\in \mathbb{Z}$ and words $r$ and $v'$ satisfying $\con(r)\subseteq \{a, c, f\}$ and $v'\neq \epsilon$, then for any word $u$,
$$uf\perp v.$$
\end{lemma}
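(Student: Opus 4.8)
The plan is to prove the first assertion $ud\perp v$ by contradiction, extracting from a hypothetical offending $\tau$-word a forbidden decomposition of $v$ via the normal form \eqref{eq:equiv-bx_nw}, and then to deduce the symmetric assertion $uf\perp v$ from the obvious automorphism of $R$ interchanging the two families of generators.

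First I would pin down the shape of words equivalent to $ud$. Since $\con(d)\subseteq\{a,c,d,f\}$, Lemma~\ref{lem:vrequivalent} applies with the word $d$ in the role of $r$ and shows that every $u'\sim ud$ ends in the letter $d$; write $u'\equiv u''d$. To establish $ud\perp v$ I must verify $u'\perp_0 v'$ for all $u'\sim ud$ and $v'\sim v$, i.e. that no $\tau$-word in $u'v'$ begins in $u'$ and ends in $v'$. Suppose such a $\tau$-word $p$ existed. Then $p$ must straddle the boundary, so it contains the final letter $d$ of $u'$, and this $d$ is not the last letter of $p$ (as $p$ reaches into $v'$). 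The combinatorial heart of the argument is the enumeration of all $\tau$-words containing $d$: these are exactly $dbx_nz$ for a generator $z$, together with $bx_nd$, $by_nd$ and $fby_nd$; and among them only $dbx_nz$ has its $d$ in non-final position. Hence $p\equiv dbx_nz$, its leading $d$ is forced to coincide with the terminal $d$ of $u'$, and the remaining three letters $bx_nz$ constitute the initial segment of $v'$; that is, $v'\equiv bx_nzv''$ for some generator $z$ and some (possibly empty) word $v''$.

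Now I would derive the contradiction. Put $w\equiv zv''$, which is nonempty since $z$ is a letter, so that $v'\equiv bx_nw$ with $w\neq\epsilon$ and $[v']=[bx_nw]$. Because $v\sim v'$, the description \eqref{eq:equiv-bx_nw} forces $v\equiv sbx_{n-m(c:s)}w'$ for some word $s$ with $\con(s)\subseteq\{a,c,d\}$ and some $w'\sim w$, where $w'\neq\epsilon$ as no nonempty word is equivalent to $\epsilon$. But this writes $v$ literally in the form $rbx_mv'$ with $\con(r)\subseteq\{a,c,d\}$ and $v'\neq\epsilon$, contradicting the hypothesis on $v$; thus $ud\perp v$. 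For the symmetric statement, the map fixing $b$ and interchanging $a\leftrightarrow c$, $d\leftrightarrow f$ and $x_n\leftrightarrow y_n$ is an automorphism of $R$: it sends the relation set $\tau$ onto itself, hence preserves $\perp$, while carrying $d$ to $f$, $bx_n$ to $by_n$ and $\{a,c,d\}$ to $\{a,c,f\}$. Applying it converts the first statement verbatim into the second, so $uf\perp v$ follows at once.

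The step I expect to be the crux is precisely the enumeration of $\tau$-words containing $d$ and the positional argument forcing $p\equiv dbx_nz$ to begin exactly at the boundary $d$; once $v'\equiv bx_nzv''$ is in hand, everything reduces to a direct appeal to \eqref{eq:equiv-bx_nw}. I should take a little care with the degenerate subcase $z\equiv d$, where $p\equiv dbx_nd$ carries two $d$'s: its terminal $d$ then lies strictly inside $v'$ and so cannot be the final letter of $u'$, leaving the leading $d$ as the only candidate for the boundary letter, so the conclusion is unaffected.
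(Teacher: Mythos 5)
Your proof is correct and takes essentially the same route as the paper's: both rest on Lemma~\ref{lem:vrequivalent} (every word equivalent to $ud$ ends in $d$), the description~\eqref{eq:equiv-bx_nw}, and the enumeration of $\tau$-words containing the letter $d$. The only difference is organizational: the paper first reduces to $ud\perp_0 v$ (using that the non-forbidden form is preserved under equivalence) and rules out a $\tau$-word beginning at the $d$, noting that the remaining candidates $bx_nd$, $dbx_nd$, $by_nd$, $fby_nd$ all end in $d$, whereas you enumerate the $\tau$-words with a non-final $d$ (forcing $p\equiv dbx_nz$) and transport the resulting forbidden decomposition of $v'$ back to $v$ via~\eqref{eq:equiv-bx_nw} --- the same case analysis read in the contrapositive.
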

\bp
By Lemma~\ref{lem:vrequivalent}, any word equivalent to $ud$ will again end in $d$. By Equation~\eqref{eq:equiv-bx_nw} any word equivalent to something which is not of the form $rbx_nv'$ for some $n\in \mathbb{Z}$ and words $r$ and $v'$ such that $\con(r)\subseteq \{a, c, d\}$ and $v'\neq \epsilon$, is again not of that form. Therefore, it is enough to prove that
\begin{equation}\label{eq: vdperp}
ud\perp_0 v.
\end{equation}
If $p$ is a $\tau$-word in $udv$ which begins in $ud$ and contains the $d$, then because of the assumption on~$v$, $p$ cannot begin in the $d$. We must have $p\equiv bx_nd$, $ dbx_nd$, $p\equiv by_nd$ or $p\equiv fby_nd$ for some~$n\in \mathbb{Z}$. In all four cases, $p$ ends in $ud$ so~\eqref{eq: vdperp} holds.
\ep

\begin{lemma}\label{lem:ubx_nw}
Fix $n\in \Z$  and let $w\neq \epsilon$ be a nonempty word.
If $u=\epsilon$ or $\beta(u)\notin \{a, c, d\}$, then
\begin{equation}\label{eq:ubx_nw}
[ubx_nw]=\{u'sbx_{n-m(c:s)} w' \mid \con(s)\subseteq \{a, c, d\},\ u'bx_n\sim ubx_n,\ w'\sim w\}
\end{equation}
Symmetrically, if $u=\epsilon$ or $\beta(u)\notin \{a, c, f\}$, then
\begin{equation}\label{eq:uby_nw}
[uby_nw]=\{u'sby_{n-m(a:s)} w' \mid \con(s)\subseteq \{a, c, f\},\ u'by_n\sim uby_n,\ w'\sim w\}
\end{equation}
\end{lemma}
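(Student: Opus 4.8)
The plan is to prove both inclusions, the reverse one being where the work lies. Throughout I write $\beta(\cdot)$ for the last letter of a word and treat the empty-word case $u=\epsilon$ as the degenerate instance of $\beta(u)\notin\{a,c,d\}$.

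For the inclusion ``$\supseteq$'' I would argue directly. Given a word $u'sbx_{n-m(c:s)}w'$ from the right-hand side of~\eqref{eq:ubx_nw}, note first that $w'\neq\epsilon$: indeed $w'\sim w\neq\epsilon$, and since $m_b,m_x,m_y$ are $\sim$-invariant (and no relation of $\tau$ involves only letters of $\{a,c,d,f\}$) a nonempty word is never equivalent to $\epsilon$. Hence Equation~\eqref{eq:equiv-bx_nw}, applied with $w'$ in the role of $w$, gives $sbx_{n-m(c:s)}w'\sim bx_nw'$. Prepending $u'$ and using that $\sim$ is a congruence, together with $w'\sim w$ and $u'bx_n\sim ubx_n$, yields $u'sbx_{n-m(c:s)}w'\sim u'bx_nw'\sim u'bx_nw\sim ubx_nw$.

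For ``$\subseteq$'' let $\Phi$ be the right-hand side of~\eqref{eq:ubx_nw}. Since $ubx_nw\in\Phi$ (take $u'=u$, $s=\epsilon$, $w'=w$), it suffices, by induction on the length of a $\tau$-sequence, to show that $\Phi$ is closed under elementary equivalence, exactly as in the proof of~\eqref{eq:equiv-bx_nw}. So I take $u'sbx_{m'}w'\in\Phi$ with $m'=n-m(c:s)$ and a relation $(p,q)\in\tau$ with $u'sbx_{m'}w'\equiv z_1pz_2$, and show $z_1qz_2\in\Phi$. Moves lying entirely in $u'$, entirely in $w'$, or within the block $sbx_{m'}$ together with at most the first letter of $w'$ are handled verbatim as in~\eqref{eq:equiv-bx_nw}: they replace $u'$, replace $w'$, or modify the tail of $s$ (adding or deleting a trailing $a$, $c$ or $d$, with $m'$ shifting accordingly), and the outcome is again of the form $u''s''bx_{n-m(c:s'')}w''$ with $u''bx_n\sim ubx_n$.

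The genuinely new phenomenon, and the main obstacle, is a $\tau$-word $p$ straddling the boundary between $u'$ and the block. Inspecting the adjacent letter pairs occurring in $\tau$, such a $p$ forces $\beta(u')\in\{a,c,d\}$ (when $s=\epsilon$, via $abx_n$, $cbx_n$, $dbx_n\alpha(w')$) or forces $\beta(u')$ to be some $x_k$ or $y_k$ with $u'$ ending in $bx_k$/$dbx_k$ (resp.\ $by_k$/$fby_k$), via the generic relations $dbx_k\,x=bx_k\,x$ and $fby_k\,x=by_k\,x$. The crux is to exclude $\beta(u')\in\{c,d\}$, so I would prove the sub-claim: \emph{if $u'bx_n\sim ubx_n$ with $\beta(u)\notin\{a,c,d\}$, then $\beta(u')\notin\{c,d\}$.} After reducing $u$ to reduced form (its last-letter type, hence whether it lies in $\{a,c,d\}$, is preserved because the two sides of each relation of $\tau$ end in letters of the same type, counting all $x_k$ as one type and all $y_k$ as another), the word $ubx_n$ is reduced, so $[ubx_n]$ has a reduced representative. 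If $\beta(u')=d$, then in $u'bx_n=u''dbx_n$ the $d$ cannot leave $bx_n$ for want of a trailing letter, so every equivalent word keeps a forbidden substring $abx_m$, $cbx_m$ or $dbx_m$ at its tail, whence the class has no reduced representative, a contradiction. If $\beta(u')=c$, then $u'bx_n=u''cbx_n=u''bx_{n+1}$, and reducing it can only strip trailing $a$'s (fixing the index) and trailing $c$'s (\emph{raising} it), and cannot terminate at a trailing $d$ (that would again preclude a reduced representative); so its reduced representative is $vbx_m$ with $m\ge n+1$ and $\beta(v)\notin\{a,c,d\}$, and Proposition~\ref{prop:distinctfinitereducedwords} forces $v\equiv u$, $m=n$, which is impossible. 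With the sub-claim in hand the surviving boundary moves are only: $\beta(u')=a$ with $s=\epsilon$, where $abx_n=bx_n$ deletes the trailing $a$ and $u''bx_n\sim u''abx_n=u'bx_n\sim ubx_n$; and the $\pm d$ (resp.\ $\pm f$) moves at a $bx_k$ (resp.\ $by_k$) junction inside $u'$, where the new prefix $\tilde u$ satisfies $\tilde ubx_n\sim ubx_n$ by the identity $dbx_kw'=bx_kw'$ (resp.\ $fby_kw'=by_kw'$) applied with $w'=bx_n\neq e$. In each case $z_1qz_2\in\Phi$, completing the induction.

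Finally, the symmetric identity~\eqref{eq:uby_nw} follows by applying the automorphism of $R$ interchanging $a\leftrightarrow c$, $d\leftrightarrow f$, $x_n\leftrightarrow y_n$ and fixing $b$; one checks it permutes the defining relations, and it transforms~\eqref{eq:ubx_nw} into~\eqref{eq:uby_nw}, turning $m(c:s)$ into $m(a:s)$ and $\con(s)\subseteq\{a,c,d\}$ into $\con(s)\subseteq\{a,c,f\}$.
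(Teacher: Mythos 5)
Your proof is correct, and it follows the same skeleton as the paper's: both inclusions, with the hard one reduced to showing the right-hand side of~\eqref{eq:ubx_nw} is closed under elementary $\tau$-equivalence, a case analysis of where a $\tau$-word $p$ can sit relative to the chosen decomposition $u'sbx_{n-m(c:s)}w'$, and the relations $dbx_kw'=bx_kw'$ (for $w'\neq e$) to absorb straddles at $bx_k$/$dbx_k$ (resp.\ $by_k$/$fby_k$) junctions inside $u'$. Where you genuinely diverge is at the crux. The paper disposes of the remaining straddles by asserting, without proof, that ``any word in $[ubx_n]$ must end in $bx_n$ and does not end in $dbx_n$,'' and concludes $\beta(u')\notin\{a,c,d\}$ --- a conclusion that is too strong as literally stated, since e.g.\ $u=\epsilon$, $u'=a$ satisfies $u'bx_n\sim ubx_n$; the paper's argument survives only if one first normalizes the decomposition by pushing trailing $a$'s of $u'$ into $s$ (trailing $c$'s cannot be pushed, because the index bookkeeping fails). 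You instead prove exactly the two exclusions actually needed: $\beta(u')\neq d$ via the stuck-$d$ observation (a $d$ immediately before a terminal $bx_m$ can never be removed, so every word in the class retains a forbidden suffix $abx_m$, $cbx_m$ or $dbx_m$ and the class has no reduced representative, contradicting that $u_{\mathrm{red}}bx_n$ is one), and $\beta(u')\neq c$ via the index argument (tail strips fix or raise the index, so a reduced representative would be $vbx_m$ with $m\geq n+1$, contradicting uniqueness of reduced forms, Proposition~\ref{prop:distinctfinitereducedwords}); and you then handle the surviving $\beta(u')=a$ straddle explicitly by the harmless computation $u''bx_n\sim u''abx_n\sim ubx_n$, which the paper's overstated exclusion silently skips. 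This buys a proof that is self-contained where the paper's is elliptical, at the cost of the extra sub-claim. Two compressions in your write-up are worth flagging but are not gaps: in the $c$-case one should also reduce the prefix $v$ internally before invoking uniqueness (interior reductions never change the final-letter type nor the terminal $bx_m$ block, so this is routine), and with $u$ replaced by its reduced form your conclusion ``$v\equiv u$, $m=n$'' is then stated correctly. Your explicit automorphism $a\leftrightarrow c$, $d\leftrightarrow f$, $x_n\leftrightarrow y_n$ for~\eqref{eq:uby_nw} is the same symmetry the paper invokes implicitly, made precise.
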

\bp
We prove~\eqref{eq:ubx_nw}. Let $u'sbx_{n-m(c:s)} w'\in \text{RHS}$ where $\con(s)\subseteq \{a, c, d\},\ u'bx_n\sim ubx_n$ and $w'\sim w$. Then
$$u'sbx_{n-m(c:s)} w' \sim u'b x_n w \sim ubx_nw \in \text{LHS}.$$
Therefore $\text{RHS}\subseteq \text{LHS}$. To establish the reverse inclusion we need to prove that the RHS is closed w.r.t elementary $\tau$-equivalence. Let $u'sbx_{n-m(c:s)} w'$ be the same element as above. Then $u'bx_n \in [ubx_n]$ and by the assumption on $u$, any word in~$[ubx_n]$ must end in $bx_n$ and does not end in $dbx_n$. Therefore $\beta(u')\notin \{a, c, d\}$. Now let $(p, q)\in \tau$ be such that
$$u'sbx_{n-m(c:s)} w'\equiv z_1pz_2$$
for some words $z_1$ and $z_2$. We must show that 
$z_1qz_2$ belongs to the RHS. If $p$ begins in the $w'$, 
or begins in the $sb$, or begins and ends in the $u'$, then this is clear. Since $\beta(u')\notin \{a, c, d\}$ the remaining case is that $p$ begins in $u'$ and $p=bx_{n'}x,\ dbx_{n'}x,\ by_{n'}x,\ fby_{n'}x$ where $x$ is the first letter of $sb$, i.e. if $s\neq \epsilon$ then $x=\alpha(s)$, if $s=\epsilon$ then $x=b$.\\

\underline{Case: $p=bx_{n'}x$}: Write $u'\equiv u_0'bx_{n'}$. Then $$z_1qz_2\equiv u_0'dbx_{n'}sbx_{n-m(c:s)}w'.$$
Since we have $u_0'dbx_{n'}bx_n\sim u_0'bx_{n'}bx_n\equiv u'bx_n\sim ubx_n$ we see that $z_1qz_2\in \text{RHS}$.\\

\underline{Case: $p=dbx_{n'}x$}: Write $u'\equiv u_0'dbx_{n'}$. Then $$z_1qz_2\equiv u_0'bx_{n'}sbx_{n-m(c:s)}w'.$$
Since we have $u_0'bx_{n'}bx_n\sim u_0'dbx_{n'}bx_n\equiv u'bx_n\sim ubx_n$ we see that $z_1qz_2\in \text{RHS}$.\\

The cases $p=by_{n'}x$ and $p=fby_{n'}x$ are similar.
\ep

\begin{replemma}{lem:reducedinfinitewordsneighborhood}
Let $x=\chi_w$, where $w\in\RR_{\text{red}}^\infty$, and fix $n\geq 1$. Then
\begin{align*}
U:=\{\eta\in \Omega(R) \mid \ & \eta(w_1\cdots w_nR)=1, \\ & \eta(w_1\cdots w_{n-1}b(\bigcup_{n'} x_{n'}R\cup \bigcup_{n'} y_{n'}R))=0 \text{ if }w_n\in\{a, c, d, f\},\\
& \eta(w_1\cdots w_{n-2}b(\bigcup_{n'} x_{n'}R\cup \bigcup_{n'} y_{n'}R))=0 \text{ if }w_{n-1}\in\{a, c, d, f\}\}
\end{align*}
is a neighbourhood of $x$ such that for any $\chi_v\in U$, where $v\in\RR_{\text{red}}^\infty$, we have $v_1 v_2\cdots v_n\equiv w_1 w_2\cdots w_n$.
\end{replemma}
\bp
Put $Z=\bigcup_{n'} x_{n'}R\cup \bigcup_{n'} y_{n'}R$. We first argue that $\chi_w\in U$. It is clear that $\chi_w(w_1\cdots w_nR)=~1$. Now assume that $w_n\in~\{a, c, d, f\}$. We must show that $\chi_w(w_1\cdots w_{n-1}bZ)=0$, i.e. that $w_1\cdots w_m \notin w_1\cdots w_{n-1}bZ$ for any $m\geq 1$. If not, then
$$w_1\cdots w_m=w_1\cdots w_{n-1}bx_k p$$
for some $m\geq 1$, $k\in \Z$ and $p\in R$. Left cancellativity yields $$w_n\cdots w_m =bx_kp.$$

Write $w_n\cdots w_m\equiv rw_{n_1}\cdots w_m$ where $\con(r)\subseteq \{a, c, d, f\}$ and $w_{n_1}\notin \{a, c, d, f\}$. If $w_{n_1}\in~\{x_n, y_n \mid n\in \Z\}$ then $rw_{n_1} \perp w_{n_1+1}\cdots w_m$ and we have a contradiction. Similarly, if $w_{n_1}=b$ and $w_{n_1+1}\notin \{x_n, y_n \mid n\in \Z\}$. Therefore we must have $w_{n_1}w_{n_1+1}\equiv bx_{n'}$ or $w_{n_1}w_{n_1+1}\equiv by_{n'}$ for some $n'\in \Z$, but this contradicts $w$ being reduced. Similarly it follows that if $w_{n-1}\in~\{a, c, d, f\}$ then $\chi_w(w_1\cdots w_{n-2}bZ)=0$. This shows that $\chi_w\in U$.\\

Now assume that $\chi_v \in U$ where $v\in \RR_{\text{red}}^\infty$. We have $\chi_v(w_1\cdots w_nR)=1$, hence
$$v_1\cdots v_m=w_1 \cdots w_n p$$
for some $m\geq 1$ and $p\in R$. Since $v$ is reduced, Lemma~\ref{lem:dbx_n-separation} implies that $p$ does not end in $dbx_{n'}$ or $fby_{n'}$ for an $n'\in \Z$. Therefore we can write $p\sim p_1\cdots p_l$ where the $p_i$'s are letters and $p_1\cdots p_l$ is a reduced word. We argue that the whole word $w_1\cdots w_n p_1\cdots p_l$ is reduced. If it weren't then one of the following must hold for some $k\in \Z$:
$w_n=a, c, d$ and $p_1p_2\equiv bx_k$, $w_n=a, c, f$ and $p_1p_2\equiv by_k$, $w_{n-1}w_n\equiv ab, cb, db$ and $p_1=x_k$, or $w_{n-1}w_n\equiv ab, cb, fb$ and $p_1=x_k$. If one of the first two are true this contradicts the fact that $\chi_v(w_1\cdots w_{n-1}bZ)=0$ if $w_n\in \{a, c, d, f\}$. If one of the last two are true this contradicts the fact that $\chi_v(w_1\cdots w_{n-2}bZ)=0$ if $w_{n-1}\in \{a, c, d, f\}$.\\

Having established that $w_1\cdots w_n p_1\cdots p_l$ is reduced, Proposition~\ref{prop:distinctfinitereducedwords} implies that $v_1\cdots v_m\equiv w_1 \cdots w_np_1\cdots p_l$, in particular $v_1\cdots v_n\equiv w_1\cdots w_n$.\\
\ep

\bigskip

\begin{bibdiv}
\begin{biblist}

\bib{BGHL}{misc}{
author={Aguyar Brix, Kevin},
	author={Gonzales, Julian},
	author={B. Hume, Jeremy},
	author={Li, Xin},
    title={On Hausdorff covers for non-Hausdorff groupoids},
    how={preprint},
    date={2025},
    eprint={\href{https://arxiv.org/pdf/2503.23203}{\texttt{arXiv:2503.23203v1 [math.OA]}}},
}

\bib{AR}{book}{
   author={Anantharaman-Delaroche, Claire},
   author={Renault, Jean},
   title={Amenable Groupoids},
   series={Monographie de l'Enseignement math{\'e}matique},
   publisher={L'Enseignement Math{\'e}matique},
   date={2000},
   isbn={9782940264018},
   url={https://books.google.no/books?id=HbQrAAAAYAAJ},
}

\bib{CN-2}{article}{
	  author={Christensen, Johannes}
	  author={Neshveyev, Sergey}
      title={Isotropy fibers of ideals in groupoid C$^*$-algebras},
      date={2024}, 
      journal={Advances in Mathematics},
      volume={447},
      pages={109696},
      issn={0001-8708},
      doi={https://doi.org/10.1016/j.aim.2024.109696},
      url={https://www.sciencedirect.com/science/article/pii/S0001870824002111},
}

\bib{CELY}{book}{
   author={Cuntz, Joachim},
   author={Echteroff, Siegfried},
   author={Li, Xin},
   author={Yu, Guoliang},
   title={K-Theory for Group C$^*$-Algebras and Semigroup C$^*$-Algebras},
   series={Oberwolfach Seminars},
   volume={47},
   publisher={Birkhäuser Cham},
   date={2017},
   pages={ix+319},
   isbn={2296-5041},
   doi={https://doi.org/10.1007/978-3-319-59915-1},
   review={\MR{3618901}},
}

\bib{MR2419901}{article}{
   author={Exel, Ruy},
   title={Inverse semigroups and combinatorial $C^\ast$-algebras},
   journal={Bulletin of the Brazilian Mathematical Society, New Series},
   volume={39},
   date={2008},
   number={2},
   pages={191--313},
   issn={1678-7544},
   review={\MR{2419901}},
   doi={10.1007/s00574-008-0080-7},
}

\bib{KKLL}{article}{
	author={Kakariadis, Evgenios},
	author={Katsoulis, Elias},
	author={Laca, Marcelo},
	author={Li, Xin},
	title={Boundary quotient C*‐algebras of semigroups},
	journal={Journal of the London Mathematical Society},
	volume={105},
	number={4},
	date={2022},
	month={02},
	pages={2136-2166},
	doi={10.1112/jlms.12557}
}

\bib{KhSk}{article}{
   author={Khoshkam, Mahmood},
   author={Skandalis, Georges},
   title={Regular representation of groupoid $C^*$-algebras and applications to inverse semigroups},
   journal={Journal Fur Die Reine Und Angewandte Mathematik},
   volume={546},
   date={2002},
   pages={47--72},
   doi={10.1515/crll.2002.045},
}

\bib{LS}{article}{
	author={Laca, Marcelo},
	author={Sehnem, Camila},
	title={Toeplitz algebras of semigroups},
	journal={Transactions of the American Mathematical 	Society},
	volume={375},
	date={2021},
	number={10},
	pages={7443--7507},
	issn={0002-9947},
	review={\MR{4491431}},
	doi={10.1090/tran/8743},
}

\bib{MR2900468}{article}{
   author={Li, Xin},
   title={Semigroup ${\rm C}^*$-algebras and amenability of semigroups},
   journal={Journal of Functional Analysis},
   volume={262},
   date={2012},
   number={10},
   pages={4302--4340},
   issn={0022-1236},
   review={\MR{2900468}},
   doi={10.1016/j.jfa.2012.02.020},
}

\bib{Li-3}{misc}{
      author={Li, Xin},
       title={Semigroup C$^*$-algebras},
       how={preprint},
       date={2017},
       eprint={\href{https://arxiv.org/pdf/1707.05940}{\texttt{arXiv:2110.04501 [math.OA]}}},
}

\bib{Li-1}{article}{
	author={Li, Xin}
	title={Left regular representations of Garside categories I. $C^*$-algebras and groupoids},
	journal={Glasgow Mathematical Journal},
	volume={65},
	date={2023}
	number={S1}
	pages={53--86},
	doi={10.1017/S0017089522000106}
}

\bib{NS}{article}{
	author={Neshveyev, Sergey},
	author={Schwartz, Gaute},
	title={Non-Hausdorff étale groupoids and $C^*$-algebras of left cancellative monoids},
	journal={Münster Journal of Mathematics},
	volume={16},
	date={2023},
	pages={147--175},
	doi={10.17879/51009604279}
}

\bib{MR3200323}{article}{
   author={Norling, Magnus Dahler},
   title={Inverse semigroup $C^*$-algebras associated with left cancellative semigroups},
   journal={Proceedings of the Edinburgh Mathematical Society},
   volume={57},
   date={2014},
   number={2},
   pages={533--564},
   issn={0013-0915},
   review={\MR{3200323}},
   doi={10.1017/S0013091513000540},
}

\bib{MR1724106}{book}{
   author={Paterson, Alan L. T.},
   title={Groupoids, inverse semigroups, and their operator algebras},
   series={Progress in Mathematics},
   volume={170},
   publisher={Birkh\"{a}user Boston, Inc., Boston, MA},
   date={1999},
   pages={xvi+274},
   isbn={0-8176-4051-7},
   review={\MR{1724106}},
   doi={10.1007/978-1-4612-1774-9},
}

\bib{R-3}{article}{
	author={Renault, Jean},
	title={The Fourier Algebra of a Measured Groupoid and 	Its Multipliers},
	journal={Journal of Functional Analysis},
	volume={145},
	date={1997},
	number={2},
	pages={455-490},
	issn={0022-1236},
	doi={https://doi.org/10.1006/jfan.1996.3039},
}

\bib{R-2}{article}{
	author={Renault, Jean},
	title={Topological Amenability is a Borel Property},
	journal={Mathematica Scandinavica},
	volume={117},
	date={2015},
	number={1},
	pages={5--30},
	doi={10.7146/math.scand.a-22235}
}

\bib{SY}{article}{
	author={Sims, Aidan},
	author={Yeend, Trent},
	title={C$^*$-algebras associated to product systems of	Hilbert bimodules},
	journal={Journal of Operator Theory},
	volume={64},
	publisher={Romanian Academy Inst. Math.},
	date={2010},
	issn={0379-4024},
	number={2},
	pages={349--376},
}

\bib{MR4151331}{article}{
   author={Spielberg, Jack},
   title={Groupoids and $C^*$-algebras for left cancellative small categories},
   journal={Indiana University Mathematics Journal},
   volume={69},
   date={2020},
   number={5},
   pages={1579--1626},
   issn={0022-2518},
   review={\MR{4151331}},
   doi={10.1512/iumj.2020.69.7969},
}

\bib{Star}{article}{
	author={Starling, Charles},
	title={Boundary quotients of $C^*$-algebras of right LCM 	semigroups},
	journal={Journal of Functional Analysis},
	volume={11},
	date={2015},
	number={11},
	pages={3326--3356},
	doi={10.1016/j.jfa.2015.01.001}
}

\end{biblist}
\end{bibdiv}

\bigskip

\end{document}